\DeclareSymbolFont{AMSb}{U}{msb}{m}{n}
\documentclass[11pt,a4paper,oneside,leqno,noamsfonts]{amsbook}
\linespread{1.34}           
\usepackage[english]{babel}
\usepackage[dvipsnames]{xcolor}
\definecolor{britishracinggreen}{rgb}{0.0, 0.26, 0.15}
\definecolor{cobalt}{rgb}{0.0, 0.28, 0.67}
\usepackage[utopia]{mathdesign}
    \DeclareSymbolFont{usualmathcal}{OMS}{cmsy}{m}{n}
    \DeclareSymbolFontAlphabet{\mathcal}{usualmathcal}
\usepackage[a4paper,top=3.5cm,bottom=4cm,left=3.25cm,
           right=3.25cm,bindingoffset=5mm]{geometry}
\savegeometry{Mem}
\usepackage[utf8]{inputenc}
\usepackage{braket,caption,comment,mathtools,stmaryrd}
\usepackage{multirow,booktabs,microtype}
\usepackage{latexsym}
\usepackage{todonotes}
\usepackage{fancyhdr}
\usepackage{young}
\pagestyle{fancy}
\fancyhead{}
\fancyfoot{}
\fancyfoot[R]{\thepage}
\addtolength{\headheight}{\baselineskip}
\usepackage{nomencl}
\usepackage{soul} 
\usepackage[colorlinks,bookmarks]{hyperref} %
\hypersetup{colorlinks,%
            citecolor=britishracinggreen,%
            filecolor=black,%
            linkcolor=cobalt,%
            urlcolor=black}
\setcounter{tocdepth}{2}
\numberwithin{section}{chapter}
\numberwithin{equation}{chapter}
\numberwithin{figure}{chapter}
\usepackage{titlesec}
\titleformat{\chapter}[display]{ \normalsize \huge  \color{black}}{\flushleft \Huge \MakeUppercase { } \hspace{1ex} {\fontsize{60}{70}\selectfont \bfseries \color{cobalt}  \thechapter }}{12 pt}{\huge}
\titleformat{\section}{\normalfont \Large \MakeUppercase{ }\selectfont\bfseries }{\thesection}{12pt}{}
\titleformat{\subsection}{\fontsize{12pt}{14pt}\selectfont\bfseries}{\thesubsection}{12pt}{}
\titleformat{\subsubsection}{\fontsize{11pt}{12pt}\selectfont\itshape\bfseries}{}{0.5pt}{}

\makeatletter
\def\l@subsection{\@tocline{1}{0,2pt}{2pc}{10mm}{\ \ }}   
\def\l@section{\@tocline{1}{0,2pt}{2pc}{8mm}{\ \ }}

\makeatletter

\makeatother

\def\into{\hookrightarrow}

\def\ra{\rightarrow}

\def\Sets{\mathrm{Sets}}
\def\Def{\mathsf{Def}}
\def\KS{\mathsf{KS}}
\def\ad{\mathsf{ad}}

\def\m{\mathrm{m}}

\def\A{\mathcal A}

\def\R{\mathbb R}
\def\N{\mathbb N}
\def\C{\mathbb C}

\def\P{\mathbb P}
\def\Q{\mathbb Q}

\def\X{\mathcal X}
\def\E{\mathcal E}
\def\Z{\mathbb Z}

\def\FF{\mathscr{F}}

\def\O{\mathscr O}

\def\pt{\mathrm{pt}}

\DeclareMathOperator{\Mor}{Mor}

\DeclareMathOperator{\ev}{ev}

\DeclareMathOperator{\Ob}{Ob}

\DeclareMathOperator{\id}{id}

\DeclareMathOperator{\vir}{vir}

\DeclareMathOperator{\Spec}{Spec\,}

\DeclareMathOperator{\GL}{GL}

\DeclareMathOperator{\dd}{d}

\DeclareMathOperator{\Aut}{Aut}

\DeclareMathOperator{\Hom}{Hom}

\DeclareMathOperator{\End}{End}

\DeclareMathOperator{\pr}{pr}
\DeclareMathOperator{\Art}{Art}

\DeclareMathOperator{\Der}{Der}

\theoremstyle{definition}

\newtheorem*{lemma*}{Lemma}
\newtheorem*{theorem*}{Theorem}
\newtheorem*{example*}{Example}
\newtheorem*{fact*}{Fact}
\newtheorem*{notation*}{Notation}
\newtheorem*{definition*}{Definition}
\newtheorem*{prop*}{Proposition}
\newtheorem*{remark*}{Remark}
\newtheorem*{corollary*}{Corollary}
\newtheorem*{conventions*}{Conventions}
\newtheorem*{caution*}{Caution}

\newtheorem{definition}{Definition}[section]

\newtheorem{example}[definition]{Example}

\newtheorem{remark}[definition]{Remark}
\newtheorem{conjecture}[definition]{Conjecture}

\newtheoremstyle{thm} 
        {3mm}
        {3mm}
        {\slshape}
        {0mm}
        {\bfseries}
        {.}
        {1mm}
        {}
\theoremstyle{thm}
\newtheorem{theorem}[definition]{Theorem}
\newtheorem{corollary}[definition]{Corollary}
\newtheorem{lemma}[definition]{Lemma}
\newtheorem{prop}[definition]{Proposition}

\newtheoremstyle{sol} 
        {3mm}
        {3mm}
        {\normalfont}
        {0mm}
        {\scshape}
        {.}
        {1mm}
        {}
\theoremstyle{sol}

\newtheorem{notation}[definition]{Notation}
\newtheorem*{ssolution*}{Solution (sketch)}
\newtheorem*{solution*}{Solution}


\usepackage{tikz}
\usepackage{tikz-cd}
\usepackage{rotating}

\usetikzlibrary{matrix,shapes,arrows,decorations.pathmorphing}
\tikzset{commutative diagrams/arrow style=math font}
\tikzset{commutative diagrams/.cd,
mysymbol/.style={start anchor=center,end anchor=center,draw=none}}

\tikzset{
shift up/.style={
to path={([yshift=#1]\tikztostart.east) -- ([yshift=#1]\tikztotarget.west) \tikztonodes}
}
}

\DeclareMathAlphabet{\mathpzc}{OT1}{pzc}{m}{it}

\newcommand*{\defeq}{\mathrel{\vcenter{\baselineskip0.5ex \lineskiplimit0pt
                     \hbox{\scriptsize.}\hbox{\scriptsize.}}}%
                     =}

\makeindex

\begin{document}

\frontmatter

\title{}
\cleardoublepage
\thispagestyle{empty}
\newgeometry{margin=0pt}
\begin{figure}
\includegraphics[scale=1]{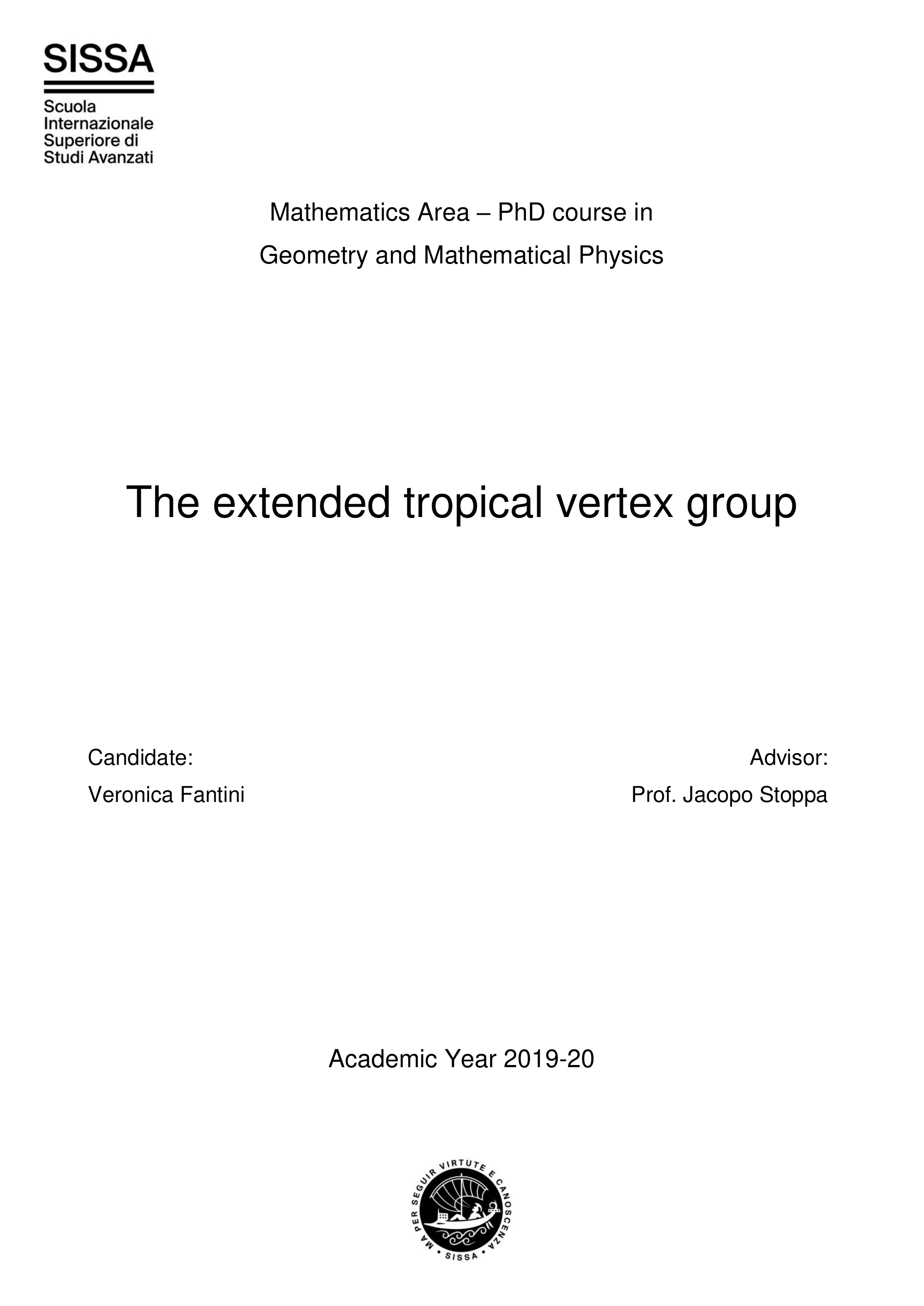}
\end{figure}
\vspace{\stretch{2}}
\cleardoublepage

\clearpage
\loadgeometry{Mem}
%
%



\keywords{}

\date{}

\begin{abstract}
In this thesis we study the relation between scattering diagrams and deformations of holomorphic pairs, building on a recent work of Chan--Conan Leung--Ma \cite{MCscattering}. The new feature is the \textit{extended tropical vertex} group where the scattering diagrams are defined. In addition, the extended tropical vertex provides interesting applications: on one hand we get a geometric interpretation of the wall-crossing formulas for coupled $2d$-$4d$ systems, previously introduced by Gaiotto--Moore--Neitzke \cite{WCF2d-4d}. On the other hand, Gromov--Witten invariants of toric surfaces relative to their boundary divisor appear in the commutator formulas, along with certain absolute invariants due to Gross--Pandharipande--Siebert \cite{GPS}, which suggests a possible connection to open/closed theories in geometry and mathematical physics.  


\end{abstract}

\maketitle

\cleardoublepage
\thispagestyle{empty}
\begin{flushright}
\itshape Ai nonni\\
Sesta e Olindo,\\
Aldo e Rina
\end{flushright}
\vspace{\stretch{2}}
\cleardoublepage

\section*{Acknowledgements}

I am grateful to my advisor Jacopo Stoppa, for his constant support, fruitful discussions, suggestions and corrections.   

I wish to thank Kwokwai Chan, Mark Gross and Andrew Neitzke for suggesting corrections and further directions and applications of the results of the thesis. 
  
I thank SISSA for providing a welcoming and professional enviroment for PhD studies and research. I thank the PhD students for innumerous discussions, and among them Guilherme Almeida, Nadir Fasola, Xiao Han, Vitantonio Peragine, Carlo Scarpa, Michele Stecconi and Boris Stupovski, with whom I started the PhD at SISSA. 

Last but not least, I would like to thank my family and Angelo, my flatmates Monica Nonino e Federico Pichi, and my friends Matteo Wauters, Andrea Ricolfi, Luca Franzoi, Raffaele Scandone, Alessio Lerose, Matteo Zancanaro, Maria Strazzullo, Alessandro Nobile, Saddam Hijazi and Daniele Agostinelli. They have always encouraged me in my studies and I have spent great time with them in Trieste.    

\tableofcontents

\mainmatter

\chapter{Introduction}

In this thesis we are going to apply many techniques and ideas which have been developed by studying mirror symmetry with different approaches. This introduction aims to present this circle of ideas, paying special attention to wall crossing formulas and Gromov--Witten invariants. 
\\
Mirror symmetry predicts that Calabi--Yaus come in pairs, i.e. that type IIB string theory compactified on a Calabi--Yau $\check{X}$ gives the same physical theory of type IIA string theory compactified on the mirror Calabi--Yau $X$. Calabi--Yau manifolds are complex K\"ahler manifolds with trivial canonical bundle, and they admit a Ricci flat K\"ahler metric. In particular, type A string refers to $X$ as a symplectic manifold, while type B refers to $\check{X}$ as a complex manifold. Hence mirror symmetry can be considered as map between $X$ and $\check{X}$ which exchanges the two structures.   
The first intrinsic formulation of mirror symmetry was the \textit{Homological Mirror Symmetry} proposed by Kontsevich \cite{HMS}, who conjectures existence of an equivalence between the derived category of coherent sheaves on $\check{X}$ and the Fukaya category of Lagrangian submanifolds on $X$.        
The Strominger--Yau--Zaslow Mirror Symmetry \cite{SYZ} asserts that mirror symmetry is T-duality, i.e. that if $X$ has a mirror and it has a special Lagrangian torus fibrations, then the moduli space of Lagrangian torus fibration with flat $U(1)$ connections is the mirror $\check{X}$. Actually this statement holds in the \textit{large radius limit}: indeed the metric on the moduli space has to be modified by adding quantum corrections (coming from open Gromov--Witten invariants). In particular, these corrections exponentially decay in the large radius limit at generic points, but near the singular points the corrections can not be neglected.   
This limit version of mirror symmetry was then studied by Fukaya \cite{Fuk05}. In particular, assuming there are mirror pairs of dual torus fibrations $\check{X}\to M$ and $X\to M$ he proposed that in the large volume limit (i.e. rescaling the symplectic structure on $X$ by $\hbar^{-1}$ and taking the limit as $\hbar\to0$), quantum corrections arise from studying the semi-classical limit of the Fourier expansion along the torus fibers of deformations of the complex structure $J_{\hbar}$, of $\check{X}$. In order to study deformations of the complex structure of $\check{X}$ he adopted the analytic approach of studying solutions of the Maurer--Cartan equation in Kodaira--Spencer theory. 

To overcome convergence issues, Kontsevich--Soibelman \cite{KS01} and Gross--Siebert \cite{GS1}, \cite{GS2} come up with new approaches. On one side Kontsevich--Soibelman study the Gromov--Hausdorff collapse of degenerating families of Calabi--Yaus obtaining a limit structure which is an integral affine structure either coming from an analytic manifold over $\C((t))$ (B model) or from the base of a fibration of a Calabi--Yau by Lagrangian tori (A model). In addition, in \cite{KS} the authors study the problem of \textit{reconstructing} the family from the integral affine structure of the base and they solve it for analytic K3 surfaces. In particular they define combinatorial objects, called \textit{scattering diagrams}, which prescribe how to deform the sheaf of functions on the smooth locus and how to perform the gluing near the singularities.   

On the other side Gross--Siebert apply techniques of logarithmic geometry and from an integral affine manifold with singularities $B$ together with a polyhedral decomposition $\mathcal{P}$, they recover a degeneration of toric Calabi--Yau varieties (i.e. with singular fibers which are union of toric varietis). In this contest they define mirror pair of polarized log Calabi--Yau $(X,\mathcal{L})$ and $(\check{X},\check{\mathcal{L}})$ such that their degeneration data $(B,\mathcal{P})$ and $(\check{B},\check{\mathcal{P}})$ are mirror under a discrete Legendre transform. They also verify the expected computation of Hodge numbers for K3 surfaces.  

A common feature of Kontsevich--Soibelman and Gross--Siebert approach is the combinatorial structure which governs the construction of the mirror manifold. Apart from the reconstruction problem, scattering diagrams have been studied in relation with some enumerative problems, such as wall crossing formulas for Donaldson--Thomas invariants \cite{KS4d} and Gromov--Witten invariants for toric surfaces \cite{GPS}, \cite{Pierrick}. We are going to explain these relations in the following sections.

%

  
\section{Wall Crossing Formulas}

The first appearance of wall-crossing-formulas (WCFs) has been in the context of studying certain classes of two dimensional $\mathcal{N}=2$ supersymmetric fields theories, by Cecotti and Vafa in \cite{CV}. In particular, their WCFs compute how the number of Bogomolony solitons jumps when the central charge crosses a wall of marginal stability: let $\lbrace i,j,k...\rbrace$ be the vacua, if deforming the central charge the j-th critical value crosses the lines which connect the i-th and the k-th vacua, then the number of solitons $\mu_{ik}$ between $i$ and $k$ becomes $\mu_{ik}\pm\mu_{ij}\mu_{jk}$. From a mathematical viewpoint, these WCFs look like braiding identities for the matrices representing monodromy data (see e.g. \cite{dubrovin}). Indeed deformations of massive $\mathcal{N}=2$ SCFTs can be studied via isomonodromic deformations of a linear operator with rational coefficients, and the BPS spectrum is encoded in the Stokes matrices of the ODE associated to the linear operator.

Later on Kontsevich--Soibelman, studying \textit{numerical} Donaldson--Thomas invariants on $3$ dimensional Calabi--Yau categories, come up with new WCFs (\cite{WCFKS}). In particular, their WCFs encode the jumping behaviour of some semistable objects when stability conditions cross a codimension one subvariety (\textit{wall}). It has been remarkably studied in a series of papers by Gaiotto--Moore--Neitzke  \cite{WCF4d}, \cite{GMNWKB}, \cite{GMNframed} that Kontsevich--Soibelman WCFs have the same algebraic structure as WCFs for BPS states in four dimensional $\mathcal{N}=2$ SCFTs. These $4d$ WCFs can be defined starting from the datum of the (``charge") lattice $\Gamma$ endowed with an antisymmetric (``Dirac") pairing $\langle\cdot,\cdot\rangle_D\colon\Gamma\times\Gamma\to\Z$, and a graded Lie algebra closely related to the Poisson algebra of functions on the algebraic torus $(\C^*)^{\operatorname{rk} \Gamma}$. Then WCFs are expressed in terms of formal Poisson automorphisms of this algebraic torus. 

For the purpose of this work, we are interested in WCFs of so called \textit{coupled $2d$-$4d$ systems}, namely of $\mathcal{N}=2$ super symmetric field theories in four dimension coupled with a surface defect, introduced by Gaiotto--Moore--Neitzke in \cite{WCF2d-4d}. This generalizes both the formulas of Cecotti--Vafa in the pure $2d$ case and those of Kontsevich--Soibelman in the pure $4d$ case. In the coupled $2d$-$4d$ the setting becomes rather more complicated: the lattice $\Gamma$ is upgraded to a pointed groupoid $\mathbb{G}$, whose objects are indices $\lbrace i,j,k\cdots\rbrace$ and whose morphisms include the charge lattice $\Gamma$ as well as arrows parameterized by $\amalg_{i,j}\Gamma_{ij}$, where $\Gamma_{ij}$ is a $\Gamma$-torsor. Then the relevant wall-crossing formulas involve two types of formal automorphisms of the groupoid algebra $\C[\mathbb{G}]$: type $S$, corresponding to Cecotti--Vafa monodromy matrices, and type $K$, which generalize the formal torus automorphisms of Kontsevich--Soibelman. The main new feature is the non trivial interaction of automorphisms of type $S$ and $K$. 

It is worth mentioning that, despite the lack of a categorical description for pure $2d$ WCFs, the $2d$-$4d$ formulas have been recently studied within a categorical framework by Kerr and Soibelman in \cite{2d-4dSK}.


\section{Relative Gromov--Witten Invariants}

From an algebraic geometric view point, Gromov--Witten theory usually requires the definition of a compact moduli space (or a proper, separated Deligne--Mumford stack with a virtual fundamental class) parameterizing smooth curves of genus $g$ and class $\beta\in H_2(X,\Z)$: Kontsevich introduced the notion of stable maps to compactify the moduli space of genus $g$, degree $d$ curves in $\P^2$. 
Here we are interested in target manifolds which are either complete toric surfaces $X^{\mathrm{toric}}$ or log Calabi--Yau pairs $(X,D)$.\footnote{A log Calabi Yau pair $(X,D)$ is by definition (Definition 2.1 \cite{HK}) a pair of a smooth projective variety $X$ and a reduced normal crossing divisor $D\subset X$ such that $D+K_X=0$.} In addition we are going to count rational curves with tangency conditions along the boundary divisor (namely, either of the union of all toric boundary divisors $\partial X^{\mathrm{toric}} $ or of the divisor $D$).     

A first approach to properly define Gromov--Witten invariants for $X^{\mathrm{toric}}$ and $(X,D)$ comes with the notion of \textit{relative stable morphisms}\footnote{A relative map $f\colon (C_g,p_1,...,p_n,q_1,...,q_s)\to (X,D)$ is such that $C_g$ is smooth genus $g$ curve with $p_1,...,p_n,q_1,...,q_s$ marked points and $f(q_i)\in D$. A relative map $(f,C_g,p_1,...,p_n,q_1,...,q_s)$ is stable if it is stable map and $f^{-1}(D)=\sum_{j=1}^sw_jq_j$ for some weight $w_j$.} introduced by Li \cite{Li}\cite{Linotes}. However Li's theory requires  $D$ to be smooth, hence in the toric case it can be applied on the open locus $(X^{\mathrm{toric},o},\partial X^{\mathrm{toric},o})$ where the zero torus orbits have been removed. In addition the moduli space of relative stable maps is not  proper in general, and Li introduces the notion of expanded degenerations, which require changing the target variety blowing it up.  
In \cite{GPS}, relative stable maps have been used to define genus zero Gromov--Witten invariants for open toric surface $X^{\mathrm{toric},o}$ with tangency conditions relative to the toric boundary divisors $\partial X^{\mathrm{toric},o}$. In addition by Li's degeneration formula \cite{Lideg} the authors get an expression to compute genus zero Gromov--Witten for the projective surface $(X,D)$ in terms of the invariants of $(X^{\mathrm{toric},o},\partial X^{\mathrm{toric},o})$, where $(X,D)$ is the blowup of the toric surface along a fixed number of generic points on the toric divisors $\partial X^{\mathrm{toric},o}$ and $D$ is the strict transform of $\partial X^{\mathrm{toric},o}$.

An alternative approach introduced by Gross--Siebert \cite{logGW} relies on logarithmic geometry. Compared to Li's theory, log theory allows to consider $(X,D)$ with $D$ a reduced normal crossing divisor. In addition every complete toric variety has a log structure over the full toric boundary divisor (without removing the zero torus orbits).

\subsection{Relation to open invariants}

Parallel to the existence of open and closed string theories, on one hand \textit{open} Gromov--Witten theory concerns ``counting'' of holomorphic maps from a genus $g$ curve with boundary components to a target manifold $X$ which admits a Lagrangian submanifold $L$, such that the image of boundary of the source lies on a Lagrangian fiber of the target. On the other hand \textit{closed} Gromov--Witten invariants aims to ``count'' holomorphic maps from a genus $g$ projective curve to a target $X$. In particular, open Gromov--Witten are rare in algebraic geometry, because it is not clear in general how to construct moduli space of maps between manifold with boundaries. However in \cite{LiSong06} by using relative stable maps the authors obtain the analogue results of the open string amplitudes computed by Ooguri--Vafa \cite{OV00}. Furthermore in \cite{Pierrick}, \cite{bousseau20} Bousseau explains how higher genus Gromov--Witten for log Calabi--Yau surfaces (which are higher genus generalizations of invariants for the blow-up surface of \cite{GPS}) offer a rigorous mathematical interpretation of the open topological string amplitudes computed in \cite{CV09}. 

From a different perspective, there have been many successfully results on computing open Gromov--Witten invariants in terms of holomorphic discs with boundary on a Lagrangian fiber (see e.g. \cite{CsemiFano} and the reference therein). The main advantage is the existence of a well defined notion of moduli space of stable discs, introduced by \cite{FOOO}.

\section{Main results}

Let $M$ be an affine tropical two dimensional manifold. Let $\Lambda$ be a lattice subbundle of $TM$ locally generated by $\frac{\partial}{\partial x_1},\frac{\partial}{\partial x_2} $, for a choice of affine coordinates $x=(x_1,x_2)$ on a contractible open subset $U\subset M$. We denote by $\Lambda^*=\Hom_\Z(\Lambda,\Z)$ the dual lattice and by \[\langle\cdot,\cdot\rangle\colon\Lambda^*\times\Lambda\to\C\] the natural pairing. 

Define $\check{X}\defeq TM/\Lambda$ to be the total space of the torus fibration $\check{p}\colon\check{X}\to M$ and similarly define $X\defeq T^*M/\Lambda^* $ as the total space of the dual torus fibration $p\colon X\to M$. Then, let $\lbrace \check{y}_1, \check{y}_2\rbrace$ be the coordinates on the fibres of  $\check{X}(U)$ with respect to the basis $\frac{\partial}{\partial x_1},\frac{\partial}{\partial x_2}$, and define a one-parameter family of complex structures on $\check{X}$:\[J_{\hbar}=\begin{pmatrix}
0 & \hbar I\\
-\hbar^{-1}I & 0
\end{pmatrix}\] with respect to the basis $\left\lbrace \frac{\partial}{\partial{x_1}},\frac{\partial}{\partial{x_2}},\frac{\partial}{\partial{\check{y}_1}},\frac{\partial}{\partial{\check{y}_2}}\right\rbrace$, parameterized by $\hbar\in\R_{>0}$. Notice that a set of holomorphic coordinates with respect to $J_{\hbar}$ is defined by \[z_j\defeq\check{y}_j+i\hbar x_j\] $j=1,2$; in particular we will denote by $\textbf{w}_j\defeq e^{2\pi iz_j}$. On the other hand $X$ is endowed with a natural symplectic structure \[\omega_{\hbar}\defeq\hbar^{-1}dy_j\wedge dx_j\] where $\lbrace y_j\rbrace$ are coordinates on the fibres of $X(U)$.

Motivated by Fukaya approach to mirror symmetry in \cite{MCscattering} the authors show how consistent scattering diagrams, in the sense of Kontsevich--Soibelman and Gross--Siebert can be constructed via the asymptotic analysis of deformations of the complex manifold $\check{X}$. Since the complex structure depends on a parameter $\hbar$, the asymptotic analysis is performed in the semiclassical limit $\hbar\to 0$. The link between scattering diagrams and solutions of Maurer--Cartan equations comes from the fact that the gauge group acting on the set of solutions of the Maurer--Cartan equation (which governs deformations of $\check{X}$) contains the \textit{tropical vertex group} $\mathbb{V}$ of Gross--Pandharipande--Siebert \cite{GPS}. 

In \cite{Fan19} and in the thesis (Section \ref{sec:extended tropical vertex}), we introduce the extended tropical vertex group $\tilde{\mathbb{V}}$ which is an extension of the tropical vertex group $\mathbb{V}$ via the general linear group $GL(r,\C)$. Hence the elements of $\tilde{\mathbb{V}}$ are pairs with a matrix component and a derivation component. Moreover, $\tilde{\mathbb{V}}$ is generated by a Lie algebra $\tilde{\mathfrak{h}}$, with a twisted Lie bracket and its definition is modelled on the deformation theory of holomorphic pairs $(\check{X}, E)$. In our applications, $\check{X}$ is defined as above and $E$ is a holomorphically trivial vector bundle on $\check{X}$. We always assume $\check{X}$ has complex dimension $2$, but we believe that this restriction can be removed along the lines of \cite{MCscattering}.    
Our first main result gives the required generalization of the construction of Chan, Conan Leung and Ma.
\begin{theorem}[Theorem \ref{thm:asymptotic_gauge}, Theorem \ref{thm:consistentD}]\label{thm:main1}
Let $\mathfrak{D}$ be an initial scattering diagram, with values in the extended tropical vertex group $\tilde{\mathbb{V}}$, consisting of two non-parallel walls. Then there exists an associated solution $\Phi$ of the Maurer-Cartan equation, which governs deformations of the holomorphic pair $(\check{X}, E)$, such that the asymptotic behaviour of  $\Phi$ as $\hbar \to 0$ defines uniquely a scattering diagram $\mathfrak{D}^{\infty}$ with values in $\tilde{\mathbb{V}}$. The scattering diagram $\mathfrak{D}^{\infty}$ is consistent.
\end{theorem}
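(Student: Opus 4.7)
The overall plan is to follow the scheme of Chan--Conan Leung--Ma in \cite{MCscattering}, adapted to the extended tropical vertex $\tilde{\mathbb{V}}$ and to the DGLA that governs deformations of the holomorphic pair $(\check X, E)$ rather than of $\check X$ alone. The starting point is to identify, for each wall $(\mathfrak{d},\theta_{\mathfrak{d}})$ of $\mathfrak{D}$, an explicit element of the Dolbeault-type DGLA controlling Maurer--Cartan elements for $(\check X, E)$ whose $\hbar\to 0$ asymptotic profile concentrates along $\check p^{-1}(\mathfrak{d})$ and whose associated (derivation, matrix) pair recovers $\log\theta_{\mathfrak{d}}\in\tilde{\mathfrak h}$ after Fourier expansion along the torus fibres. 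Writing each wall element $\log\theta_{\mathfrak d}$ as a series in monomials $\mathbf{w}^m\otimes (\xi,A)$, I would take the ansatz to be a sum of Gaussian wave packets transverse to the wall multiplied by these monomial data, exactly as in the pure tropical vertex case, with the matrix component $A$ riding along as an $\End(E)$-valued coefficient.

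First I would set up the DGLA $L^*$ for deformations of the pair: it is the total complex $\Omega^{0,*}(\check X, T^{1,0}\check X)\oplus\Omega^{0,*}(\check X,\End E)$ with the twisted bracket combining the Schouten bracket, the action of vector fields on endomorphisms, and the commutator of endomorphisms. The tropical vertex Lie algebra $\tilde{\mathfrak h}$ embeds into $L^0$ via the Fourier-mode construction above, and the key structural identity to verify is that the bracket $[\cdot,\cdot]_{\tilde{\mathfrak h}}$ computed formally on monomials agrees, up to exponentially suppressed terms in $\hbar$, with the Dolbeault bracket on the wave-packet representatives. Once this compatibility is in place, the Kuranishi homotopy operator $H$ of \cite{MCscattering} (which is essentially the $\bar\partial^*$-type operator smoothing along the base) can be applied to solve $\bar\partial\Phi+\tfrac12[\Phi,\Phi]=0$ by iteration $\Phi=\Phi_{\rm in}+\sum_{k\ge 2}\Phi_k$, with $\Phi_k=-\tfrac12 H\sum_{i+j=k}[\Phi_i,\Phi_j]$, using the semiclassical bounds on $H$ applied to wave packets to ensure convergence in the appropriate Banach norms.

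Second, from the resulting $\Phi$ I would extract $\mathfrak{D}^\infty$ by reading off the asymptotic support loci: by the Kuranishi iteration, each bracket $[\Phi_i,\Phi_j]$ of wave packets concentrated on rays $\mathfrak{d}_i,\mathfrak{d}_j$ produces, after applying $H$, a new wave packet concentrated on the outgoing ray from $\mathfrak{d}_i\cap\mathfrak{d}_j$ in the direction determined by the tropical vertex bracket of their Fourier data. Summing over iterations yields a (possibly infinite) set of rays with associated elements of $\tilde{\mathbb V}$, and this is declared to be $\mathfrak{D}^\infty$. For consistency, I would argue as follows: around any simple loop $\gamma$ in $M$ avoiding singular points of $\mathfrak{D}^\infty$, the path-ordered product $\theta_\gamma$ of wall-crossing automorphisms is the monodromy, in the Fourier picture, of the gauge transformation associated to $\Phi$ via the Kuranishi trivialisation on the complement of the walls. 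Since $\Phi$ is a globally defined Maurer--Cartan element, this monodromy must be trivial, yielding $\theta_\gamma=\id$.

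The hardest step will be the asymptotic/combinatorial matching between the twisted Lie bracket on $\tilde{\mathfrak h}$ and the DGLA bracket on wave-packet representatives, because the new $\End E$ component interacts with the derivation component through the nontrivial twist, and one must show that higher-order error terms picked up by $H$ do not contaminate the extracted wall-element in the $\hbar\to 0$ limit. In the pure tropical vertex setting this is controlled by an exact stationary-phase computation, and the key technical point here is to verify that the matrix factors only enter as $\hbar$-independent coefficients and therefore pass unharmed through the semiclassical estimates, so that the stationary-phase extraction of \cite{MCscattering} applies term by term after tensoring with $\End E$. Granting this, the remainder of the argument (convergence of Kuranishi iteration and consistency via absence of monodromy of $\Phi$) is a direct extension of the CLM strategy.
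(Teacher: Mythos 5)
Your proposal follows the same broad strategy as the paper (Gaussian wave packets supported on walls, Kuranishi iteration, extraction of $\mathfrak{D}^\infty$ from asymptotic support, consistency via monodromy of the global gauge), but it glosses over two technical points that the paper must address carefully, and one of them would cause a genuine problem if not repaired.

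First, you use a single homotopy operator ``$H$ of CLM'' for both purposes, but the paper (following CLM) needs \emph{two distinct} homotopy operators: a two-step rectangular integration $\mathbf{H}$ (Definition \ref{def:homotopyH}) to invert $d_W$ in the Kuranishi iteration that constructs $\Phi$, and a separate path-integral operator $H$ (Equation \eqref{def:homotopy_1wall}, with the specially chosen path $\varrho$ of Equation \eqref{varro}) that fixes the gauge $\varphi_a$ normalised by $p(\varphi_a)=0$. These are genuinely different: $\mathbf{H}$ applied to $\delta_{m_1}\wedge\delta_{m_2}$ produces a $1$-form with asymptotic support on the outgoing ray (Remark \ref{rmk:H(delta wedge delta)}), whereas $H$ applied to a $1$-form in $\mathcal{W}_{\mathfrak{d}_m}^s$ produces a $0$-form in $\mathcal{W}_0^{s-1}$ (Lemma \ref{lem:H(W^s)}), and it is the latter which gives the step-function behaviour across the wall that one reads off as the wall automorphism. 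If you try to do both jobs with one operator you either lose the MC property of $\Phi$ or fail to extract the correct leading term of the gauge.

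Second, your consistency argument is stated as ``since $\Phi$ is a globally defined Maurer--Cartan element, the monodromy must be trivial,'' but this does not directly produce the identity $\Theta_{\gamma,\mathfrak{D}^\infty}=\mathrm{id}$. The paper needs the intermediate decomposition $\Phi=\sum_a\Phi_a$ over primitive Fourier modes, the fact that each $\tilde\Phi_a$ is \emph{independently} a Maurer--Cartan solution up to exponentially small error (Lemma \ref{lem:Phi_aMC}), and the resulting factorisation $\prod_a e^{\varphi_a}\ast 0=\sum_a\tilde\Phi_a$ together with uniqueness of the gauge fixing, to identify the path-ordered product with $e^{\varpi^*(\varphi)}$ and conclude it is monodromy-free. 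This factorisation step, together with the labeled-ribbon-tree bookkeeping and the $\natural,\flat,\sharp,\star$ bracket decomposition that controls the extra $\hbar$-factors coming from the curvature $F_E$ and the connection $\nabla^E$, is what makes the stationary-phase extraction rigorous. You correctly identify the asymptotic/combinatorial matching as the hard step, but the matrix factors do \emph{not} simply ``ride along'' as $\hbar$-independent coefficients: the bracket $\{\cdot,\cdot\}_\sim$ on the pair DGLA contains curvature and connection terms, and the computation in Section \ref{subsec:relation} shows these are suppressed by extra powers of $\hbar$ only after a careful choice of hermitian metric on $E$ (fibrewise constant and diagonal). If one omits this hypothesis the asymptotic subalgebra is not $\tilde{\mathfrak{h}}$. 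Finally, the paper works on the Fourier-transformed symplectic side rather than directly on the Dolbeault side as you propose; this is not a gap per se, but it is what makes the identification of leading-order terms with elements of $\tilde{\mathfrak{h}}$ transparent, and staying on the complex side would require re-deriving all the semiclassical estimates there.
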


Elements of the tropical vertex group are formal automorphisms of an algebraic torus and are analogous to the type $K$ automorphisms, and \textit{consistent} scattering diagrams with values the tropical vertex group reproduce wall-crossing formulas in the pure $4d$ case. This is a motivation for our second main result, namely the application to $2d$-$4d$ wall-crossing. As we mentioned WCFs for coupled $2d$-$4d$ systems involve automorphisms of type $S$ and $K$. By considering their infinitesimal generators (i.e. elements of the Lie algebra of derivations of $\Aut(\C[\mathbb{G}][\![ t ]\!])$), we introduce the Lie ring $\mathbf{L}_\Gamma$ which they generate as a $\C[\Gamma]$-module. On the other hand we construct a Lie ring $\tilde{\mathbf{L}}$ generated as $\C[\Gamma]$-module by certain special elements of the extended tropical vertex Lie algebra for holomorphic pairs, $\tilde{\mathfrak{h}}$. Then we compare these two Lie rings:
\begin{theorem}[Theorem \ref{thm:homomLiering}]\label{thm:main3}
Let $\left(\mathbf{L}_{\Gamma},[-,-]_{\Der(\C[\mathbb{G}])}\right)$ and $\left(\tilde{\mathbf{L}},[\cdot,\cdot]_{\tilde{\mathfrak{h}}}\right)$ be the $\C[\Gamma]$-modules discussed above (see Section \ref{sec:WCF}). Under an assumption on the BPS spectrum, there exists a homomorphism of $\C[\Gamma]$-modules and of Lie rings $\Upsilon\colon \mathbf{L}_{\Gamma}\to\tilde{\mathbf{L}}$.
\end{theorem}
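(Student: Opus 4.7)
The strategy is to exhibit $\Upsilon$ explicitly on a set of generators and then verify compatibility with both the $\C[\Gamma]$-action and the two Lie brackets. On the source side, $\mathbf{L}_\Gamma$ is generated as a $\C[\Gamma]$-module by two families: the infinitesimal generators of type $K$, which are derivations of $\C[\mathbb{G}]$ labelled by $\gamma \in \Gamma$ and act diagonally on the groupoid algebra, and the infinitesimal generators of type $S$, which are off-diagonal endomorphism-valued derivations labelled by morphisms $a \in \Gamma_{ij}$ of $\mathbb{G}$. On the target side, elements of $\tilde{\mathfrak{h}}$ split as a pair (matrix component, derivation component); the Lie subring $\tilde{\mathbf{L}}$ is generated as a $\C[\Gamma]$-module by pure derivation elements (trivial matrix part) and pure matrix elements (trivial derivation part) whose non-zero blocks are indexed by pairs $(i,j)$. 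I would define $\Upsilon$ by sending each infinitesimal $K$-generator to the pure derivation element of $\tilde{\mathfrak{h}}$ attached to $\gamma$, and each infinitesimal $S$-generator to the pure matrix element whose only non-zero block is the one dictated by the source and target of $a \in \Gamma_{ij}$.

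The next step is to extend $\Upsilon$ $\C[\Gamma]$-linearly. This requires checking that the $\C[\Gamma]$-action on $\mathbf{L}_\Gamma$ by multiplication by the generators $z^{\gamma}$ of the torus part of $\C[\mathbb{G}]$ matches the $\C[\Gamma]$-module structure on $\tilde{\mathbf{L}}$ coming from multiplication by the holomorphic monomials on the dual torus fibers of $\check{X}$. Once the grading conventions are aligned — using the identification between $\Gamma$ and the lattice $\Lambda$ underlying the extended tropical vertex — this compatibility is essentially tautological.

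The core of the theorem is the Lie bracket compatibility, which splits into three cases. For the $[K,K]$ bracket one reduces to the already-established fact that pure derivations in the tropical vertex Lie algebra bracket via the Dirac/skew pairing, matching exactly the commutator of Kontsevich--Soibelman infinitesimal $K$-generators on $\C[\mathbb{G}]$. For the $[S,S]$ bracket the twist in the Lie bracket of $\tilde{\mathfrak{h}}$ collapses to an ordinary matrix commutator (both factors have trivial derivation component), and this must reproduce the Cecotti--Vafa braiding-type identities on the groupoid algebra: the torsor structure of $\Gamma_{ij}$ guarantees that only composable morphisms $a\in\Gamma_{ij}$, $b\in\Gamma_{jk}$ contribute, which is precisely what the block structure of the matrix part of $\tilde{\mathfrak{h}}$ enforces.

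The main obstacle is the mixed $[S,K]$ bracket, which is where the genuine $2d$-$4d$ interaction enters. Here the twist in the Lie bracket of $\tilde{\mathfrak{h}}$ — coupling the derivation action on holomorphic monomials with matrix multiplication — must reproduce the action of a type $K$ derivation on a $\Gamma_{ij}$-twisted sector of $\C[\mathbb{G}]$, weighted by the pairing between $\gamma \in \Gamma$ and the torsor label $a \in \Gamma_{ij}$. The assumption on the BPS spectrum is used precisely at this point: it ensures that the two pairings (the one implicit in the twisted bracket of $\tilde{\mathfrak{h}}$ and the one coming from the $\Gamma$-torsor structure on $\Gamma_{ij}$) agree up to a fixed sign convention, and that only finitely many morphisms $a$ contribute so that the commutator remains inside $\tilde{\mathbf{L}}$. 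Once all three bracket cases are verified on generators, $\C[\Gamma]$-linearity and the Jacobi identity propagate the compatibility to arbitrary elements of $\mathbf{L}_\Gamma$, producing the desired homomorphism $\Upsilon$.
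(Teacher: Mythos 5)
Your overall strategy matches the paper's: define $\Upsilon$ on the two families of generators ($K$-type derivations $\mathfrak{d}_\gamma$ mapped to pure derivation elements, $S$-type derivations $\mathfrak{d}_{\gamma_{ij}}$ mapped to elementary-matrix elements with block $(i,j)$), extend $\C[\Gamma]$-linearly, and verify bracket compatibility in the three cases $[S,S]$, $[S,K]$, $[K,K]$. That is exactly what the paper does, with $\Upsilon(X_{\gamma}\mathfrak{d}_{\gamma_{ij}})=\mathfrak{w}^{\gamma}(-E_{ij}\mathfrak{w}^{m(\gamma_{ij})},0)$ and $\Upsilon(X_{\gamma'}\mathfrak{d}_\gamma)=\mathfrak{w}^{\gamma'}(0,\Omega(\gamma)\mathfrak{w}^{\gamma}\partial_{n_\gamma})$.

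However, your account of where the BPS assumption enters is not accurate, and this would surface as a real gap if you tried to carry out the verification. The paper's assumption is the explicit factorization $\omega(\gamma,a)=\Omega(\gamma)\langle a,n_\gamma\rangle$, i.e.\ the $2d$-$4d$ pairing $\omega$ splits into a multiplicity $\Omega(\gamma)$ times the standard $\Lambda\times\Lambda^*$ pairing against the primitive orthogonal vector $n_\gamma$. You claim this assumption is used ``precisely'' at the mixed $[S,K]$ bracket, and that the $[K,K]$ case ``reduces to the already-established fact'' about the tropical vertex. This is not the case: in the $[K,K]$ computation the left-hand side produces coefficients $\omega(\gamma',\gamma''+\gamma''')$ and $\omega(\gamma''',\gamma+\gamma')$, while the right-hand side (the $\tilde{\mathfrak{h}}$ bracket) produces $\Omega(\gamma')\Omega(\gamma''')\langle\gamma''+\gamma''',n_{\gamma'}\rangle$ etc.\ --- so the factorization of $\omega$ is needed there just as much as in the mixed case. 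Without it, the two brackets simply do not agree on $K$-type generators. You also introduce a finiteness condition on contributing morphisms that plays no role in the paper's argument and is not what the assumption buys you; the module $\tilde{\mathbf{L}}$ is closed under the bracket by the explicit formula (Lemma on the $\C[\Lambda]$-module structure), not by a finiteness constraint.
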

This result shows that a consistent scattering diagram with values in (the formal group of) $\tilde{\mathbf{L}}$ is the same as a $2d$-$4d$ wall-crossing formula. Thus, applying our main construction with suitable input data, we can recover a large class of WCFs for coupled $2d$-$4d$ systems from the deformation theory of holomorphic pairs $(\check{X}, E)$.  
\\

In \cite{GPS} the authors show that computing commutators in the tropical vertex group allows one to compute genus zero Gromov--Witten invariants for weighted projective surfaces. Recently, Bousseau defines the \textit{quantum tropical vertex} \cite{Pierrick} and he shows that higher genus invariants (with insertion of Lambda classes) can be computed from commutators in the quantum tropical vertex. 
In the spirit of the previous works, we show that genus zero, relative Gromov--Witten invariants for some toric surfaces appear in the matrix component of the automorphisms of consistent scattering diagrams in $\tilde{\mathbb{V}}$. Let $\mathbf{m}=\left((-1,0), (0,-1), (a,b)\right)\in\Lambda^3$ and let $\overline{Y}_{\mathbf{m}}$ be the toric surface associated to the complete fan generated by $(-1,0), (0,-1), (a,b)$, where $(a,b)$ is a primitive vector. In section \ref{sec:toric GW} we define relative Gromov--Witten invariants $N_{0,{\mathbf{w}}}(\overline{Y}_{\mathbf{m}})$ counting curves of class $\beta_{\mathbf{w}}$ with tangency conditions at the boundary divisors, specified by the vector $\mathbf{w}\in\Lambda^s$ for some positive integer $s$. Then we consider the blow-up surface $Y_{\mathbf{m}}$ along a finite number of points on the toric boundary divisors associated to $(-1,0)$ and $(0,-1)$. Let $N_{0,{\mathbf{P}}}(Y_{\mathbf{m}})$ be the Gromov--Witten invariants with full tangency at a point on the strict transform of the boundary divisor. Then our main result is the following:    

 
\begin{theorem}[ Theorem \ref{thm:m_out}]\label{thm:main4}
Let $\mathfrak{D}$ be a standard scattering diagram in $\tilde{\mathbb{V}}$ which consists of n initial walls. Assuming the matrix contributions of the initial scattering diagram are all commuting, then the automorphism associated to a ray $(a,b)$ in the consistent scattering diagram $\mathfrak{D}^{\infty}$ is explicitly determined by $N_{0,{\mathbf{P}}}(Y_{\mathbf{m}})$ and $N_{0,\mathbf{w}}(\overline{Y}_{\mathbf{m}})$.
\end{theorem}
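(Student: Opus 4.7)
The plan is to follow the strategy of Gross--Pandharipande--Siebert \cite{GPS} (and its quantum generalisation by Bousseau \cite{Pierrick}), using Theorem \ref{thm:main1} together with the structure of the extended tropical vertex Lie algebra $\tilde{\mathfrak{h}}$ to reduce the computation of the automorphism attached to the outgoing ray $(a,b)$ to two essentially decoupled enumerative problems: one for the ``pure derivation'' part and one for the ``matrix'' part, the commuting hypothesis being what allows these two pieces to be disentangled cleanly.

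First I would reduce to the two-wall case. Given $n$ initial walls, one applies a Kontsevich--Soibelman order-by-order perturbation to $\mathfrak{D}^\infty$, organising contributions according to the monomial valuation, so that at each order only pairs of already-present rays interact to produce new rays. Theorem \ref{thm:main1} guarantees that each such binary interaction produces a consistent wall in $\tilde{\mathbb{V}}$, and hence the automorphism attached to a fixed outgoing direction $(a,b)$ can be written as an ordered product of contributions coming from trivalent tropical trees whose unbounded edges are labelled by the initial walls and whose root has direction $(a,b)$. This reduction adapts from the tropical vertex $\mathbb{V}$ to its extension $\tilde{\mathbb{V}}$ since the relevant formal group is still pro-unipotent.

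Next I would split the logarithm of the outgoing automorphism using the natural decomposition of $\tilde{\mathfrak{h}}$ into its derivation-only subalgebra $\mathfrak{h}\subset\tilde{\mathfrak{h}}$ and its matrix direction. Restricted to $\mathfrak{h}$ the computation is exactly the one solved by Gross--Pandharipande--Siebert: the coefficients on the ray $(a,b)$ are given by the genus-zero relative invariants $N_{0,\mathbf{P}}(Y_{\mathbf{m}})$ of the blow-up, obtained from Li's degeneration formula applied to $\partial X^{\mathrm{toric},o}$. For the matrix component, the assumption that the initial matrices commute has two immediate consequences: pure matrix--matrix brackets vanish in $\tilde{\mathfrak{h}}$, while the mixed brackets $[H,\partial]$ between a matrix generator $H$ and a derivation generator $\partial$ are governed by the twist built into the bracket of $\tilde{\mathfrak{h}}$ modelled on deformations of $(\check X, E)$. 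Iterating the binary collisions, the matrix coefficient on the ray $(a,b)$ is therefore a sum over ``coloured'' trivalent tropical trees carrying exactly one distinguished matrix edge, the remaining structure being a purely derivation tropical tree of the type counted by \cite{GPS}.

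Finally I would identify these coloured tree counts with the relative Gromov--Witten invariants $N_{0,\mathbf{w}}(\overline{Y}_{\mathbf{m}})$ of the complete toric surface. Because the distinguished matrix edge is allowed to meet each of the three unbounded directions in $\mathbf{m}=((-1,0),(0,-1),(a,b))$, a Mikhalkin/Nishinou--Siebert style correspondence for $\overline{Y}_{\mathbf{m}}$ matches the coloured tropical count in direction $(a,b)$ with the count of rational curves in $\overline{Y}_{\mathbf{m}}$ of class $\beta_{\mathbf{w}}$ and tangency profile $\mathbf{w}$ along the full toric boundary, i.e.\ precisely $N_{0,\mathbf{w}}(\overline{Y}_{\mathbf{m}})$. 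Combined with the GPS identification of the derivation part, this yields the explicit formula claimed in the theorem. The main obstacle I anticipate is the last step: one has to verify carefully that the vertex multiplicities produced by the twisted bracket of $\tilde{\mathfrak{h}}$, read off from the semiclassical limit of the Maurer--Cartan solution of Theorem \ref{thm:main1}, match on the nose with the tropical multiplicities for the toric (as opposed to blown-up) geometry, and that no contribution is double-counted or lost in passing from the formal/asymptotic expansion to the enumerative count.
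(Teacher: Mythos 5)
Your plan has the right global shape and correctly identifies the two key mechanisms: (i) the derivation component of $\log\overrightarrow{f}_{\mathfrak{d}}$ reproduces the Gross--Pandharipande--Siebert generating function of $N_{0,\mathbf{P}}(Y_{\mathbf{m}})$ unchanged, and (ii) the commuting hypothesis kills all $[A_i,A_j]$ terms, so that the matrix component is linear in the initial matrices and is governed by ``coloured'' tropical trees with a single distinguished leaf carrying the matrix datum, whose count matches the toric invariant $N_{0,\mathbf{w}}(\overline{Y}_{\mathbf{m}})$. This agrees in substance with the paper's proof, which factors as Theorem \ref{thm:2.8} (tropical-tree expansion of $\log\overrightarrow{f}_{\mathfrak{d}}$ in the extended vertex) composed with Proposition \ref{thm:trop-toric} (tropical $\leftrightarrow$ toric log GW) and Proposition \ref{thm:deg} (degeneration formula giving $N_{0,\mathbf{P}}(Y_{\mathbf{m}})$).

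The one place where you deviate, and where your argument as written would not quite go through, is the reduction step. You invoke Theorem \ref{thm:main1} to justify that each binary collision in the perturbed diagram produces the correct new wall in $\tilde{\mathbb{V}}$. But Theorem \ref{thm:main1} is a statement about an \emph{initial} two-wall diagram with walls of a special form, constructed from Maurer--Cartan solutions; it does not directly give the commutator formula for two generic walls that appear deeper in the iteration (where the attached elements of $\tilde{\mathfrak{h}}$ are already products of matrix and derivation data accumulated from earlier collisions), nor is it set up for $n>2$ initial walls. The paper's Chapter~\ref{cha:enumertaive geom} is deliberately purely algebraic and does not rely on the analytic Chapter~\ref{sec:MC and scattering}: the pairwise collision rule is Lemma \ref{lem:1.9}, a direct BCH computation over $R_2$ valid for arbitrary walls of the form $\left(1+Az^{lm},1+cz^{lm}\right)$, and the reduction to pairwise collisions is the GPS deformation technique over the nilpotent ring $\tilde{R}_N$, where the factorisation \eqref{eq:f_fact} and the genericity of the shifted lines $\xi_{iJl}+m_i\R$ ensure that only transverse two-wall crossings ever occur. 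Replacing your appeal to Theorem \ref{thm:main1} with Lemma \ref{lem:1.9} and the deformation-over-$\tilde{R}_N$ argument closes the gap; from there your identification of the derivation part with $N_{0,\mathbf{P}}(Y_{\mathbf{m}})$ and the matrix part with $N_{0,\mathbf{w}}(\overline{Y}_{\mathbf{m}})$ is exactly what the paper does, down to the multiplicity bookkeeping in Theorem \ref{thm:2.4} that you flag as the delicate point.
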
 

Furthermore we conjecture that the automorphism associated to a ray $(a,b)$ in the consistent scattering diagram $\mathfrak{D}^{\infty}$ is a generating function of  $N_{0,\mathbf{w}}(\overline{Y}_{\mathbf{m}})$ (Conjecture \ref{conj}), i.e. it determines them completely. We show this in some special cases (see Theorem \ref{thm:partial}). Thus scattering diagrams in the extended tropical vertex group are closely related to both relative and absolute invariants. It seems that this may be compared with a much more general expectation in the physical literature that ``holomorphic Chern--Simons/BCOV'' theories, coupling deformations of a complex structure to an auxiliary bundle, contain the information of certain open/closed Gromov--Witten invariants \cite{open_closed}, \cite{CL12}.


\section{Plan of the thesis}

The first chapter is organized as follows: in Section \ref{sec:deformation} we provide some background on deformations of complex manifolds and of holomorphic pairs in terms of differential graded Lie algebras. Then in Section \ref{sec:scattering} we recall definitions and properties of scattering diagrams.  
In Chapter \ref{sec:MC and scattering} we introduce the tools which finally lead to the proof of Theorem \ref{thm:main1} in Section \ref{sec:two_walls}. 
\\
Then in Chapter \ref{sec:WCF} we recall the setting of wall-crossing formulas in coupled $2d$-$4d$ systems and prove Theorem \ref{thm:main3}. We also include two examples to show how the correspondence works explicitly. 
\\
Finally in Chapter \ref{cha:enumertaive geom} we exploit the relation between scattering diagrams in the extended tropical vertex group and relative Gromov--Witten invariants. In particular, in Section \ref{sec:tropical curve count} we give a first interpretation of commutators in the extended tropical vertex in terms of tropical curves counting. Then in Section \ref{sec:GW} we review definitions and properties of relative Gromov--Witten invariants for toric surfaces and for log Calabi--Yau surfaces. Finally the proof of the Theorem \ref{thm:main4} is given in Section \ref{sec:GW and commutators}.   

\chapter{Preliminaries}



\section{Deformations of complex manifolds and holomorphic pairs}
\label{sec:deformation}

In this section we review some background materials about infinitesimal deformations of complex manifolds and of holomorphic pairs with a differential geometric approach. We try to keep the material self-consistent and we refer the reader to Chapter 6 of Huybrechts's book \cite{Huybrechts}, Manetti's lectures note \cite{Lecture_Manetti} and Chan--Suen's paper \cite{defpair} for more detailed and complete discussions. 

Classically infinitesimal deformations of compact complex manifolds were studied as small parametric variations of their complex structure. Let $B\subset\C^m$ be an open subset which contains the origin and let $\check{X}$ be a compact complex manifold of dimension $n$.  

\begin{definition} A deformation of $\check{X}$ is a proper holomorphic submersion $\pi\colon\X\to B$ such that:
\begin{itemize}
\item $\X$ is a complex manifold;
\item $\pi^{-1}(0)=\check{X}$;
\item $\pi^{-1}(t)=:\check{X}_t$ is a compact complex manifold.
\end{itemize}
\end{definition}
Two deformations $\pi\colon\X\to B$ and $\pi'\colon\X'\to B$ over the same base $B$ are isomorphic if and only if there exists a holomorphic morphism $f\colon\X\to X'$ which commutes with $\pi, \pi'$. A deformation $\X\to B$ is said \textit{trivial} if it is isomorphic to the product $\check{X}\times B\to B$. If $\X\to B$ is a deformation of $\check{X}$, then the fibers $\check{X}_t$ are diffeomorphic to $\check{X}$ (in general they are not biholomorphic, see Ehresmann's theorem, Proposition 6.2.2 \cite{Huybrechts}). 
 
A complex structure on $\check{X}$ is an integrable almost complex structure $J\in\End(T\check{X})$ such that $J^2=\id$ and the holomorphic tangent bundle $T^{1,0}\check{X}\subset T_{\C}\check{X}\defeq T\check{X}\otimes_\R\C$ is an integrable distribution $[T^{1,0}\check{X}, T^{1,0}\check{X}]\subset T^{1,0}\check{X}$, where $[-,-]$ is the Lie bracket on vector fields in $T_\C\check{X}$. Analogously a complex structure on $\check{X}_t$ is an integrable almost complex structure $J_t$, and we denote by $T^{1,0}_t\check{X}$ ($T^{0,1}_t\check{X}$) the holomorphic (antiholomorphic) tangent bundle with respect to the splitting induced by $J_t$. If $t$ is small enough, then the datum of $J_t$ is equivalent to the datum of $\phi(t)\colon T^{0,1}\check{X}\to T^{1,0}\check{X}$ with $v+\phi(t)v\in T^{0,1}_t\check{X}$  such that $\phi(0)=0$\footnote{Indeed if $J_t$ is know, then $\phi(t)=-pr_{T^{1,0}_t\check{X}}\circ j$ where $pr_{T^{1,0}}\colon T^{0,1}\to T_\C\check{X}$ is the projection and $j\colon T^{0,1}\hookrightarrow T_\C\check{X}$ is the inclusion. Conversely if $\phi(t)$ is given, then $T^{1,0}_t\defeq\left(\id+\phi(t)\right)T^{0,1}\check{X}$.}. In addition, the integrability of $J_t$ is equivalent to the so called Maurer--Cartan equation, namely 
\begin{equation}
\label{eq:MC X}
\bar{\partial}\phi(t)+\frac{1}{2}[\phi(t),\phi(t)]=0
\end{equation} 
where $\bar{\partial}$ is the Dolbeaut differential with respect to the complex structure $J$, and $[-,-]$ is the standard Lie bracket on $T_\C\check{X}$ (see Theorem 1.1 Chapter 4 \cite{Chap4}). 


Let us assume $\phi(t)$ has a formal power expansion in $t\in B_\varepsilon\subset\C^m$, $\phi(t)=\sum_{k\geq 1}\phi_kt^k$, where $t^k$ is a homogeneous polynomial of degree $k$ in $t_1,...,t_m$: the Maurer--Cartan equation can be written order by order in $t$ as follows:
\begin{equation}\label{eq:hierarchy}
\begin{split}
&\bar{\partial}\phi_1=0 \\
&\bar{\partial}\phi_2+\frac{1}{2}[\phi_1,\phi_1]=0\\
&\vdots \\
&\bar{\partial}\phi_j+\frac{1}{2}\sum_{i=1}^{j-1}[\phi_i,\phi_{j-i}]=0
\end{split}
\end{equation} 
In particular, the first equation says that $\phi_1$ is a $\bar{\partial}$-closed $1$-form, hence it defines a cohomology class $[\phi_1]\in H^1(\check{X},T^{1,0}\check{X})$.

Let $\X\to B_\varepsilon$ and $\X'\to B_\varepsilon$ be two isomorphic deformations of $\check{X}$ and let $f\colon\X\to\X'$ be a holomorphic morphism such that $f\vert_{\check{X}}=\id_{\check{X}}$. In particular, for $t$ small enough, we denote by $f_t$ the one parameter family of diffeomorphisms of $\check{X}$ such that $f_t\colon X_t\to X_t'$ with $f_0=\id$. If the complex structures of $X_t$, $X_t'$ are $J_t$, $J_t'$ respectively, then $J_t'\circ f_t=f_t\circ J_t$. Let $\phi(t),\phi(t)'\in\Omega_{0,1}(\check{X},T^{1,0}\check{X})$ be such that $J_t=\bar{\partial}+\phi(t)\lrcorner\partial$ and $J_t'=\bar{\partial}+\phi(t)'\lrcorner\partial$. Then the difference between two isomorphic first order deformations is
\begin{align*}
\frac{d}{dt}\left(J_t'-J_t\right)\big\vert_{t=0}=\frac{d}{dt}\left(\phi(t)'-\phi(t)\right)\big\vert_{t=0}\lrcorner\partial =\left(\phi_1'-\phi_1\right)\lrcorner\partial.
\end{align*} 
In addition,  
\begin{align*}
\frac{d}{dt}\left(J_t'\circ f_t\right)\big\vert_{t=0}=\frac{d}{dt}\left(f_t\circ J_t\right)\big\vert_{t=0}
\end{align*}
which is equivalent to 
\begin{align*}
\frac{d}{dt}\left(J_t'\right)\Big\vert_{t=0}\circ f_0+J_0'\circ\frac{d}{dt}\left(f_t\right)\Big\vert_{t=0}=\frac{d}{dt}\left(f_t\right)\Big\vert_{t=0}\circ J_0+f_0\circ\frac{d}{dt}\left( J_t\right)\Big\vert_{t=0}\\
\frac{d}{dt}\left(J_t'\right)\Big\vert_{t=0}+\bar{\partial}\circ\frac{d}{dt}\left(f_t\right)\Big\vert_{t=0}=\frac{d}{dt}\left(f_t\right)\Big\vert_{t=0}\circ \bar{\partial}+\frac{d}{dt}\left( J_t\right)\Big\vert_{t=0}.
\end{align*}
Since $\frac{d}{dt}\left(f_t\right)\big\vert_{t=0}\in\Omega^0(\check{X},T\check{X})$, we define $h\defeq\left(\frac{d f_t}{dt}\big\vert_{t=0}\right)^{1,0}\in\Omega^0(\check{X},T^{1,0}\check{X})$ and from the previous computations we get 
\begin{align*}
\left(\phi_1'-\phi_1\right)T^{1,0}\check{X}=\left(\frac{d}{dt}\left(J_t'-J_t\right)\big\vert_{t=0}\right)T^{1,0}\check{X}=\left(\frac{d}{dt}\left(f_t\right)\Big\vert_{t=0}\circ \bar{\partial}-\bar{\partial}\circ\frac{d}{dt}\left(f_t\right)\Big\vert_{t=0}\right)T^{1,0}\check{X}=\bar{\partial}h T^{1,0}\check{X}.
\end{align*}
We have proved the following proposition:
\begin{prop}
Let $\X\to B_\varepsilon$ and $\X'\to B_\varepsilon$ be two isomorphic deformations of $\check{X}$. Then their first order deformations differ from a $\bar{\partial}$-exact form. 
\end{prop}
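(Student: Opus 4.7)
My plan is to translate the hypothesis of isomorphic deformations into a pointwise intertwining relation between the two complex structures and then differentiate it at $t = 0$ to extract the $\bar\partial$-exactness. Concretely, from the isomorphism $f\colon \X \to \X'$ restricting to the identity on $\check X$, I would extract a smooth one-parameter family $f_t\colon \check X \to \check X$ (as a diffeomorphism of the underlying smooth manifold), with $f_0 = \id$, such that $f_t$ is holomorphic from $(\check X, J_t)$ to $(\check X, J_t')$; equivalently, $J_t' \circ f_t = f_t \circ J_t$ on $T_\C \check X$.

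Next, I would differentiate this identity at $t = 0$. Using $J_0 = J_0' = J$, $f_0 = \id$, and the Leibniz rule, the derivatives of $J_t$ and $J_t'$ on either side produce the infinitesimal relation
\[\dot J' - \dot J \;=\; J \circ \dot f_0 - \dot f_0 \circ J,\]
with $\dot f_0 \defeq \partial_t f_t\big|_{t=0} \in \Omega^0(\check X, T_\C \check X)$. In the Beltrami-differential language used in the excerpt, $\dot J$ corresponds to $\phi_1\lrcorner\partial$ and $\dot J'$ to $\phi_1'\lrcorner\partial$ in $\Omega^{0,1}(\check X, T^{1,0}\check X)$, so the above identity becomes a statement in that space.

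Finally, decomposing $\dot f_0 = h + \bar h$ into $(1,0)$ and $(0,1)$ parts relative to $J$, I would restrict the identity to $T^{0,1}\check X$ and project its right-hand side onto $T^{1,0}\check X$; only the $(1,0)$ component $h$ contributes, and the surviving expression is exactly $\bar\partial h$. Hence $\phi_1' - \phi_1 = \bar\partial h$ in $\Omega^{0,1}(\check X, T^{1,0}\check X)$, which is the claimed $\bar\partial$-exactness; in particular $[\phi_1'] = [\phi_1]$ in $H^1(\check X, T^{1,0}\check X)$. The only subtle point is this last bookkeeping: verifying that the antiholomorphic part $\bar h$ drops out under restriction to $T^{0,1}\check X$ and projection onto $T^{1,0}\check X$, so that only $h$ survives and produces the Dolbeault differential.
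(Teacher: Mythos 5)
Your argument is the paper's proof: extract the one-parameter family $f_t$ from the isomorphism, differentiate the intertwining identity $J_t'\circ f_t = f_t\circ J_t$ at $t=0$, take the $(1,0)$-component $h=\big(\frac{d f_t}{dt}\big\vert_{t=0}\big)^{1,0}$, and read off $\phi_1'-\phi_1=\bar{\partial}h$. The only discrepancy is a sign in your displayed relation---differentiating actually yields $\dot{J}'-\dot{J}=\dot{f}_0\circ J-J\circ\dot{f}_0$ rather than its negative---but this is harmless since replacing $h$ by $-h$ restores agreement and does not affect the $\bar{\partial}$-exactness claim.
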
 
Hence first order deformations are characterized as follows:
\begin{prop}
There exists a natural bijection between first order deformations of a compact complex manifold $\check{X}$ (up to isomorphism) and $H^1(\check{X},T^{1,0}\check{X})$.  
\end{prop}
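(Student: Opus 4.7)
The plan is to package what has already been established in the excerpt into the two halves of a bijection, and then supply the one piece that is still missing, namely surjectivity.

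First I would fix conventions: by a \emph{first order deformation} of $\check{X}$ I mean an equivalence class of deformations $\X\to B_\varepsilon$ modulo terms of order $\geq 2$ in $t$, equivalently a class $[\phi_1]$ where $\phi_1\in\Omega^{0,1}(\check{X},T^{1,0}\check{X})$ is the linear Taylor coefficient of the family $\phi(t)$ encoding $J_t$. With this convention the assignment
\[
\Psi\colon [\X\to B_\varepsilon]\longmapsto [\phi_1]\in H^1(\check{X},T^{1,0}\check{X})
\]
is the natural candidate map. Well-definedness has essentially been proved already: the hierarchy~\eqref{eq:hierarchy} forces $\bar\partial\phi_1=0$, so $\phi_1$ represents a Dolbeault class, and the preceding proposition shows that isomorphic deformations produce representatives that differ by a $\bar\partial$-exact term. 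So $\Psi$ is well-defined on equivalence classes.

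For injectivity, suppose two first order deformations yield $\phi_1,\phi_1'$ with $\phi_1'-\phi_1=\bar\partial h$ for some $h\in\Omega^0(\check{X},T^{1,0}\check{X})$. I would run the computation of the preceding proposition in reverse: consider the one-parameter family of diffeomorphisms $f_t$ of $\check{X}$ obtained by flowing along the real vector field $h+\bar h\in\Omega^0(\check{X},T_\C\check{X})$, truncated at first order in $t$. Differentiating $J_t'\circ f_t=f_t\circ J_t$ at $t=0$ reproduces exactly the identity $\phi_1'-\phi_1=\bar\partial h$, so $f_t$ provides an isomorphism of the two first order deformations. Here the only subtlety is that one only needs the flow up to order~$t$, which is automatic, so compactness of $\check{X}$ is not even required for this step but is used implicitly to ensure $h$ integrates to an honest diffeomorphism.

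For surjectivity, given any class $[\alpha]\in H^1(\check{X},T^{1,0}\check{X})$ choose a $\bar\partial$-closed representative $\alpha$ and set $\phi(t)\defeq t\alpha$. At order $t$ the Maurer--Cartan equation~\eqref{eq:MC X} reduces to $\bar\partial\alpha=0$, which holds by construction; the quadratic term $\tfrac12[\alpha,\alpha]$ is of order $t^2$ and therefore irrelevant for a first order deformation. Thus $\phi(t)$ defines a first order deformation whose image under $\Psi$ is $[\alpha]$.

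I expect the main obstacle to be largely bookkeeping rather than a genuine mathematical difficulty: one must be careful to state once and for all what is meant by ``first order deformation'' (equivalently, what the base of the truncated family is, and what isomorphism of such truncated families means), so that the integrability obstructions at orders $\geq 2$ do not enter and the naive construction $\phi(t)=t\alpha$ in the surjectivity step is admissible. Once this framing is fixed, each of the three steps above follows directly from the computations already carried out in the excerpt.
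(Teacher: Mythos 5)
The paper does not actually write out a proof of this proposition: after establishing that isomorphic deformations yield $\bar{\partial}$-cohomologous first order parts, it simply states the bijection as a consequence (``Hence first order deformations are characterized as follows''). What you have written is the proof the paper omits, and it is essentially correct. Your well-definedness step and your use of the preceding proposition match what the paper is tacitly invoking; your injectivity step (reversing the differentiation of $J_t'\circ f_t = f_t\circ J_t$, integrating $h+\bar h$ to a flow) and your surjectivity step ($\phi(t)=t\alpha$) supply the two directions the paper leaves unspoken.

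The one genuinely delicate point is the one you flag yourself: the paper's definition of a deformation is an honest proper holomorphic submersion over an open $B\subset\C^m$, and under that reading surjectivity is \emph{not} automatic --- a $\bar{\partial}$-closed $\alpha$ need not be the linearization of an integrable family unless the Maurer--Cartan equation can be solved to all orders, which is exactly the content of the Kuranishi and Bogomolov--Tian--Todorov discussion that follows in the paper. Your argument is valid only under the standard convention that ``first order deformation'' means a deformation over $\Spec\C[t]/(t^2)$, where the quadratic term $\tfrac12[\alpha,\alpha]$ indeed vanishes for degree reasons. You say this explicitly at the end, which is the right move: once that convention is fixed, your three steps are complete and your argument is sound. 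Since the paper never states this convention either, your proof is not merely filling in routine steps but also making a necessary clarification that the paper glosses over.
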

In particular, if $H^1(\check{X},T^{1,0}\check{X})=0$ then every deformation is trivial. 

Let us now study the existence of solution of the Maurer--Cartan equation. Let $g$ be a hermitian metric on $\check{X}$, define the formal adjoint $\bar{\partial}^*$ of the Dolbeaut operator $\bar{\partial}$ with respect to the metric $g$ and the Laplace operator $\Delta_{\bar{\partial}}\defeq\bar{\partial}\bar{\partial}^*+\bar{\partial}^*\bar{\partial}$. Then we denote the set of harmonic form as $\mathbb{H}^p(\check{X})\defeq\lbrace \alpha\in\Omega^p(\check{X})\vert \Delta_{\bar{\partial}}\alpha=0\rbrace$ and let us choose a basis $\lbrace\eta_1,...,\eta_m\rbrace$ of $\mathbb{H}^1(\check{X})$, $m=\dim_\C\mathbb{H}^1(\check{X})$. Let $L^p$ be the completion of $\Omega^p(\check{X},T^{1,0}\check{X})$ with respect to the metric $g$ and recall that the Green operator $G\colon L^q(\check{X})\to L^{q}(\check{X})$ is a linear operator such that \[\id=\mathbf{H}+ \Delta_{\bar{\partial}}G \] where $\mathbf{H}\colon L^{q}(\check{X})\to\mathbb{H}^q(\check{X})$ is the harmonic projector. 
\begin{lemma}[Kuranishi's method]
Let $\eta=\sum_{j=1}^m\eta_jt_j$ be a harmonic form $\eta\in\mathbb{H}^1(\check{X})$. Then there exists a unique $\phi(t)\in\Omega^{0,1}(\check{X},T^{1,0}\check{X})$ such that 
\begin{align}\label{eq:Kur phi}
\phi(t)=\eta -\frac{1}{2}\bar{\partial}^*G([\phi(t),\phi(t)])
\end{align}
and for $|t|$ small enough it is holomorphic in $t$.

In addition, such $\phi(t)$ is a solution of the Maurer--Cartan equation \eqref{eq:MC X} if and only if $\mathbf{H}([\phi(t),\phi(t)])=0$.  
\end{lemma}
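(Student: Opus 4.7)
The plan is to attack the three assertions in the order: formal solvability, convergence, and the Maurer--Cartan criterion.

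First I would build $\phi(t)$ as a formal power series $\phi(t) = \sum_{k \geq 1} \phi_k$, where each $\phi_k$ is homogeneous of degree $k$ in $t=(t_1,\dots,t_m)$ with values in $\Omega^{0,1}(\check{X}, T^{1,0}\check{X})$. Substituting into \eqref{eq:Kur phi} and comparing terms of degree $k$ gives
\begin{equation*}
\phi_1 = \eta, \qquad \phi_k = -\tfrac{1}{2}\,\bar{\partial}^{\,*} G\!\left(\sum_{i+j=k} [\phi_i, \phi_j]\right) \quad \text{for } k \geq 2.
\end{equation*}
This recursion manifestly determines every coefficient from the previous ones, which simultaneously proves formal existence and formal uniqueness. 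In particular $\phi(t)$ is automatically polynomial, hence holomorphic, in $t$ order by order.

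Next I would upgrade the formal solution to an actual holomorphic map on a small polydisc $B_\varepsilon$. The standard argument uses H\"older norms $\|\cdot\|_{k,\alpha}$ on $\check{X}$: the operator $\bar{\partial}^{\,*} G$ gains one derivative by elliptic regularity, so there is a constant $C>0$ with $\|\bar{\partial}^{\,*} G \psi\|_{k+1,\alpha} \leq C \|\psi\|_{k,\alpha}$, and the bracket $[-,-]$ satisfies the standard product estimate $\|[\alpha,\beta]\|_{k,\alpha} \leq C' \|\alpha\|_{k+1,\alpha}\|\beta\|_{k+1,\alpha}$. Feeding this into the recursion gives $\|\phi_k\|_{k+1,\alpha} \leq C'' \sum_{i+j=k}\|\phi_i\|_{i+1,\alpha}\|\phi_j\|_{j+1,\alpha}$, so the series $\sum_k \|\phi_k\|_{k+1,\alpha}|t|^k$ is majorised by a convergent solution of an algebraic equation of the form $A(s) = a\,s + b\,A(s)^2$. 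This forces convergence in $B_\varepsilon$ for $\varepsilon$ small enough; elliptic regularity then bootstraps smoothness.

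For the Maurer--Cartan criterion, write $\psi \defeq \bar{\partial}\phi(t) + \tfrac{1}{2}[\phi(t),\phi(t)]$. Applying $\bar{\partial}$ to \eqref{eq:Kur phi} and using $\bar{\partial}\eta = 0$, together with the Hodge decomposition $\id = \mathbf{H} + \bar{\partial}\bar{\partial}^{\,*} G + \bar{\partial}^{\,*} \bar{\partial} G$ on $(0,2)$-forms and the commutation of $G$ with $\bar{\partial}$, I get the identity
\begin{equation*}
\psi = \tfrac{1}{2}\,\mathbf{H}([\phi(t),\phi(t)]) + \tfrac{1}{2}\,\bar{\partial}^{\,*} G\,\bar{\partial}[\phi(t),\phi(t)].
\end{equation*}
The forward direction is now immediate: if $\psi = 0$, then $\bar{\partial}\phi$ is $\bar{\partial}$-exact so $\mathbf{H}(\bar{\partial}\phi)=0$, forcing $\mathbf{H}([\phi,\phi])=0$. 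For the converse, assume $\mathbf{H}([\phi(t),\phi(t)]) = 0$. Using the graded Leibniz rule $\bar{\partial}[\phi,\phi] = 2[\bar{\partial}\phi,\phi]$ together with the Jacobi identity $[[\phi,\phi],\phi]=0$, one rewrites $\bar{\partial}[\phi,\phi] = 2[\psi,\phi]$, which plugs back into the above display as the fixed-point equation $\psi = \bar{\partial}^{\,*} G [\psi,\phi(t)]$. For $|t|$ small enough, $\phi(t)$ is small in the H\"older norm above, so the operator $\chi \mapsto \bar{\partial}^{\,*} G[\chi, \phi(t)]$ is a contraction on a ball and its only fixed point is $\psi=0$.

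The main obstacle is the convergence step: the recursion is quadratic, so naive bounds blow up, and one must work in a scale of norms adapted to the elliptic regularisation of $\bar{\partial}^{\,*} G$ and control the resulting majorant carefully. The Maurer--Cartan part, by contrast, is essentially algebraic once the Hodge decomposition and Jacobi identity are exploited, modulo the same smallness estimates used in step two to turn the fixed-point equation for $\psi$ into a contraction.
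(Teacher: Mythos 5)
Your proof is correct and follows essentially the same route as the paper: derive a relation for $\psi \defeq \bar{\partial}\phi + \tfrac{1}{2}[\phi,\phi]$ by applying $\bar{\partial}$ to the fixed-point equation and invoking $\id = \mathbf{H} + \Delta_{\bar{\partial}}G$, reduce $\psi$ to the self-referential identity $\psi = \bar{\partial}^{*}G[\psi,\phi]$ via the graded Leibniz rule and Jacobi identity, and close with a smallness estimate in H\"older norms forcing $\psi = 0$. Two minor differences are worth flagging: your forward direction (read $\mathbf{H}([\phi,\phi]) = -2\mathbf{H}(\bar{\partial}\phi) = 0$ directly from MC) is cleaner than the paper's, which reproves the same thing by a less direct chain of equalities, and your formal-series-plus-majorant discussion of the existence and holomorphy of the fixed point spells out a step the paper simply defers to Kuranishi's original work.
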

\begin{proof}
Let $\phi(t)$ be a solution of \eqref{eq:Kur phi} and assume it is a solution of the Maurer--Cartan equation. Then by property of Green's operator $G$: 
\begin{align*}
[\phi(t),\phi(t)]=\mathbf{H}([\phi(t),\phi(t)])+\Delta_{\bar{\partial}}G([\phi(t),\phi(t)])
\end{align*} 
which is equivalent to
\begin{align*}
-2\bar{\partial}\phi(t)&=\mathbf{H}([\phi(t),\phi(t)]+\bar{\partial}^*\bar{\partial}G([\phi(t),\phi(t)])\bar{\partial}\bar{\partial}^*G([\phi(t),\phi(t)])\\
&=\mathbf{H}([\phi(t),\phi(t)]+\bar{\partial}^*G\bar{\partial}([\phi(t),\phi(t)])\bar{\partial}(\eta -\phi(t))\\
&=\mathbf{H}([\phi(t),\phi(t)]-2\bar{\partial}(\phi(t))
\end{align*}
hence $\mathbf{H}([\phi(t),\phi(t)])=0$. 

Conversely, assume $\phi(t)$ is a solution of \eqref{eq:Kur phi} and $\mathbf{H}([\phi(t),\phi(t)])=0$. Then 
\begin{align*}
\bar{\partial}\phi(t)&=\bar{\partial}(\eta)-\frac{1}{2}\bar{\partial}\bar{\partial}^*G([\phi(t),\phi(t)])\\
&=-\frac{1}{2}\Delta_{\bar{\partial}}G([\phi(t),\phi(t)])+\frac{1}{2}\bar{\partial}^*\bar{\partial}G([\phi(t),\phi(t)])\\
&=\frac{1}{2}\mathbf{H}([\phi(t),\phi(t)])-\frac{1}{2}[\phi(t),\phi(t)]+\frac{1}{2}\bar{\partial}^*G\bar{\partial}([\phi(t),\phi(t)])\\
&=-\frac{1}{2}[\phi(t),\phi(t)]+\bar{\partial}^*G([\bar{\partial}\phi(t),\phi(t)]) 
\end{align*}
and by Jacobi identity
\[
\bar{\partial}\phi(t)+\frac{1}{2}[\phi(t),\phi(t)]=\bar{\partial}^*G([\bar{\partial}\phi(t)+\frac{1}{2}[\phi(t),\phi(t)],\phi(t)]). 
\]  
Let $\psi(t)=\bar{\partial}\phi(t)\frac{1}{2}[\phi(t),\phi(t)]$ and let us introduce the Holder norm $|| \bullet ||_{k,\alpha}$ with respect to the metric $g$. From analytic estimates in the Holder norm (see Chapter 4, Proposition 2.2, Proposition 2.3, Proposition 2.4 \cite{Chap4}) it follows that 
\begin{align*}
||\psi(t)||_{k,\alpha}=||\bar{\partial}^*G([\psi(t),\phi(t)]) ||_{k,\alpha}\leq C_1 ||G([\psi(t),\phi(t)]) ||_{k+1,\alpha}\leq C_2 ||[\psi(t),\phi(t)]||_{k-1,\alpha}\\
\leq C_3 ||\psi(t)||_{k,\alpha}||\phi(t)||_{k,\alpha} 
\end{align*} 
and by choosing $t$ small enough such that $C_3||\phi(t)||_{k,\alpha}<1$ (Proposition 2.4 \cite{Chap4}), we get a contradiction unless $\psi(t)=0$.  

Existence and uniqueness of $\phi(t)$ solution of \eqref{eq:Kur phi} relies on implicit function theorem for Banach spaces and some analytic estimates in the Holder norm (we refer to \cite{Kuranishi}).  
\end{proof}
\begin{definition}
A deformation $\pi\colon\X\to B$ of a compact complex manifold $\check{X}$ is said \textit{complete }if any other deformation $\pi'\colon\X'\to B'$ is the pull-back under some $f\colon B'\to B$, namely $\X'=\X\times_{B}B'$.
Moreover if $df_0\colon T_{0'}B'\to T_0B$ is always unique, the deformation $\X\to B$ is called \textit{versal}. 
\end{definition}  
Deformations of $\check{X}$ are called \textit{unobstructed} if $\check{X}$ admits a versal deformation $\X\to B$ and $B$ is smooth. 
\begin{theorem}[\cite{Kuranishi}]
Any compact complex manifold admits a versal deformation. 
\end{theorem}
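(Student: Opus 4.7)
The plan is to leverage the Kuranishi lemma we just proved to produce the deformation and then to verify versality via an implicit function theorem argument in suitable Banach spaces.

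First I would fix a basis $\{\eta_1, \dots, \eta_m\}$ of $\mathbb{H}^1(\check{X})$ and apply the Kuranishi lemma to the harmonic form $\eta(t) = \sum_{j=1}^m \eta_j t_j$ for $t = (t_1, \dots, t_m)$ in a small ball $B_\varepsilon \subset \C^m \cong H^1(\check{X}, T^{1,0}\check{X})$. This produces a unique $\phi(t) \in \Omega^{0,1}(\check{X}, T^{1,0}\check{X})$ depending holomorphically on $t$ and satisfying the fixed-point equation \eqref{eq:Kur phi}. By the lemma, $\phi(t)$ is a Maurer--Cartan element precisely on the analytic subset
\[
B := \{\, t \in B_\varepsilon : \mathbf{H}([\phi(t), \phi(t)]) = 0 \,\},
\]
cut out by finitely many holomorphic equations valued in $\mathbb{H}^2(\check{X})$. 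This $B$ is the candidate base.

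Next I would build the total space $\X \to B$ by equipping the $C^\infty$ product $\check{X} \times B$ with the family of integrable almost complex structures $J_t = \bar{\partial} + \phi(t) \lrcorner \partial$ on the $\check{X}$-factor together with the standard complex structure on $B$. Integrability along the fibers follows from the Maurer--Cartan equation, and holomorphic dependence of $\phi(t)$ on $t$ makes $\X$ into a complex manifold; properness and submersivity of $\pi$ are immediate from the product description, while $\pi^{-1}(0) = \check{X}$ by construction since $\phi(0) = 0$. This yields the Kuranishi family.

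The hard part, and the real content of the theorem, is proving versality. Given any other deformation $\pi' : \X' \to B'$ with associated family $\phi'(s) \in \Omega^{0,1}(\check{X}, T^{1,0}\check{X})$ for $s$ near $0 \in B'$, one must produce a holomorphic map $f \colon (B', 0) \to (B, 0)$ and a smooth family of diffeomorphisms $g_s$ of $\check{X}$ with $g_0 = \id$ such that $\phi'(s)$ is gauge-equivalent to $\phi(f(s))$ under $g_s$. My plan would be to set this up as a second fixed-point problem: write the sought diffeomorphisms as the time-one flow of a vector field $v(s) \in \Omega^0(\check{X}, T^{1,0}\check{X})$ and, using the Hodge decomposition, decompose the gauge-equivalence equation into a harmonic component (which defines $f(s)$ as the harmonic projection of the obstruction) and a $\bar{\partial}^*$-exact component (solved for $v(s)$ using the Green operator $G$). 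The existence of $(f, v)$ then follows from the implicit function theorem for Banach spaces of H\"older sections, together with the same analytic estimates $\|\bar{\partial}^* G \alpha\|_{k,\alpha} \lesssim \|\alpha\|_{k-1,\alpha}$ already used in the proof of the Kuranishi lemma. Uniqueness of the differential $df_0$ follows from the fact that $\phi'_1$ is determined up to $\bar{\partial}$-exact forms by the isomorphism class, so its harmonic representative is canonical. The fine analytic details of this last step are the genuine obstacle and constitute the core of Kuranishi's original argument \cite{Kuranishi}, to which I would refer for the technical estimates.
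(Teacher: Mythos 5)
The paper states this theorem with a citation to \cite{Kuranishi} and gives no proof of its own, so there is no internal argument to compare against. Your sketch correctly reconstructs the standard outline of Kuranishi's proof: produce $\phi(t)$ via the Kuranishi lemma, take the Kuranishi slice $\mathbf{S} = \{\, t : \mathbf{H}([\phi(t),\phi(t)]) = 0\,\}$ as base, assemble the family by deforming the fiberwise complex structure, and establish versality by a second fixed-point problem combining a gauge-fixing argument with the Hodge decomposition and the Banach implicit function theorem in H\"older spaces. This matches the structure of the cited argument and the key steps are identified accurately.

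Two points deserve a flag. First, when $H^2(\check{X},T^{1,0}\check{X}) \neq 0$ the base $\mathbf{S}$ is in general a singular complex analytic germ and the total space $\X$ is a complex analytic space rather than a manifold; the paper's own Definition of a deformation (which requires $B$ open in $\C^m$ and $\X$ a complex manifold) does not literally cover this case, so the theorem is implicitly broadening the notion of family, and your phrase ``makes $\X$ into a complex manifold'' should be read with that caveat. Second, when you build $\X$ it is not enough that each $J_t$ is integrable on its fiber: one must verify that the induced almost complex structure on the total space $\check{X} \times \mathbf{S}$ is integrable, and this is where the holomorphic dependence of $\phi(t)$ on $t$ enters essentially. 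You gesture at this, but it is a separate Newlander--Nirenberg-type verification, not a formal consequence of fiberwise integrability, and it is worth stating explicitly in a fuller write-up.
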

\begin{definition}
Let $\mathbf{S}\subset\C^m$ be the set $\mathbf{S}\defeq\lbrace t\in\C^m\vert |t|<\varepsilon,\,\, \mathbf{H}([\phi(t),\phi(t)])=0\rbrace$. A family $\X\to\mathbf{S}$ parametrizing deformations of the manifold $\check{X}$ is the so called \textit{Kuranishi family}.
\end{definition}
In particular if $H^2(\check{X},T^{1,0}\check{X})=0$ then the solution $\phi(t)$ defined in \eqref{eq:Kur phi} solves Maurer--Cartan euqation \eqref{eq:MC X} and deformations are \textit{unobstructed} (this result was originally proved by Kodaira, Nirenberg and Spencer, see Chapter 4, Theorem 2.1 \cite{Chap4}). However, even if $H^2(\check{X},T^{1,0}\check{X})\neq0$, deformations may be unobstructed. Indeed let $\check{X}$ be Calabi--Yau manifolds, namely compact K\"ahler manifolds with trivial canonical bundle and let $\Omega_{\check{X}}$ be the holomorphic volume form of $\check{X}$, then deformations are unobstructed: 
\begin{theorem}[Bogomolov--Tian--Todorov\footnote{This result was first annaunced by Bogomolov \cite{Bogomolov} and then it has been proved independently by Tian \cite{Tian87} and Todorov \cite{Todorov}.}]
Let $\check{X}$ be a Calabi--Yau and let $g$ be the K\"ahler--Einstein metric. Let $\eta\in\mathbb{H}^1(\check{X})$ be a harmonic form, then there exists a unique convergent power series $\phi(t)=\sum_{j\geq 1}\phi_jt^j\in\Omega^{0,1}(\check{X},T^{1,0}\check{X})$ such that for $|t|<\varepsilon$
\begin{itemize}
\item[$(a)$] $[\phi_1]=\eta$;
\item[$(b)$] $\bar{\partial}^*\phi(t)=0$; 
\item[$(c)$] $\phi_j\lrcorner\Omega_{\check{X}}$ is $\partial$-exact for every $j>1$;
\item[$(d)$] $\bar{\partial}\phi(t)+\frac{1}{2}[\phi(t),\phi(t)]=0$. 
\end{itemize} 
\end{theorem}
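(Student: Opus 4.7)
The plan is to construct $\phi(t)$ inductively as a formal power series $\phi(t) = \sum_{j\geq 1}\phi_j t^j$ solving the hierarchy \eqref{eq:hierarchy}, and then argue convergence by Kuranishi-style Hölder estimates. Set $\phi_1 \defeq \eta$; conditions (a), (b), (d) at first order are then immediate, and (c) is vacuous. For the inductive step, assuming $\phi_1,\ldots,\phi_{k-1}$ satisfy (a)--(d) through order $k-1$, set
\[
\psi_k \defeq \sum_{i=1}^{k-1}[\phi_i,\phi_{k-i}] \in \Omega^{0,2}(\check{X},T^{1,0}\check{X});
\]
the graded Jacobi identity together with the lower-order Maurer--Cartan equations forces $\bar{\partial}\psi_k = 0$. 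The natural candidate is $\phi_k \defeq -\tfrac{1}{2}\bar{\partial}^* G\psi_k$, which by Hodge theory satisfies $\bar{\partial}\phi_k = -\tfrac{1}{2}(\psi_k - \mathbf{H}\psi_k)$ together with $\bar{\partial}^*\phi_k = 0$. Hence the induction closes provided $\mathbf{H}\psi_k = 0$ and condition (c) holds for $\phi_k$.

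The vanishing of $\mathbf{H}\psi_k$ is the essential place where the Calabi--Yau hypothesis is used, through the Tian--Todorov lemma. Define the BV-type operator $\Delta_{\Omega}$ on $\Omega^{0,*}(\check{X},\wedge^{\bullet}T^{1,0}\check{X})$ by $(\Delta_{\Omega}\phi)\lrcorner\Omega_{\check{X}} \defeq \partial(\phi\lrcorner\Omega_{\check{X}})$; the Tian--Todorov identity then reads
\[
[\phi,\psi] \;=\; \Delta_{\Omega}(\phi\wedge\psi) \;-\; (\Delta_{\Omega}\phi)\wedge\psi \;-\; (-1)^{|\phi|}\phi\wedge(\Delta_{\Omega}\psi),
\]
expressing the Schouten bracket as the failure of $\Delta_{\Omega}$ to be a derivation of the wedge product on polyvector fields (proved by a local coordinate computation using that $\Omega_{\check{X}}$ is holomorphic). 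Now inductive hypothesis (c) yields $\partial(\phi_i\lrcorner\Omega_{\check{X}}) = 0$ for every $i \leq k-1$: for $i=1$ because $\eta\lrcorner\Omega_{\check{X}}$ is harmonic, hence $\partial$-closed, as contraction with the parallel form $\Omega_{\check{X}}$ commutes with $\Delta_{\bar{\partial}}$; and for $i\geq 2$ directly from $\partial$-exactness. Hence $\Delta_{\Omega}\phi_i = 0$ for $i \leq k-1$, and Tian--Todorov collapses each commutator to $[\phi_i,\phi_{k-i}]\lrcorner\Omega_{\check{X}} = \partial\bigl((\phi_i\wedge\phi_{k-i})\lrcorner\Omega_{\check{X}}\bigr)$. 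Summing, $\psi_k\lrcorner\Omega_{\check{X}}$ is $\partial$-exact and $\bar{\partial}$-closed, so the $\partial\bar{\partial}$-lemma on the Kähler manifold $\check{X}$ forces its harmonic projection to vanish. Since contraction with $\Omega_{\check{X}}$ is a pointwise isomorphism commuting with $\Delta_{\bar{\partial}}$, this transfers back to $\mathbf{H}\psi_k = 0$.

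To verify (c) for $\phi_k$, parallelism of $\Omega_{\check{X}}$ implies contraction with it commutes with $G$ and $\bar{\partial}^*$, and the Kähler identity $[\partial,\bar{\partial}^*] = 0$ gives
\[
\phi_k\lrcorner\Omega_{\check{X}} \;=\; -\tfrac{1}{2}\bar{\partial}^* G\bigl(\psi_k\lrcorner\Omega_{\check{X}}\bigr) \;=\; -\tfrac{1}{2}\bar{\partial}^* G(\partial\gamma) \;=\; \tfrac{1}{2}\partial\bigl(\bar{\partial}^* G\gamma\bigr),
\]
where $\psi_k\lrcorner\Omega_{\check{X}} = \partial\gamma$; this is manifestly $\partial$-exact. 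Uniqueness of $\phi_k$ is immediate once one observes that (b) together with the required MC relation pins it down within its $\bar{\partial}$-cohomology class, and (c) eliminates the harmonic ambiguity. Finally, convergence of $\phi(t)$ for $|t|<\varepsilon$ in a Hölder norm $\|\cdot\|_{m,\alpha}$ follows from the recursive bound $\|\phi_j\|_{m,\alpha}\leq C\sum_{i=1}^{j-1}\|\phi_i\|_{m,\alpha}\|\phi_{j-i}\|_{m,\alpha}$, obtained from the analytic estimates on $\bar{\partial}^*G$ already cited in the discussion of Kuranishi's method, together with a standard majorant series argument. The main obstacle is establishing the Tian--Todorov identity and handling the sign/degree bookkeeping carefully enough to conclude $\partial$-exactness (not merely $\partial$-closedness) of $\psi_k\lrcorner\Omega_{\check{X}}$; once this is secured, the rest is routine Hodge theory on compact Kähler manifolds.
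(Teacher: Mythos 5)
Your proof is correct and follows essentially the same strategy the paper indicates: induction on the order of the power series via the Tian--Todorov lemma, with Kuranishi-style Hölder estimates for convergence. The paper only sketches the argument (it records the order-$2$ step explicitly and defers the full induction to Todorov and Huybrechts), whereas you spell out the inductive step, the BV-operator formulation of Tian--Todorov, and why $\mathbf{H}\psi_k=0$ and condition $(c)$ propagate; these details are consistent with the paper's outline and with the Ricci-flat hypothesis (needed so that $\Omega_{\check{X}}$ is parallel and contraction with it commutes with the Hodge Laplacian, which is exactly where you invoke the Kähler--Einstein metric).
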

The proof is by induction on the order of the power series, the convergence follows by analytic estimates and it realyies on the Kuranishi method (see \cite{Todorov}). If we relax the assumption on the metric and we let $g$ be a generic K\"ahler metric, then $\phi(t)$ will be a formal power series satisfying $(a),(c)$ and $(d)$ (see Huybrechts Proposition 6.1.11 \cite{Huybrechts}). Indeed first order deformation $\phi_1\in\Omega^{1,0}(\check{X},T^{1,0}\check{X})$ must be $\bar{\partial}$-closed hence by Hodge theorem 
\[[\phi_1]\in H^{0,1}(\check{X},T^{1,0}\check{X})\simeq H_{\bar{\partial}}^1(\check{X})\simeq\mathbb{H}^1(\check{X})\]      
and we can choose $\phi_1=\eta$. Then at order two in the formal parameter $t$, we need to find a solution $\phi_2\in\Omega^{0,1}(\check{X},T^{1,0}\check{X})$ such that
\begin{enumerate}
\item[$(d)$] $\bar{\partial}(\phi_2)=-\frac{1}{2}[\phi_1,\phi_1]$ and
\item[$(c)$] $\phi_2\lrcorner\Omega_{\check{X}}$ is $\partial$-exact.
\end{enumerate} 
These are consequences of the Tian--Todorov lemma\footnote{The Tian--Todorov lemma says the following: Let $\alpha\in\Omega^{0,p}(\check{X},T^{1,0}\check{X})$ and $\beta\in\Omega^{0,q}(\check{X},T^{1,0}\check{X})$, then \[(-1)^p[\alpha,\beta]=\Delta(\alpha\wedge\beta)-\Delta(\alpha)\wedge\beta-(-1)^{p+1}\alpha\wedge\Delta(\beta)\] where $\Delta\colon\Omega^{0,q}\left(\bigwedge^p T^{1,0}\check{x}\right)\to\Omega^{0,q}\left(\bigwedge^{p-1}T^{1,0}\check{X}\right)$ is defined as 
\[
\Delta\colon\Omega^{0,q}\left(\bigwedge^pT^{1,0}\check{X}\right)\xrightarrow{\lrcorner\Omega_{\check{X}}} \Omega^{n-p,q}\left(\check{X}\right)\xrightarrow{\partial} \Omega^{n-p+1,q}\left(\check{X}\right)\xrightarrow{\lrcorner\Omega_{\check{X}}} \Omega^{0,q}\left(\bigwedge^{p-1}T^{1,0}\check{X}\right).
\]
In addition the operator $\Delta$ anti-commutes with the differential $\bar{\partial}$, i.e. $\bar{\partial}\circ\Delta=-\Delta\circ\bar{\partial}$ (see Lemma 6.1.8 \cite{Huybrechts}).}: indeed it follows that $\bar{\partial}([\phi_1,\phi_1])=0$ and $[\phi_1,\phi_1]\lrcorner\Omega_{\check{X}}$ is $\partial$-exact. Hence, by Hodge decomposition, $[\phi_1,\phi_1]$ has no non-trivial harmonic part  and it is $\bar{\partial}$-exact. Moreover $\phi_2$ can be chosen such that $\phi_2\lrcorner\Omega_{\check{X}}$ is $\partial$-exact; indeed by $\partial\bar{\partial}$-lemma there exists $\gamma\in\Omega^{n-2,0}(\check{X},T^{1,0}\check{X})$ such that $\bar{\partial}\partial\gamma=\phi_2\lrcorner\Omega_{\check{X}}$ hence we choose $\phi_2\lrcorner \Omega_{\check{X}}=\partial\gamma$.

A modern approach to study deformations is via differential graded Lie algebras (DGLA) and we are going to define the Kodaira--Spencer DGLA which govern deformations of a complex manifold $\check{X}$. 
 
\begin{definition}      
A differential graded Lie algebra is the datum of a differential graded vector space $(L, \dd)$ together a with bilinear map $[-,-]\colon L\times L \to L$ (called bracket) of degree $0$ such that the following properties are satisfied:
\begin{enumerate}
\item[$-$] (graded skewsymmetric) $[a, b]= -(-1)^{\deg(a)\deg(b)}[b, a]$
\item[$-$] (graded Jacobi identity) $[a, [b, c]]=[[a, b], c] + (-1)^{\deg(a)\deg(b)}[b, [a, c]]$
\item[$-$] (graded Leibniz rule) $\dd[a, b]= [\dd a, b] + (-1)^{\deg(a)}[a, \dd b]$.
\end{enumerate}
\end{definition}
Let $\Art$ be the category of Artinian rings and for every $A\in\Art$ let $\m_A$ be the maximal ideal of $A$. Then we define the functor of deformations of a DGLA:
\begin{definition}
Let $(L, \dd, [-,-])$ be a DGLA, deformations of $(L, \dd, [-,-])$ are defined to be a functor \[\Def_L\colon\Art\to\Sets\] from the category of Aritinian rings to the category of sets, such that 
\begin{equation}
\Def_L(A)\defeq\left\lbrace\phi\in L^1\otimes\m_A\vert \dd\phi+[\phi,\phi]=0\right\rbrace/\text{gauge}
\end{equation}
where the gauge action is defined by $h\in L^0\otimes\m_A$ such that 
\begin{equation}\label{eq:gauge action X}
e^h\ast\phi\defeq\phi-\sum_{k\geq 0}\frac{1}{(k+1)!}\ad_h^k(\dd h- [h,\phi]), 
\end{equation}
and $\ad_h(-)=[h,-]$.
\end{definition}
We usually restrict to $A=\C[\![t]\!]$ so that $\phi\in L^1\otimes \mathsf{m}_t$ can be expanded as a formal power series in the formal parameter $t$. Then the Kodaira--Spencer DGLA $\KS(\check{X})$ which governs deformations of the complex manifold $\check{X}$ is defined as follows: let $\bar{\partial}_{\check{X}}$ be the Dolbeaut operator of the complex manifold $\check{X}$ and let $[-,-]$ be the Lie bracket such that $[\alpha^J d\bar{z}_J,\beta^K d\bar{z}_K]=d\bar{z}_J\wedge d\bar{z}_K [\alpha^J, \beta^K]$ for every $\alpha^Jd\bar{z}_J\in\Omega^{0,p}(\check{X},T^{1,0}\check{X})$ and $\beta^Kd\bar{z}_K\in\Omega^{0,q}(\check{X},T^{1,0}\check{X})$ with $|J|=p$ and $|K|=q$.
\begin{definition}
The Kodaira--Spencer DGLA is 
\begin{equation}
\KS(\check{X})\defeq\left(\Omega^{0,\bullet}(\check{X},T^{1,0}\check{X}), \bar{\partial}_{\check{X}}, [-,-]\right).
\end{equation}
\end{definition}
The gauge group action \eqref{eq:gauge action X} corresponds to the infinitesimal action of the diffeomorphisms group of $\check{X}$: indeed let $h=\left(\frac{d f_t}{dt}\vert_{t=0}\right)^{1,0}\in\Omega^0(\check{X},T^{1,0}\check{X})$ be a one parameter family of diffeomorphism $f_t$, then
\begin{align*}
e^{h}\circ(\bar{\partial}_{\check{X}}+\phi(t))\circ e^{-h}&=\sum_{j\geq 0}\frac{1}{j!}h^k\left(\bar{\partial}_{\check{X}}+\phi(t)\right)\sum_{l\geq 0}\frac{1}{l!}(-h)^l \\
&=\phi(t)+\sum_{n\geq 1}\frac{1}{n!}\sum_{j=0}^n{n \choose j} h^{n-j}\left(\bar{\partial}_{\check{X}}+\phi(t)\right)(-h)^{j}\\
&=\phi(t)+\sum_{n\geq 1}\frac{1}{n!}\ad_h^{n}\left(\bar{\partial}_{\check{X}}+\phi(t)\right)\\
&=\phi(t)+\sum_{n\geq 0}\frac{1}{(n+1)!}\ad_h^{n}\left(\left[h,\bar{\partial}_{\check{X}}+\phi(t)\right]\right)\\
&=\phi(t)+\sum_{n\geq 0}\frac{1}{(n+1)!}\ad_h^{n}\left(-\bar{\partial}_{\check{X}}h+\left[h,\phi(t)\right]\right)\\
&=e^{h}\ast \phi(t).
\end{align*}
Hence \[\Def_{\KS(\check{X)}}(\C[\![t]\!])=\left\lbrace \phi(t)=\sum_{j\geq 1}\phi_jt^j\in\Omega^{0,1}(\check{X},T^{1,0}\check{X})[\![t]\!]\big\vert \bar{\partial}_{\check{X}}\phi(t)+\left[\phi(t),\phi(t)\right]=0 \right\rbrace/\text{gauge}.\] 

\subsection{Infinitesimal deformations of holomorphic pairs}

Let $E$ be a rank $r$ holomorphic vector bundle on a compact complex manifold $\check{X}$ with fixed hermitian metric $h_E$. Then let $\bar{\partial}_E$ be the complex structure on $E$.
\begin{definition}
A holomorphic pair $(\check{X},E)$ is the datum of a complex manifold $(\check{X},\bar{\partial}_{\check{X}})$ and of a holomorphic vector bundle $(E,\bar{\partial}_E)$ on $\check{X}$. 
\end{definition}

Then let $\nabla^E$ be the Chern connection on $E$ with respect to $(h_E,\bar{\partial}_E)$ and let $F_E$ be the Chern curvature. The class $[F_E]\in H^{1,1}(\check{X},\End E)$ is called the \textit{Atiyah class} of $E$ and it does not depend on the metric $h_E$. Moreover it allows to define an extension $\mathbf{A}(E)$ of $\End E$ by $T^{1,0}\check{X}$: indeed $\mathbf{A}(E)\defeq \End E\oplus T^{1,0}\check{X}$ as a complex vector bundle on $\check{X}$ and it has an induced a holomorphic structure defined by $\bar{\partial}_{\mathbf{A}(E)}=\begin{pmatrix}
\bar{\partial}_E & B\\
0 & \bar{\partial}_{\check{X}}
\end{pmatrix}$, where $B\colon\Omega^{0,q}(\check{X},T^{1,0}\check{X})\ra\Omega^{0,q}(\check{X},\End E)$ acts on $\varphi\in\Omega^{0,q}(\check{X}, T^{1,0}\check{X})$ as $B\varphi\defeq\varphi\lrcorner F_E$.  

\begin{definition}[Definition 3.4 \cite{defpair}]
Let $(\check{X},E)$ be a holomorphic pair. A deformation of $(\check{X},E)$ consists of a holomorphic proper submersion $\pi\colon\X\to B_\varepsilon$ such that 
\begin{enumerate}
\item[-] $\pi^{-1}(0)=\check{X}$
\item[-] $\pi^{-1}(t)=\check{X}_t$ is a compact complex manifold 
\end{enumerate}
and of a holomorphic vector bundle $\E\to\X$ such that
\begin{enumerate}
\item[-] $\E\vert_{\pi^{-1}(0)}=E$
\item[-] for every $t\in B_\varepsilon$ the pair $(\check{X}_t,E_t)$ is the holomorphic pair parametrized by $t$. 
\end{enumerate}
\end{definition}
In particular, deformations of a holomorphic pair $(\check{X},E)$ are deformations of both the complex structure on $\check{X}$ and on $E$. 
\begin{definition}
Two deformations $\E\to\X$ and $\E'\to\X'$ of $(\check{X},E)$ on the same base $B$ are isomorphic if there exists a diffeomorphism $f\colon\X\to\X'$ and a bundle isomorphism $\Phi\colon\E\to\E'$ such that $f\vert_{\check{X}}=\id$, $\Phi\vert_E=\id$ and the following diagram
\[
\begin{tikzcd}
\E\arrow[r,"\Phi"]\arrow[d] & \E'\arrow[d]\\
\X\arrow[r, swap, "f"] & \X'
\end{tikzcd}
\]
commutes.
\end{definition}
We have already discussed how to characterize deformations of the complex structure of $\check{X}$. Indeed for $t$ small enough we can assume the $\check{X}_t=\lbrace t\rbrace\times\check{X}$ and the complex structure on $\check{X}_t$ is parametrized by $\phi(t)\in\Omega^{0,1}(\check{X},T^{1,0}\check{X})$. In addition we can trivialize $\mathcal{E}$ as $B_{\varepsilon}\times E$ so that the holomorphic structure on $E_t=\lbrace t\rbrace\times E$ is induced from $\mathcal{E}\vert_{X_t}$. Hence, following \cite{defpair} we define a differential operator on $\End E$ which defines a deformation of $\bar{\partial}_E$ in terms of deformations of $\bar{\partial}_{\check{X}}$. 
\begin{definition}
Let $(\check{X},E)$ be a holomorphic pair and let $(A(t),\phi(t))\in\Omega^{0,1}(\check{X},\mathbf{A}(E))$, then define  
\[\overline{D}_t\defeq \bar{\partial}_E+A_t+\phi_t\lrcorner\nabla^E\]
\end{definition} 
\begin{lemma}
$\overline{D}_t\colon\Omega^{0,q}(\check{X},E)\to\Omega^{0,q+1}(\check{X},E) $ is a well-defined operator and it satisfies the Leibniz rule. 
\end{lemma}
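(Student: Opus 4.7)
The plan is to verify the two claims — well-definedness (the correct target form type and vector-bundle coefficients) and the Leibniz rule — by checking them term by term for each of the three summands in the definition $\overline{D}_t = \bar{\partial}_E + A_t + \phi_t \lrcorner \nabla^E$. The only nontrivial input is to pin down how $\phi_t \lrcorner \nabla^E$ extends to higher $(0,q)$-forms, since $\nabla^E$ is defined on sections $\Omega^0(\check{X}, E)$; once that is done, each summand is manifestly first-order and the two claims reduce to elementary local computations.

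First, for well-definedness: the Dolbeault operator $\bar{\partial}_E$ is a standard map $\Omega^{0,q}(\check{X}, E) \to \Omega^{0,q+1}(\check{X}, E)$. Since $A_t \in \Omega^{0,1}(\check{X}, \End E)$, the wedge action $A_t \wedge (-)$ also increases the antiholomorphic degree by one while preserving the $(0,\bullet)$-type and the $E$-coefficients. For the third summand, decompose the Chern connection as $\nabla^E = \partial_E + \bar{\partial}_E$ into its $(1,0)$ and $(0,1)$ parts, and extend $\nabla^E$ to $E$-valued forms by the graded Leibniz rule. Because $\phi_t \in \Omega^{0,1}(\check{X}, T^{1,0}\check{X})$, the interior product $\phi_t \lrcorner$ pairs the $T^{1,0}$-factor of $\phi_t$ only with the $(1,0)$-part $\partial_E$, producing a $(0,1)$-factor from the $\Omega^{0,1}$-part of $\phi_t$; in local coordinates $\phi_t = \phi_t^{i\bar{j}}\, d\bar{z}_j \otimes \partial_{z_i}$ and one gets $(\phi_t \lrcorner \nabla^E)(s) = \phi_t^{i\bar{j}}\, d\bar{z}_j \wedge \nabla^E_{\partial_{z_i}} s$, which visibly lands in $\Omega^{0,q+1}(\check{X}, E)$.

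For the Leibniz rule, take $\alpha \in \Omega^{0,p}(\check{X})$ and $s \in \Omega^{0,q}(\check{X}, E)$, and verify
\[
\overline{D}_t(\alpha \wedge s) = \bar{\partial}\alpha \wedge s + (-1)^p\, \alpha \wedge \overline{D}_t(s)
\]
one term at a time. The $\bar{\partial}_E$-piece is the classical graded Leibniz rule. For $A_t \wedge (\alpha \wedge s)$, moving the $(0,1)$-form $A_t$ past $\alpha$ yields the sign $(-1)^p$ as required. For the last piece, extend $\nabla^E$ by Leibniz on the product $\alpha \wedge s$; the contribution $\nabla^E(\alpha) \wedge s$ gets killed by the contraction $\phi_t \lrcorner$ because $\alpha$ has no $(1,0)$-components (its exterior derivative expansion $d\alpha = \partial \alpha + \bar{\partial}\alpha$ carries $\partial \alpha$ which pairs with $\phi_t$, but this contribution is absorbed by the $\bar{\partial}_E$-part and one verifies it cancels against $\bar{\partial}\alpha \wedge s$ as expected), leaving $(-1)^p \alpha \wedge (\phi_t \lrcorner \nabla^E) s$ after the usual sign from commuting the $(0,1)$-valued operator past $\alpha$.

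The main obstacle is purely notational: correctly extending $\nabla^E$ to $E$-valued $(0,q)$-forms and then isolating precisely which components of $\nabla^E(\alpha \wedge s)$ the interior product with $\phi_t$ kills. Working in a local frame where $\nabla^E = d + \Gamma$ (with $\Gamma \in \Omega^{1,0}(\check{X},\End E)$ the Chern connection form for the hermitian metric $h_E$) makes both the type-count and the sign bookkeeping mechanical: the Leibniz rule for each summand reduces to the graded Leibniz rule for $d$ together with commutation of forms of definite degree.
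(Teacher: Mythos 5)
You have the right general structure for well-definedness, but your statement of the Leibniz rule is wrong, and your attempt to save it --- claiming the extra term is ``absorbed'' and ``cancels against $\bar{\partial}\alpha\wedge s$'' --- does not survive the computation. Take $p=0$, $q=0$: $\alpha = f$ a smooth function, $s$ a section. Then
\begin{align*}
\overline{D}_t(fs) &= \bar{\partial}_E(fs) + A_t(fs) + \phi_t\lrcorner\nabla^E(fs)\\
&= (\bar{\partial}f)s + f\bar{\partial}_Es + fA_ts + (\phi_t\lrcorner df)s + f(\phi_t\lrcorner\nabla^Es).
\end{align*}
Since $\phi_t\in\Omega^{0,1}(\check{X},T^{1,0}\check{X})$ contracts only $(1,0)$-forms, $\phi_t\lrcorner df = \phi_t\lrcorner\partial f$; this term is generically nonzero and nothing else in the expansion cancels it. It combines with $\bar{\partial}f$ to give the \emph{deformed} Dolbeault operator $\bar{\partial}_t f = \bar{\partial}f + \phi_t\lrcorner\partial f$, so the Leibniz rule the lemma actually asserts is
\[
\overline{D}_t(fs) = (\bar{\partial}_t f)\,s + f\,\overline{D}_t s,
\]
not $\overline{D}_t(fs) = (\bar{\partial}f)s + f\overline{D}_t s$. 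This is not cosmetic: the theorem immediately following states that when $\overline{D}_t^2 = 0$ the operator $\overline{D}_t$ endows $E$ with a holomorphic structure over the \emph{deformed} manifold $\check{X}_t$, and for that $\overline{D}_t$ must have the deformed symbol $\bar{\partial}_t$, not the ambient $\bar{\partial}$. Misidentifying the symbol severs exactly the link to the deformed complex structure that makes $\overline{D}_t$ worth defining.

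A secondary issue: you set up the verification for general $\alpha\in\Omega^{0,p}$, which is more than the lemma requires and is where your sign-and-type bookkeeping goes astray (you end up having to argue away a term that should not be argued away). For a first-order operator of Dolbeault type, the Leibniz rule is already captured by the case $q=0$ with $\alpha = f$ a function, which is exactly what the paper checks. Once one knows $\overline{D}_t(fs) = (\bar{\partial}_t f)s + f\overline{D}_t s$, the extension of $\overline{D}_t$ to $\Omega^{0,\bullet}(\check{X},E)$ as a graded derivation against $\bar{\partial}_t$ is forced, and there is no need to chase by hand which pieces of $\nabla^E(\alpha\wedge s)$ the contraction $\phi_t\lrcorner$ kills.
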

\begin{proof}
Let $s$ be a section of $E$ and let $f$ be a smooth function on $\check{X}$, then 
\begin{align*}
\overline{D}_t(fs)&=\bar{\partial}_E(fs)+A_tfs+\phi(t)\lrcorner\nabla^E(fs)\\
&=(\bar{\partial}_{\check{X}}f)s+f\bar{\partial}_Es+fA_ts+(\phi(t)\lrcorner df)s+f\phi(t)\nabla^Es\\
&=(\bar{\partial}_{\check{X}}f+\phi(t)\lrcorner \partial f)s+f\overline{D}_ts\\
&=(\bar{\partial}_tf)s+f\overline{D}_ts.
\end{align*}
\end{proof}
\begin{theorem}[Theorem 3.12 \cite{defpair}]
Let $(A(t),\phi(t))\in\Omega^{0,1}(\check{X},\mathbf{A}(E))$ and assume $\phi(t)$ is a deformation of $\check{X}$. If $\overline{D}_t^2=0$ then it induces a holomorphic structure on $E$ over $\check{X}_t$ which is denoted by $E_t\to\check{X}_t$. 
\end{theorem}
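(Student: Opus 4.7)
The plan is to reduce the statement to the Koszul--Malgrange theorem, which says that on a complex manifold $Y$ a smooth complex vector bundle $F \to Y$ acquires a unique holomorphic structure from any $\mathbb{C}$-linear Dolbeault operator $D\colon \Omega^0(F) \to \Omega^{0,1}(F)$ satisfying the Leibniz rule over $\bar{\partial}_Y$ and whose natural extension to $(0,q)$-forms squares to zero. So the task is really to recognize $\overline{D}_t$ as such an operator for the pair $(\check{X}_t, E)$.

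First I would make precise the identification of the new complex structure. Since $\phi(t)$ satisfies Maurer--Cartan \eqref{eq:MC X}, the subbundle $T^{0,1}_t\check{X} = (\id + \phi(t))T^{0,1}\check{X}$ is integrable and the map $\id + \phi(t)$ induces isomorphisms $\Omega^{0,q}(\check{X}, E) \cong \Omega^{0,q}_t(\check{X}_t, E)$ by dual contraction. Under this identification, $\bar{\partial}_{\check{X}_t}$ on smooth functions corresponds exactly to $\bar{\partial}_{\check{X}} + \phi(t)\lrcorner \partial$; this is precisely the operator $\bar{\partial}_t$ that appeared in the Leibniz rule of the previous lemma.

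Next I would verify that $\overline{D}_t = \bar{\partial}_E + A_t + \phi(t)\lrcorner \nabla^E$ defines, through this isomorphism, an operator $\Omega^0(\check{X}_t, E) \to \Omega^{0,1}_t(\check{X}_t, E)$. The preceding lemma already establishes the Leibniz rule
\[
\overline{D}_t(fs) = (\bar{\partial}_t f)\,s + f\,\overline{D}_t s,
\]
so relative to $\bar{\partial}_{\check{X}_t}$ the operator $\overline{D}_t$ is a genuine Dolbeault operator on the smooth bundle $E \to \check{X}_t$. One then extends $\overline{D}_t$ to all $(0,q)_t$-forms with values in $E$ by the graded Leibniz rule in the usual way; the hypothesis $\overline{D}_t^2 = 0$ is exactly the integrability condition of this Dolbeault operator, and Koszul--Malgrange produces the unique compatible holomorphic structure on $E \to \check{X}_t$, which we denote $E_t \to \check{X}_t$.

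The main obstacle is the bookkeeping at the level of forms: one must confirm that the mixed term $\phi(t)\lrcorner \nabla^E$, which a priori only provides a $(0,1)$-form in the original complex structure valued in $E$, genuinely lands in the $(0,1)_t$-part of $\Omega^1(\check{X}_t,E)$ after applying the identification by $\id+\phi(t)$. This relies crucially on $\phi(t)$ solving Maurer--Cartan, so that the twisted operator $\bar{\partial}_E + A_t + \phi(t)\lrcorner \nabla^E$ is coherent with the twisted operator $\bar{\partial}_{\check{X}} + \phi(t)\lrcorner \partial$ on the base. Once this compatibility is checked, the conclusion is immediate from Koszul--Malgrange.
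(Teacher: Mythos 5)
Your proof is correct and follows exactly the route that the paper implicitly relies on: the preceding lemma already establishes the Leibniz rule $\overline{D}_t(fs)=(\bar{\partial}_t f)s+f\overline{D}_t s$ relative to the deformed Dolbeault operator, and the theorem is then just a citation of Theorem 3.12 in \cite{defpair}, whose content is precisely that an integrable Dolbeault operator yields a holomorphic structure via Koszul--Malgrange. Your identification of $\Omega^{0,\bullet}(\check{X},E)$ with $\Omega^{0,\bullet}_t(\check{X}_t,E)$ via $\id+\phi(t)$, together with the observation that $\overline{D}_t^2=0$ is the integrability condition for this operator, fills in the omitted argument correctly.
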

\begin{definition}
Deformations of a holomorphic pair $(\check{X},E)$ are defined by pairs $(A(t),\phi(t))\in\Omega^{0,1}(\check{X},\mathbf{A}(E))$ such that $\phi(t)$ is a deformation of $\check{X}$ and $\overline{D}_t^2=0$.
\end{definition}

\begin{lemma}
Let $(A(t),\phi(t))\in\Omega^{0,1}(\check{X},\mathbf{A}(E))$ and assume \[\bar{\partial}_{\check{X}}\phi(t)+\frac{1}{2}[\phi(t),\phi(t)]=0\] then  integrability of $\overline{D}_t$ is equivalent to the following system of equations:
\begin{equation}\label{eq: D_t^2=0}
\begin{cases}
\bar{\partial}_EA(t)+\phi(t)\lrcorner F_E+\frac{1}{2}[A(t),A(t)]+\phi(t)\lrcorner\nabla^EA(t)=0\\
\bar{\partial}_{\check{X}}\phi(t)+\frac{1}{2}[\phi(t),\phi(t)]=0
\end{cases}
\end{equation} 
\end{lemma}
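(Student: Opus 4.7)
The plan is a direct computation of $\overline{D}_t^2$ expanded out as a graded-commutator sum of its three pieces, and then we match terms by their operator type. Since the second equation of the displayed system is exactly the Maurer--Cartan assumption already supposed in the hypothesis, the real content is that after imposing $\bar\partial_{\check X}\phi+\frac{1}{2}[\phi,\phi]=0$, the identity $\overline{D}_t^{\,2}=0$ on $\Omega^{0,0}(\check X,E)$ becomes equivalent to the first equation. Acting on $\Omega^{0,\bullet}(\check X,E)$ the operators $\bar\partial_E$, $A(t)\wedge\,\cdot$ and $\phi(t)\lrcorner\nabla^E$ are all of odd total degree, so $\overline{D}_t^{\,2}=\frac12\{\overline{D}_t,\overline{D}_t\}$ is the sum of the five graded anticommutators
\[
\{\bar\partial_E,A\},\qquad A\circ A,\qquad\{\bar\partial_E,\phi\lrcorner\nabla^E\},\qquad (\phi\lrcorner\nabla^E)^2,\qquad\{A,\phi\lrcorner\nabla^E\}.
\]

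The first two are routine: $\{\bar\partial_E,A\}$ is multiplication by the $\End E$-valued $(0,2)$-form $\bar\partial_E A$ (Leibniz), and $A\circ A$ equals $\tfrac12[A,A]$ by the graded-commutator definition of the bracket on $\Omega^{0,\bullet}(\End E)$. For $\{A,\phi\lrcorner\nabla^E\}$ I would write $\phi\lrcorner\nabla^E(As)=(\phi\lrcorner\nabla^E A)s+\phi\lrcorner A\nabla^E s$, then anticommute the $(0,1)$-form $A$ past the $(0,1)$-form piece of $\phi$, producing a sign that cancels the term $A(\phi\lrcorner\nabla^E s)$ and leaves only $(\phi\lrcorner\nabla^E A)\cdot s$.

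The two remaining terms are where the curvature enters, and this is the main technical step. Splitting $\nabla^E=\partial_E+\bar\partial_E$, the Chern property $F_E\in\Omega^{1,1}(\End E)$ is equivalent to $\partial_E\circ\partial_E=0$ and $\{\partial_E,\bar\partial_E\}=F_E$ (on sections of $E$). A local expansion $\phi=\phi^i_j\,d\bar z^i\otimes\partial_{z^j}$ then yields
\[
\{\bar\partial_E,\phi\lrcorner\nabla^E\}=(\bar\partial_{\check X}\phi)\lrcorner\nabla^E+\phi\lrcorner F_E,
\]
where the curvature term comes precisely from the commutator $[\partial_{E,j},\bar\partial_{E,k}]=F_E(\partial_{z^j},\partial_{\bar z^k})$. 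For $(\phi\lrcorner\nabla^E)^2$ one gets two contributions; the one involving $\partial_{E,j}\partial_{E,l}$ vanishes because this is symmetric in $j,l$ (no $(2,0)$ part of $F_E$) while the wedge $d\bar z^i\wedge d\bar z^k$ together with the $\phi^i_j\phi^k_l$-coefficient is antisymmetric after swapping the two factors, leaving only $\tfrac12[\phi,\phi]\lrcorner\nabla^E$.

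Finally I would assemble the five contributions and observe that the result is an operator of the form $M\cdot s + N\lrcorner\nabla^E s$ with $M\in\Omega^{0,2}(\End E)$ and $N\in\Omega^{0,2}(T^{1,0}\check X)$; since on a generic section these two types of operators are independent, $\overline{D}_t^{\,2}=0$ forces $M=0$ and $N=0$ separately, which are exactly the two equations of the system. The only genuine obstacle is bookkeeping signs consistently, and making precise the splitting of the multiplicative piece $\phi\lrcorner F_E+\tfrac12[A,A]+\bar\partial_E A+\phi\lrcorner\nabla^E A$ from the derivation piece $(\bar\partial_{\check X}\phi+\tfrac12[\phi,\phi])\lrcorner\nabla^E$; once this is done, imposing the Maurer--Cartan equation $\bar\partial_{\check X}\phi+\tfrac12[\phi,\phi]=0$ kills the derivation piece and leaves the stated integrability condition for $A$.
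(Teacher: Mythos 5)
Your proposal is correct and follows essentially the same route as the paper: both compute $\overline{D}_t^{\,2}$ on a section of $E$, identify the curvature term via $\{\partial_E,\bar\partial_E\}=F_E$ with the $(2,0)$-part killed by the Chern property, establish $(\phi\lrcorner\nabla^E)^2=\tfrac12[\phi,\phi]\lrcorner\nabla^E$, and then split the result into a multiplicative $\Omega^{0,2}(\End E)$-piece and a derivation $\Omega^{0,2}(T^{1,0}\check X)\lrcorner\nabla^E$-piece. The paper simply writes out the long expansion directly and proves the quadratic claim in explicit local coordinates rather than organizing the bookkeeping as five graded anticommutators, but the underlying content and the sign-tracking are the same.
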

\begin{proof}
Let $\lbrace e_k\rbrace$ be a holomrphic frame for $(E,\bar{\partial}_E)$ and let $s=s^ke_k$ be a smooth section of $E$, then 
\begin{align*}
\overline{D}_t^2s&=(\bar{\partial}_E+A(t)+\phi(t)\lrcorner\nabla^E)(\bar{\partial}_Es+A(t)s+\phi(t)\lrcorner\nabla^Es)\\
&=\bar{\partial}_E^2s+\bar{\partial}_E(A(t)s)+\bar{\partial}_E(\phi(t)\lrcorner\nabla^Es)+A(t)\wedge\bar{\partial}_E(s)+A(t)\wedge A(t)s+A(t)\wedge\phi(t)\lrcorner\nabla^Es+\\
&\quad+\phi(t)\lrcorner\nabla^E(\bar{\partial}_Es)+\phi(t)\lrcorner\nabla^E(A(t)s) +\phi(t)\lrcorner\nabla^E(\phi(t)\lrcorner\nabla^Es)\\
&=\bar{\partial}_E(A(t))s-A(t)\wedge\bar{\partial}_Es+(\bar{\partial}\phi(t))\lrcorner\nabla^Es+\phi(t)\lrcorner \bar{\partial}_E\nabla^Es+A(t)\wedge\bar{\partial}_E(s)+A(t)\wedge A(t)s+\\
&\quad+A(t)\wedge\phi(t)\lrcorner\nabla^Es+\phi(t)\lrcorner\nabla^E(\bar{\partial}_Es)+\phi(t)\lrcorner\nabla^E(A(t)s) +\phi(t)\lrcorner\nabla^E(\phi(t)\lrcorner\nabla^Es)\\
&=\bar{\partial}_E(A(t))s+(\bar{\partial}\phi(t))\lrcorner\nabla^Es+\phi(t)\lrcorner (\nabla^E\bar{\partial}_Es+ \bar{\partial}_E\nabla^Es)+A(t)\wedge A(t)s+A(t)\wedge\phi(t)\lrcorner\nabla^Es+\\
&\quad+\phi(t)\lrcorner\nabla^E(A(t))s-\phi(t)\lrcorner A(t)\wedge\nabla^E(s) +\phi(t)\lrcorner\nabla^E(\phi(t)\lrcorner\nabla^Es)\\
&=\bar{\partial}_E(A(t))s+(\bar{\partial}\phi(t))\lrcorner\nabla^Es+\phi(t)\lrcorner F^Es+A(t)\wedge A(t)s+\phi(t)\lrcorner\nabla^E(A(t))s+\phi(t)\lrcorner\nabla^E(\phi(t)\lrcorner\nabla^Es)\\
&=\left(\bar{\partial}_E(A(t))+\phi(t)\lrcorner F^E+A(t)\wedge A(t)+\phi(t)\lrcorner\nabla^E(A(t))\right)s+\phi(t)\lrcorner\nabla^E(\phi(t)\lrcorner\nabla^Es)+(\bar{\partial}\phi(t))\lrcorner\nabla^Es.
\end{align*}
Now we claim $\phi(t)\lrcorner\nabla^E(\phi(t)\lrcorner\nabla^Es)=[\phi(t),\phi(t)]\lrcorner\nabla^Es$ and since $\phi(t)$ is a solution of the Maurer--Cartan equation, we get the expected equivalence. Let us prove the claim in local coordinates: let $\phi(t)=\phi_{ij}d\bar{z}_i\partial_j$ and let $\nabla^E=d +\Theta^ldz_l$ (where we are summing over repeated indexes), then

\begin{align*}
\phi(t)\lrcorner\nabla^E(\phi(t)\lrcorner\nabla^Es)&=\phi(t)\lrcorner\nabla^E\left(\phi_{ij}d\bar{z}_i\left(\frac{\partial s^a}{\partial z_j}+\Theta^j_{ab}s^b\right)e_a\right)\\
&=\phi_{ml}\left(\frac{\partial \phi_{ij}}{\partial z_l}\frac{\partial s^a}{\partial z_j}+\phi_{ij}\frac{\partial^2 s^a}{\partial z_j\partial z_l}+\frac{\partial \phi_{ij}}{\partial z_l}\Theta^j_{ab}s^b\right)d\bar{z}_m\wedge d\bar{z}_i e_a+\\
&\quad+\phi_{ml}\left(\phi_{ij}\frac{\partial\Theta_{ab}^j}{\partial z_l}s^b+\phi_{ij}\Theta_{ab}^j\frac{\partial s^b}{\partial z_l} \right)d\bar{z}_m\wedge d\bar{z}_i e_a\\
&\quad+\phi_{ml} \Theta_{hk}^l\left(\phi_{ij}\frac{\partial s^k}{\partial z_j}+\phi_{ij}\Theta^j_{kb}s^b\right)d\bar{z}_m\wedge d\bar{z}_i e_h\\
&=\phi_{ml}\frac{\partial \phi_{ij}}{\partial z_l}\left(\frac{\partial s^a}{\partial z_j}+\Theta^j_{ab}s^b\right) d\bar{z}_m\wedge d\bar{z}_i e_a+\\
&\quad+\phi_{ml}\phi_{ij}\left(\frac{\partial\Theta_{ab}^j}{\partial z_l}+\Theta_{ak}^l\Theta^j_{kb}\right)s^b d\bar{z}_m\wedge d\bar{z}_i e_a
\end{align*}
where in the last step we notice that $\phi_{ml}\phi_{ij}\frac{\partial^2 s^a}{\partial z_j\partial z_l}d\bar{z}_m\wedge d\bar{z}_i e_a=0$. Finally we have
\begin{align*}
\phi_{ml}\frac{\partial \phi_{ij}}{\partial z_l}\left(\frac{\partial s^a}{\partial z_j}+\Theta^j_{ab}s^b\right) d\bar{z}_m\wedge d\bar{z}_i e_a&=2[\phi_{ml}d\bar{z}^m\partial_l ,\phi_{ij}d\bar{z}^i\partial_j]\lrcorner\left(\frac{\partial s^a}{\partial z_p}e_a+\Theta^p_{ab}s^be_a dz_p\right)\\
&=2\left[\phi(t),\phi(t)\right]\lrcorner \nabla^Es
\end{align*}
while, since $F_E$ is of type $(1,1)$, $\frac{\partial\Theta_{ab}^j}{\partial z_l}+\Theta_{ak}^l\Theta^j_{kb}=0$ and the claim is proved. 
\end{proof}
Recall the definition of $\bar{\partial}_{\mathbf{A}(E)}$, then we define the following operator:
\begin{align*}
\left[-,-\right]&\colon \Omega^{0,q}(\check{X},\mathbf{A}(E))\times\Omega^{0,r}(\check{X},\mathbf{A}(E))\to\Omega^{0,q+r}(\check{X},\mathbf{A}(E))\\
\left[(A(t),\phi(t)), (A'(t),\phi'(t))\right]&\defeq\left([A(t),A'(t)]+\phi(t)\lrcorner\nabla^EA'(t)+\phi'(t)\lrcorner\nabla^EA(t), [\phi(t),\phi'(t)]\right).
\end{align*}
It is possible to prove that $[-,-]$ is indeed a Lie bracket. Therefore deformations of a holomorphic pair $(\check{X},E)$ are governed by $(A(t),\phi(t))\in\Omega^{0,1}(\check{X},\mathbf{A}(E))$ which is a solution of the Maurer--Cartan equation
\begin{equation}\label{eq:MC}
\bar{\partial}_{\mathbf{A}(E)}(A(t),\phi(t)) +\frac{1}{2}\big[(A(t),\phi(t)),(A(t),\phi(t))\big]=0.
\end{equation}
These lead to the following definition:
\begin{definition}
The Kodaira-Spencer DGLA which governs deformations of a holomorphic pair $(\check{X},E)$ is defined as follows
\begin{equation}
\KS(\check{X},E)\defeq (\Omega^{0,\bullet}(\check{X},\mathbf{A}(E)),\bar{\partial}_{\mathbf{A}(E)},[-,-]).
\end{equation}
\end{definition}
The proof that $\KS(\check{X},E)$ is indeed a DGLA follows from the definitions of $\bar{\partial}_{\mathbf{A}(E)}$ and of the Lie bracket $[-,-]$. Moreover, it can be proved that $\KS(\check{X},E)$ does not depend on the choice of the hermitian metric $h_E$ (see Appendix A \cite{defpair}). Thus deformations of $\KS(\check{X},E)$ are defined by  
\begin{multline}
\Def_{\KS(\check{X},E)}(\C[\![t]\!])=\Big\lbrace (A(t),\phi(t))\in\Omega^{0,1}(\check{X},\mathbf{A}(E))[\![t]\!]\,\big\vert\\ \bar{\partial}_{\mathbf{A}(E)}(A(t),\phi(t))+\left[\left(A(t),\phi(t)\right),\left(A(t),\phi(t)\right)\right]=0\Big\rbrace/\text{gauge}
\end{multline}
where the gauge group acting on the set of solutions of \eqref{eq:MC} is defined by $(\Theta,h)\in\Omega^{0}(\check{X},\End E\oplus T^{1,0}\check{X})[\![ t ]\!]$ such that it acts on $(A,\phi)\in\Omega^{0,1}(\check{X},\End E\oplus T^{1,0}\check{X})[\![ t ]\!]$ as follows 
\begin{equation}
e^{(\Theta,h)}\ast (A,\phi)\defeq \left(A-\displaystyle\sum_{k=0}^\infty \frac{\ad_\Theta^k}{(k+1)!}(\bar{\partial} \Theta-[\Theta,A]), \phi-\displaystyle\sum_{k=0}^\infty \frac{\ad_h^k}{(k+1)!}(\bar{\partial} h-[h,\phi])\right).
\end{equation}



Let us now assume $(A(t),\phi(t))=\sum_{j\geq 1}(A_j,\phi_j)t^j$, then from the Maurer--Cartan equation \eqref{eq:MC} it is immediate that first order deformations $(A_1,\phi_1)\in\Omega^{0,1}(\check{X},T^{1,0}\check{X})$ are $\bar{\partial}_{\mathbf{A}(E)}$-closed $1$-forms hence they define a cohomology class $[(A_1,\phi_1)]\in H^1(\check{X},\mathbf{A}(E))$. Furthermore we claim that two isomorphic deformations differ by a $\bar{\partial}_{\mathbf{A}(E)}$-exact form. Indeed let $\E\to\X$ and $\E'\to\X'$ be two isomorphic families of deformations of $(\check{X},E)$ which are represented by $(A(t),\phi(t)),(A(t)',\phi(t)')\in\Omega^{0,1}(\check{X},\mathbf{A}(E))$ respectively. Then, for $t$ small enough, we denote by $f_t\colon \check{X}_t\to\check{X}_t'$ a one parameter family of diffeomorphisms of $\check{X}$ such that $f_0=\id_{\check{X}}$ and by $\Phi_t\colon E_t\to E_t'$ a one parameter family of bundle isomorphisms such that $\Phi_0=\id_E$ and they cover $f_t$, i.e. $\Phi_t\colon E_x\to E_{f_t(x)}$ for every $x\in\check{X}$ where $E_x$ is the fiber over $x$. In addition if $\overline{D}_t'$ and $\overline{D}_t$ denote the deformed complex structure of $E_t$ and $E_t'$ respectively, which are defined in terms of $(A(t),\phi(t))$ and $(A'(t),\phi'(t))$, then 
\begin{equation}
\label{eq:D_tPhi}
\overline{D}_t'\Phi_t=\Phi_t\overline{D}_t.
\end{equation} 
Let $x\in\check{X}$ and denote by $\mathcal{P}_{\gamma_x(t)}\colon E_x\to E_{f_t(x)}$ the parallel transport along $\gamma_x(t)=f_t(x)$ associated with the connection $\nabla^E$. Then we define $\Theta_t\defeq \mathcal{P}^{-1}_{\gamma_x(t)}\Phi_t\colon E_x\to E_x$. Since $\Phi_t$ is a holomorphic bundle isomorphism, it follows that $\Theta_t$ is a well defined holomorphic bundle isomorphism too and $\Theta_0=\id_E$. We claim that
\begin{equation}\label{eq:claim A}
(A_1',\phi_1')-(A_1,\phi_1)=\left(\bar{\partial}_E\Theta_1+h\lrcorner F_E, \bar{\partial}_{\check{X}}h\right)=\bar{\partial}_{\mathbf{A}(E)}(\Theta_1,h).
\end{equation}
Recall that two first order deformations of $\check{X}$ differ by a $\bar{\partial}$-exact form, i.e. there is $h\in\Omega^{0}(\check{X},T^{1,0}\check{X})$ such that $\bar{\partial}_{\check{X}}h=\phi_1'-\phi_1$. Let us now compute $A_1'-A_1$: from definition of $\overline{D}_t'$ and $\overline{D}_t$ we have
\begin{align*}
A_1'-A_1&=\frac{d}{dt}\left(\overline{D}_t'-\overline{D}_t\right)\Big\vert_{t=0}-\bar{\partial}_{\check{X}}h\lrcorner\nabla_E.
\end{align*}  
In addition, taking derivatives from \eqref{eq:D_tPhi} we get
\begin{align*}
\frac{d}{dt}\left(\overline{D}_t'\right)\Big\vert_{t=0}\Phi_0+\bar{\partial}_E\frac{d}{dt}\left(\Phi_t\right)\Big\vert_{t=0}=\frac{d}{dt}\left(\Phi_t\right)\Big\vert_{t=0}\bar{\partial}_E+\Phi_0\frac{d}{dt}\left(\overline{D}_t\right)\Big\vert_{t=0}
\end{align*}
hence $\frac{d}{dt}\left(\overline{D}_t'-\overline{D}_t\right)\Big\vert_{t=0}=\frac{d}{dt}\left(\Phi_t\right)\Big\vert_{t=0}\bar{\partial}_E- \bar{\partial}_E\frac{d}{dt}\left(\Phi_t\right)\Big\vert_{t=0}$. Furthermore, if $s_t$ is a holomorphic section of $E_t$, then $\Phi_ts_t'=\mathcal{P}_{\gamma(t)}\Theta_ts_t$ and 
\begin{align*}
\frac{d}{dt}\left(\Phi_ts_t\right)\Big\vert_{t=0}&=\frac{d}{dt}\left( \mathcal{P}_{\gamma (t)}\Theta_t s_t \right)\Big\vert_{t=0} \\
\frac{d}{dt}\left(\Phi_t\right)\Big\vert_{t=0}s_0+\frac{d}{dt}\left(s_t\right)\Big\vert_{t=0}&=-h\lrcorner\nabla^Es_0+\Theta_1s_0+\frac{d}{dt}\left(s_t\right)\Big\vert_{t=0}
\end{align*}
thus $\frac{d}{dt}\left(\Phi_t\right)\Big\vert_{t=0}s_0=-h\lrcorner\nabla^Es_0+\Theta_1s_0$. Collecting these results together we end up with 
\begin{align*}
(A_1'-A_1)s_0&=-\bar{\partial}_E\frac{d}{dt}\left(\Phi_t\right)\Big\vert_{t=0}s_0-\bar{\partial}_{\check{X}}h\lrcorner\nabla_Es_0\\
&=-\bar{\partial}_E\left(-h\lrcorner\nabla^Es_0+\Theta_1s_0\right)-\bar{\partial}_{\check{X}}h\lrcorner\nabla_Es_0\\
&=\bar{\partial}_{\check{X}}h\lrcorner\nabla^Es_0+h\lrcorner\bar{\partial}_E\nabla^Es_0+\bar{\partial}_E\Theta_1s_0-\bar{\partial}_{\check{X}}h\lrcorner\nabla_Es_0\\
&=h\lrcorner F_Es_0+\bar{\partial}_E\Theta_1s_0
\end{align*}  
and we have proved the claim \eqref{eq:claim A}. 
\begin{lemma}
There is a bijective correspondence between the first order deformations up to gauge equivalence and the first cohomology group $H^1(\check{X},\mathbf{A}(E))$.
\end{lemma}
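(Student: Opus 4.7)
The plan is to establish the correspondence $[(A_1,\phi_1)] \longleftrightarrow [(A_1,\phi_1)]_{H^1(\check X,\mathbf{A}(E))}$ by showing the natural map is well-defined, injective, and surjective; almost all the technical work has already been extracted in the discussion preceding the statement.

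First I would verify well-definedness. Substituting $(A(t),\phi(t)) = t(A_1,\phi_1) + O(t^2)$ into the Maurer--Cartan equation \eqref{eq:MC} and extracting the coefficient of $t$ gives $\bar{\partial}_{\mathbf{A}(E)}(A_1,\phi_1) = 0$, so every first-order deformation determines a class in $H^1(\check X,\mathbf{A}(E))$. The claim \eqref{eq:claim A} proved just above the statement shows that if $(A(t),\phi(t))$ and $(A'(t),\phi'(t))$ represent isomorphic deformations, their first-order terms differ by $\bar{\partial}_{\mathbf{A}(E)}(\Theta_1,h)$; the assignment therefore factors through isomorphism classes.

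Next I would handle injectivity. Suppose $(A_1,\phi_1)$ and $(A_1',\phi_1')$ define the same class, so $(A_1'-A_1,\,\phi_1'-\phi_1) = \bar{\partial}_{\mathbf{A}(E)}(\Theta,h) = (\bar{\partial}_E\Theta + h\lrcorner F_E,\,\bar{\partial}_{\check X}h)$ for some $(\Theta,h)\in\Omega^{0}(\check X,\mathbf{A}(E))$. This is precisely the first-order truncation of the infinitesimal gauge action: $e^{t(\Theta,h)} \ast t(A_1,\phi_1) = t(A_1,\phi_1) + t\,\bar{\partial}_{\mathbf{A}(E)}(\Theta,h) + O(t^2) = t(A_1',\phi_1') + O(t^2)$. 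Geometrically, integrating $h$ to a one-parameter family of diffeomorphisms $f_t$ of $\check X$ (via its flow) and setting $\Phi_t = \mathcal{P}_{\gamma(t)} \circ \exp(t\Theta)$, where $\mathcal{P}_{\gamma(t)}$ is parallel transport along $\gamma_x(t) = f_t(x)$ with respect to $\nabla^E$, produces the bundle isomorphism realizing the gauge equivalence; this is exactly the inverse of the construction used to prove \eqref{eq:claim A}. Surjectivity is then immediate: any $\bar{\partial}_{\mathbf{A}(E)}$-closed representative $(A_1,\phi_1)$ of a class in $H^1(\check X,\mathbf{A}(E))$ may be taken as the first-order term of a formal deformation $(A(t),\phi(t)) = t(A_1,\phi_1)$ modulo $t^2$, and the Maurer--Cartan equation is automatically satisfied at this order.

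The genuine subtlety lies in injectivity: one needs to exhibit an honest isomorphism of first-order deformations rather than an abstract gauge equivalence, and to check that the reverse of the computation in \eqref{eq:claim A} produces exactly the exact form $\bar{\partial}_{\mathbf{A}(E)}(\Theta,h)$. The Atiyah extension term $h\lrcorner F_E$ in the differential $\bar{\partial}_{\mathbf{A}(E)}$ is precisely what accounts for the failure of $\Phi_t$ to be literal parallel transport along $f_t$, so the matching of formulas is forced once $(\Theta,h)$ is interpreted as the first-order datum of $(\Phi_t,f_t)$. Everything else is a direct application of the Hodge-theoretic fact that $H^1(\check X,\mathbf{A}(E))$ is the quotient of closed by exact forms in $\Omega^{0,1}(\check X,\mathbf{A}(E))$.
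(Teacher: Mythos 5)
Your proposal is correct and follows the same line of reasoning the paper sketches in the paragraphs immediately preceding the lemma; the paper establishes well-definedness (closedness from the MC equation, and the exactness claim \eqref{eq:claim A}) and then states the bijection as a summary. You flesh out the injectivity direction — reconstructing the pair $(f_t,\Phi_t)$ from $(\Theta,h)$ by integrating $h$ to a flow and composing parallel transport with $\exp(t\Theta)$ — which is precisely the inverse of the paper's construction of $\Theta_t=\mathcal{P}^{-1}_{\gamma_x(t)}\Phi_t$, and your remark that the $h\lrcorner F_E$ term in $\bar{\partial}_{\mathbf{A}(E)}$ accounts for the discrepancy between $\Phi_t$ and bare parallel transport is the right justification for why the formulas match.
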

 
The obstruction to extend the solution of Maurer--Cartan to higher order in the formal parameter $t$ is encoded in the second cohomology $H^2(\check{X},\mathbf{A}(E))$. In particular, let us fix an hermintian metric $g_{\mathbf{A}(E)}$ on $\mathbf{A}(E)$, then the Kuranishi method applies:
\begin{prop}
Let $\eta=\sum_{j=1}^m\eta_jt_j\in\mathbb{H}^1(\mathbf{A}(E))$ be a harmonic form, where $\lbrace\eta_1,...\eta_m\rbrace$ is a basis for $\mathbb{H}^1(\check{X})$. There exists a unique solution $(A(t),\phi(t))\in\Omega^{0,1}(\check{X},\mathbf{A}(E))$ of
\[(A(t),\phi(t))= \eta +\bar{\partial}^*_{\mathbf{A}(E)}G([(A(t),\phi(t)),(A(t),\phi(t)))] \]
In addition it solves the Maurer--Cartan equation \eqref{eq:MC} if and only if \[\mathbf{H}([(A(t),\phi(t)),(A(t),\phi(t))])=0\] where $\mathbf{H}$ is the harmonic projection on $\mathbb{H}(\mathbf{A}(E))$ and $G$ is the Green operator of the Laplacian $\Delta_{\bar{\partial}_{\mathbf{A}(E)}}=\bar{\partial}_{\mathbf{A}(E)}\bar{\partial}_{\mathbf{A}(E)}^{*_{g_{\mathbf{A}(E)}}}+\bar{\partial}_{\mathbf{A}(E)}^{*_{g_{\mathbf{A}(E)}}}\bar{\partial}_{\mathbf{A}(E)}$.
\end{prop}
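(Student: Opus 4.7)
\emph{Plan.} The strategy is to run the classical Kuranishi argument already carried out in the excerpt for $\KS(\check{X})$, now in the larger DGLA $\KS(\check{X},E) = \bigl(\Omega^{0,\bullet}(\check{X},\mathbf{A}(E)),\bar{\partial}_{\mathbf{A}(E)},[-,-]\bigr)$. Since $\bar{\partial}_{\mathbf{A}(E)}$ is an ordinary Dolbeault operator on the holomorphic vector bundle $\mathbf{A}(E)$, elliptic Hodge theory applies verbatim: one has $\id = \mathbf{H} + \Delta_{\bar{\partial}_{\mathbf{A}(E)}} G$, both $\bar{\partial}_{\mathbf{A}(E)}$ and $\bar{\partial}^*_{\mathbf{A}(E)}$ commute with $G$, and the same elliptic H\"older estimates used in the single-manifold case remain available.

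\emph{Existence and uniqueness.} I would define the nonlinear operator
\[
T(\psi) := \eta + \bar{\partial}^*_{\mathbf{A}(E)}\, G\bigl([\psi,\psi]\bigr)
\]
on a small ball in $\Omega^{0,1}(\check{X},\mathbf{A}(E))$ with respect to a suitable H\"older norm $\|\cdot\|_{k,\alpha}$. Since $\bar{\partial}^*_{\mathbf{A}(E)} G$ gains one derivative, the quadratic perturbation makes $T$ a contraction once $\eta$ is sufficiently small, and the Banach fixed point theorem produces a unique solution $(A(t),\phi(t))$ of the implicit equation depending holomorphically on $t$. Equivalently, one can expand $(A(t),\phi(t)) = \sum_{k \geq 1}(A_k,\phi_k)\, t^k$, recursively determine the coefficients order by order from the implicit equation, and prove convergence of the formal power series by the same H\"older estimates invoked in the classical argument.

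\emph{Equivalence with the Maurer--Cartan condition.} Assume first that $(A(t),\phi(t))$ solves the Maurer--Cartan equation \eqref{eq:MC}. Substitute the implicit equation into the Hodge decomposition of $[(A,\phi),(A,\phi)]$,
\[
[(A,\phi),(A,\phi)] = \mathbf{H}\bigl([(A,\phi),(A,\phi)]\bigr) + \bar{\partial}^*_{\mathbf{A}(E)}\bar{\partial}_{\mathbf{A}(E)} G\bigl([(A,\phi),(A,\phi)]\bigr) + \bar{\partial}_{\mathbf{A}(E)}\bar{\partial}^*_{\mathbf{A}(E)} G\bigl([(A,\phi),(A,\phi)]\bigr),
\]
and simplify using MC to isolate the harmonic part, which is forced to vanish. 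Conversely, set
\[
\Psi(t) := \bar{\partial}_{\mathbf{A}(E)}(A(t),\phi(t)) + \tfrac{1}{2}\bigl[(A(t),\phi(t)),(A(t),\phi(t))\bigr].
\]
Applying $\bar{\partial}_{\mathbf{A}(E)}$ to the implicit equation, using the graded Jacobi identity, the commutations of $G$ with both $\bar{\partial}_{\mathbf{A}(E)}$ and $\bar{\partial}^*_{\mathbf{A}(E)}$, and the hypothesis $\mathbf{H}([(A,\phi),(A,\phi)]) = 0$, leads to the fixed-point identity
\[
\Psi(t) = \bar{\partial}^*_{\mathbf{A}(E)}\, G\bigl([\Psi(t),(A(t),\phi(t))]\bigr).
\]
The same H\"older estimate $\|\Psi(t)\|_{k,\alpha} \leq C\, \|\Psi(t)\|_{k,\alpha}\, \|(A(t),\phi(t))\|_{k,\alpha}$ used in the proof of the single-manifold Kuranishi lemma, combined with $\|(A(t),\phi(t))\|_{k,\alpha}$ being small for small $|t|$, forces $\Psi(t) \equiv 0$.

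\emph{Main obstacle.} The genuinely new ingredient compared with $\KS(\check{X})$ is that the pair bracket mixes the $\End E$ and $T^{1,0}\check{X}$ components through $\nabla^E$ and the Atiyah class $F_E$. The delicate point is to verify that $\bar{\partial}_{\mathbf{A}(E)}$ is a graded derivation for this bracket, so that the graded Jacobi identity produces the cancellations required in the derivation of the fixed-point identity for $\Psi(t)$; this is precisely the DGLA property of $\KS(\check{X},E)$ recalled from Chan--Suen, and once it is granted the remaining analytic steps are line-by-line identical to the single-manifold Kuranishi argument already reproduced in the excerpt.
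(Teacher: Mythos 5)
Your proposal is correct and matches the route the paper takes: the paper does not actually spell out a proof of this proposition but simply invokes ``the Kuranishi method applies,'' i.e.\ the proof of the manifold case Lemma carries over verbatim once one has elliptic Hodge theory for the holomorphic bundle $\mathbf{A}(E)$ and the DGLA axioms (in particular the graded Leibniz rule for $\bar{\partial}_{\mathbf{A}(E)}$ with respect to the pair bracket). You correctly isolate exactly that Leibniz/Jacobi verification as the only new ingredient relative to $\KS(\check{X})$, which is what justifies the paper's shorthand.

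One small remark worth flagging: as stated in the paper the implicit equation reads
\[
(A(t),\phi(t)) = \eta + \bar{\partial}^*_{\mathbf{A}(E)} G\bigl([(A(t),\phi(t)),(A(t),\phi(t))]\bigr),
\]
without the factor $-\tfrac{1}{2}$ that appears in the single-manifold version $\phi=\eta-\tfrac{1}{2}\bar{\partial}^*G([\phi,\phi])$; for the ``only if'' direction to match the Maurer--Cartan equation \eqref{eq:MC} (which does carry the $\tfrac{1}{2}$) one should restore that coefficient. Your derivation of the fixed-point identity $\Psi(t)=\bar{\partial}^*_{\mathbf{A}(E)}G([\Psi(t),(A(t),\phi(t))])$ and the ensuing H\"older-norm contradiction is exactly the argument the paper uses for $\KS(\check{X})$, so once the constant is corrected the two proofs are the same.
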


As a consequence of Bogomolov--Tian--Todorov theorem the following result has been proved in \cite{defpair}:
\begin{theorem}[Proposition 7.7 \cite{defpair}]
Let $\check{X}$ be a compact Calabi--Yau surface and let $E$ be a holomorphic vector bundle on $\check{X}$ such that $c_1(E)\neq 0$ and the trace free second cohomology vanishes $H^2(\check{X},\End_0 E)=0$. Then infinitesimal deformations of the pair $(\check{X},E)$ are unobstructed.
\end{theorem}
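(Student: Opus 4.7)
The plan is to show, via the Atiyah extension
\begin{equation*}
0\lra\End E\lra\mathbf A(E)\lra T^{1,0}\check X\lra 0,
\end{equation*}
that the obstruction group $H^2(\check X,\mathbf A(E))$ is detected by its image in $H^2(\check X,T^{1,0}\check X)$, and then to kill the obstructions order by order in the Kuranishi iteration by invoking the Bogomolov--Tian--Todorov theorem for the underlying Calabi--Yau surface $\check X$.

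To establish the required injectivity, I would write out the relevant fragment of the associated long exact sequence
\begin{equation*}
H^1(T^{1,0}\check X)\xrightarrow{\,\cup\,\at(E)\,}H^2(\End E)\lra H^2(\mathbf A(E))\lra H^2(T^{1,0}\check X)\xrightarrow{\,\cup\,\at(E)\,}H^3(\End E),
\end{equation*}
in which the connecting homomorphism is cup product with the Atiyah class $\at(E)\in H^1(\Omega^1_{\check X}\otimes\End E)$. Splitting $\End E=\End_0 E\oplus\O_{\check X}$ via the trace and invoking the hypothesis $H^2(\check X,\End_0 E)=0$ gives $H^2(\End E)\isom H^2(\O_{\check X})$, while under this identification the first connecting map becomes cup product with $\tr\,\at(E)=c_1(E)\in H^1(\Omega^1_{\check X})$. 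Since $\check X$ is a Calabi--Yau surface, the trivialization $K_{\check X}\isom\O_{\check X}$ combined with Serre duality makes the pairing $H^1(T^{1,0}\check X)\otimes H^1(\Omega^1_{\check X})\to H^2(\O_{\check X})$ non-degenerate; the assumption $c_1(E)\neq 0$ then forces $\cup\,c_1(E)\colon H^1(T^{1,0}\check X)\to H^2(\O_{\check X})$ to be surjective, so that the arrow $H^2(\mathbf A(E))\to H^2(T^{1,0}\check X)$ is indeed injective.

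With this reduction in hand I would run the Kuranishi iteration for $\KS(\check X,E)$: given a formal solution $(A(t),\phi(t))$ of the Maurer--Cartan equation \eqref{eq:MC} modulo $t^{k+1}$, the obstruction class $o_{k+1}\in H^2(\mathbf A(E))$ projects, via the induced map $\mathbf A(E)\to T^{1,0}\check X$, to the classical obstruction in $H^2(T^{1,0}\check X)$ to extending the underlying deformation $\phi(t)$ of the complex structure of $\check X$. By the Bogomolov--Tian--Todorov theorem, together with the Tian--Todorov lemma and the holomorphic volume form $\Omega_{\check X}$, one can normalize $\phi_{k+1}$ so that this projected obstruction vanishes; the injectivity established above then forces $o_{k+1}=0$, and the Kuranishi equation recalled in the previous proposition produces a compatible $A_{k+1}$. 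Iterating yields a formal solution of \eqref{eq:MC} to all orders, and the family of pairs $(\check X_t,E_t)$ over the Kuranishi base is smooth.

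The step I expect to be the main obstacle is verifying that the BTT normalisation on $\phi(t)$ (typically $\bar\partial^{*}\phi(t)=0$ with $\phi(t)\lrcorner\Omega_{\check X}$ being $\partial$-exact) is genuinely compatible with the Kuranishi gauge used to fix $A(t)$ at each order: one must check that the cross term $\phi\lrcorner\nabla^E A$ in the bracket on $\mathbf A(E)$ does not introduce a new harmonic contribution to $H^2(\End E)$ beyond what is already absorbed by the surjectivity of $\cup\,c_1(E)$. This cohomological bookkeeping should reduce to an $\mathbf A(E)$-valued extension of the $\partial\bar\partial$-lemma argument at the heart of BTT, after which convergence of the resulting formal series is controlled by the same Hölder estimates as in the original Kuranishi proof \cite{Kuranishi}.
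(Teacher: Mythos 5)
Your cohomological reduction via the Atiyah sequence is correct and is the real content of the proof: with $H^2(\check X,\End_0 E)=0$ one has $H^2(\check X,\End E)\isom H^2(\check X,\O_{\check X})$, and on a Calabi--Yau surface the trivialisation $K_{\check X}\isom\O_{\check X}$ together with the nondegeneracy of the intersection form on $H^{1,1}(\check X)$ shows that $\cup\,c_1(E)$ is nonzero, hence surjective onto the one-dimensional $H^2(\O_{\check X})$; since $H^3$ vanishes on a surface, the map $H^2(\check X,\mathbf A(E))\to H^2(\check X,T^{1,0}\check X)$ is in fact an isomorphism, not merely injective. Two remarks on your second step. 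First, a small slip: the obstruction at order $k+1$ is computed from $\phi_1,\dots,\phi_k$ alone, so it is those that one normalises in the Bogomolov--Tian--Todorov scheme, not $\phi_{k+1}$. Second, and more substantively, the ``main obstacle'' you flag dissolves if you argue at the level of deformation functors rather than harmonic representatives. The projection $\mathbf A(E)\to T^{1,0}\check X$ induces a morphism of DGLAs $\KS(\check X,E)\to\KS(\check X)$, hence a natural transformation $\Def_{\KS(\check X,E)}\to\Def_{\KS(\check X)}$ whose induced map on obstruction spaces is precisely the injection you constructed. Obstruction classes are functorial under DGLA morphisms, and Bogomolov--Tian--Todorov makes $\Def_{\KS(\check X)}$ smooth, so every obstruction class for the pair maps to zero under an injective map and therefore vanishes. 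No separate compatibility check between the BTT normalisation of $\phi(t)$ and a Kuranishi gauge for $A(t)$, and no analysis of the cross term $\phi\lrcorner\nabla^E A$, is needed. As far as I can tell this functorial route is what the cited Chan--Suen reference does; the present paper quotes the statement without reproducing a proof, so there is no in-paper argument to compare against beyond the cohomological setup you have correctly identified.
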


\section{Scattering diagrams}\label{sec:scattering}

Scattering diagrams have been introduced by Kontsevich--Soibelman in \cite{KS} and they usually encode a combinatorial structure. Naively we may define scattering diagrams as a bunch of co-dimension one subspace in $\R^n$ decorated with some automorphisms. In our application we will restrict to $\R^2$ and the automorphisms group will be a generalization of the group of formal automorphisms of an algebraic torus $\C^*\times\C^*$. Let us first introduce some notation: 
\begin{notation}
Let $\Lambda\simeq\Z^2$ be a rank two lattice and choose $e_1$ and $e_2$ being a basis for $\Lambda$. Then the group ring $\mathbb{C}[\Lambda]$ is the ring of Laurent polynomial in the variable $z^m$, with the convention that $z^{e_1}=:x$ and $z^{e_2}=:y$. 
\end{notation}

We define the Lie algebra $\mathfrak{g}$ as follows:
\begin{equation}
\mathfrak{g}\defeq\mathsf{m}_t\big(\mathbb{C}[\Lambda]{\otimes}_{\mathbb{C}}\C[\![ t ]\!]\big)\otimes_{\mathbb{Z}} \Lambda^*
\end{equation}
where every $n\in\Lambda^*$ is associated to a derivation $\partial_n$ such that $\partial_n(z^m)=\langle m,n\rangle z^m$ and the natural Lie bracket on $\mathfrak{g}$ is 
\begin{equation}
[z^m\partial_n,z^{m'}\partial_{n'}]\defeq z^{m+m'}\partial_{\langle m',n\rangle n'-\langle m, n'\rangle n}.
\end{equation}
In particular $\mathfrak{g}$ has a Lie sub-algebra $\mathfrak{h}\subset\mathfrak{g}$ defined by:
\begin{equation}
\mathfrak{h}\defeq\bigoplus_{m\in \Lambda\smallsetminus\lbrace0\rbrace}z^m\cdot\big(\mathsf{m}_t\otimes m^{\perp}\big),
\end{equation}
where $m^{\perp}\in\Lambda^*$ is identified with the derivation $\partial_n$ and $n$ the unique primitive vector such that $\langle m,n\rangle=0$ and it is positive oriented according with the orientation induced by $\Lambda_{\mathbb{R}}\defeq\Lambda\otimes_{\Z}\mathbb{R}$. 
\begin{definition}
The tropical vertex group $\mathbb{V}$ is the sub-group of $\Aut_{\C[\![ t ]\!]}\big(\mathbb{C}[\Lambda]{\otimes}_{\mathbb{C}}\C[\![ t ]\!]\big)$, such that $\mathbb{V}\defeq\exp(\mathfrak{h})$. The product on $\mathbb{V}$ is defined by the Baker-Campbell-Hausdorff (BCH) formula, namely  
\begin{equation}
g\circ g'=\exp(h)\circ \exp(h')\defeq\exp(h\bullet h')=\exp(h+h'+\frac{1}{2}[h,h']+\cdots)
\end{equation}
where $g=\exp(h), g'=\exp(h')\in\mathbb{V}$.  
\end{definition}
The tropical vertex group has been introduced by Kontsevich--Soibelman in \cite{KS} and in the simplest case its elements are formal one parameter families of symplectomorphisms of the algebraic torus $\C^*\times\C^*=\Spec\C[x,x^{-1},y,y^{-1}]$ with respect to the holomorphic symplectic form $\omega=\frac{dx}{x}\wedge \frac{dy}{y}$. Indeed let $f_{(a,b)}=1+tx^ay^b\cdot g(x^ay^b,t)\in\C[\Lambda]\otimes_\C\C[\![t]\!]$ for some $(a,b)\in\Z^2$ and for a polynomial $g(x^ay^b,t)$, then $\theta_{(a,b),f_{(a,b)}}$ is defined as follows:
\begin{equation}
\theta_{(a,b),f_{(a,b)}}(x)=f^{-b}x \qquad \theta_{(a,b),f_{(a,b)}}(y)=f^ay.
\end{equation}
In particular $\theta_{(a,b),f_{(a,b)}}^*\omega=\omega$. 

We can now state the definition of scattering diagrams according to \cite{GPS}: \begin{definition}[Scattering diagram]\label{def:scattering}
A scattering diagram $\mathfrak{D}$ is a collection of \textit{walls} $\textsf{w}_i=(m_i,\mathfrak{d}_i,\theta_i)$, where 
\begin{itemize}
    \item $m_i\in \Lambda$,
    \item $\mathfrak{d}_i$ can be either a \textit{line} through $\xi_0$, i.e. $\mathfrak{d}_i=\xi_0-m_i\mathbb{R}$ or a \textit{ray} (half line) $\mathfrak{d}_i=\xi_0-m_i\mathbb{R}_{\geq 0}$,
    \item $\theta_i\in\mathbb{V}$ is such that $\log(\theta_i)=\sum_{j,k}a_{jk}t^j z^{km_i}\partial_{n_i}$. 
\end{itemize}
Moreover for any $k>0$ there are finitely many $\theta_i$ such that $\theta_i\not\equiv 1$ mod $t^k$.
\end{definition}
 
As an example, the scattering diagram \[\mathfrak{D}=\lbrace \mathsf{w}_1=\big(m_1=(1,0), \mathfrak{d}_1=m_1\R, \theta_1\big), \mathsf{w}_2=\big(m_2=(0,1), \mathfrak{d}_2=m_2\R, \theta_2\big)\rbrace\] can be represented as if figure \ref{fig:D1}.
\begin{figure}[h]
\center
\begin{tikzpicture}
\draw (2,0) -- (2,4);
\draw (0,2) -- (4,2);
\node [below right, font=\tiny] at (2,2) {0};
\node [below right, font=\tiny] at (2,2) {0};
\node [font=\tiny,above right] at (4,2) {$\theta_1$};
\node [font=\tiny,above left] at (2,4) {$\theta_2$};
\end{tikzpicture}
\caption{A scattering diagram with only two walls $\mathfrak{D}=\lbrace\mathsf{w}_1,\mathsf{w}_2\rbrace$}
\label{fig:D1}
\end{figure}
 
Denote by $\text{Sing}(\mathfrak{D})$ the singular set of $\mathfrak{D}$: 
\[\text{Sing}(\mathfrak{D})\defeq\bigcup_{\mathsf{w}\in\mathfrak{D}}\mathsf{\partial}\mathfrak{d}_\mathsf{w}\cup\bigcup_{\mathsf{w}_1,\mathsf{w}_2}\mathfrak{d}_{\mathsf{w}_1}\cap \mathfrak{d}_{\mathsf{w}_2}\] where $\partial \mathfrak{d}_\mathsf{w}=\xi_0$ if $\mathfrak{d}_{\mathsf{w}}$ is a ray and zero otherwise. There is a notion of ordered product for the automorphisms associated to each lines of a given scattering diagram, and it is defined as follows:
\begin{definition}[Path ordered product]\label{def:pathorderedprod}
Let $\gamma:[0,1]\rightarrow \Lambda\otimes_{\Z}\R\setminus\text{Sing}(\mathfrak{D})$ be a smooth immersion with starting point that does not lie on a ray of the scattering diagram $\mathfrak{D}$ and such that it intersects transversally the rays of $\mathfrak{D}$ (as in figure \ref{fig:Dloop}). For each power $k>0$, there are times $0< \tau_{1}\leq\cdots\leq\tau_{s} <1$ and rays $\mathfrak{d}_i\in\mathfrak{D}$ such that $\gamma(\tau_j)\cap \mathfrak{d}_j\neq 0$. Then, define $\Theta_{\gamma,\mathfrak{D}}^k\defeq\prod_{j=1}^s\theta_{j}$. The path ordered product is given by: 
\begin{equation}
\Theta_{\gamma,\mathfrak{D}}\defeq\lim_{k\rightarrow\infty}\Theta_{\gamma,\mathfrak{D}}^k
\end{equation}
\end{definition}

\begin{figure}[h]
\begin{tikzpicture}
\draw[thick] (2,0) -- (2,4);
\draw[thick] (0,2) -- (4,2);
\node [below right, font=\tiny] at (2,2) {0};
\node [font=\small, right] at (4,2) {$\theta_1$};
\node [font=\small, right] at (2,4) { $\theta_2$};
\node [font=\small,left] at (0,2) {$\theta_1^{-1}$};
\node [font=\small, right] at (2,0) {$\theta_2^{-1}$};
\draw[red, thick] (2,2) -- (4,4);
\node [red, font=\small, right] at (4,4) {$\theta_m$ };
\draw [-latex, blue] (3,1) arc (-45:310:1.41);
\node [blue, below left, font=\tiny] at (1,1) {$\gamma$};
\end{tikzpicture}
\caption{$\Theta_{{\gamma}, \mathfrak{D}^{\infty}}=\theta_{1}\circ{\theta_{m}}\circ\theta_{2}\circ\theta_{1}^{-1}\circ\theta_{2}^{-1}$}
\label{fig:Dloop}
\end{figure}
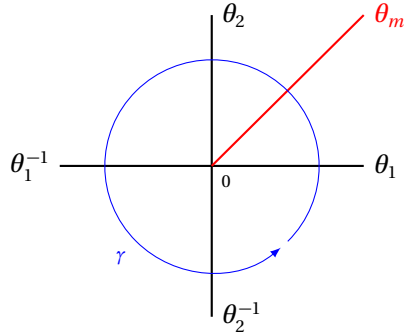

\begin{definition}[Consistent scattering diagram]A scattering diagram $\mathfrak{D}$ is \textit{consistent} if for any closed path $\gamma$ intersecting $\mathfrak{D}$ generically, $\Theta_{\gamma,\mathfrak{D}}=\id_{\mathbb{V}}$. 
\end{definition}
The following theorem by Kontsevich and Soibelman is an existence (and uniqueness) result of consistent scattering diagrams: 
\begin{theorem}[\cite{KS}]\label{thm:KS}
Let $\mathfrak{D}$ be a scattering diagram with two non parallel walls. There exists a unique minimal scattering diagram\footnote{The diagram is minimal meaning that we do not consider rays with trivial automorphisms and no rays with the same support.} $\mathfrak{D}^{\infty}\supseteq\mathfrak{D}$ such that $\mathfrak{D}^{\infty}\setminus\mathfrak{D}$ consists only of rays, and it is {consistent}.
\end{theorem}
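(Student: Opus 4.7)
The plan is to construct $\mathfrak{D}^\infty$ inductively on the $t$-adic order, exploiting the pro-nilpotency of $\mathbb{V} = \exp(\mathfrak{h})$ with respect to the $t$-adic filtration. At each stage $k \geq 1$, I would build a scattering diagram $\mathfrak{D}_k \supseteq \mathfrak{D}$ by adjoining finitely many outgoing rays from $\xi_0$, such that
\begin{equation*}
\Theta_{\gamma,\mathfrak{D}_k} \equiv \id \pmod{t^{k+1}}
\end{equation*}
for every small loop $\gamma$ around $\xi_0$. Since all new rays emanate from $\xi_0$, any two rays meet only at $\xi_0$ (parallel rays in the same direction may be combined by taking the BCH product of their attached automorphisms), so the only singular point is $\xi_0$ itself, and consistency of $\mathfrak{D}^\infty \defeq \bigcup_k \mathfrak{D}_k$ reduces to the displayed condition. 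The base case $k=1$ is immediate: since $\log \theta_1, \log \theta_2 \in t\mathfrak{h}$, their commutator lies in $t^2 \mathfrak{h}$, so $\mathfrak{D}_1 = \mathfrak{D}$ truncated modulo $t^2$ is already consistent.

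For the inductive step, assuming $\mathfrak{D}_k$ has been constructed, I would write $\Theta_{\gamma,\mathfrak{D}_k} = \exp(\Psi)$ with $\Psi \in t^{k+1}\mathfrak{h}$ modulo $t^{k+2}$ and decompose the leading term according to the $\Lambda$-grading of $\mathfrak{h}$:
\begin{equation*}
\Psi \equiv \sum_{m \in \Lambda \setminus \{0\}} c_m \, t^{k+1} z^m \partial_{n_m} \pmod{t^{k+2}},
\end{equation*}
a finite sum with $n_m \in m^\perp$ primitive. For each $m$ with $c_m \neq 0$, I would adjoin an outgoing ray $\mathsf{w}_m = \left(m, \xi_0 - m\R_{\geq 0}, \theta_m\right)$ with $\log \theta_m \equiv -c_m t^{k+1} z^m \partial_{n_m} \pmod{t^{k+2}}$, with signs adjusted according to the orientation of $\gamma$ and the direction in which $\gamma$ crosses $\mathsf{w}_m$. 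Because all the new log-automorphisms lie in $t^{k+1}\mathfrak{h}$, they commute pairwise modulo $t^{k+2}$, so their insertions into the path-ordered product along $\gamma$ multiply (in any order) to $\exp(-\Psi)$, cancelling the obstruction. This produces $\mathfrak{D}_{k+1}$ with $\Theta_{\gamma,\mathfrak{D}_{k+1}} \equiv \id \pmod{t^{k+2}}$.

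For uniqueness: if $\mathfrak{D}'$ is another minimal consistent extension of $\mathfrak{D}$, then working inductively modulo $t^{k+1}$, the graded components $c_m t^{k+1} z^m \partial_{n_m}$ of the obstruction $\Psi$ uniquely dictate the rays to be adjoined, because a ray in direction $-m'\R_{\geq 0}$ contributes to the degree-$m'$ piece of $\mathfrak{h}$ only. Together with minimality, this forces $\mathfrak{D}'_{k+1} = \mathfrak{D}_{k+1}$. The main subtlety, and the hard part of the argument, is to verify that the obstruction $\Psi$ at each stage genuinely lies in the Lie subalgebra $\mathfrak{h} \subset \mathfrak{g}$, so that its graded components correspond to admissible ray data. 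This reduces to checking that $\mathfrak{h}$ is closed under the bracket of $\mathfrak{g}$: given $z^m \partial_n, z^{m'} \partial_{n'} \in \mathfrak{h}$ with $\langle m, n\rangle = 0$ and $\langle m', n'\rangle = 0$, the bracket equals $z^{m+m'} \partial_{\langle m',n\rangle n' - \langle m, n'\rangle n}$, and one computes
\begin{equation*}
\langle m+m', \langle m', n\rangle n' - \langle m, n'\rangle n\rangle = \langle m', n\rangle\langle m', n'\rangle - \langle m, n'\rangle\langle m, n\rangle = 0,
\end{equation*}
so the result lies in the degree-$(m+m')$ piece of $\mathfrak{h}$, as needed.
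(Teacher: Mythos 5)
Your proof is correct and follows essentially the same strategy as the paper's: induction on the $t$-adic order, at each stage decomposing the obstruction $\Theta_{\gamma,\mathfrak{D}_k}$ by the $\Lambda$-grading of $\mathfrak{h}$ and adjoining rays with the negated log-automorphisms to cancel it. You make a few things more explicit than the paper does — the pro-nilpotency underlying the base case, the uniqueness argument via the grading, and the check that $\mathfrak{h}$ is closed under the bracket of $\mathfrak{g}$ — but the underlying mechanism is identical.
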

\begin{proof}
Let $\mathfrak{D}_0=\mathfrak{D}$ and let $\mathfrak{D}_k$ be the order $k$ scattering diagram obtained from $\mathfrak{D}_{k-1}$ adding only rays emanating from $p\in\text{Sing}(\mathfrak{D}_{k-1})$ such that for any generic loop around $p$, $\Theta_{\gamma,\mathfrak{D}_k}=\id$ mod $t^{k+1}$.  
The proof goes by induction on $k$, and we need to prove that there exists a scattering diagram $\mathfrak{D}_k$. Assume $\mathfrak{D}_{k-1}$ satisfies the inductive assumption, then $\mathfrak{D}_k$ is constructed as follows: let $p\in\text{Sing}(\mathfrak{D}_{k-1})$ and compute $\Theta_{\gamma_p,\mathfrak{D}_{k-1}}$ for a generic loop $\gamma_p$ around $p$. By inductive assumption $\Theta_{\gamma_p,\mathfrak{D}_{k-1}}=\exp(\sum_{i=1}^s a_i(p)z^{m_i}\partial_{n_i})$ mod $t^{k+1}$, for some $a_i\in\C[\![t]\!]/(t)^k$, $m_i\in\Lambda$ and $\langle m_i,n_i\rangle=0$. Hence \[\mathfrak{D}_{k}=\mathfrak{D}_{k-1}\cup\bigcup_{p\in\text{Sing}(\mathfrak{D}_{k-1})}\left\lbrace \left(\mathfrak{d}_i=p+m_{i}\R_{\geq 0}, \theta_{i}=\exp(-a_i(p)z^{m_i}\partial_{n_i})\right)\vert i=1,...,s\right\rbrace.\] Notice that we have to chose the opposite sign for the automorphisms of $\mathfrak{D}_{k}$ in order to cancel the contribution from $\Theta_{\gamma_p,\mathfrak{D}_{k-1}}$.         
\end{proof}
There are examples in which the final configuration of rays (for which the diagram is consistent) it is known explicitly. For instance let $\mathfrak{D}$ as in Figure \ref{fig:D1} with automorphisms $\theta_1,\theta_2\in\mathbb{V}$ 
\begin{align*}
\theta_1\colon & x\to x& \theta_2\colon & x\to x/(1+ty) \\
 &y\to y(1+tx) &  &y\to y
\end{align*}
Then the consistent scattering diagram $\mathfrak{D}^{\infty}$ consists of one more ray $\mathfrak{d}=(1,1)$ with automorphism $\theta$ such that
\begin{align*}
\theta_m\colon& x\to x/(1+t^2xy)\\
& y\to y(1+t^2xy)
\end{align*}
as it is represented in Figure \ref{fig:Dloop}. 
\subsection{Extension of the tropical vertex group}\label{sec:extended tropical vertex}

In \cite{MCscattering} the authors prove that some elements of the gauge group acting on $\Omega^{0,1}(\check{X},T^{1,0}\check{X})$ can be represented as elements of the tropical vertex group $\mathbb{V}$. Here we are going to define an extension of the Lie algebra $\mathfrak{h}$, which will be related with the infinitesimal generators of the gauge group acting on $\Omega^{0,1}(\check{X},\End E\oplus T^{1,0}\check{X})$.
Let $\mathfrak{gl}(r,\mathbb{C})$ be the Lie algebra of the Lie group $\GL(r,\mathbb{C})$, then we define 
\begin{equation}
\tilde{\mathfrak{h}}\defeq\bigoplus_{m\in \Lambda\smallsetminus\lbrace 0\rbrace}\C z^{m}\cdot\left(\mathsf{m}_t\mathfrak{gl}(r,\mathbb{C})\oplus \left(\mathsf{m}_t\otimes m^{\perp}\right)\right). 
\end{equation}
\begin{lemma}\label{lem:dgLa}
$\Big(\tilde{\mathfrak{h}} , [\cdot ,\cdot]_{\sim}\Big)$ is a Lie algebra, where the bracket $[\cdot,\cdot]_{\sim}$ is defined by: 
\begin{equation}\label{eq:Liebracket}
[(A,\partial_n)z^m , (A',\partial_{n'})z^{m'} ]_{\sim}\defeq([A,A']_{\mathfrak{gl}}z^{m+m'}+A'\langle m',n\rangle z^{m+m'}- A\langle m,n'\rangle z^{m+m'}, [z^m\partial_n,z^{m'}\partial_{n'}]_{\mathfrak{h}} ).
\end{equation}
\end{lemma}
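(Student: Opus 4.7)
My plan is to exhibit $(\tilde{\mathfrak{h}},[\cdot,\cdot]_\sim)$ as the semi-direct product of two explicit Lie algebras, so that the three Lie algebra axioms will follow from the standard semi-direct product construction. First I would introduce
\[
V \defeq \bigoplus_{m \in \Lambda\setminus\{0\}} \C z^m \cdot \mathsf{m}_t \mathfrak{gl}(r,\C), \qquad [A z^m, A' z^{m'}]_V \defeq [A,A']_{\mathfrak{gl}}\, z^{m+m'},
\]
which is manifestly a Lie algebra, being $\mathfrak{gl}(r,\C)$ tensored with the commutative algebra $\C[\Lambda]$. Under the obvious identification one has $\tilde{\mathfrak{h}} \simeq V \oplus \mathfrak{h}$ as $\mathsf{m}_t$-modules, and $(\mathfrak{h},[\cdot,\cdot]_{\mathfrak{h}})$ is already known to be a Lie algebra as a Lie subalgebra of $\mathfrak{g}$.

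Next I would define the linear map $\rho \colon \mathfrak{h} \to \End(V)$ by
\[
\rho(z^m \partial_n)(A z^{m'}) \defeq \langle m', n\rangle \, A z^{m+m'}
\]
and verify that the bracket $[\cdot,\cdot]_\sim$ coincides, under the identification above, with the standard semi-direct product bracket
\[
[(v_1,x_1),(v_2,x_2)] = \bigl([v_1,v_2]_V + \rho(x_1)v_2 - \rho(x_2)v_1,\; [x_1,x_2]_{\mathfrak{h}}\bigr).
\]
Granted this matching, the Lie algebra axioms for $\tilde{\mathfrak{h}}$ reduce to two checks: (i) each $\rho(x)$ is a derivation of $V$, and (ii) $\rho$ is a Lie algebra homomorphism into $\Der(V)$. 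Bilinearity and skew-symmetry of $[\cdot,\cdot]_\sim$ are then immediate from the corresponding properties of $[\cdot,\cdot]_{\mathfrak{gl}}$ and $[\cdot,\cdot]_{\mathfrak{h}}$ together with the sign structure of the cross terms.

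Check (i) is routine, using only additivity $\langle m'+m'', n\rangle = \langle m', n\rangle + \langle m'', n\rangle$ of the pairing in its first slot. The main technical step, which I would carry out with care, is (ii): evaluating $\rho(z^m\partial_n)\rho(z^{m'}\partial_{n'}) - \rho(z^{m'}\partial_{n'})\rho(z^m\partial_n)$ on $Az^{m''}$ and comparing with the action of $\rho([z^m\partial_n, z^{m'}\partial_{n'}]_{\mathfrak{h}})$; after the symmetric double pairings cancel this reduces to the identity
\[
\langle m', n\rangle\langle m'', n'\rangle - \langle m, n'\rangle\langle m'', n\rangle = \langle m'', \langle m', n\rangle n' - \langle m, n'\rangle n\rangle,
\]
which holds by $\Z$-linearity of $\langle\cdot,\cdot\rangle$ in the second slot. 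The only edge case is $m+m'=0$: the derivation component then vanishes because $n, n' \in m^\perp$ forces both coupling coefficients to be zero, while the purely matrix contribution $[A,A']_{\mathfrak{gl}}\, z^0$ lies outside $\tilde{\mathfrak{h}}$ and is set to zero by convention; one verifies directly that this convention is consistent with the three-fold cyclic sum in the Jacobi identity and therefore does not obstruct the semi-direct product argument.
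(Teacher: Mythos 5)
The semi-direct product strategy is genuinely different from the paper's proof, which verifies antisymmetry and the Jacobi identity by direct cyclic manipulation of the coefficients in \eqref{eq:Liebracket}, and your approach is conceptually cleaner. Your matching of $[\cdot,\cdot]_\sim$ with the standard semi-direct product bracket on $V\oplus\mathfrak{h}$ is exact, and your checks (i) and (ii) --- that each $\rho(z^m\partial_n)$ is a derivation and that $\rho$ is a Lie algebra morphism into $\Der$ of the matrix factor --- both reduce, as you say, to linearity of the pairing and are correct.

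The gap is in your treatment of $V$ and of the case $m+m'=0$. First, $V$ as you define it, with $m$ ranging over $\Lambda\smallsetminus\{0\}$, is \emph{not} $\mathfrak{gl}(r,\C)\otimes\C[\Lambda]$: it omits the $z^0$-component, so the indexing set is not a sub-monoid of $\Lambda$, and $V$ is not closed under $[\cdot,\cdot]_V$ (take $m'=-m$); the ``manifestly a Lie algebra'' claim therefore fails for your $V$, though it does hold for the enlargement $\hat V$ that includes $z^0$. Second, your proposed fix --- setting $[A,A']z^0:=0$ --- does not repair this: the $z^0$-slice $\mathsf{m}_t\mathfrak{gl}(r,\C)\,z^0$ of $\hat V\rtimes\mathfrak{h}$ is not an ideal, since bracketing $([A_1,A_2]z^0,0)$ with $(A_3z^{m_3},z^{m_3}\partial_{n_3})$ gives $([[A_1,A_2],A_3]z^{m_3},0)$, and so the truncated bracket does not satisfy Jacobi. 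Concretely, take $m_1=(1,0)$, $m_2=(-1,0)$, $m_3=(0,1)$ with $[[A_1,A_2],A_3]\neq 0$: your truncation kills the cyclic term $[[X_1,X_2]_\pi,X_3]_\pi$, while the remaining two cyclic terms sum to $(-[[A_1,A_2],A_3]z^{(0,1)},0)\neq 0$. Your assertion that ``one verifies directly that this convention is consistent with the three-fold cyclic sum'' is therefore false. The paper's proof is silent on this edge case too --- it writes the derivation component of the triple bracket as $\partial_{(m_1+m_2+m_3)^\perp}$, tacitly assuming $m_1+m_2+m_3\neq 0$ --- and the intended resolution is simply that all scattering-diagram exponents lie in a strictly convex cone, so $m+m'\neq 0$ never occurs. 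If you replace the truncation convention by this explicit restriction (or state the semi-direct product on $\hat V\rtimes\mathfrak{h}$ and observe closure on $\tilde{\mathfrak{h}}$ within a convex cone), your argument becomes complete and is a structurally nicer proof than the paper's coefficient computation.
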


The definition of the Lie bracket $[\cdot, \cdot]_\sim$ is closely related with the Lie bracket of $\KS(\check{X}, E)$ and we will explain it below, in \eqref{subsec:relation}. 
\begin{definition}
The extended tropical vertex group $\tilde{\mathbb{V}}$ is the sub-group of $\GL(r,\C)\times\Aut_{\C[\![ t ]\!]}\big(\mathbb{C}[\Lambda]{\otimes}_{\mathbb{C}}\C[\![ t ]\!]\big)$, such that $\tilde{\mathbb{V}}\defeq\exp(\tilde{\mathfrak{h}})$. The product on $\tilde{\mathbb{V}}$ is defined by the BCH formula. 
\end{definition}

\begin{proof}
First of all the the bracket is antisymmetric: 
\begin{equation*}
\begin{split}
[(A,\partial_n)z^m , (A',\partial_{n'})z^{m'} ]_{\sim}&=([A,A']_{\mathfrak{gl}}z^{m+m'}+A'\langle m',n\rangle z^{m+m'}- A\langle m,n'\rangle z^{m+m'}, [z^m\partial_n,z^{m'}\partial_{n'}]_{\mathfrak{h}} )\\
&=(-[A',A]_{\mathfrak{gl}}z^{m+m'}+A'\langle m',n\rangle z^{m+m'}- A\langle m,n'\rangle z^{m+m'}, -[z^{m'}\partial_{n'},z^{m}\partial_{n}]_{\mathfrak{h}} )\\
&=-([A',A]_{\mathfrak{gl}}z^{m+m'}+A\langle m,n'\rangle z^{m+m'}- A'\langle m',n\rangle z^{m+m'}, [z^{m'}\partial_{n'},z^{m}\partial_{n}]_{\mathfrak{h}} ).
\end{split}
\end{equation*}
Moreover the Jacobi identity is satisfied: 
\begin{align*}
&\left[[(A_1,\partial_{n_1})z^{m_1} , (A_2,\partial_{n_2})z^{m_2} ]_{\sim},(A_3,\partial_{n_3})z^{m_3} \right]_{\sim}=\Big[\big([A_1,A_2]_{\mathfrak{gl}}+A_2\langle m_2,n_1\rangle-A_1\langle m_1,n_2\rangle,\\
&\qquad\qquad\qquad\qquad\qquad\qquad\qquad\qquad\qquad\qquad\qquad\partial_{\langle m_2,n_1\rangle n_2-\langle m_1,n_2\rangle n_1}\big)z^{m_1+m_2}, (A_3,\partial_{n_3})z^{m_3}\Big]_{\sim}\\
&=\Bigg(\big[([A_1,A_2]_{\mathfrak{gl}}+A_2\langle m_2, n_1\rangle -A_1\langle m_1,n_2\rangle),A_3\big]_{\mathfrak{gl}}+A_3\langle m_2,n_1\rangle\langle m_3,n_2\rangle -A_3\langle m_1,n_2\rangle\langle m_3,n_1\rangle +\\
&\quad-\big([A_1,A_2]+A_2\langle m_2, n_1\rangle -A_1\langle m_1, n_2\rangle\big)\langle m_1+ m_2,n_3\rangle, (m_1+m_2+m_3)^{\perp})\Bigg)z^{m_1+m_2+m_3}\\
&=\Bigg([[A_1,A_2]_{\mathfrak{gl}}, A_3]_{\mathfrak{gl}}+[A_2,A_3]_{\mathfrak{gl}}\langle m_2,n_1\rangle-\langle m_1, n_2\rangle [A_1,A_3]_{\mathfrak{gl}}-[A_1,A_2]_{\mathfrak{gl}}\langle m_1+m_2,n_3\rangle+\\
&\quad+A_3\langle m_2,n_1\rangle\langle m_3,n_2\rangle-A_3\langle m_1,n_2\rangle\langle m_3,n_1\rangle-A_2\langle m_2,n_1\rangle\langle m_1+m_2,n_3\rangle+A_1\langle m_1,n_2\rangle\langle m_1+m_2,n_3\rangle,\\
&\qquad\qquad\qquad\qquad\qquad\qquad\qquad\qquad\qquad\qquad\qquad (m_1+m_2+m_3)^{\perp}\Bigg)z^{m_1+m_2+m_3}
\end{align*} 
Then by cyclic permutation we compute also the other terms:
\begin{align*}
&\left[\left[(A_2,\partial_{n_2})z^{m_2}, (A_3,\partial_{n_3})z^{m_3} \right]_{\sim},(A_1,\partial_{n_1})z^{m_1}\right]_{\sim}=\Bigg(\big[[A_2,A_3]_{\mathfrak{gl}}, A_1]\big]+[A_3,A_1]_{\mathfrak{gl}}\langle m_3,n_2\rangle+\\
&\qquad-\langle m_2,n_3\rangle [A_2,A_1]_{\mathfrak{gl}}-[A_2,A_3]_{\mathfrak{gl}}\langle m_3+m_2,n_1\rangle+A_1\langle m_3,n_2\rangle\langle m_1,n_3\rangle+\\
&\qquad-A_1\langle m_2,n_3\rangle\langle m_1,n_2\rangle-A_3\langle m_3,n_2\rangle\langle m_2+m_3, n_1\rangle+A_2\langle m_2,n_3\rangle\langle m_2+m_3, n_1\rangle,\\
&\qquad\qquad\qquad\qquad\qquad\qquad\qquad\qquad\qquad\qquad\qquad (m_1+m_2+m_3)^{\perp}\Bigg)z^{m_1+m_2+m_3}
\end{align*} 
\begin{align*}
&\left[\left[(A_1,\partial_{n_3})z^{m_3}, (A_1,\partial_{n_1})z^{m_1} \right]_{\sim},(A_2,\partial_{n_2})z^{m_2} \right]_{\sim}=\Big(\big[[A_3,A_1]_{\mathfrak{gl}}, A_2]\big]+[A_1,A_2]\langle m_1,n_3\rangle\\
&\qquad -\langle m_3,n_1\rangle [A_3,A_2]_{\mathfrak{gl}}-[A_3,A_1]_{\mathfrak{gl}}\langle m_1+m_3, n_2\rangle+A_2\langle m_1,n_3\rangle\langle m_2,n_1\rangle+\\
&\qquad-A_2\langle m_3,n_1\rangle\langle m_2,n_3\rangle-A_1\langle m_1,n_3\rangle\langle m_3+m_1,n_2\rangle+A_3\langle m_3, n_1\rangle\langle m_3+m_1,n_2\rangle ,\\
&\qquad\qquad\qquad\qquad\qquad\qquad\qquad\qquad\qquad\qquad\qquad(m_1+m_2+m_3)^{\perp}\Big)z^{m_1+m_2+m_3}
\end{align*}
Since Jacobi identity holds for $[\cdot,\cdot]_{\mathfrak{gl}}$ and $[\cdot,\cdot]_{\mathfrak{h}}$, we are left to check that the remaining terms sum to zero. Indeed the coefficient of $[A_2,A_3]_{\mathfrak{gl}}$ is $\langle n_1,m_2\rangle-\langle n_1,m_2+m_3\rangle-\langle n_1,m_3\rangle$, and it is zero. By permuting the indexes, the same hold true for the coefficients in front of the other bracket $[A_1,A_3]_{\mathfrak{gl}}$ and $[A_2,A_1]_{\mathfrak{gl}}$. In addition the coefficient of $A_3$ is $\langle m_2,n_1\rangle\langle m_3,n_2\rangle-\langle m_1,n_2\rangle\langle m_3,n_1\rangle-\langle m_3,n_2\rangle\langle m_2,n_1\rangle-\langle m_3,n_2\rangle\langle m_3,n_1\rangle+ \langle m_3,n_1\rangle\langle m_1,n_2\rangle+\langle m_3,n_1\rangle\langle m_3,n_2\rangle$ and it is zero. By permuting the indexes the same holds true for the coefficient in front of $A_1$ and $A_2$.  
\end{proof}

\begin{notation}
Let $\mathfrak{d}_i=\xi_i +m_i\R_{\geq 0}$ and let $\overrightarrow{f}_{i}$ be the function 
\begin{align*}
\overrightarrow{f}_{i}&\defeq\left(1+A_it_iz^{m_i}, f_i\right)\\
f_i&=1+c_it_iz^{m_i}
\end{align*}
such that $\log\overrightarrow{f}_{i}\defeq\left(\log(1+A_it_iz^{m_i}),\log f_i\partial_{n_i}\right)\in\tilde{\mathfrak{h}}$, where $n_i$ is the unique primitive vector in $\Lambda^*$ orthogonal to $m_i$ and positively oriented. 
\end{notation}
We define scattering diagrams in the extended tropical vertex group $\tilde{\mathbb{V}}$ by replacing the last assumption of Definition \ref{def:scattering} with $\overrightarrow{f}_i$ such that \[\log\overrightarrow{f}_{i}=\left(\log(1+A_it_iz^{m_i}),\log f_i\partial_{n_i}\right)\in\tilde{\mathfrak{h}}.\]
In Chapter \ref{cha:enumertaive geom} we will introduce other definitions about scattering diagrams which generalizes that of \cite{GPS} to scattering diagrams defined in the extended tropical vertex group.  


\chapter{Holomorphic pairs and scattering diagrams}\label{sec:MC and scattering}

This section is devoted to study the relation between scattering diagrams in the extended tropical vertex group and the asymptotic behaviour of the solutions of the Maurer--Cartan equation which governs deformations of holomorphic pairs. 

We briefly highlight the main steps of the construction, which follows closely that of \cite{MCscattering}, adapting it to pairs $(\check{X}, E)$.   

\textbf{Step 1} We first introduce a \textit{symplectic DGLA} as the Fourier-type transform of the Kodaira-Spencer DGLA $\KS(\check{X},E)$ which governs deformation of the pair $(\check{X}, E)$. Although the two DGLAs are isomorphic, we find that working on the symplectic side makes the results more transparent. In particular we define the Lie algebra $\tilde{\mathfrak{h}}$ as a subalgebra, modulo terms which vanish as $\hbar \to 0$, of the Lie algebra of infinitesimal gauge transformations on the symplectic side. 

\textbf{Step 2.a} Starting from the data of a wall in a scattering diagram, namely from the automorphism $\theta$ attached to a line $\mathfrak{d}$, we construct a solution $\Pi$ supported along the wall, i.e. such that there exists a unique normalised infinitesimal gauge transformation $\varphi$ which takes the trivial solution to $\Pi$ and has asymptotic behaviour with leading ordered term given by $\log(\theta)$ (see Proposition \ref{prop:asymp1}). The gauge-fixing condition $\varphi$ is given by choosing a suitable homotopy operator $H$.    

\textbf{Step 2.b} Let $\mathfrak{D}=\lbrace\mathsf{w}_1, \mathsf{w}_2\rbrace$ be an initial scattering diagram with two non-parallel walls. By \textbf{Step 2.a.}, there are Maurer-Cartan solutions $\Pi_1$, $\Pi_2$, which are respectively supported along the walls $\mathsf{w}_1$, $\mathsf{w}_2$. Using Kuranishi's method we construct a solution $\Phi$ taking as input $\Pi_1 +\Pi_2$, of the form $\Phi=\Pi_1+\Pi_2+\Xi$, where $\Xi$ is a correction term. In particular $\Xi$ is computed using a different homotopy operator $\mathbf{H}$. 

\textbf{Step 2.c } By using \textit{labeled ribbon trees} we write $\Phi$ as a sum of contributions $\Phi_a$ over ${a\in\left(\Z^2_{\geq 0}\right)_{\text{prim}}}$, each of which turns out to be independently a Maurer-Cartan equation (Lemma \ref{lem:Phi_aMC}). Moreover we show that each $\Phi_a$ is supported on a ray of rational slope, meaning that for every $a$, there is a unique normalised infinitesimal gauge transformation $\varphi_a$ whose asymptotic behaviour is an element of our Lie algebra $\tilde{\mathfrak{h}}$ (Theorem \ref{thm:asymptotic_gauge}). The transformations $\varphi_a$ allow us to define the scattering diagram $\mathfrak{D}^{\infty}$ (Definition \ref{def:D_infty}) from the solution $\Phi$.

\textbf{Step 3 } The consistency of the scattering diagram $\mathfrak{D}^{\infty}$ is proved by a monodromy argument.    

Note that in fact the results of \cite{MCscattering} have already been extended to a large class of DGLAs (see \cite{CLMaY19}). For our purposes however we need a more ad hoc study of a specific differential-geometric realization of $\KS(\check{X},E)$: for example, there is a background Hermitian metric on $E$ which needs to be chosen carefully.\\  

\section{Symplectic DGLA}

In order to construct scattering diagrams from deformations of holomorphic pairs, it is more convenient to work with a suitable Fourier transform $\FF$ of the DGLA $\KS(\check{X},E)$. Following \cite{MCv1} we start with the definition of $\FF(\KS(\check{X}, E))$ (see also Section 3.2.1 of \cite{Ma}). Let $\mathcal{L}$ be the space of fibre-wise homotopy classes of loops with respect to the fibration $p\colon X\to M$ and the zero section $s\colon M\to X$ 
\[\mathcal{L}=\bigsqcup_{x\in M}\pi_1(p^{-1}(x), s(x)).\]
Define a map $\ev\colon \mathcal{L}\to X$, which maps a homotopy class $[\gamma]\in\mathcal{L}$ to $\gamma(0)\in X$ and define $\pr\colon \mathcal{L}\to M$ the projection, such that the following diagram commutes:
\[
\begin{tikzcd}
\mathcal{L}\arrow{rr}{\ev}\arrow[swap]{rd}{{\pr}}& & X\arrow{ld}{p}\\
& M&
\end{tikzcd}
\] 
In particular $\pr$ is a local diffeomorphism and on a contractible open subset $U\subset M$ it induces an isomorphism $\Omega^\bullet(U,TM)\cong\Omega^\bullet(U_{\textbf{m}}, T\mathcal{L})$, where $U_{\textbf{m}}\defeq \lbrace\textbf{m}\rbrace\times U\in \pr^{-1}(U)$, $\textbf{m}\in\Lambda$. In addition, there is a one-to-one correspondence between $\Omega^0(U,T^{1,0}\check{X})$ and $\Omega^0(U,TM)$, 
\[
\frac{\partial}{\partial z_j}\longleftrightarrow\frac{\hbar}{4\pi}\frac{\partial}{\partial x_j}
\] 
which leads us to the following definition:
\begin{definition} 
The Fourier transform is a map $\FF\colon\Omega^{0,k}(\check{X},T^{1,0}\check{X})\ra\Omega^k(\mathcal{L},T\mathcal{L})$, such that  
\begin{equation}\label{def:F}
\big(\FF(\varphi)\big)_{\textbf{m}}(x)\defeq\bigg(\frac{4\pi}{\hbar}\bigg)^{|I|-1}\int_{\check{p}^{-1}(x)}\varphi^{I}_j(x,\check{y})e^{-2\pi i(\textbf{m},\check{y})}d\check{y}\,\, dx_{I}\otimes \frac{\partial}{\partial x_j},
\end{equation}
where $\textbf{m}\in\Lambda$ represents an affine loop in the fibre $p^{-1} (x)$ with tangent vector $\sum_{j=1}^2m_j\frac{\partial}{\partial y_j}$ and $\varphi$ is locally given by $\varphi=\varphi^{I}_j(x,\check{y})d\bar{z}_I\otimes\frac{\partial}{\partial z_j}$, $|I|=k$.
\end{definition}
The inverse Fourier transform is then defined by the following formula, providing the coefficients have enough regularity: 
\begin{equation}
\FF^{-1}\big(\alpha\big)(x,\check{y})=\bigg(\frac{4\pi}{\hbar}\bigg)^{-|I|+1}\sum_{\textbf{m}\in\Lambda}\alpha_{j,\textbf{m}}^I e^{2\pi i(\textbf{m},\check{y})} d\bar{z}_I\otimes \frac{\partial}{\partial z_j}
\end{equation}
where $\alpha_{j,\textbf{m}}^I(x)dx_I\otimes\frac{\partial}{\partial x^j}\in\Omega^k(U_{\textbf{m}},T\mathcal{L})$ is the $\textbf{m}$-th Fourier coefficient of $\alpha\in\Omega^k(\mathcal{L},T\mathcal{L})$ and $|I|=k$.


The Fourier transform can be extended to $\KS(\check{X},E)$ as a map
\[\FF\colon\Omega^{0,k}(\check{X},\End E\oplus T^{1,0}\check{X})\ra\Omega^{k}(\mathcal{L}, \End E\oplus T\mathcal{L})\] 
\begin{equation}
\begin{split}
\FF\left(\left(A^{I}d\bar{z}_I,\varphi^{I}_jd\bar{z}_I\otimes\frac{\partial}{\partial z_j}\right)\right)_{\textbf{m}}\defeq&\left(\frac{4\pi}{\hbar}\right)^{|I|}\Big(\int_{\check{p}^{-1}(x)}A^{I}(x,\check{y})e^{-2\pi i (\textbf{m},\check{y})}d\check{y} dx_I,\\
&\left(\frac{4\pi}{\hbar}\right)^{-1}\int_{\check{p}^{-1}(x)}\varphi^{I}_j(x,\check{y})e^{-2\pi i(\textbf{m},\check{y})}d\check{y} dx_{I}\otimes\frac{\partial}{\partial x_j}\Big)
\end{split}
\end{equation} 
where the first integral is meant on each matrix element of $A^I$.
\\

In order to define a DGLA isomorphic to $\KS(\check{X},E)$, we introduce the so called Witten differential $d_W$ and the Lie bracket $\lbrace\cdot, \cdot\rbrace_\sim$, acting on $\Omega^\bullet(\mathcal{L},\End E\oplus T\mathcal{L})$. It is enough for us to consider the case in which $E$ is holomorphically trivial $E=\O_{\check{X}}\oplus\O_{\check{X}}\oplus\cdots\oplus\O_{\check{X}}$ and the hermitian metric $h_E$ is diagonal, $h_E=\mathrm{diag}(e^{-\phi_1},\cdots, e^{-\phi_r)}$ and $\phi_j\in\Omega^0(\check{X})$ for $j=1,...,r$.  The differential $d_W$ is defined as follows:
\begin{equation*}
\begin{split}
d_W&\colon\Omega^k(\mathcal{L},\End E\oplus T\mathcal{L})\to\Omega^{k+1}(\mathcal{L},\End E\oplus T\mathcal{L})\\
d_W&\defeq\begin{pmatrix}
d_{W,E} & \hat{B}\\
0 & d_{W,\mathcal{L}}
\end{pmatrix}.
\end{split}
\end{equation*} 
In particular, $d_{W,E}$ is defined as:
\begin{equation}
\begin{split}
\left(d_{W,E}\left(A^Jdx_J\right)\right)_{\textbf{n}}&\defeq\FF(\bar{\partial}_E(\FF^{-1}(A^Jdx_J))_{\textbf{n}}\\
&=\FF\left(\bar{\partial}_E\left(\left(\frac{4\pi}{\hbar}\right)^{-|J|}\sum_{\textbf{m}\in\Lambda}e^{2\pi i(\textbf{m},\check{y})}A^J_{\textbf{m}}d\bar{z}_J\right)\right)_{\textbf{n}} \\
&=\left(\frac{4\pi}{\hbar}\right)^{-|J|}\FF\left(\sum_{\textbf{m}}e^{2\pi i(\textbf{m},\check{y})}\left(2\pi i\textbf{m}_kA^J_{\textbf{m}}+i\hbar\frac{\partial A^J_{\textbf{m}}}{\partial x_k}\right)d\bar{z}_k\wedge d\bar{z}_J\right)_{\textbf{n}} \\
&=\frac{4\pi}{\hbar}\int_{\check{p}^{-1}(x)}\left(\sum_{\textbf{m}}e^{2\pi i(\textbf{m},\check{y})}\left(2\pi i\textbf{m}_kA^J_{\textbf{m}}+i\hbar\frac{\partial A^J_{\textbf{m}}}{\partial x_k}\right)e^{-2\pi i(\textbf{n},\check{y})}\right)d\check{y} dx_k\wedge dx_J\\
&=\frac{4\pi}{\hbar}\left(2\pi i\textbf{n}_kA^J_{\textbf{n}}+i\hbar\frac{\partial A^J_{\textbf{n}}}{\partial x_k}\right)dx_k\wedge dx_J.
\end{split}
\end{equation} 
The operator $\hat{B}$ is then defined by
\begin{equation}
\begin{split}
\left(\hat{B}(\psi_{j}^I dx_I\otimes\frac{\partial}{\partial x_j})\right)_{\textbf{n}}&\defeq\FF(\FF^{-1}(\psi_{j}^I dx_I\otimes\frac{\partial}{\partial x_j})\lrcorner F_E)\\
&=\bigg(\frac{4\pi}{\hbar}\bigg)^{1-|I|}\FF\Big(\sum_\textbf{m}\psi_{\textbf{m},j}^Ie^{2\pi i(\textbf{m},\check{y})}d\bar{z}_I\otimes\frac{\partial}{\partial z_j}\lrcorner F_{pq}(\phi)dz_p\wedge d\bar{z}_q)\Big)\\
&=\bigg(\frac{4\pi}{\hbar}\bigg)^{1-|I|}\bigg(\frac{4\pi}{\hbar}\bigg)^{1+|I|}\int_{\check{p}^{-1}(x)}\sum_\textbf{m}\psi_{\textbf{m},j}^Ie^{2\pi i (\textbf{m}-\textbf{n},\check{y})}F_{jq}(\phi)d\check{y} dx_I\wedge dx_q 
\end{split}
\end{equation}
where $F_{pq}(\phi)$ is the curvature matrix. Then the $d_{W,\mathcal{L}}$ is defined by:
\begin{equation}
\left(d_{W,\mathcal{L}}(\psi_{j}^I dx_I\otimes\frac{\partial}{\partial x_j})\right)_{\textbf{n}}\defeq e^{-2\pi\hbar^{-1}({\textbf{n}},x)}d\left(\psi_{j}^I dx_I\otimes\frac{\partial}{\partial x_j}e^{2\pi\hbar^{-1}({\textbf{n}},x)}\right),
\end{equation}
where $d$ is the de Rham differential on the base $M$. Notice that by definition $d_W=\FF(\bar{\partial})\FF^{-1}$. 

Analogously we define the Lie bracket $\lbrace\cdot ,\cdot\rbrace_\sim\defeq\FF([\cdot,\cdot]_\sim)\FF^{-1}$. If we compute it explicitly in local coordinates we find:
\begin{align*}
\lbrace\cdot ,\cdot\rbrace_\sim\colon\Omega^p(\mathcal{L},\End E\oplus T\mathcal{L})\times\Omega^q(\mathcal{L},\End E\oplus T\mathcal{L})\to\Omega^{p+q}(\mathcal{L},\End E\oplus T\mathcal{L})\\
\lbrace(A,\varphi),(N,\psi)\rbrace_{\sim } = \big(\lbrace A,N\rbrace+\textbf{ad}(\varphi,N)-(-1)^{pq}\textbf{ad}(\psi,A), \lbrace\varphi, \psi\rbrace\big).
\end{align*}

In particular locally on $U_{\textbf{m}}$ we consider \[(A,\varphi)=\big(A_{\textbf{m}}^Idx_I,\varphi_{j,\textbf{m}}^{I}dx_{I}\otimes\frac{\partial}{\partial x_j}\big)\in\Omega^p(U_{\textbf{m}},\End E\oplus T\mathcal{L})\] and on $U_{\textbf{m}'}$ we consider \[(N,\psi)=(N_{\textbf{m}'}^Jdx_J,\psi_{J,\textbf{m}'}^{J}dx_{J}\otimes\frac{\partial}{\partial x\_{\textbf{m}}})\in\Omega^q(U_{\textbf{m}'},\End E\oplus T\mathcal{L})\] then   
\begin{equation}
\lbrace A,N\rbrace_{\textbf{n}}\defeq \sum_{\textbf{m}+\textbf{m'}=\textbf{n}}[A_{\textbf{m}}^I,N_{\textbf{m'}}^J]dx_I\wedge dx_J
\end{equation}
where the sum over $\textbf{m}+\textbf{m'}=\textbf{n}$ makes sense under the assumption of enough regularity of the coefficients. The operator $\textbf{ad}\colon\Omega^{p}(\mathcal{L},T\mathcal{L})\times\Omega^q(\mathcal{L},\End E)\to \Omega^{p+q}(\mathcal{L},\End E)$ is explicitly 
\begin{multline*}
\left(\textbf{ad}\left(\varphi,N\right)\right)_{\textbf{n}}\defeq\frac{4\pi}{\hbar}\Big(\int_{\check{p}^{-1}(x)}\Big(\sum_{\textbf{m}+\textbf{m}'=\textbf{n}}\varphi_{I,\textbf{m}}^j \Big(\frac{\partial N_{\textbf{m}'}^J}{\partial z_j}+2\pi i{\textbf{m}'}_j+A_j(\phi)N_{\textbf{m}'}^{J}\Big)\cdot\\
\cdot e^{2\pi i(\textbf{m}+\textbf{m}'-\textbf{n},\check{y})}\Big)d\check{y}\Big) dx_{I}\wedge dx_{J}
\end{multline*}
where $A_j(\phi)dz_j$ is the connection $\nabla^E$ one-form matrix. Finally the Lie bracket $\lbrace\psi,\phi\rbrace_\sim$ is 
\begin{equation}
\begin{split}
\lbrace\varphi, \psi\rbrace_{\textbf{n}}&\defeq\bigg(\sum_{\textbf{m}'+\textbf{m}=\textbf{n}}e^{-2\pi\hbar^{-1}(\textbf{n},x)}\Big(\varphi_{j,\textbf{m}}^I e^{2\pi (\textbf{m},x)}\nabla_{\frac{\partial}{\partial x_j}}\big(e^{2\pi (\textbf{m}',x)}\psi_{k,\textbf{m}'}^J\frac{\partial}{\partial x_k}\big)\\
&-(-1)^{pq}\psi_{k,\textbf{m}'}^Je^{2\pi\hbar^{-1} (\textbf{m}',x)}\nabla_{\frac{\partial}{\partial x_k}}\big(e^{2\pi\hbar^{-1}(\textbf{m},x)}\varphi_{j,\textbf{m}}^I\frac{\partial}{\partial x_j}\big)\Big)\bigg)dx_I\wedge dx_J
\end{split}
\end{equation}
where $\nabla$ is the flat connection on $M$.

\begin{definition}\label{def:mirror DGLA} 
The symplectic DGLA is defined as follows:
\begin{equation*}
G\defeq\left(\Omega^\bullet(\mathcal{L},\End E\oplus T\mathcal{L}), d_W, \lbrace\cdot, \cdot\rbrace_\sim\right)
\end{equation*}
and it is isomorphic to $\KS(\check{X},E)$ via $\FF$. 
\end{definition} 
As we mention above, the gauge group on the symplectic side $\Omega^0(\mathcal{L},\End E\oplus T\mathcal{L})$ is related with the extended Lie algebra $\tilde{\mathfrak{h}}$. However to figure it out, some more work has to be done, as we prove in the following subsection.  
\subsection{Relation with the Lie algebra $\tilde{\mathfrak{h}}$}\label{subsec:relation}

Let $Aff_M^{\Z}$ be the sheaf of affine linear transformations over $M$ defined for any open affine subset $U\subset M$ by $f_{{m}}(x)=({m},x)+b\in Aff_M^{\Z}(U)$ where $x\in U$, $m\in\Lambda$ and $b\in\R$. Since there is an embedding of $Aff_M^{\Z}(U)$ into $\O_{\check{X}}(\check{p}^{-1}(U))$ which maps $f_{{m}}(x)=({m},x)+b\in Aff_M^{\Z}(U)$ to $e^{2\pi i({m}, z)+2\pi i b}\in\O_{\check{X}}(\check{p}^{-1}(U))$, we define $\O_{aff}$ the image sub-sheaf of $Aff_M^{\Z}$ in $\O_{\check{X}}$. Then consider the embedding of the dual lattice $\Lambda^*\into T^{1,0}\check{X}$ which maps
\[
n\to n^j\frac{\partial}{\partial z_j}=:\check{\partial}_n.\] 
It follows that the Fourier transform $\FF$ maps \[\left(Ne^{2\pi i((m,z)+b)}, e^{2\pi i((m,z)+b)}\check{\partial}_n\right)\in\O_{aff}\left(\check{p}^{-1}(U), \mathfrak{gl}(r,\C)\oplus T^{1,0}\check{X} \right)\] to \[\left(Ne^{2\pi ib}\mathfrak{w}^m, \frac{\hbar}{4\pi}e^{2\pi i b}\mathfrak{w}^mn^j\frac{\partial}{\partial x_j}\right)\in\mathfrak{w}^m\cdot \underline{\C}\left(U_m,\mathfrak{gl}(r,\C)\oplus T\mathcal{L}\right)\] where $\mathfrak{w}^m\defeq\FF(e^{2\pi i({m}, z)})$, i.e. on $U_k$
\[
\mathfrak{w}^m=\begin{cases} 
e^{2\pi\hbar^{-1} (m,x)} & \text{if $k=m$}\\
0 & \text{if $k\neq m$}
\end{cases}
\] and we define \[\partial_n\defeq \frac{\hbar}{4\pi}n^j\frac{\partial}{\partial x_j}.\]
Let $\mathcal{G}$ be the sheaf over $M$ defined as follows: for any open subset $U\subset M$
\[
\mathcal{G}(U)\defeq\bigoplus_{m\in\Lambda\setminus 0}\mathfrak{w}^m\cdot\underline{\C}(U,\mathfrak{gl}(r,\C)\oplus TM).
\]  
In particular $\tilde{\mathfrak{h}}$ is a subspace of $\mathcal{G}(U)$ once we identify $z^m$ with $\mathfrak{w}^m$ and $m^\perp$ with $\partial_{m^\perp}$. In order to show how the Lie bracket on $\tilde{\mathfrak{h}}$ is defined, we need to make another assumption on the metric: assume that the metric $h_E$ is constant along the fibres of $\check{X}$, i.e. in an open subset $U\subset M$ $\phi_j=\phi_j(x_1,x_2)$, $j=1,\cdots, r$. Hence, the Chern connection becomes $\nabla^E=d+\hbar A_j(\phi)dz_j$ while the curvature becomes $F_E=\hbar^2 F_{jk}(\phi)dz_j\wedge dz_k$. 
We now show $\mathcal{G}(U)$ is a Lie sub-algebra of $\left(\Omega^0(\pr^{-1}(U)),\End E\oplus T\mathcal{L}), \lbrace\cdot,\cdot\rbrace_\sim\right)\subset G(U)$ and we compute the Lie bracket $\lbrace\cdot,\cdot\rbrace_{\sim}$ explicitly on functions of $\mathcal{G}(U)$.

\begin{multline*}
\lbrace \left( A\mathfrak{w}^m, \mathfrak{w}^m\partial_n\right), \left(N\mathfrak{w}^{m'}, \mathfrak{w}^{m'}\partial_{n'}\right)\rbrace_\sim=\Big([A,N]\mathfrak{w}^{m+m'}+\textbf{ad}(\mathfrak{w}^m\partial_n,N\mathfrak{w}^{m'})-\textbf{ad}(\mathfrak{w}^{m'}\partial_{n'},A\mathfrak{w}^{m}),\\
\lbrace\mathfrak{w}^m\partial_n,\mathfrak{w}^{m'}\partial_{n'}\rbrace\Big)
\end{multline*} 
\begin{equation}
\begin{split}
\textbf{ad}(\mathfrak{w}^m\partial_n,N\mathfrak{w}^{m'})_s&=\frac{4\pi}{\hbar}\sum_{k+k'=s}\mathfrak{w}^m\frac{\hbar}{4\pi} n^j\left(\frac{\partial N\mathfrak{w}^{m'}}{\partial x_j}+2\pi im'_j+\hbar A_j(\phi)N\right)\\
&=\mathfrak{w}^{m+m'}n^j\left( 2\pi im'_j+i\hbar\frac{\partial\phi}{\partial x_j}N\right)\\
&=\mathfrak{w}^{m+m'}\left(2\pi i\langle m',n\rangle+i\hbar n^jA_j(\phi)N\right)
\end{split}
\end{equation}
where in the second step we use the fact that $\mathfrak{w}^m$ is not zero only on $U_m$, and in the last step we use the pairing of $\Lambda$ and $\Lambda^*$ given by $\langle m,n'\rangle =\sum_jm_jn'^j $. Thus
\begin{equation*}
\begin{split}
\lbrace \left( A\mathfrak{w}^m, \mathfrak{w}^m\partial_n\right), \left(N\mathfrak{w}^{m'}, \mathfrak{w}^{m'}\partial_{n'}\right)\rbrace_\sim &=([A,N]\mathfrak{w}^{m+m'}+N(2\pi im'_j)n^j\mathfrak{w}^{m+m'}+\\
&+\hbar NA_j(\phi)n^j\mathfrak{w}^{m+m'}-A(2\pi im_j){n'}^j \mathfrak{w}^{m+m'}+\\
&-\hbar A A_j(\phi){n'}^j\mathfrak{w}^{m+m'},\lbrace\mathfrak{w}^m\partial_n,\mathfrak{w}^{m'}\partial_{n'}\rbrace)
\end{split}
\end{equation*} 
Taking the limit as $\hbar\to 0$, the Lie bracket $\lbrace\left( A\mathfrak{w}^m, \mathfrak{w}^m\partial_n\right), \left(N\mathfrak{w}^{m'}, \mathfrak{w}^{m'}\partial_{n'}\right)\rbrace_\sim$ converges to
\begin{equation*}
\left([A,N]\mathfrak{w}^{m+m'}+2\pi i\langle m',n\rangle N\mathfrak{w}^{m+m'}-2\pi i\langle m,n'\rangle A\mathfrak{w}^{m+m'}, \lbrace\mathfrak{w}^m\partial_n,\mathfrak{w}^{m'}\partial_{n'}\rbrace\right)
\end{equation*} 
and we finally recover the definition of the Lie bracket of $[\cdot,\cdot]_{\tilde{\mathfrak{h}}}$ \eqref{eq:Liebracket}, up to a factor of $2\pi i$. Hence $(\tilde{\mathfrak{h}},[\cdot,\cdot]_\sim)$ is the \textit{asymptotic} subalgebra of $\left(\Omega^0(\pr^{-1}(U)),\End E\oplus T\mathcal{L}), \lbrace\cdot,\cdot\rbrace_\sim\right)$.

\section{Deformations associated to a single wall diagram}\label{sec:single wall}
In this section we are going to construct a solution of the Maurer-Cartan equation from the data of a single wall. We work locally on a contractible, open affine subset $U\subset M$.

Let $(m,\mathfrak{d}_m,\theta_m)$ be a wall and assume $\log(\theta_m)=\sum_{j,k}\big(A_{jk}t^j\mathfrak{w}^{km}, a_{jk}t^j\mathfrak{w}^{km}\partial_n\big)$, where $A_{jk}\in{\mathfrak{gl}(r,\C)}$ and $a_{jk}\in\C$, for every $j,k$. 

\begin{notation}
We need to introduce a suitable set of local coordinates on $U$, namely $(u_m,u_{m,\perp})$, where $u_m$ is the coordinate in the direction of $\mathfrak{d}_m$, while $u_{m^\perp}$ is normal to $\mathfrak{d}_m$, according with the orientation of $U$. We further define $H_{m,+}$ and $H_{m,-}$ to be the half planes in which $\mathfrak{d}_m$ divides $U$, according with the orientation. 
\end{notation}

\begin{notation}
We will denote by the superscript \textit{CLM} the elements already introduced in \cite{MCscattering}. 
\end{notation}

\subsection{Ansatz for a wall}\label{sec:ansatz}

Let $\delta_m\defeq\frac{e^{-\frac{u_{m^\perp}^2}{\hbar}}}{\sqrt{\pi \hbar}}du_{m^\perp}$ be a normalized Gaussian one-form, which is \textit{supported} on $\mathfrak{d}_m$.
Then, let us define \[\Pi\defeq(\Pi_E,\Pi^{CLM})\]
where $\Pi_E=-\sum_{j,k}A_{jk}t^j\delta_m\mathfrak{w}^{km}$ and $\Pi^{CLM}=-\sum_{j,k\geq 1}a_{kj}\delta_mt^j\mathfrak{w}^{km}\partial_n$. 

From section 4 of \cite{MCscattering} we are going to recall the definition of \textit{generalized Sobolev space} suitably defined to compute the asymptotic behaviour of Gaussian k-forms like $\delta_m$ which depend on $\hbar$. 
Let $\Omega_{\hbar}^k(U)$ denote the set of $k$-forms on $U$ whose coefficients depend on the real positive parameter $\hbar$.  
\begin{definition}[Definition 4.15 \cite{MCscattering}]
\[\mathcal{W}_k^{-\infty}(U)\defeq\big\lbrace \alpha\in\Omega_{\hbar}^k(U)\vert \forall q\in U \exists V\subset U\,,q\in V\,\text{s.t.} \sup_{x\in V}\left|\nabla^j\alpha(x)\right|\leq C(j,V)e^{-\frac{c_V}{\hbar}}, C(j,V),\textcolor{britishracinggreen}{c_V}>0\big\rbrace\] is the set of exponential k-forms.
\end{definition}
\begin{definition}[Definition 4.16 \cite{MCscattering}]
\[\mathcal{W}_k^\infty(U)\defeq\big\lbrace \alpha\in\Omega_{\hbar}^k(U)\vert \forall q\in U \exists  V\subset U\,,q\in V\,\text{s.t.} \sup_{x\in V}\left|\nabla^j\alpha(x)\right|\leq C(j,V)\hbar^{-N_{j,V}},\, C(j,V),N_{j,V}\in\Z_{>0}\big\rbrace\] is the set of polynomially growing k-forms.
\end{definition}

\begin{definition}[Definition 4.19 \cite{MCscattering}]
Let $\mathfrak{d}_m$ be a ray in $U$. The set $\mathcal{W}_{\mathfrak{d}_m}^s(U)$ of $1$-forms $\alpha$ which have \textit{asymptotic support} of order $s\in\Z$ on $\mathfrak{d}_m$ is defined by the following conditions:
\begin{enumerate}
\item for every $q_*\in U\setminus \mathfrak{d}_m$, there is a neighbourhood $V\subset U\setminus \mathfrak{d}_m$ such that $\alpha|_V\in\mathcal{W}_{1}^{-\infty}(V)$;
\item for every $q_*\in \mathfrak{d}_m$ there exists a neighbourhood $q_*\in W\subset U$ where in local coordinates $u_q=(u_{q,m},u_{q,m^\perp})$ centred at $q_*$, $\alpha$ decomposes as
\[ 
\alpha=f(u_q,\hbar)du_{q,m^\perp}+\eta
\]
$\eta\in\mathcal{W}_{1}^{-\infty}(W)$ and for all $j\geq 0$ and for all $\beta\in\Z_{\geq 0}$
\begin{equation}\label{eq:stima W^s}
\int_{(0,u_{q,m^\perp})\in W}(u_{m^\perp})^\beta\big(\sup_{(u_{q,m},u_{m^\perp})\in W}\left|\nabla^j(f(u_q,\hbar))\right|\big)du_{m^\perp}\leq C(j,W,\beta)\hbar^{-\frac{j+s-\beta-1}{2}}
\end{equation}
for some positive constant $C(\beta, W,j)$.
\end{enumerate}
\end{definition}

\begin{remark}
A simpler way to figure out what is the space $\mathcal{W}_{\mathfrak{d}_m}^s(U)$, is to understand first the case of a $1$-form $\alpha\in\Omega_{\hbar}^1(U)$ which depends only on the coordinate $u_{m^\perp}$. Indeed $\alpha=\alpha(u_{m^\perp},\hbar)du_{m^\perp}$ has asymptotic support of order $s$ on a ray $\mathfrak{d}_m$ if for every $q\in \mathfrak{d}_m$, there exists a neighbourhood $q\in W\subset U$ such that
\[
\int_{(0,u_{q,m^\perp})\in W} u_{q,m^\perp}^{\beta} \left|\nabla^j \alpha(u_{q,m^\perp},\hbar)\right|du_{q,m^\perp} \leq C(W,\beta,j)\hbar^{-\frac{\beta+s-1-j}{2}}
\]
for every $\beta\in\Z_{\geq 0}$ and $j\geq 0$.

In particular for $\beta=0$ the estimate above reminds to the definition of the usual Sobolev spaces $L_1^j(U)$.  
\end{remark}

\begin{lemma}
The one-form $\delta_m$ defined above, has asymptotic support of order $1$ along $\mathfrak{d}_m$, i.e. $\delta_m\in\mathcal{W}_{\mathfrak{d}_m}^1(U)$. 
\end{lemma}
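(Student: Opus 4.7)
The plan is to verify the two conditions of Definition 4.19 separately. \emph{Part 1 (exponential decay away from $\mathfrak{d}_m$).} For any $q_*\in U\smallsetminus\mathfrak{d}_m$, choose a neighbourhood $V\subset U\smallsetminus\mathfrak{d}_m$ on which $|u_{m^\perp}|\ge c_V>0$. The $j$-th covariant derivative of $\delta_m$ (computed with respect to the flat affine structure) is a finite sum of terms of the form
\[
\hbar^{-j/2-1/2}\,p_j(u_{m^\perp}/\sqrt{\hbar})\,e^{-u_{m^\perp}^2/\hbar}\,du_{m^\perp}
\]
with $p_j$ a polynomial. Since $u_{m^\perp}^2\ge c_V^2$ on $V$, the Gaussian factor provides a bound of the form $e^{-c_V^2/(2\hbar)}$, which dominates any polynomial and negative power of $\hbar$. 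Hence $\delta_m|_V\in\mathcal{W}_1^{-\infty}(V)$.

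\emph{Part 2 (asymptotic bound along $\mathfrak{d}_m$).} For $q_*\in\mathfrak{d}_m$, choose affine coordinates $(u_{q,m},u_{q,m^\perp})$ centred at $q_*$ and aligned with $(u_m,u_{m^\perp})$, so that $u_{q,m^\perp}=u_{m^\perp}$ on some neighbourhood $W$. Then $\delta_m$ already has the decomposition with $\eta=0$ and
\[
f(u_q,\hbar)=\frac{e^{-u_{m^\perp}^2/\hbar}}{\sqrt{\pi\hbar}},
\]
depending only on $u_{m^\perp}$. Consequently $\nabla^j f$ reduces to $\partial_{u_{m^\perp}}^k f$ for $k\le j$, and a direct computation gives
\[
\partial_{u_{m^\perp}}^k f=\frac{(-1)^k}{\sqrt{\pi\hbar}}\,\hbar^{-k/2}\,H_k\!\bigl(u_{m^\perp}/\sqrt{\hbar}\bigr)\,e^{-u_{m^\perp}^2/\hbar},
\]
where $H_k$ is the Hermite polynomial. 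Taking the supremum over $u_{q,m}$ is vacuous, so the required integral becomes, up to a constant depending on $j$,
\[
\int_{W\cap\{u_{q,m}=0\}} u_{m^\perp}^\beta\,\frac{\hbar^{-j/2}}{\sqrt{\pi\hbar}}\,\bigl|H_j(u_{m^\perp}/\sqrt{\hbar})\bigr|\,e^{-u_{m^\perp}^2/\hbar}\,du_{m^\perp}.
\]

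The final step is the scaling $y=u_{m^\perp}/\sqrt{\hbar}$, which yields $du_{m^\perp}=\sqrt{\hbar}\,dy$ and $u_{m^\perp}^\beta=\hbar^{\beta/2}y^\beta$, so the integral becomes
\[
\hbar^{(\beta-j)/2}\cdot\frac{1}{\sqrt{\pi}}\int_{\R} y^\beta\bigl|H_j(y)\bigr|\,e^{-y^2}\,dy,
\]
where the domain has been enlarged to $\R$ (harmless thanks to the Gaussian). The $y$-integral is a finite absolute moment of $|H_j|e^{-y^2}$, hence bounded by a constant $C(j,W,\beta)$, giving exactly the estimate \eqref{eq:stima W^s} with $s=1$, namely $\hbar^{-(j+1-\beta-1)/2}=\hbar^{(\beta-j)/2}$. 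No step is genuinely obstructive; the only thing to keep track of is the careful bookkeeping of powers of $\hbar$, which is what fixes the order of asymptotic support to be $s=1$.
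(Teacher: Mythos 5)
Your proof is correct and takes a genuinely different route from the paper. The paper verifies the key integral estimate by a double induction (first on $\beta$, then on $j$), each inductive step carried out by integrating by parts. You instead compute the derivatives explicitly through the Hermite-polynomial identity $\partial_u^k e^{-u^2/\hbar}=\hbar^{-k/2}(-1)^k H_k(u/\sqrt{\hbar})\,e^{-u^2/\hbar}$ and then obtain the estimate in one stroke by the rescaling $y=u_{m^\perp}/\sqrt{\hbar}$, which lays bare exactly where the exponent $\hbar^{(\beta-j)/2}=\hbar^{-\frac{j+1-\beta-1}{2}}$ comes from. This is cleaner and more transparent than the induction: the scaling exposes the invariance that the induction reproduces step by step. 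It also makes the constant explicit as a finite absolute moment $\int_\R |y|^\beta|H_j(y)|e^{-y^2}\,dy$. One further point in your favour: you explicitly check condition (1) of Definition 4.19 (exponential decay in $\mathcal{W}_1^{-\infty}$ off the ray), whereas the paper's proof addresses only the integral estimate and leaves that part implicit. The only caveat, inherited from the paper itself, is that the integrand $(u_{m^\perp})^\beta$ in Definition 4.19 can change sign for odd $\beta$; both your proof and the paper's silently treat it as $|u_{m^\perp}|^\beta$, which is evidently the intended reading.
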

\begin{proof}
We claim that
\begin{equation}\label{eq:estimate}
\int_{-a}^b(u_{m^\perp})^\beta \nabla^j\left(\frac{e^{-\frac{u_{m^\perp}^2}{\hbar}}}{\sqrt{\hbar\pi}}\right)du_{m^\perp}\leq C(\beta,W,j)\hbar^{-\frac{j-\beta}{2}}
\end{equation}
for every $j\geq 0$, $\beta\in\Z_{\geq 0}$, for some $a,b> 0$.
This claim holds for $\beta=0=j$, indeed $\int_{-a}^b\frac{e^{-\frac{u_{m^\perp}^2}{\hbar}}}{\sqrt{\hbar\pi}}du_{m^\perp}$ is bounded by a constant $C=C(a,b)>0$. 

Then we prove the claim by induction on $\beta$, at $\beta=0$ it holds true by the previous computation. Assume that
\begin{equation}
\int_{-a}^b(u_{m\perp})^\beta \frac{e^{-\frac{u_{m^\perp}^2}{\hbar}}}{\sqrt{\hbar\pi}}du_{m^\perp} \leq C(\beta,a,b)\hbar^{\beta/2}
\end{equation} 
holds for $\beta$, then 
\begin{equation}
\begin{split}
\int_{-a}^b(u_{m^\perp})^{\beta+1}\frac{e^{-\frac{u_{m^\perp}^2}{\hbar}}}{\sqrt{\hbar\pi}}du_{m^\perp}&=-\frac{\hbar}{2}\int_{-a}^b(u_{m^\perp})^\beta\left(-2\frac{u_{m^\perp}}{\hbar}\frac{e^{-\frac{u_{m^\perp}^2}{\hbar}}}{\sqrt{\hbar\pi}}\right)du_{m^\perp}\\
&=-\frac{\hbar}{2}\left[(u_{m^\perp})^\beta\frac{e^{-\frac{u_{m^\perp}^2}{\hbar}}}{\sqrt{\hbar\pi}}\right]_{-a}^b +\beta\frac{\hbar}{2}\int_{-a}^b(u_{m^\perp})^{\beta-1}\frac{e^{-\frac{u_{m^\perp}^2}{\hbar}}}{\sqrt{\hbar\pi}}du_{m^\perp}\\
&\leq C(\beta, a,b)\hbar^{\frac{1}{2}}+\tilde{C}(\beta,a,b)\hbar^{1+\frac{\beta-1}{2}}\\
&\leq C(a,b,\beta)\hbar^{\frac{\beta+1}{2}}.
\end{split}
\end{equation}
Analogously let us prove the estimate by induction on $j$. At $j=0$ it holds true, and assume that 
\begin{equation}
\int_{-a}^b(u_{m^\perp})^\beta \nabla^j\left( \frac{e^{-\frac{u_{m^\perp}^2}{\hbar}}}{\sqrt{\hbar\pi}}\right)du_{m^\perp}\leq C(a,b,\beta,j)\hbar^{-\frac{j-\beta}{2}}
\end{equation} 
holds for $j$. Then at $j+1$ we have the following
\begin{equation*}
\begin{split}
\int_{-a}^b(u_{m^\perp})^\beta\nabla^{j+1}\left(\frac{e^{-\frac{u_{m^\perp}^2}{\hbar}}}{\sqrt{\hbar\pi}}\right)du_{m^\perp}&=\left[u_{m^\perp}^{\beta}\nabla^j\left(\frac{e^{-\frac{u_{m^\perp}^2}{\hbar}}}{\sqrt{\hbar\pi}}\right)\right]_{-a}^b-\beta\int_{N_V}(u_{m^\perp})^{\beta-1}\nabla^j\left(\frac{e^{-\frac{u_{m^\perp}^2}{\hbar}}}{\sqrt{\hbar\pi}}\right)du_{m^\perp}\\
&\leq \tilde{C}(\beta,a,b,j)\hbar^{-j-\frac{1}{2}}+ C(a,b,\beta, j)\hbar^{-\frac{j-\beta+1}{2}}\\
&\leq C(a,b,\beta,j)\hbar^{-\frac{j+1-\beta}{2}}
\end{split}
\end{equation*}
This ends the proof.  
\end{proof}

\begin{notation}
We say that a function $f(x,\hbar)$ on an open subset $U\times\R_{\geq 0}\subset M\times\R_{\geq 0}$ belongs to $O_{loc}(\hbar^l)$ if it is bounded by $C_K\hbar^l$ on every compact subset $K\subset U$, for some constant $C_K$ (independent on $\hbar$), $l\in\R$. 
\end{notation}
In order to deal with $0$-forms ``asymptotically supported on $U$'', we define the following space $\mathcal{W}_{0}^{s}$: 
\begin{definition}\label{def:I(P)}
A function $f(u_q,\hbar)\in\Omega^0_{\hbar}(U)$ belongs to $\mathcal{W}_{0}^{s}(U)$ if and only if for every $q_*\in U$ there is a neighbourhood $q_*\in W\subset U$ such that  
\[\sup_{q\in W}\left|\nabla^j f(u_q,\hbar)\right|\leq C(W,j)\hbar^{-\frac{s+j}{2}}\]
for every $j\geq 0$.  

\end{definition}
 
\begin{notation}
Let us denote by $\Omega_{\hbar}^k(U,TM)$ the set of $k$-forms valued in $TM$, which depends on the real parameter $\hbar$ and analogously we denote by $\Omega_{\hbar}^k(U,\End E)$ the set of $k$-forms valued in $\End E$ which also depend on $\hbar$. 
We say that $\alpha=\alpha_K(x,\hbar)dx^K\otimes\partial_n\in\Omega_{\hbar}^k(U,TM)$ belongs to $\mathcal{W}_P^s(U,TM)/ \mathcal{W}_k^\infty(U, TM)/ \mathcal{W}_k^{-\infty}(U, TM)$ if $\alpha_K(x,\hbar)dx^K\in \mathcal{W}_P^s(U)/ \mathcal{W}_k^\infty(U)/ \mathcal{W}_k^{-\infty}(U)$. Analogously we say that $A=A_K(x,\hbar)dx^K\in\Omega_{\hbar}^k(U,\End E)$ belongs to $\mathcal{W}_P^s(U, \End E)/ \mathcal{W}_k^\infty(U, \End E)/ \mathcal{W}_k^{-\infty}(U, \End E)$ if for every $p,q=1,\cdots, r$ then $(A_K)_{ij}(x,\hbar)dx^K\in \mathcal{W}_P^s(U)/ \mathcal{W}_k^\infty(U)/ \mathcal{W}_k^{-\infty}(U)$.
\end{notation}

\begin{prop}\label{rmk:closed}
$\Pi$ is a solution of the Maurer-Cartan equation $d_W\Pi+\frac{1}{2}\lbrace\Pi,\Pi\rbrace_\sim=0$, up to higher order term in $\hbar$, i.e. there exists $\Pi_{E,R}\in\Omega^1(U,\End E\oplus T\mathcal{L})$ such that $\bar{\Pi}\defeq (\Pi_E+\Pi_{E,R}, \Pi^{CLM})$ is a solution of Maurer-Cartan and $\Pi_{E,R}\in\mathcal{W}_{\mathfrak{d}_m}^{-1}(U)$. 
\end{prop}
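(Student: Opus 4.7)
The proof naturally splits the equation $d_W\Pi+\tfrac12\{\Pi,\Pi\}_\sim=0$ into its $T\mathcal{L}$ and $\End E$ components, via the decomposition $\mathbf{A}(E) = \End E \oplus T^{1,0}\check X$. The $T\mathcal{L}$ component $d_{W,\mathcal{L}}\Pi^{CLM} + \tfrac12\{\Pi^{CLM},\Pi^{CLM}\} = 0$ is, up to exponentially small terms, the single-wall Maurer--Cartan statement already established in \cite{MCscattering}, so no modification of $\Pi^{CLM}$ is required. It therefore suffices to treat the $\End E$ component
\begin{equation*}
d_{W,E}\Pi_E + \hat B\,\Pi^{CLM} + \tfrac12\{\Pi_E,\Pi_E\}_{\End E} + \textbf{ad}(\Pi^{CLM},\Pi_E) = 0.
\end{equation*}
The observation that drives everything is that $\tfrac12\{\Pi_E,\Pi_E\}_{\End E}$ vanishes identically: every summand carries the factor $\delta_m\wedge\delta_m$, which is zero since $\delta_m$ is a scalar multiple of the single $1$-form $du_{m^\perp}$.

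Next I would analyse the remaining three $2$-forms mode by mode in $\mathfrak{w}^{km}$. The leading $\hbar$-behaviour of $d_{W,E}\Pi_E$ comes from a $\hbar^{-1}\langle km,dx\rangle\wedge\delta_m$ contribution (since $d\delta_m=0$); $\hat B\Pi^{CLM}$ is the contraction of $\partial_n$ with the Chern curvature $F_E$---of type $(1,1)$ thanks to the diagonal metric $h_E$---producing a Gaussian $2$-form of order $\delta_m$; and $\textbf{ad}(\Pi^{CLM},\Pi_E)$ gives an analogous $2$-form involving the diagonal connection matrix. Summing produces a residual $2$-form $R$ whose asymptotic support along $\mathfrak{d}_m$ is controlled using precisely the Gaussian estimates already established in the proof that $\delta_m\in\mathcal{W}_{\mathfrak{d}_m}^1(U)$.

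To finish, apply a homotopy operator $H$ of the kind outlined in Step 2.a---built from a suitably normalised primitive of $\delta_m$---and set $\Pi_{E,R}\defeq -H(R)$. The identity $d_W^2=0$ combined with the already-solved $T\mathcal{L}$-component forces $R$ to be $d_{W,E}$-closed up to exponentially small terms, so this $\Pi_{E,R}$ exactly cancels the residual and makes $\bar\Pi$ a Maurer--Cartan solution; the mapping properties of $H$, which essentially integrate once in $u_{m^\perp}$ and gain two orders in the Gaussian scale, place $\Pi_{E,R}$ in $\mathcal{W}_{\mathfrak{d}_m}^{-1}(U,\End E)$. The main obstacle will be the bookkeeping of $\hbar$-powers: tracking the competing $\hbar^{-1}$ coming from $d_{W,E}$ against the $\hbar^{0}$ contributions from $\hat B$ and $\textbf{ad}$, and verifying that the integrating action of $H$ absorbs both so that $\Pi_{E,R}$ lands in $\mathcal{W}_{\mathfrak{d}_m}^{-1}$ rather than a larger asymptotic class; a secondary point is to check that off-wall contributions are exponentially negligible so that $H$ acts purely locally along $\mathfrak{d}_m$.
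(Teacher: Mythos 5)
Your proposal has the right skeleton---split the Maurer--Cartan equation into $T\mathcal{L}$ and $\End E$ components, use $\delta_m\wedge\delta_m=0$ to kill $\{\Pi_E,\Pi_E\}_{\End E}$, and cancel the remaining $\End E$ residual by adding a correction supported on the wall---but you miss two exact cancellations that the paper's proof relies on, and these change the shape of the argument in an essential way.

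First, $d_{W,E}\Pi_E$ vanishes \emph{identically}, not up to a leading $\hbar^{-1}\langle km,dx\rangle\wedge\delta_m$ term as you claim. The Witten differential is conjugated by $\mathfrak{w}^{km}$: the $\tfrac{4\pi}{\hbar}\cdot 2\pi i(km)_k$ Fourier-mode contribution is cancelled exactly by $\tfrac{4\pi}{\hbar}\cdot i\hbar\,\partial_{x_k}$ acting on $\mathfrak{w}^{km}$, so what survives is only $d\delta_m$, which is zero. Your competing $\hbar^{-1}$ term never appears, and the ``bookkeeping obstacle'' you flag at the end is built on this false premise. Second, the bracket $\{\bar\Pi,\bar\Pi\}_\sim$ vanishes \emph{identically}, not merely to higher order: the $\textbf{ad}(\Pi^{\mathrm{CLM}},\Pi_E)$ and $\textbf{ad}(\Pi^{\mathrm{CLM}},\Pi_{E,R})$ pieces, after contracting $\check{\partial}_n$ into $\nabla^E$, all end up multiplied by $\check{\delta}_m\wedge\check{\delta}_m=0$, just like the $\{\Pi_E,\Pi_E\}$ piece. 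Because of these two vanishings the only nonzero term in $d_W\Pi+\tfrac12\{\Pi,\Pi\}_\sim$ is $\hat B\,\Pi^{\mathrm{CLM}}$, and $\bar\Pi$ is a Maurer--Cartan solution \emph{exactly}, not modulo exponentially small errors.

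This also changes how $\Pi_{E,R}$ is found. The paper does not invert $d_W$ with the homotopy operator; instead it observes that $\hat B\,\Pi^{\mathrm{CLM}} = -4\pi\hbar\,a_{jk}t^j\mathfrak{w}^{km}n^l F_{lq}(\phi)\,\delta_m\wedge dx^q$ is manifestly exact because, for the diagonal metric $h_E$, the curvature matrix satisfies $F_{lq}(\phi)dx^q = dA_l(\phi)$; one then reads off the explicit primitive $\Pi_{E,R}=4\pi\hbar\,a_{jk}t^j\mathfrak{w}^{km}n^lA_l(\phi)\delta_m$. The asymptotic class is immediate: $\delta_m\in\mathcal{W}_{\mathfrak{d}_m}^1(U)$, so the extra factor of $\hbar$ drops it by two orders to $\mathcal{W}_{\mathfrak{d}_m}^{-1}(U)$. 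Your homotopy-operator route is not wrong in spirit, but note that $H$ gains only one order (Lemma \ref{lem:H(W^s)}), not two, and it would leave you to verify \emph{a posteriori} that the resulting $\Pi_{E,R}$ still carries a $\delta_m$ factor so that the self-consistency terms $\{\Pi_{E,R},\Pi_{E,R}\}$ and $\textbf{ad}(\Pi^{\mathrm{CLM}},\Pi_{E,R})$ also vanish---a step that the explicit primitive renders automatic.
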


\begin{proof}
First of all let us compute $d_W\Pi$:
\[
\begin{split}
d_W\Pi&=\big(d_{W,E}\Pi_E+\hat{B}\Pi^{CLM},d_{W,\mathcal{L}}\Pi^{CLM}\big)\\
&=\big(-A_{jk}t^j\mathfrak{w}^{km}d\delta_m+\hat{B}\Pi^{CLM}, -a_{jk}t^j\mathfrak{w}^{km}d(\delta_m)\otimes\partial_n\big)
\end{split}
\]
and notice that $d(\delta_m)=0$. Then, let us compute $\hat{B}\Pi^{CLM}$:
\[
\begin{split}
\hat{B}\Pi^{CLM}&=\FF(\FF^{-1}(\Pi^{CLM})\lrcorner F_E)\\
&=-\FF\left(\left(\frac{4\pi}{\hbar}\right)^{-1}a_{jk}t^j\textbf{w}^{km}\check{\delta}_m\otimes\check{\partial}_n\lrcorner \left(\hbar^2F_j^q(\phi)dz^j\wedge d\bar{z}_q\right)\right)\\
&=-\left(\frac{4\pi}{\hbar}\right)^{-1}\FF\big(a_{jk}t^j\textbf{w}^{km} n^l\hbar^2F_{jq}(\phi)\check{\delta}_m\wedge d\bar{z}^q\big)\\
&=-\hbar^{2}\Big(\frac{4\pi}{\hbar}\Big)^{-1}\Big(\frac{4\pi}{\hbar}\Big)^{2}a_{jk}t^j\mathfrak{w}^{km} n^lF_{lq}(\phi)\delta_m\wedge dx^q\\
&=-4\pi\hbar (a_{jk}t^j\mathfrak{w}^{km} n^lF_{lq}(\phi)\delta_m\wedge dx^q)
\end{split}
\]
where we denote by $\check{\delta}_m$ the Fourier transform of $\delta_m$. Notice that $\hat{B}\Pi^{CLM}$ is an exact two form, thus since $F_{lq}(\phi)dx^q=dA_l(\phi)$ (recall that the hermitian metric on $E$ is diagonal)we define 
\[\Pi_{E,R}\defeq 4\pi\hbar (a_{jk}t^j\mathfrak{w}^{km} n^lA_l(\phi)\delta_m)\] 
i.e. as a solution of $d_W\Pi_{E,R}=-\hat{B}\Pi^{CLM}$.

In particular, since $\delta_m\in\mathcal{W}_{\mathfrak{d}_m}^1(U)$ then $\hbar\delta_m\in\mathcal{W}_{\mathfrak{d}_m}^{-1}(U)$. Therefore $\Pi_{E,R}$ has the expected asymptotic behaviour and $d_W\bar{\Pi}=0$.  
Let us now compute the commutator:
\[
\begin{split}
\lbrace\bar{\Pi}, \bar{\Pi}\rbrace_\sim&=\big(2\FF\big(\FF^{-1}\Pi^{CLM}\lrcorner\nabla^E\FF^{-1}(\Pi_E+\Pi_{E,R})\big)+\lbrace\Pi_E+\Pi_{E,R},\Pi_E+\Pi_{E,R}\rbrace,\lbrace\Pi^{CLM},\Pi^{CLM}\rbrace\big)\\
&=\big(2\FF\big(\FF^{-1}\Pi^{CLM}\lrcorner\nabla^E\FF^{-1}(\Pi_E+\Pi_{E,R})\big)+2(\Pi_E+\Pi_{E,R})\wedge(\Pi_E+\Pi_{E,R}),0\big)
\end{split}
\] 
Notice that, since both $\Pi_E$ and $\Pi_{E,R}$ are matrix valued one forms where the form part is given by $\delta_m$, the wedge product $(\Pi_E+\Pi_{E,R})\wedge(\Pi_E+\Pi_{E,R})$ vanishes as we explicitly compute below
\[
\begin{split}
&(\Pi_E+\Pi_{E,R})\wedge(\Pi_E+\Pi_{E,R})=A_{jk}A_{rs}t^{j+r}\mathfrak{w}^{km+sm}\delta_m\wedge\delta_m+\\
&+8\pi\hbar a_{jk}t^{j+r}\mathfrak{w}^{km+sm} n^lA_l(\phi)A_{rs}\delta_m\wedge\delta_m+4\pi\hbar (a_{jk}t^j\mathfrak{w}^{km} n^lA_l(\phi))^2\delta_m\wedge\delta_m=0.
\end{split}
\]
 
Hence we are left to compute $\FF\left(\FF^{-1}\Pi^{CLM}\lrcorner\nabla^E\FF^{-1}\left((\Pi_E+\Pi_{E,R})\right)\right)$:
\[
\begin{split}
&\FF\big(\FF^{-1}\Pi^{CLM}\lrcorner\nabla^E\FF^{-1}(\Pi_E+\Pi_{E,R})\big)=\\
&=\FF\bigg(\Big(\frac{4\pi}{\hbar}\Big)^{-1}a_{jk}t^j\textbf{w}^{km}\check{\delta}_m\check{\partial}_n\lrcorner d \Big(\Big(\frac{4\pi}{\hbar}\Big)^{-1}\big(At\textbf{w}^m\check{\delta}_m+4\pi\hbar a_{rs}t^r\textbf{w}^{sm}n^lA_l(\phi)\check{\delta}_m\big)\Big)+\\
&\quad+\Big(\frac{4\pi}{\hbar}\Big)^{-1}a_{jk}t^j\textbf{w}^{km}\check{\delta}_m\check{\partial}_n\lrcorner \Big(i\hbar A_q(\phi)dz^q\wedge \Big(\Big(\frac{4\pi}{\hbar}\Big)^{-1}\big(At\textbf{w}^m\check{\delta}_m+4\pi\hbar a_{rs}t^r\textbf{w}^{sm}n^lA_l(\phi)\check{\delta}_m\big)\Big)\Big)\bigg)\\
&=\Big(\frac{4\pi}{\hbar}\Big)^{-1}\FF\bigg(a_{jk}t^j\textbf{w}^{km}\check{\delta}_m\check{\partial}_n\lrcorner\Big(At\partial_l(\textbf{w}^m)dz^l\wedge\check{\delta}_m+At\textbf{w}^md(\check{\delta}_m)+\\
&\quad+4\pi\hbar a_{rs}t^r\partial_l(n^qA_q(\phi)\textbf{w}^{sm})\check{\delta}_m+ 4\pi\hbar a_{rs}t^rn^qA_q(\phi)\textbf{w}^{sm}d(\check{\delta}_m)\Big)\bigg)\\
&=\Big(\frac{4\pi}{\hbar}\Big)^{-1}\FF\bigg(a_{jk}At^{j+1}\textbf{w}^{km}\check{\delta}_mn^l\partial_l(\textbf{w}^m)\wedge\check{\delta}_m+a_{jk}At^{j+1}\textbf{w}^{km+m}n^l\gamma_l(i\hbar^{-1}\gamma_pz^p)\check{\delta}_m\wedge\check{\delta}_m+\\
&\quad+4\pi\hbar a_{jk}a_{rs}t^{j+r}\textbf{w}^{km}\check{\delta}_m n^l\partial_l(n^qA_q(\phi)\textbf{w}^{sm})\check{\delta}_m+\\
&\quad+4\pi\hbar a_{jk}a_{rs}t^{j+r}\textbf{w}^{km+sm}n^qA_q(\phi)n^l\gamma_l(i\hbar^{-1}\gamma_pz^p)\check{\delta}_m\wedge\check{\delta}_m\bigg)\\
&=0
\end{split}
\]  
where $\check{\delta}_m=\frac{e^{-\frac{u_{m^\perp}^2}{\hbar}}}{\sqrt{\pi\hbar}}\gamma_pd\bar{z}^p$ for some constant $\gamma_p$ such that $u_{m^\perp}=\gamma_1x^1+\gamma_2 x^2$, and $\partial_l$ is the partial derivative with respect to the coordinate $z^l$. In the last step we use that $\check{\delta}_m\wedge\check{\delta}_m=0$.  
\end{proof}
\begin{remark}
In the following it will be useful to consider $\bar{\Pi}$ in order to compute the solution of Maurer-Cartan from the data of two non-parallel walls (see section \eqref{sec:two_walls}). However, in order to compute the asymptotic behaviour of the gauge it is enough to consider $\Pi$. 
\end{remark}
Since $\check{X}(U)\cong U\times \C/\Lambda$ has no non trivial deformations and $E$ is holomorphically trivial, then also the pair $(\check{X}(U),E)$ has no non trivial deformations. Therefore there is a gauge $\varphi\in\Omega^0(U, \End E\oplus TM)[\![ t ]\!]$ such that 
\begin{equation}
e^\varphi\ast 0=\bar{\Pi}
\end{equation}
namely $\varphi$ is a solution of the following equation
\begin{equation}\label{eq:gauge}
d_W\varphi=-\bar{\Pi}-\sum_{k\geq 0}\frac{1}{(k+1)!}\textsf{ad}_{\varphi}^kd_W\varphi.
\end{equation}
In particular the gauge $\varphi$ is not unique, unless we choose a gauge fixing condition (see Lemma \ref{lem:uniq}). In order to define the gauge fixing condition we introduce the so called homotopy operator.
  
\subsection{Gauge fixing condition and homotopy operator}  

Since $\mathcal{L}(U)=\bigsqcup_{m\in\Lambda}U_m$, it is enough to define the homotopy operator $H_m$ for every frequency $m$. Let us first define morphisms $p\defeq\bigoplus_{m\in \Lambda\setminus\lbrace 0\rbrace}p_m$ and $\iota\defeq\bigoplus_{m\in\Lambda\setminus\lbrace 0\rbrace}\iota_m$. We define $p_m:\mathfrak{w}^m\cdot\Omega^{\bullet}(U)\rightarrow\mathfrak{w}^m\cdot H^{\bullet}(U)$ which acts as $p_m(\alpha \mathfrak{w}^m)=\alpha({q_0})\mathfrak{w}^m$ if $\alpha\in\Omega^0(U)$ and it is zero otherwise. 

Then $\iota_m\colon\mathfrak{w}^m\cdot H^{\bullet}(U)\rightarrow\mathfrak{w}^m\Omega^{\bullet}(U)$ is the embedding of constant functions on $\Omega^{\bullet}(U)$ at degree zero, and it is zero otherwise. Then let $q_0\in H_{-}$ be a fixed base point, then since $U$ is contractible, there is a homotopy $\varrho\colon [0,1]\times U\to U$ which maps $(\tau,u_m,u_{m,\perp})$ to $(\varrho_1(\tau,u_m,u_{m,\perp}),\varrho_2(\tau,u_m,u_{m,\perp}))$ and such that $\varrho(0,\cdot)=q_0=(u_0^1,u_0^2)$ and $\varrho(1,\cdot)=\text{Id}$. We define $H_m$ as follows:
\begin{equation}\label{def:homotopy_1wall}
\begin{split}
H_m &\colon\mathfrak{w}^m\cdot \Omega^{\bullet}(U)\rightarrow \mathfrak{w}^m\cdot\Omega^{\bullet}(U)[-1]\\
&H_m(\mathfrak{w}^m\alpha)\defeq\mathfrak{w}^m\int_0^1 d\tau\wedge\frac{\partial}{\partial \tau}\lrcorner\varrho^*(\alpha)
\end{split}
\end{equation} 

\begin{lemma}
The morphism $H$ is a homotopy equivalence of $\text{id}_{\Omega^{\bullet}}$ and $\iota\circ p$, i.e. the identity
\begin{equation}\label{eq:PHi}
\text{id}-\iota\circ p=d_WH+Hd_W
\end{equation}
holds true.
\end{lemma}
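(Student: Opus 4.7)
The plan is to reduce the identity to the classical Poincar\'e lemma on the contractible open set $U$, applied one Fourier mode at a time. Since $d_W$, $H$, $\iota$, and $p$ all preserve the $\Lambda$-grading $\bigoplus_m \mathfrak{w}^m\cdot\Omega^\bullet(U)$, it suffices to establish
\[
\id - \iota_m\circ p_m = d_W\circ H_m + H_m\circ d_W
\]
on each summand $\mathfrak{w}^m\cdot\Omega^\bullet(U)$ separately. On this summand, the formal factor $\mathfrak{w}^m$ can be stripped off from every operator: $d_W$ reduces to the de Rham differential $d$ acting on the form-factor $\alpha\in\Omega^\bullet(U)$ (any residual twist by the exact $1$-form $2\pi\hbar^{-1}(m,dx)$ is absorbed into $\mathfrak{w}^m$ via conjugation by $e^{2\pi\hbar^{-1}(m,x)}$, which does not affect the identity), $H_m$ reduces to the classical Poincar\'e homotopy $K\alpha:=\int_0^1\iota_{\partial_\tau}\varrho^*\alpha\,d\tau$, and $\iota_m\circ p_m$ reduces to evaluation at $q_0$, which sends a $0$-form $\alpha$ to the constant $\alpha(q_0)$ and kills forms of positive degree.

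It thus remains to verify $dK+Kd=\id-\iota_0\,p_0$ on $\Omega^\bullet(U)$. Writing $\varrho^*\alpha=A+d\tau\wedge B$ on $[0,1]\times U$ with $A,B$ not involving $d\tau$, one has $\iota_{\partial_\tau}\varrho^*\alpha=B$, and a short calculation using $d\,\varrho^*=\varrho^*\,d$ shows $\iota_{\partial_\tau}\bigl(d(\varrho^*\alpha)\bigr)=\partial_\tau A-d_U B$. Therefore
\[
(dK+Kd)\alpha=\int_0^1 d_U B\,d\tau+\int_0^1(\partial_\tau A-d_U B)\,d\tau=A\big|_{\tau=1}-A\big|_{\tau=0}=\varrho_1^*\alpha-\varrho_0^*\alpha,
\]
which is the content of Cartan's magic formula $\mathcal L_{\partial_\tau}=d\,\iota_{\partial_\tau}+\iota_{\partial_\tau}\,d$ combined with the fundamental theorem of calculus. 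Since $\varrho_1=\id_U$ and $\varrho_0$ is the constant map to $q_0$, the right-hand side equals $\alpha-\iota_0\,p_0\,\alpha$, using that a constant map pulls back positive-degree forms to zero. Tensoring with $\mathfrak{w}^m$ and summing over $m$ then recovers the full identity.

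The only subtlety in the computation is commuting $d$ through the $\tau$-integration in $K$, which amounts to decomposing the de Rham differential on $[0,1]\times U$ into its $U$- and $\tau$-components and tracking the cancellation of the $d_U B$ terms; this is the routine manipulation underlying any proof of the Poincar\'e lemma via the contraction homotopy and presents no real obstacle.
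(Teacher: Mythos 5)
Your proof is correct, but it takes a different route from the paper's. Both begin by fixing the Fourier mode $m$. The paper then verifies $\text{id} - \iota_m\circ p_m = d_W H_m + H_m d_W$ by an explicit degree-by-degree computation: since $\dim U = 2$, it writes out $H_m d_W(\alpha\mathfrak{w}^m)$ and $d_W H_m(\alpha\mathfrak{w}^m)$ separately for $\alpha$ a $0$-, $1$- and $2$-form and checks the identity in each case. You instead invoke the general fiber-integration argument: decomposing $\varrho^*\alpha = A + d\tau\wedge B$, splitting $d$ on $[0,1]\times U$ into its $U$- and $\tau$-parts, and applying the fundamental theorem of calculus gives $dK + Kd = \varrho_1^* - \varrho_0^*$, which is the textbook Cartan-formula proof of the Poincar\'e lemma and works uniformly in degree. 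Both arguments then identify $\varrho_0^*$ with $\iota_m\circ p_m$ using that $\varrho_0$ is the constant map at $q_0$ (so it kills positive-degree forms and evaluates $0$-forms at $q_0$). Your approach is cleaner and degree-independent; the paper's is more pedestrian but fully self-contained in low degree. Your parenthetical about absorbing the residual twist $2\pi\hbar^{-1}(m,dx)$ into $\mathfrak{w}^m$ is the right thing to flag: the paper's own proof tacitly computes $d_W(\mathfrak{w}^m f) = \mathfrak{w}^m d_M f$, i.e.\ $d_W$ acts as the ordinary de Rham differential once the factor $\mathfrak{w}^m$ is stripped (and $H_m$ is defined by stripping $\mathfrak{w}^m$ in exactly the same way), which is precisely what your reduction to the ordinary Poincar\'e lemma on $U$ requires.
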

\begin{proof}
At degree zero, let $f\in\Omega^0(U)$: then $\iota_m\circ p_m(f\mathfrak{w}^m)=f(q_0)\mathfrak{w}^m$. By degree reason $H_m(f\mathfrak{w}^m)=0$ and 
\begin{equation*}
H_md_W(\mathfrak{w}^mf)=\mathfrak{w}^m\int_0^1d\tau\wedge\frac{\partial}{\partial\tau}\lrcorner(d_M (f(\varrho))+d\tau\frac{\partial f(\varrho)}{\partial\tau})=\mathfrak{w}^m\int_0^1d\tau\frac{\partial f(\varrho)}{\partial\tau}=\mathfrak{w}^m(f(q)-f(q_0)).
\end{equation*} 
At degree $k=1$, let $\alpha=f_idx^i\in \Omega^1(U)$ then: $\iota_m\circ p_m(\alpha\mathfrak{w}^m)=0$,
\begin{equation*}
\begin{split}
H_md_W(\alpha\mathfrak{w}^m)&=\mathfrak{w}^m\int_0^1d\tau\wedge\frac{\partial}{\partial\tau}\lrcorner (d(\varrho^*(\alpha))\\
&=\mathfrak{w}^m\int_0^1d\tau\wedge\frac{\partial}{\partial\tau}\lrcorner (d_M(\varrho^*(\alpha))+d\tau\wedge\frac{\partial}{\partial\tau}(f_i(\varrho)\frac{\partial\varrho_i}{\partial x^i})dx^i)\\
&=-\mathfrak{w}^md_M\left(\int_0^1d\tau\wedge\frac{\partial}{\partial\tau}\lrcorner (\varrho^*(\alpha))\right)+\mathfrak{w}^m\int_0^1d\tau \frac{\partial}{\partial\tau}(f_i(\varrho)\frac{\partial\varrho_i}{\partial x^i})dx^i\\
&=-\mathfrak{w}^md_M\left(\int_0^1d\tau\wedge\frac{\partial}{\partial\tau}\lrcorner (\varrho^*(\alpha))\right)+\mathfrak{w}^m\alpha
\end{split}
\end{equation*}
and 
\begin{equation*}
d_WH_m(\mathfrak{w}^m\alpha)=\mathfrak{w}^md_M\left(\int_0^1d\tau\wedge\frac{\partial}{\partial\tau}\lrcorner\varrho^*(\alpha)\right).
\end{equation*}
Finally let $\alpha\in\Omega^2(U)$, then $p_m(\alpha\mathfrak{w}^m)=0$ and $d_W(\alpha\mathfrak{w}^m)=0$. Then it is easy to check that $d_WH_m(\mathfrak{w}^m\alpha)=\alpha$.
\end{proof}

\begin{lemma}[Lemma 4.7 in \cite{MCscattering}]\label{lem:uniq}
Among all solution of $e^{\varphi}\ast 0=\bar{\Pi}$, there exists a unique one such that $p(\varphi)=0$. 
\end{lemma}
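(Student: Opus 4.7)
The strategy is to convert the gauge equation \eqref{eq:gauge} into a fixed-point equation via the homotopy operator $H$ and solve it by $t$-adic iteration. Since $\varphi\in\Omega^0(U,\End E\oplus TM)[\![t]\!]$ has form-degree $0$ and $H$ lowers form-degree by one, we have $H\varphi=0$. Applying the homotopy identity \eqref{eq:PHi} to $\varphi$ and using the gauge-fixing condition $p(\varphi)=0$ then yields
\[
\varphi = d_W H\varphi + H d_W\varphi = H d_W\varphi.
\]
Substituting the gauge equation \eqref{eq:gauge} transforms this into the fixed-point equation
\[
\varphi \;=\; -H\bar{\Pi} \;-\; \sum_{k\geq 0}\frac{1}{(k+1)!}\,H\bigl(\mathsf{ad}_{\varphi}^{k} d_W\varphi\bigr), \qquad (\star)
\]
and I claim $(\star)$ is equivalent to the original gauge equation under the constraint $p(\varphi)=0$.

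The plan for \emph{existence} is to solve $(\star)$ recursively. Both $\bar\Pi$ and $\varphi$ lie in $\mathsf{m}_t$, and each factor of $\mathsf{ad}_\varphi$ contributes an additional power of $t$; moreover $d_W\varphi$ can be iteratively eliminated via the same gauge equation, producing only higher-order corrections. Hence the coefficient of $t^N$ on the right-hand side of $(\star)$ depends only on $\varphi$ modulo $t^N$. Starting from $\varphi_1 = -H\bar\Pi_1$ I will define $\varphi_N$ inductively from $(\star)$. Well-definedness requires checking that the constructed $\varphi$ satisfies the auxiliary condition $p(\varphi)=0$; this follows from the fact that $p\circ H=0$ on forms of positive degree, which is a direct consequence of the homotopy $\varrho$ fixing the base point $q_0$ (so $\partial_\tau\varrho\vert_{q_0}=0$).

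For \emph{uniqueness}, suppose $\varphi,\varphi'$ are two solutions with $p(\varphi)=p(\varphi')=0$. By the argument above both satisfy $(\star)$. A straightforward induction on the $t$-order shows that if $\varphi\equiv\varphi'\bmod t^N$, then the right-hand side of $(\star)$ evaluated on $\varphi$ and on $\varphi'$ agree modulo $t^{N+1}$, whence $\varphi\equiv\varphi'\bmod t^{N+1}$. Since both agree at order $t$ with $-H\bar\Pi_1$, they agree to all orders.

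The main (minor) obstacle is purely bookkeeping: to carry out the induction one must organise the recursion so that $d_W\varphi$ appearing inside $\mathsf{ad}_\varphi^k d_W\varphi$ is consistently replaced using \eqref{eq:gauge}, and then verify that the resulting expression for the $N$-th coefficient of $\varphi$ is a finite sum of well-defined terms in $\Omega^0(U,\End E\oplus TM)$. No analytic input is needed beyond the formal power series structure, because $H$, $p$, $\iota$ and $d_W$ all act on the smooth de Rham complex of $U$ and the iteration is finite at each $t$-order. Thus both existence and uniqueness reduce to the two ingredients already available: the homotopy identity \eqref{eq:PHi} and the vanishing $p\circ H=0$ on positive-degree forms.
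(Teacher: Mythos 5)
Your route differs genuinely from the paper's, and the existence half has a real gap. The paper does not set up a fixed-point equation at all: it starts from the fact, recorded just before the lemma, that \emph{some} gauge $\varphi$ with $e^{\varphi}\ast 0=\bar{\Pi}$ exists because local deformations of $(\check{X}(U),E)$ are trivial. It then observes that for any $d_W$-closed $0$-form $\sigma$ one has $e^{\sigma}\ast 0=0$, so the BCH product $\varphi\bullet\sigma$ is again a solution, and it determines $\sigma$ order by order in $t$ from the requirement $p(\varphi\bullet\sigma)=0$, which fixes $\sigma_n(q_0)$ uniquely at each order. Uniqueness of the normalized gauge follows because the stabilizer of $0$ consists precisely of such $\sigma$. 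Your scheme instead tries to manufacture $\varphi$ from scratch by iterating $(\star)$.

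The gap is the claim that $(\star)$ is equivalent to ``$e^{\varphi}\ast 0=\bar{\Pi}$ and $p(\varphi)=0$.'' You verify only the easy implication, that a normalized gauge satisfies $(\star)$. For existence you need the converse: that a solution of $(\star)$ actually solves the gauge equation, and this is not bookkeeping. Applying $d_W$ to $(\star)$, using $d_WH=\mathrm{id}-\iota\circ p-Hd_W$ on $1$-forms and $d_W\bar{\Pi}=0$, leaves $e^{\varphi}\ast 0-\bar{\Pi}=-Hd_WR$ with $R=\sum_{k\geq 1}\tfrac{1}{(k+1)!}\mathsf{ad}_{\varphi}^k d_W\varphi$, which is not visibly zero. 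You would have to run a further bootstrap (set $\Xi:=e^{\varphi}\ast 0-\bar{\Pi}$, derive a homogeneous relation such as $\Xi=-H\bigl(\{\bar{\Pi},\Xi\}_{\sim}+\tfrac12\{\Xi,\Xi\}_{\sim}\bigr)$, and conclude $\Xi\in\bigcap_n\mathsf{m}_t^n=0$) --- precisely the kind of check Proposition~\ref{prop:Kuranishi} supplies for the closely related equation \eqref{eq:Phi}, and exactly what your plan omits. A smaller issue: as written, $(\star)$ keeps the $k=0$ term $H(\mathsf{ad}_{\varphi}^0 d_W\varphi)=Hd_W\varphi$, which under $p(\varphi)=0$ equals $\varphi$ itself, so the $t^N$-coefficient of the right-hand side depends on $\varphi^{(N)}$ and the iteration does not close on lower-order data; the sum must start at $k=1$, the $k=0$ piece being the $d_W\varphi$ already isolated on the left of \eqref{eq:gauge}. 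With that correction your uniqueness induction is sound, but existence still needs the missing converse (or, more simply, the paper's shift-by-stabilizer argument).
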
 
\begin{proof}
First of all, let $\sigma\in\Omega^0(U)$ such that $d\sigma=0$. Then $e^{\varphi\bullet\sigma}\ast 0=\bar{\Pi}$, indeed $e^{\sigma}\ast 0=0-\sum_{k}\frac{[\sigma,\cdot]^k}{k!}(d\sigma)=0$. Thus $e^{\varphi\bullet\sigma}\ast 0=e^{\varphi}\ast(e^{\sigma}\ast 0)=e^{\varphi}\ast 0=\bar{\Pi}$. Thanks to the BCH formula \[\varphi\bullet\sigma=\varphi+\sigma+\frac{1}{2}\lbrace\varphi,\sigma\rbrace_\sim +\cdots\] we can uniquely determine $\sigma$ such that $p(\varphi\bullet\sigma)=0$. Indeed working order by order in the formal parameter $t$, we get:
\begin{enumerate}
\item $p(\sigma_1+\varphi_1)=0$, hence by definition of $p$, $\sigma_1(q_0)=-\varphi_1(q_0)$;
\item $p(\sigma_2+\varphi_2+\frac{1}{2}\lbrace\varphi_1,\sigma_1\rbrace_\sim)=0$, hence $\sigma_2(q_0)=-\big(\varphi_2(q_0)+\frac{1}{2}\lbrace\varphi_1,\sigma_1\rbrace_\sim (q_0)\big)$;
\end{enumerate} 
and any further order is determined by the previous one. 
\end{proof}

Now that we have defined the homotopy operator and the gauge fixing condition (as in Lemma \ref{lem:uniq}), we are going to study the asymptotic behaviour of the gauge $\varphi$ such that it is a solution of \eqref{eq:gauge} and $p(\varphi)=0$. Equations \eqref{eq:gauge}, \eqref{eq:PHi} and $p(\varphi)=0$ together say that the unique gauge $\varphi$ is indeed a solution of the following equation:
\begin{equation}\label{eq:sol_gauge}
\varphi=-Hd_W(\varphi)=-H\big(\bar{\Pi}+\sum_{k}{\textsf{ad}_{\varphi}^k\over (k+1)!}d_W\varphi\big).
\end{equation}

Up to now we have used a generic homotopy $\varrho$, but from now on we are going to choose it in order to get the expected asymptotic behaviour of the gauge $\varphi$. In particular we choose the homotopy $\varrho$ as follows: for every $q=(u_{q,m},u_{q,m^\perp})\in U$   
\begin{equation}\label{varro}
\varrho(\tau, u_q)=\begin{cases}
\left((1-2\tau)u_{1}^0+2\tau u_{q,m},u_{2}^0\right) & \text{if} \tau\in [0,\frac{1}{2}] \\
\left(u_{q,m},(2\tau-1)u_{q,m^\perp}+(2-2\tau)u_{2}^0\right) & \text{if} \tau\in [\frac{1}{2},1]
\end{cases}
\end{equation}

where $(u_0^1, u^2_0)$ are the coordinates for the fixed point $q_0$ on $U$. Then we have the following result:

\begin{lemma}\label{lem:H(W^s)}
Let $\mathfrak{d}_m$ be a ray in $U$ and let $\alpha\in\mathcal{W}_{\mathfrak{d}_m}^s(U)$. Then $H(\alpha\mathfrak{w}^m)$ belongs to $\mathcal{W}_0^{s-1}(U)$. 
\end{lemma}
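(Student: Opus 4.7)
The plan is to compute $H(\alpha\mathfrak{w}^m)$ explicitly using the piecewise homotopy $\varrho$ in \eqref{varro}, reduce everything to a one-dimensional integral of $f$ along the normal direction, and then convert the integrated bounds \eqref{eq:stima W^s} built into the definition of $\mathcal{W}_{\mathfrak{d}_m}^s(U)$ into the pointwise estimates required by $\mathcal{W}_0^{s-1}(U)$.

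\textbf{Step 1 (explicit formula).} Using a partition of unity one may split $\alpha = f\,du_{m^\perp}+\eta$ where, after shrinking $U$ slightly, $\eta\in\mathcal{W}_1^{-\infty}(U)$ globally (condition $(1)$ makes the contribution from $U\smallsetminus\mathfrak{d}_m$ absorbable into $\eta$). On the first half $\tau\in[0,\tfrac12]$ of $\varrho$ the component $\varrho_2$ is constant equal to $u_2^0$, so $\varrho^*(du_{m^\perp})=0$; on the second half $\tau\in[\tfrac12,1]$ the change of variables $v=\varrho_2(\tau)=(2\tau-1)u_{q,m^\perp}+(2-2\tau)u_2^0$ yields
\[
H_m(\alpha\mathfrak{w}^m)(u_{q,m},u_{q,m^\perp})
=\mathfrak{w}^m\int_{u_2^0}^{u_{q,m^\perp}} f(u_{q,m},v,\hbar)\,dv+R(u_q,\hbar),
\]
where $R$ comes from $\eta$ (together with the partition-of-unity correction) and is exponentially small on compact subsets of $U$. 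Hence $R\in\mathcal{W}_0^{-\infty}(U)\subset\mathcal{W}_0^{s-1}(U)$, and it suffices to bound the principal term $G(u_q,\hbar):=\int_{u_2^0}^{u_{q,m^\perp}} f(u_{q,m},v,\hbar)\,dv$.

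\textbf{Step 2 (integrated bounds).} For any multi-index with all derivatives in the tangential direction $u_{q,m}$ one can differentiate under the integral:
\[
\bigl|\partial_{u_{q,m}}^{j} G(u_q,\hbar)\bigr|\;\leq\;\int_{W'}\sup_{u_{q,m}}\bigl|\partial_{u_{q,m}}^{j} f\bigr|\,dv\;\leq\; C\,\hbar^{-(s+j-1)/2},
\]
directly from \eqref{eq:stima W^s} with $\beta=0$. This is exactly the required $\mathcal{W}_0^{s-1}$ estimate.

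\textbf{Step 3 (pointwise bounds, the main obstacle).} For a derivative that involves at least one factor of $\partial_{u_{q,m^\perp}}$, the fundamental theorem of calculus gives
\[
\partial_{u_{q,m^\perp}}^{b}\partial_{u_{q,m}}^{a} G \;=\; \partial_{u_{q,m^\perp}}^{b-1}\partial_{u_{q,m}}^{a} f\bigl(u_{q,m},u_{q,m^\perp},\hbar\bigr),
\]
so one needs pointwise control of mixed derivatives of $f$ of total order $k=a+b-1=j-1$, whereas \eqref{eq:stima W^s} only provides integrated control. This is the technical heart of the lemma. I would close the gap by a Sobolev-type trick: for $h=\partial_{u_{q,m^\perp}}^{b-1}\partial_{u_{q,m}}^{a} f$ and any $v_0,v_1\in W'$,
\[
\sup_{u_{q,m}}|h(u_{q,m},v_0)|\;\leq\;\sup_{u_{q,m}}|h(u_{q,m},v_1)|+\int_{v_1}^{v_0}\sup_{u_{q,m}}|\partial_v h|\,dv,
\]
and then averaging over $v_1\in W'$ converts the two integrated estimates (orders $k=j-1$ and $k+1=j$) coming from \eqref{eq:stima W^s} into
\[
\sup_{u_{q,m}}\bigl|h(u_{q,m},v_0)\bigr|\;\leq\; C\bigl(\hbar^{-(s+j-2)/2}+\hbar^{-(s+j-1)/2}\bigr)\;\leq\; C\,\hbar^{-(s+j-1)/2},
\]
which is exactly the $\mathcal{W}_0^{s-1}$ bound $|\nabla^{j} G|\leq C\hbar^{-((s-1)+j)/2}$.

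\textbf{Step 4 (conclusion).} Combining Steps 2 and 3 for every decomposition $\nabla^{j}=\partial_{u_{q,m}}^{a}\partial_{u_{q,m^\perp}}^{b}$ with $a+b=j$ and adding the exponentially small remainder $R$, one obtains the required estimate on every compact $W\Subset U$, so $H(\alpha\mathfrak{w}^m)\in\mathcal{W}_0^{s-1}(U)$. The principal difficulty is Step 3: all of the analytic weight of the argument is in bridging the gap between the $L^1$-type bounds that define $\mathcal{W}_{\mathfrak{d}_m}^s$ and the $L^\infty$-type bounds that define $\mathcal{W}_0^{s-1}$, which is possible precisely because \eqref{eq:stima W^s} holds for every order $j$ simultaneously.
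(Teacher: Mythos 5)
Your proof is correct, and it takes a genuinely different route in the decisive step. Both you and the paper compute $H(\alpha\mathfrak{w}^m)$ explicitly from the two-leg homotopy $\varrho$ of \eqref{varro}, discard the exponentially small part coming from $\eta$, and reduce the estimate to controlling $\nabla^{j}$ of $G(u_q,\hbar)=\int_{u_2^0}^{u_{q,m^\perp}}f\,dv$. The difference is in how the boundary term $\nabla^{j-1}f(u_{q,m},u_{q,m^\perp})$ (appearing when $\nabla^{j}$ has at least one normal factor) is bounded. The paper pushes the evaluation to the lower endpoint $u_{m^\perp}=u_2^0$ via the fundamental theorem of calculus, obtaining $\nabla^{j-1}f|_{u_2^0}+\int_{u_2^0}^{u_{q,m^\perp}}\nabla^{j}f\,dv$, and kills the boundary term using condition (1) of Definition~4.19 (exponential decay of $\alpha$ away from $\mathfrak{d}_m$, hence of $f$ at $u_2^0$) while bounding the integral by \eqref{eq:stima W^s} at order $j$. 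Your Sobolev-type averaging replaces the fixed lower endpoint $u_2^0$ with a floating $v_1$ averaged over $W'$, converting the boundary contribution into the integrated bound \eqref{eq:stima W^s} at order $j-1$ rather than an exponential-decay bound; the derivative term uses \eqref{eq:stima W^s} at order $j$ as before, and $\hbar^{-(s+j-2)/2}\leq\hbar^{-(s+j-1)/2}$ for $\hbar\to 0^+$ closes the estimate. The trade-off is that your argument for this step relies only on the integrated estimates of condition (2), at the cost of invoking them at two consecutive orders $j-1$ and $j$, while the paper uses condition (2) at a single order but also needs condition (1) at the point $u_2^0$ — which requires the coefficient $f$ in the local decomposition $\alpha=f\,du_{m^\perp}+\eta$ to inherit the off-ray exponential decay, a point your route sidesteps. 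Both are valid; yours is arguably the more self-contained, and the paper's has a slightly more economical use of the definition.
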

\begin{proof}
Let us first consider $q_*\in U\setminus \mathfrak{d}_m$. By assumption there is a neighbourhood of $q_*$, $V\subset U$ such that $\alpha\in\mathcal{W}_1^{-\infty}(V)$. Then by definition  
\begin{equation*}
H(\alpha\mathfrak{w}^m)=\mathfrak{w}^m\int_0^1d\tau\wedge\frac{\partial}{\partial \tau}\lrcorner\varrho^*(\alpha)=\int_0^1d\tau\alpha(\varrho)\left(\frac{\partial\varrho_1}{\partial\tau}+\frac{\partial\varrho_2}{\partial\tau} \right)
\end{equation*}
hence, since $\varrho$ does not depend on $\hbar$ 
\begin{equation*}
\sup_{q\in V}\nabla^j\left|\int_0^1d\tau\alpha(\varrho)\left(\frac{\partial\varrho_1}{\partial\tau}+\frac{\partial\varrho_2}{\partial\tau} \right)\right|\leq\int_0^1d\tau\sup_{q\in V}\left|\nabla^j(\alpha(\varrho)\left(\frac{\partial\varrho_1}{\partial\tau}+\frac{\partial\varrho_2}{\partial\tau} \right))\right|\leq C(V,j)e^{-\frac{c_v}{\hbar}}.
\end{equation*}

Let us now consider $q_*\in \mathfrak{d}_m$. By assumption there is a neighbourhood of $q$, $W\subset U$ such that for all $q=(u_{q,m},u_{q,m^\perp})\in W$  $\alpha=h(u_q, \hbar)du_{m_q^\perp}+\eta$ and $\eta\in\mathcal{W}_{1}^{-\infty}(W)$. By definition 

\begin{equation*}
\begin{split}
H(\alpha\mathfrak{w}^m)&=\mathfrak{w}^m\int_0^1d\tau\wedge\frac{\partial}{\partial \tau}\lrcorner\varrho^*(\alpha)\\
&=2\int_{\frac{1}{2}}^1d\tau h(u_{q,m},(2\tau-1)u_{q,m^\perp}+(2-2\tau)u_0^2)(u_{q,m^\perp}-u_0^2)+ \int_0^1d\tau\eta(\varrho)\frac{\partial\varrho_1}{\partial\tau}\\
&=\int_{u_0^2}^{u_{q,m^\perp}}du_m^\perp h(u_{q,m},u_{m^\perp})+ \int_0^1d\tau\eta(\varrho)\frac{\partial\varrho_1}{\partial\tau}
\end{split}
\end{equation*} 

and since $\eta\in\mathcal{W}_1^{-\infty}(W)$ the second term $\int_0^1d\tau\eta(\varrho)\frac{\partial\varrho_1}{\partial\tau}$ belongs to $\mathcal{W}_0^{\infty}$. The first term is computed below:

\begin{equation}
\begin{split}
\sup_{q\in W}\left|\nabla^j\left(\int_{u_0^2}^{u_{q,m^\perp}}du_m^\perp h(u_{q,m},u_{m^\perp})\right)\right|&=\sup_{q\in W}\Big|\int_{u_0^2}^{u_{q,m^\perp}}du_m^\perp \nabla^j(h(u_{q,m},u_{m^\perp}))+\\
&\quad+\left[\frac{\partial^{j-1}}{\partial u_{m^\perp}^{j-1}}(h(u_{q,m},u_{m^\perp)})\right]_{u_{m^\perp}=u_{q,m^\perp}}\Big|\\
&\leq \sup_{u_{q,m^\perp}}\int_{u_0^2}^{u_{q,m^\perp}}du_m^\perp \sup_{u_{q,m}}\left|\nabla^j(h(u_{q,m},u_{m^\perp}))\right|+\\
&\quad +\left[\sup_{q\in W}\left|\frac{\partial^{j-1}}{\partial u_{m^\perp}^{j-1}}(h(u_{q,m},u_{m^\perp)})\right|\right]_{u_{m^\perp}=u_{0}^2}\\
&\leq C(j,W)\hbar^{-\frac{s+j-1}{2}}
\end{split}
\end{equation}
where in the last step we use that $\left[\frac{\partial^{j-1}}{\partial u_{m^\perp}^{j-1}}(h(u_{q,m},u_{m^\perp)})\right]_{u_{m^\perp}=u_{0}^2} $ is outside the support of $\mathfrak{d}_m$.

\end{proof}

\begin{corollary}\label{lem:H(delta_m)}
Let $\mathfrak{d}_m$ be a ray in $U$, then $H(\delta_m\mathfrak{w}^m)\in\mathcal{W}_{0}^{0}(U)\mathfrak{w}^m$.
\end{corollary}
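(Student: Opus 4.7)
The plan is to observe that this corollary is an immediate consequence of Lemma~\ref{lem:H(W^s)} applied to the one-form $\delta_m$. First I would recall from the computation preceding the current statement that $\delta_m = \frac{e^{-u_{m^\perp}^2/\hbar}}{\sqrt{\pi\hbar}}\, du_{m^\perp}$ has asymptotic support of order $1$ along $\mathfrak{d}_m$, i.e.\ $\delta_m \in \mathcal{W}_{\mathfrak{d}_m}^1(U)$, which was established just before Definition~\ref{def:I(P)}.

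Next I would apply Lemma~\ref{lem:H(W^s)} with $\alpha = \delta_m$ and $s = 1$. The lemma's conclusion states that $H(\alpha \mathfrak{w}^m) \in \mathcal{W}_0^{s-1}(U)$, so with $s=1$ this yields directly $H(\delta_m \mathfrak{w}^m) \in \mathcal{W}_0^0(U) \mathfrak{w}^m$, as required. No additional estimate is needed: the whole content of the corollary is captured by specializing the more general lemma.

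The only mild subtlety worth mentioning is to check that the output lives in the correct Fourier component $\mathfrak{w}^m$. This is built into the definition of $H_m$ in \eqref{def:homotopy_1wall}, which simply multiplies the integral by $\mathfrak{w}^m$, so the monomial component is preserved and the decorated space $\mathcal{W}_0^0(U)\mathfrak{w}^m$ is the right ambient space. There is no genuine obstacle here; the corollary is recorded separately only because $\delta_m$ and its $H$-image appear repeatedly in the ansatz constructions of Section~\ref{sec:single wall}.
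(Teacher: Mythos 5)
Your proof is correct and matches the paper's intent exactly: the result is stated as a corollary of Lemma~\ref{lem:H(W^s)} precisely because it follows by specializing $s=1$, using the earlier lemma that $\delta_m\in\mathcal{W}_{\mathfrak{d}_m}^1(U)$; your remark about $H_m$ preserving the Fourier component $\mathfrak{w}^m$ is also the right way to reconcile the lemma's conclusion with the $\mathfrak{w}^m$-decorated statement of the corollary.
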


\subsection{Asymptotic behaviour of the gauge $\varphi$}
We are going to compute the asymptotic behaviour of $\varphi=\sum_j\varphi^{(j)}t^j\in\Omega^0(U,\End E\oplus TM)[\![ t ]\!]$ order by order in the formal parameter $t$. In addition since $\Pi_{E,R}$ gives a higher $\hbar$-order contribution in the definition of $\bar{\Pi}$ we get rid of it by replacing $\bar{\Pi}$ with $\Pi$ in equation \eqref{eq:sol_gauge}. 

\begin{prop}\label{prop:asymp1}
Let $(m,\mathfrak{d}_m,\theta_m)$ be a wall with $\log\theta_m=\sum_{j,k\geq 1}\big(A_{jk}t^j\mathfrak{w}^{km},a_{jk}\mathfrak{w}^{km}t^j\partial_n\big)$. Then, the unique gauge $\varphi=(\varphi_E,\varphi^{CLM})$ such that $e^\varphi\ast 0=\Pi$ and $P(\varphi)=0$, has the following  asymptotic jumping behaviour along the wall, namely 
\begin{equation}\label{eq:lim gauge}
\varphi^{(s+1)}\in\begin{cases} \sum_{k\geq 1}\big(A_{s+1,k}t^{s+1}\mathfrak{w}^{km},a_{s+1,k}\mathfrak{w}^{km}t^{s+1}\partial_n\big)+\bigoplus_{k\geq 1}\mathcal{W}_{0}^{-1}(U,\End E\oplus TM)\mathfrak{w}^{km}t^{s+1} & H_{m,+}\\
\bigoplus_{k\geq 1}\mathcal{W}_0^{-\infty}(U, \End E\oplus TM)\mathfrak{w}^{km} t^{s+1}& H_{m,-}.
\end{cases}
\end{equation} 
\end{prop}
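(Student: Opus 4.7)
I would prove this by induction on $s\ge 0$, adapting the strategy of \cite{MCscattering} to track both the matrix and derivation components of $\tilde{\mathfrak{h}}$.

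\textbf{Base case} $(s=0)$. The gauge-fixing $p(\varphi)=0$ and \eqref{eq:sol_gauge} reduce to $\varphi^{(1)}=-H(\Pi^{(1)})$. With the homotopy $\varrho$ of \eqref{varro},
\[
H(\delta_m\mathfrak{w}^{km})(q)\;=\;\mathfrak{w}^{km}\int_{u_0^2}^{u_{q,m^\perp}}\frac{e^{-u^2/\hbar}}{\sqrt{\pi\hbar}}\,du.
\]
Since $q_0\in H_{m,-}$ so that $u_0^2<0$, for $q_*\in H_{m,+}$ bounded away from $\mathfrak{d}_m$ this equals $1+O(e^{-c/\hbar})$, and for $q_*\in H_{m,-}$ bounded away from $\mathfrak{d}_m$ it is $O(e^{-c/\hbar})$. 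Multiplying by the coefficients $-A_{1,k}$ and $-a_{1,k}\partial_n$ gives the claim at $s=0$, the exponentially small errors lying in $\mathcal{W}_0^{-\infty}\subseteq\mathcal{W}_0^{-1}$.

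\textbf{Inductive step}. Assume the claim at orders $\leq s$. Expanding \eqref{eq:sol_gauge} in $t$,
\[
\varphi^{(s+1)}\;=\;-H(\Pi^{(s+1)})\,-\,H(R^{(s+1)}),
\]
where $R^{(s+1)}$ is a finite sum of nested $\{\cdot,\cdot\}_\sim$-brackets of $\varphi^{(j_i)}$ (with $j_i\leq s$) and $d_W\varphi^{(j_{k+1})}$ for $\sum j_i=s+1$. The first term reproduces, by the same integral computation as in the base case, the explicit leading piece $\sum_k(A_{s+1,k}\mathfrak{w}^{km},a_{s+1,k}\mathfrak{w}^{km}\partial_n)$ on $H_{m,+}$ with exponentially small error, and is exponentially small on $H_{m,-}$. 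For $H(R^{(s+1)})$: on $H_{m,-}$, every $\varphi^{(j_i)}$ lies in $\mathcal{W}_0^{-\infty}$ by induction, and since $d_W$ and $\{\cdot,\cdot\}_\sim$ introduce only base derivatives and factors polynomial in $\hbar^{-1}$, iterated brackets remain exponentially small and $H$ preserves this class. On $H_{m,+}$ I decompose $\varphi^{(j_i)}=\varphi^{(j_i)}_{\mathrm{lead}}+\varphi^{(j_i)}_{\mathrm{rem}}$ with $\varphi^{(j_i)}_{\mathrm{rem}}\in\mathcal{W}_0^{-1}\mathfrak{w}^{j_im}$; away from $\mathfrak{d}_m$ the remainder is already exponentially small, and using $d_W\varphi^{(j_{k+1})}=-\Pi^{(j_{k+1})}+(\text{lower brackets})$ recursively shows $d_W\varphi^{(j_{k+1})}$ is exponentially small there too, so $R^{(s+1)}$ is exponentially small in the interior of $H_{m,+}$. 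Near the wall $R^{(s+1)}\in\mathcal{W}_{\mathfrak{d}_m}^{s'}$ for a suitable $s'\leq 1$, and the Maurer--Cartan identity for $\log\theta_m$ in $\tilde{\mathfrak{h}}$ (the algebraic shadow of \eqref{eq:sol_gauge}) matches the near-wall contribution of $-H(R^{(s+1)})$ to the prescribed leading coefficients; the leftover is controlled by Lemma \ref{lem:H(W^s)} and lies in $\mathcal{W}_0^{-1}$ on $H_{m,+}\setminus\mathfrak{d}_m$.

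The hard part will be the systematic bookkeeping of the cross-terms produced by the twisted bracket $\{\cdot,\cdot\}_\sim$---particularly the mixing piece $\textbf{ad}(\varphi,N)$ coupling matrix and derivation components---together with the verification that the near-wall contributions of $-H(\Pi^{(s+1)})$ and $-H(R^{(s+1)})$ combine exactly into the leading form dictated by $\log\theta_m$. This is a tedious but routine adaptation of the asymptotic estimates in \cite[Section 4]{MCscattering} to the paired setting.
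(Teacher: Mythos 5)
Your base case and the overall inductive scaffolding agree with the paper's argument, but the inductive step has a genuine gap precisely where the real work lies. The paper's proof hinges on showing that the bracket correction
\[
R^{(s+1)}\;=\;\Big(\textstyle\sum_{k\geq 1}\frac{\ad_{\varphi^s}^k}{(k+1)!}\,d_W\varphi^s\Big)^{(s+1)}
\]
lies in $\mathcal{W}_{\mathfrak{d}_m}^{0}(U,\End E\oplus TM)$, one full order \emph{lower} than the naive estimate $\mathcal{W}_{\mathfrak{d}_m}^{1}$ that Lemma~\ref{lem:bracket} would give when pairing an element of $\mathcal{W}_0^{0}$ with an element of $\mathcal{W}_{\mathfrak{d}_m}^{1}$; only with the lower order does Lemma~\ref{lem:H(W^s)} yield $H(R^{(s+1)})\in\mathcal{W}_{0}^{-1}$, so that the bracket does not pollute the jump. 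You concede only ``$R^{(s+1)}\in\mathcal{W}_{\mathfrak{d}_m}^{s'}$ for a suitable $s'\leq 1$'' and then invoke a ``Maurer--Cartan identity for $\log\theta_m$ in $\tilde{\mathfrak{h}}$'' to reconcile the resulting $O(1)$ near-wall contribution of $-H(R^{(s+1)})$ with the prescribed leading coefficients. That does not work: the leading coefficient $\sum_k(A_{s+1,k},a_{s+1,k}\partial_n)t^{s+1}\mathfrak{w}^{km}$ is exactly the order-$(s+1)$ part of the wall data and comes entirely from $-H(\Pi^{(s+1)})$; there is no algebraic slack into which a further $O(1)$ contribution from $-H(R^{(s+1)})$ could be absorbed. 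That contribution must genuinely vanish to leading order, and this must be proved, not asserted.

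The mechanism that makes it vanish is specific to the single-wall situation, and it is what your sketch misses. Writing $\ad_{\varphi^s}d_W\varphi^s\in\lbrace H(\Pi^s)+\mathcal{W}_{0}^{-1},\,\Pi^s+\mathcal{W}_{\mathfrak{d}_m}^{0}\rbrace_\sim$, the only potentially order-preserving term is $\lbrace H(\Pi^s),\Pi^s\rbrace_\sim$. In the decomposition of $\lbrace\cdot,\cdot\rbrace_\sim$ into matrix, mixed, and derivation parts, one checks that $\lbrace H(\Pi_E^s),\Pi_E^s\rbrace_{\End E}=0$ (the commutators $[A_{ak},A_{bk'}]$ cancel pairwise over $a+b=s+1$ by antisymmetry, including the diagonal term where $[A,A]=0$), and $\lbrace H(\Pi^{CLM,s}),\Pi^{CLM,s}\rbrace=0$ because $[\mathfrak{w}^{km}\partial_n,\mathfrak{w}^{k'm}\partial_n]=0$ when $\langle m,n\rangle=0$. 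Only the mixed terms $\textbf{ad}(H(\Pi^{CLM,s}),\Pi_E^s)$ and $\textbf{ad}(\Pi^{CLM,s},H(\Pi_E^s))$ survive, and the explicit computation in Lemma~\ref{lem:bracket} shows these carry an extra factor of $\hbar$, landing them in $\mathcal{W}_{\mathfrak{d}_m}^{0}$. The higher iterates $\ad_{\varphi^s}^k d_W\varphi^s$ for $k\geq 2$ are then controlled by repeated application of Lemma~\ref{lem:bracket}. Without identifying this cancellation, your inductive step does not close.
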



Before giving the proof of Proposition \ref{prop:asymp1}, let us introduce the following Lemma which are useful to compute the asymptotic behaviour of one-forms asymptotically supported on a ray $\mathfrak{d}_m$.
\begin{lemma}\label{lem:wedge_sobolev}
Let $\mathfrak{d}_m$ be a ray in $U$. Then $\mathcal{W}_{\mathfrak{d}_m}^s(U)\wedge\mathcal{W}_{0}^{r}(U)\subset\mathcal{W}^{r+s}_{\mathfrak{d}_{m}}(U)$.
\end{lemma}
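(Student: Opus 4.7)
The plan is to verify directly the two defining conditions of $\mathcal{W}^{r+s}_{\mathfrak{d}_m}(U)$ applied to the product $f\alpha$, where $\alpha\in\mathcal{W}_{\mathfrak{d}_m}^s(U)$ and $f\in\mathcal{W}_{0}^{r}(U)$. The whole argument is driven by the Leibniz rule $\nabla^j(f\alpha)=\sum_{i=0}^{j}\binom{j}{i}\nabla^i f\cdot\nabla^{j-i}\alpha$ combined with the uniform bound $\sup_{q\in V}|\nabla^i f(u_q,\hbar)|\leq C(V,i)\,\hbar^{-(r+i)/2}$ coming from the definition of $\mathcal{W}_0^r(U)$.

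For the first condition, fix $q_*\in U\setminus\mathfrak{d}_m$. By hypothesis there is a neighborhood $V\subset U\setminus\mathfrak{d}_m$ with $\alpha|_V\in\mathcal{W}_1^{-\infty}(V)$, so $\sup_{V}|\nabla^{j-i}\alpha|\leq C(j-i,V)e^{-c_V/\hbar}$. Multiplying by the polynomial-in-$\hbar^{-1}$ bound on the derivatives of $f$ leaves an exponentially decaying quantity, hence $f\alpha|_V\in\mathcal{W}_1^{-\infty}(V)$.

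For the second condition, fix $q_*\in\mathfrak{d}_m$ and choose a neighborhood $W\subset U$ in which $\alpha=h(u_q,\hbar)\,du_{q,m^\perp}+\eta$ with $\eta\in\mathcal{W}_1^{-\infty}(W)$ and $h$ satisfying the estimate \eqref{eq:stima W^s} with parameter $s$. Writing $f\alpha=(fh)\,du_{q,m^\perp}+f\eta$, the remainder $f\eta$ lies in $\mathcal{W}_1^{-\infty}(W)$ by exactly the argument of the previous paragraph. For the main piece, the Leibniz rule and the fact that the bound on $|\nabla^i f|$ is uniform in $u_{q,m}$ give
\begin{equation*}
\int u_{m^\perp}^\beta \sup_{(u_{q,m},u_{m^\perp})\in W}\!\bigl|\nabla^j(fh)\bigr|\,du_{m^\perp}
\leq \sum_{i=0}^{j}\binom{j}{i}\,C(W,i)\,\hbar^{-\frac{r+i}{2}}\int u_{m^\perp}^\beta \sup_{u_{q,m}}\!\bigl|\nabla^{j-i}h\bigr|\,du_{m^\perp}.
\end{equation*}
Applying \eqref{eq:stima W^s} to each term on the right yields a bound
\begin{equation*}
\sum_{i=0}^{j}\binom{j}{i}\,C(W,i)\,C(j-i,W,\beta)\,\hbar^{-\frac{r+i}{2}}\,\hbar^{-\frac{j-i+s-\beta-1}{2}}
= \tilde C(j,W,\beta)\,\hbar^{-\frac{j+(r+s)-\beta-1}{2}},
\end{equation*}
which is precisely the estimate \eqref{eq:stima W^s} for $f\alpha$ with parameter $r+s$. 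This shows $f\alpha\in\mathcal{W}^{r+s}_{\mathfrak{d}_m}(U)$.

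The proof is essentially a bookkeeping exercise; the only point requiring mild care is that the supremum in $u_{q,m}$ must be taken before integrating in $u_{m^\perp}$, which works here precisely because the $\mathcal{W}_0^r$-bound on $f$ is a sup-norm estimate uniform in both variables. I therefore do not anticipate any serious obstacle; the statement is essentially a compatibility of the Sobolev-type spaces with multiplication by $\hbar$-dependent functions of controlled growth.
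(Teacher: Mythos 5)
Your proof is correct and follows essentially the same route as the paper: decompose $\alpha = h\,du_{m^\perp}+\eta$ near the ray, apply the Leibniz rule to $\nabla^j(fh)$, pull out the uniform $\hbar^{-(r+i)/2}$ bound on $\nabla^i f$, and invoke the defining integral estimate for $h$ to collect the exponents into $\hbar^{-(j+r+s-\beta-1)/2}$. The paper does the same thing, only more tersely; it omits the explicit check of condition (1) for points off the ray, which you supply, and handles $f\eta$ in one line as you do.
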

\begin{proof}
Let $\alpha\in\mathcal{W}_{\mathfrak{d}_m}^s(U)$ and let $f\in\mathcal{W}_{0}^{r}(U)$. Pick a point $q_*\in \mathfrak{d}_m$ and let $W\subset U$ be a neighbourhood of $q_*$ where $\alpha=h(u_q,\hbar)du_{m^\perp}+\eta$, we claim 
\begin{equation}
\int_{-a}^b u_{m^\perp}^\beta\sup_{u_m}\left|\nabla^j(h(u_q,\hbar)f(u_q\hbar))\right|du_{m^\perp}\leq C(a,b,j,\beta)\hbar^{-\frac{r+s+j-\beta-1}{2}}
\end{equation}
for every $\beta\in\Z_{\geq 0}$ and for every $j\geq0$.

\begin{equation*}
\begin{split}
&\int_{-a}^b u_{m^\perp}^{\beta} \sup_{u_m}\left|\nabla^j\left(h(u_q,\hbar)f(u_q\hbar)\right)\right|du_{m^\perp}=\\
&=\sum_{j_1+j_2=j}\int_{-a}^b u_{m^\perp}^{\beta}  \sup_{u_m}\left|\nabla^{j_1}(h(u_q,\hbar))\nabla^{j_2}(f(u_q\hbar))\right|du_{m^\perp}\\
&\leq \sum_{j_1+j_2=j}C(a,b,j_2)\hbar^{-\frac{r+j_2}{2}}\int_{-a}^b u_{m^\perp}^{\beta} \sup_{u_m}\left|\nabla^{j_1}(h(u_q,\hbar))\right|du_{m^\perp}\\
&\leq \sum_{j_1+j_2=j}C(a,b,j_2,j_1)\hbar^{-\frac{r+j_2}{2}}\hbar^{-\frac{s+j_1-\beta-1}{2}}\\
&\leq C(a,b,j)\hbar^{-\frac{r+s+j-\beta-1}{2}}
\end{split}
\end{equation*}
Finally, since $\eta\in\mathcal{W}_1^{-\infty}(W)$ also $f(x,\hbar)\eta$ belongs to $\mathcal{W}_1^{-\infty}(W)$.  
\end{proof}

\begin{lemma}\label{lem:bracket} 
Let $\mathfrak{d}_{m}$ be a ray in $U$. If $(A\mathfrak{w}^{m}, \varphi\mathfrak{w}^{m}\partial_{n})\in\mathcal{W}_{\mathfrak{d}_{m}}^{r}(U,\End E\oplus TM)\mathfrak{w}^{m}$ for some $r\geq 0$ and $(T\mathfrak{w}^{m}, \psi\mathfrak{w}^{m}\partial_{n})\in\mathcal{W}_{0}^{s}(U,\End E\oplus TM)\mathfrak{w}^{m}$ for some $s\geq$, then 
\begin{equation}
\lbrace (A\mathfrak{w}^{m}, \alpha\mathfrak{w}^{m}\partial_{n}),(T\mathfrak{w}^{m}, f\mathfrak{w}^{m}\partial_{n})\rbrace_\sim\in \mathcal{W}_{\mathfrak{d}_{m}}^{r+s}(U,\End E\oplus TM)\mathfrak{w}^{2m}.
\end{equation}
\end{lemma}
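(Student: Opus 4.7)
The plan is to compute the bracket $\lbrace\cdot,\cdot\rbrace_\sim$ componentwise on the given pair, following the same algebraic manipulation already carried out in Section~\ref{subsec:relation} for elements of $\mathcal{G}(U)$, and then to apply Lemma~\ref{lem:wedge_sobolev} summand by summand. The formulas from Section~\ref{subsec:relation} go through in the present setting because the only way the first entry enters any given term is through its $1$-form coefficients $A$ or $\varphi$, which are combined with the analogous coefficients of the second entry only by wedge product, matrix commutator, or pointwise multiplication.

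Unfolding the definition yields four families of contributions. The $\End E$ component is the sum of the commutator-type wedge $\lbrace A\mathfrak{w}^m, T\mathfrak{w}^m\rbrace=[A,T]\,\mathfrak{w}^{2m}$ together with the two $\textbf{ad}$ terms $\textbf{ad}(\varphi\mathfrak{w}^m\partial_n, T\mathfrak{w}^m)$ and $-\textbf{ad}(\psi\mathfrak{w}^m\partial_n, A\mathfrak{w}^m)$; the $T\mathcal{L}$ component is $\lbrace\varphi\mathfrak{w}^m\partial_n,\psi\mathfrak{w}^m\partial_n\rbrace$. Repeating the explicit computation of Section~\ref{subsec:relation} in this form-valued setting, each contribution reduces to a finite sum of terms of the shape $\mathfrak{w}^{2m}\,c(x,\hbar)\,(\eta\wedge g)$, where $\eta$ is a $1$-form in $\mathcal{W}_{\mathfrak{d}_m}^{r}(U)$ extracted from the first entry, $g$ is a $0$-form in $\mathcal{W}_0^{s}(U)$ extracted from the second, and $c(x,\hbar)$ is a smooth prefactor of the form $2\pi i\langle m,n\rangle$, $2\pi i\langle m,n'\rangle$, or $\hbar^{\alpha}\, n^j A_j(\phi)$ with $\alpha\geq 0$.

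For each such summand, Lemma~\ref{lem:wedge_sobolev} immediately yields $\eta\wedge g\in\mathcal{W}_{\mathfrak{d}_m}^{r+s}(U)$, and multiplication by the smooth, $\hbar$-bounded prefactor $c(x,\hbar)$ preserves this containment. Tracking the frequencies, the product carries $\mathfrak{w}^{2m}$, and summing over the finitely many contributions gives the desired inclusion in $\mathcal{W}_{\mathfrak{d}_m}^{r+s}(U, \End E\oplus TM)\,\mathfrak{w}^{2m}$.

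The main subtlety to watch is that both the $\textbf{ad}$ operator and the $T\mathcal{L}$-bracket involve base-direction derivatives $\partial_{x_j}$, together with the connection form $A_j(\phi)$, acting after the exponential $e^{2\pi\hbar^{-1}(m,x)}$ has been stripped; a priori, such differentiations could degrade the $\hbar$-order. As already observed in Section~\ref{subsec:relation}, however, the factors of $\hbar^{-1}m_j$ produced when derivatives hit $\mathfrak{w}^m$ are exactly compensated by the $\hbar$-prefactors built into the definitions of $d_W$ and $\lbrace\cdot,\cdot\rbrace_\sim$, so that only finite algebraic contributions survive at leading order and only strictly positive powers of $\hbar$ appear in the subleading corrections. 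Hence no degradation of the asymptotic order occurs, and the estimate claimed follows from Lemma~\ref{lem:wedge_sobolev} applied term by term.
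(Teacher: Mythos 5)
Your overall strategy matches the paper's: split the bracket $\lbrace\cdot,\cdot\rbrace_\sim$ into its $\End E$-commutator part, the two $\textbf{ad}$-contributions, and the $T\mathcal{L}$-bracket, then feed each summand to Lemma~\ref{lem:wedge_sobolev}. The gap is in your classification of the summands. You assert that every contribution is of the form $\mathfrak{w}^{2m}c(x,\hbar)(\eta\wedge g)$ with $\eta\in\mathcal{W}_{\mathfrak{d}_m}^{r}(U)$, $g\in\mathcal{W}_0^{s}(U)$ and $c(x,\hbar)$ an $\hbar$-bounded coefficient, and you conclude that multiplication by $c$ \emph{preserves} the containment $\eta\wedge g\in\mathcal{W}_{\mathfrak{d}_m}^{r+s}$. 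That is false for the terms produced when a base derivative lands on $T$ (in $\textbf{ad}(\varphi\mathfrak{w}^m\partial_n, T\mathfrak{w}^m)$) or on $A$ (in $\textbf{ad}(\psi\mathfrak{w}^m\partial_n, A\mathfrak{w}^m)$): after the Fourier computation one finds, schematically, $i\hbar\,\varphi_k(n^l\partial_{x_l}T + n^l A_l(\phi)T)\mathfrak{w}^{2m}dx^k$, and here the $0$-form piece $\partial_{x_l}T$ lives in $\mathcal{W}_0^{s+1}$, not in $\mathcal{W}_0^{s}$. If $c$ were merely bounded, the estimate would fail. What actually saves the lemma is the quantitative fact, nowhere stated in your proposal, that each explicit $\hbar$-prefactor lowers the Sobolev index by $2$ while each derivative raises it by $1$, so each such term lands in $\mathcal{W}_{\mathfrak{d}_m}^{r+s-1}$ or $\mathcal{W}_{\mathfrak{d}_m}^{r+s-2}$, strictly inside the target space. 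The ``compensation'' you cite from Section~\ref{subsec:relation} addresses only derivatives hitting the exponential $\mathfrak{w}^m$ (which indeed give $\hbar^{-1}m_j$ cancelling against $\hbar$ from $\partial_n$); it does not address derivatives hitting the coefficient functions, which is where the index bookkeeping is needed.

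Two further omissions worth noting. First, since both entries here carry the same Fourier mode $m$ and the same orthogonal direction $n$, the leading (derivative-free) term in each $\textbf{ad}$ would be proportional to $\langle m,n\rangle$, which vanishes because $n=m^{\perp}$; the paper uses this to drop a contribution, though the estimate would in fact survive without the vanishing. Second, the paper treats the $T\mathcal{L}$-component $\lbrace\varphi\mathfrak{w}^m\partial_n,\psi\mathfrak{w}^m\partial_n\rbrace$ separately and argues it vanishes; your proposal does not address this component at all. In short, the strategy is the right one, but the claimed shape of the summands is incorrect and the key $+1/-2$ index arithmetic that makes the derivative terms harmless is left implicit, so the argument as written does not close.
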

\begin{proof}
We are going to prove the following:
\begin{equation*}
\begin{split}
(1)&\lbrace A\mathfrak{w}^{m},T\mathfrak{w}^{m}\rbrace_{\End E}\subset \mathcal{W}_{\mathfrak{d}_{m}}^{r+s}(U,\End E)\mathfrak{w}^{2m} \\
(2)&\textbf{ad}\left(\varphi\mathfrak{w}^{m}\partial_{n}, T\mathfrak{w}^{m}\right)\subset \mathcal{W}_{\mathfrak{d}_{m}}^{r+s-1}(U,\End E)\mathfrak{w}^{2m}\\
(3)&\textbf{ad}\left(\psi\mathfrak{w}^{m}\partial_{n}, A\mathfrak{w}^{m}\right)\subset \mathcal{W}_{\mathfrak{d}_{m}}^{r+s-1}(U,\End E)\mathfrak{w}^{2m}\\
(4)&\lbrace\varphi\mathfrak{w}^{m}\partial_{n},\psi\mathfrak{w}^{m}\partial_{n}\rbrace\subset \mathcal{W}_{\mathfrak{d}_{m}}^{r+s}(U, TM)\mathfrak{w}^{m}.
\end{split}
\end{equation*}  
The first one is a consequence of Lemma \ref{lem:wedge_sobolev}, indeed by definition \[\lbrace A_k(x)dx^k, T(x)\rbrace_{\End E}=[A_k(x), T(x)]dx^k\]
which is an element in $\End E$ with coefficients in $\mathcal{W}_{\mathfrak{d}_{m}}^{r+s}(U)$.
 
The second one is less straightforward and need some explicit computations to be done. 
\[
\begin{split}
&\textbf{ad}(\varphi_k(x)\mathfrak{w}^mdx^k\partial_n,T(x)\mathfrak{w}^m)=\FF\big(\FF^{-1}(\varphi_k(x)\textbf{w}^m dx^k\otimes\partial_n)\lrcorner\nabla^E\FF^{-1}(T(x)\mathfrak{w}^m)\big)\\
&=\FF\bigg(\Big(\frac{4\pi}{\hbar}\Big)^{-1}\varphi_k(x)\textbf{w}^m d\bar{z}^k\otimes\check{\partial}_n\lrcorner\nabla^E(T(x)\textbf{w}^m)\bigg)\\
&=\Big(\frac{4\pi}{\hbar}\Big)^{-1}\FF\bigg(\varphi_k(x)\textbf{w}^m d\bar{z}^k\check{\partial}_n\lrcorner\Big(\partial_j(T(x)\textbf{w}^m)dz^j \Big)\bigg)\\
&\quad+i\hbar\Big(\frac{4\pi}{\hbar}\Big)^{-1}\FF\bigg(\varphi_k(x)\textbf{w}^m d\bar{z}^k\check{\partial}_n\lrcorner\Big(A_j(\phi)T(x) \textbf{w}^mdz^j\Big)\bigg)\\
&=\Big(\frac{4\pi}{\hbar}\Big)^{-1}\FF\bigg(\varphi_k(x)\textbf{w}^md\bar{z}^kn^l\frac{\partial}{\partial z_l}\lrcorner\Big(\partial_j(T(x))\textbf{w}^{m}dz^j +m_jA(x)\textbf{w}^{m}dz^j\Big)\bigg)\\
&\quad+i\hbar\Big(\frac{4\pi}{\hbar}\Big)^{-1}\FF\bigg(\varphi_k(x)\textbf{w}^m d\bar{z}^kn^l\frac{\partial}{\partial z_l}\lrcorner\Big(A_j(\phi)T(x)\textbf{w}^mdz^j\Big)\bigg)\\
&=\Big(\frac{4\pi}{\hbar}\Big)^{-1}\FF\bigg(\varphi_k(x)d\bar{z}^k
n^lm_lT(x)\textbf{w}^{2m}\bigg)+\\
&\quad+i\hbar\Big(\frac{4\pi}{\hbar}\Big)^{-1}\FF\bigg(\varphi_k(x)d\bar{z}^k\big(n^lT(x)A_l(\phi)+n^l\frac{\partial T(x)}{\partial x^l} \big)\textbf{w}^{2m}\Big)\bigg)\\
&=i\hbar \varphi_k(x)
\big(n^lT(x)A_l(\phi)+n^l\frac{\partial T(x)}{\partial x^l}\big) \mathfrak{w}^{2m} dx^k
\end{split}
\]
Notice that as a consequence of Lemma \ref{lem:wedge_sobolev}, $\hbar\varphi_k(x)dx^k A_l(\phi)T(x)\in\mathcal{W}_{\mathfrak{d}_{m}}^{s+r-2}(U)$ while $\hbar\varphi_k(x)\frac{\partial T(x)}{\partial x^l}dx^k\in\mathcal{W}_{\mathfrak{d}_m}^{r+s-1}$.

The third one is
\begin{align*}
&\textbf{ad}(\psi(x)\mathfrak{w}^m\partial_n,A_k(x)\mathfrak{w}^mdx^k)=\\
&=\FF\big(\FF^{-1}(\psi(x)\partial_n)\lrcorner\nabla^E\FF^{-1}(A_k(x)\mathfrak{w}^mdx^k)\big)\\
&=\FF\bigg(\psi(x)\textbf{w}^m\check{\partial}_n\lrcorner\nabla^E(\Big(\frac{4\pi}{\hbar}\Big)^{-1}A_k(x)\textbf{w}^md\bar{z}^k)\bigg)\\
&=\Big(\frac{4\pi}{\hbar}\Big)^{-1}\FF\bigg(\psi(x)\textbf{w}^m\check{\partial}_n\lrcorner\Big(\partial_j(A_k(x)\textbf{w}^m)\Big)dz^j\wedge d\bar{z}^k\bigg)\\
&\quad+i\hbar\Big(\frac{4\pi}{\hbar}\Big)^{-1}\FF\bigg(\psi(x)\textbf{w}^m\check{\partial}_n\lrcorner\Big(A_j(\phi)A_k(x) \textbf{w}^m\Big)\wedge d\bar{z}^k\bigg)\\
&=\Big(\frac{4\pi}{\hbar}\Big)^{-1}\FF\bigg(\psi(x)\textbf{w}^mn^l\frac{\partial}{\partial z_l}\lrcorner\Big(\partial_j(A_k(x))\textbf{w}^mdz^j+m_jA_k(x)\textbf{w}^m dz^j\Big)\wedge d\bar{z}^k\bigg)\\
&\quad+i\hbar\Big(\frac{4\pi}{\hbar}\Big)^{-1}\FF\bigg(\psi(x)\textbf{w}^m\check{\partial}_n\lrcorner\Big(A_j(\phi)A_k(x) \textbf{w}^mdz^j\Big)\wedge d\bar{z}^k\bigg)\\
&=\Big(\frac{4\pi}{\hbar}\Big)^{-1}\FF\bigg(\psi(x)\textbf{w}^{2m}n^lm_lA(x) d\bar{z}^k\bigg)+\\
&\quad+i\hbar\Big(\frac{4\pi}{\hbar}\Big)^{-1}\FF\bigg(\psi(x)\textbf{w}^{2m}n^l
A_k(x)A_l(\phi)+\psi(x)n^l\frac{\partial A_k(x)}{\partial x^l}\textbf{w}^{2m}d\bar{z}^k\bigg)\\
&=i\hbar \psi(x)\Big(n^lA_k(x)A_l(\phi)+n^l\frac{\partial A_k(x)}{\partial x^l}\Big)\mathfrak{w}^{2m}dx^k
\end{align*}

Notice that $\hbar\psi(x)A_k(x)A(\phi)dx^k\in\mathcal{W}_{\mathfrak{d}_m}^{r+s-2}(W)$ and $\hbar\psi(x)\frac{\partial A_k(x)}{\partial x^l}dx^k\in\mathcal{W}_{\mathfrak{d}_m}^{r+s-1}(W)$.

In the end $\lbrace\varphi\mathfrak{w}^{m}\partial_{n},\psi\mathfrak{w}^{m}\partial_{n}\rbrace$ is equal to zero, indeed by definition 
\[
\lbrace\varphi_{k}(x)dx^k{\partial_n}, \psi(x)\partial_n \rbrace=\left(\varphi_{k}dx^k\wedge\psi\right)[\mathfrak{w}^m\partial_n, \mathfrak{w}^m\partial_n]
\]
and $[\mathfrak{w}^m\partial_n,\mathfrak{w}^m\partial_n]=0$.
\end{proof}

\begin{proof}{(Proposition \ref{prop:asymp1})}

It is enough to show that for every $s\geq 0$
\begin{equation}
\label{claim}
\Big(\sum_{k\geq 1}{\ad_{\varphi^s}^k\over (k+1)!} d_W\varphi^s\Big)^{(s+1)}\in\mathcal{W}_{\mathfrak{d}_m}^{0}(U,\End E\oplus TM),
\end{equation}
where $\varphi^s=\sum_{j=1}^s\varphi^{(j)}t^j$.
Indeed from equation \eqref{eq:sol_gauge}, at the order $s+1$ in the formal parameter $t$, the solution $\varphi^{(s+1)}$ is:
\begin{equation}
\varphi^{(s+1)}=-H(\Pi^{(s+1)})-H\left(\left(\sum_{k\geq 1}{\ad_{\varphi^s}^k\over (k+1)!} d_W\varphi^s\right)^{(s+1)}\right).    
\end{equation}

In particular, if we assume equation \eqref{claim} then by Lemma \ref{lem:H(W^s)} \[H\left(\left(\sum_{k\geq 1}{\ad_{\varphi^s}^k\over (k+1)!} d_W\varphi^s\right)^{(s+1)}\right)\in\mathcal{W}_{0}^{-1}(U,\End E\oplus TM).\] By definition of $H$, \[H(\Pi^{(s+1)})=\sum_{k}(A_{s+1,k}t^{s+1}H(\mathfrak{w}^{km}\delta_m),a_{s+1,k}t^{s+1}H(\mathfrak{w}^{km}\delta_m)\partial_n)\] and by Corollary \ref{lem:H(delta_m)} $H(\delta_m\mathfrak{w}^{km})\in\mathcal{W}_{0}^{0}(U, \End E\oplus TM)\mathfrak{w}^{km}$ for every $k\geq 1$. Hence $H(\Pi^{(s+1)})$ is the leading order term and $\varphi^{(s+1)}$ has the expected asymptotic behaviour.   

Let us now prove the claim \eqref{claim} by induction on $s$. At $s=0$,
\begin{equation}
\varphi^{(1)}=-H(\Pi^{(1)})
\end{equation}

and there is nothing to prove. Assume that \eqref{claim} holds true for $s$, then at order $s+1$ we get contributions for every $k=1,\cdots,s$. Thus let start at $k=1$ with $\ad_{\varphi^s}d_W\varphi^s$:
\begin{equation}
\begin{split}
\ad_{\varphi^s}d_W\varphi^s&=\lbrace\varphi^s,d_W\varphi^s\rbrace_\sim\\
&\in\lbrace H(\Pi^s)+\mathcal{W}_{0}^{-1}(U), \Pi^s+\mathcal{W}_{\mathfrak{d}_m}^0(U)\rbrace_\sim\\
&=\lbrace H(\Pi^s), \Pi^s\rbrace_\sim +\lbrace H(\Pi^s), \mathcal{W}_{\mathfrak{d}_m}^0(U)\rbrace_\sim +\lbrace\mathcal{W}_{0}^{-1}(U), \Pi^s\rbrace_\sim +\lbrace\mathcal{W}_{0}^{-1}(U), \mathcal{W}_{\mathfrak{d}_m}^0(U)\rbrace_\sim\\
&\in \lbrace H(\Pi^s), \Pi^s\rbrace_\sim +\mathcal{W}_{\mathfrak{d}_m}^{0}(U)
\end{split}
\end{equation}
where in the first step we use the inductive assumption on $\varphi^s$ and $d_W\varphi^s$ and the identity \eqref{eq:PHi}. In the last step since $H(\Pi^s)\in\mathcal{W}_{0}^{0}(U)$ then by Lemma \ref{lem:bracket} $\lbrace H(\Pi^s), \mathcal{W}_{\mathfrak{d}_m}^0(U)\rbrace_\sim\in\mathcal{W}_{\mathfrak{d}_m}^{0}(U)$. Then, since $\Pi^s\in\mathcal{W}_{\mathfrak{d}_m}^1(U)$, still by Lemma \ref{lem:bracket} $\lbrace\mathcal{W}_{0}^{-1}(U), \Pi^s\rbrace_\sim\in\mathcal{W}_{\mathfrak{d}_m}^{0}(U)$ and $\lbrace\mathcal{W}_{0}^{-1}(U), \mathcal{W}_{\mathfrak{d}_m}^0(U)\rbrace_\sim\in\mathcal{W}_{\mathfrak{d}_m}^{-1}(U)\subset\mathcal{W}_{\mathfrak{d}_m}^0(U)$. In addition $\lbrace H(\Pi^s), \Pi^s\rbrace_\sim\in\mathcal{W}_{\mathfrak{d}_m}^0(U) $, indeed 
\begin{multline*}
\lbrace H(\Pi^s), \Pi^s\rbrace_\sim =\Big(\lbrace H(\Pi_E^s), \Pi_E^s\rbrace_{\End E}+\textbf{ad}(H(\Pi^{CLM,s}), \Pi_E^s)- \textbf{ad}(\Pi^{CLM,s},H(\Pi_E^s)),\\ \lbrace H(\Pi^{CLM,s}), \Pi^{CLM,s}\rbrace\Big)
\end{multline*}
Notice that since $[A,A]=0$ then $\lbrace H(\Pi_E^s), \Pi_E^s\rbrace_{\End E}=0$ and because of the grading \[\lbrace H(\Pi^{CLM,s}), \Pi^{CLM,s}\rbrace=0.\] 
Then by the proof of Lemma \ref{lem:bracket} identities $(2)$ and $(3)$ we get \[\ad(H(\Pi^{CLM,s}), \Pi_E^s),  \ad(\Pi^{CLM,s},H(\Pi_E^s))\in\mathcal{W}_{\mathfrak{d}_m}^{0}(U)\] therefore 
\begin{equation}
\lbrace H(\Pi^s), \Pi^s\rbrace_\sim\in\mathcal{W}_{\mathfrak{d}_m}^{0}(U).
\end{equation}   

Now at $k>1$ we have to prove that:
\begin{equation}
\ad_{\varphi^s}\cdots \textsf{ad}_{\varphi^s}d_W\varphi^s\in \mathcal{W}_{\mathfrak{d}_m}^{0}(U)
\end{equation}
By the fact that $ H(\Pi^s)\in\mathcal{W}_{0}^{0}(U)$, applying Lemma \ref{lem:bracket} $k$ times we finally get:
\begin{equation}
\ad_{\varphi^s}\cdots \ad_{\varphi^s}d_W\varphi^s\in\lbrace H(\Pi^s),\cdots, \lbrace H(\Pi^s), \lbrace H(\Pi^s), \Pi^s\rbrace_\sim\rbrace_\sim\cdots\rbrace_\sim+\mathcal{W}_{\mathfrak{d}_m}^{0}(U)\in\mathcal{W}_{\mathfrak{d}_m}^{0}(U).
\end{equation}
\end{proof}

\section{Scattering diagrams from solutions of Maurer-Cartan}
\label{sec:two_walls}
In this section we are going to construct consistent scattering diagrams from solutions of the Maurer-Cartan equation. In particular we will first show how to construct a solution $\Phi$ of the Maurer-Cartan equation from the data of an initial scattering diagram $\mathfrak{D}$ with two non parallel walls. Then we will define its completion $\mathfrak{D}^{\infty} $ by the solution $\Phi$ and we will prove it is consistent. 
\subsection{From scattering diagram to solution of Maurer-Cartan}
Let the initial scattering diagram $\mathfrak{D}=\lbrace\mathsf{w}_1, \mathsf{w}_2\rbrace$ be such that $\mathsf{w}_1=(m_1,\mathfrak{d}_1,\theta_1)$ and $\mathsf{w}_2=(m_2,\mathfrak{d}_2,\theta_2)$ are two non-parallel walls and \[\log(\theta_i)=\sum_{j_i,k_i}\Big(A_{j_i,k_i}\mathfrak{w}^{k_im_i}t^{j_i},  a_{j_i,k_i}\mathfrak{w}^{k_im_i}t^{j_i}\partial_{n_i}\Big)\]
for $i=1,2$. 
As we have already done in Section \ref{sec:single wall}, we can define $\bar{\Pi}_1$ and $\bar{\Pi}_2$ to be solutions of Maurer-Cartan equation, respectively supported on $\mathsf{w}_1$ and $\mathsf{w}_2$. 

Although $\bar{\Pi}\defeq\bar{\Pi}_1+\bar{\Pi}_2$ is not a solution of Maurer-Cartan, by Kuranishi's method we can construct $\Xi=\sum_{j\geq 2}\Xi^{(j)}t^j$ such that the one form $\Phi\in\Omega^1(U,\End E\oplus TM)[\![ t ]\!]$ is $\Phi=\bar{\Pi}+\Xi$ and it is a solution of Maurer-Cartan up to higher order in $\hbar$. Indeed let us we write $\Phi$ as a formal power series in the parameter $t$, $\Phi=\sum_{j\geq 1}\Phi^{(j)}t^j$, then it is a solution of Maurer-Cartan if and only if:
\begin{equation*}
\begin{split}
&d_W\Phi^{(1)}=0\\
&d_W\Phi^{(2)}+\frac{1}{2}\lbrace\Phi^{(1)},\Phi^{(1)}\rbrace_\sim=0\\
&\vdots\\
&d_W\Phi^{(k)}+\frac{1}{2}\left(\sum_{s=1}^{k-1}\lbrace\Phi^{(s)},\Phi^{(k-s)}\rbrace_\sim\right)=0
\end{split}
\end{equation*}
 Moreover, recall from \eqref{rmk:closed} that $\bar{\Pi}_i\defeq\big(\Pi_{E,i}+\Pi_{E,R,i},\Pi^{CLM}_i\big)$, $i=1,2$ are solutions of the Maurer-Cartan equation and they are $d_W$-closed. Therefore at any order in the formal parameter $t$, the solution $\Phi=\bar{\Pi}+\Xi$ is computed as follows:
\begin{equation}\label{eq:recPhi}
\begin{split}
\Phi^{(1)}&=\bar{\Pi}^{(1)}\\
\Phi^{(2)}&=\bar{\Pi}^{(2)}+\Xi^{(2)}, \text{where } d_W\Xi^{(2)}=-\frac{1}{2}(\lbrace\Phi^{(1)},\Phi^{(1)}\rbrace_\sim) \\
\Phi^{(3)}&=\bar{\Pi}^{(3)}+\Xi^{(3)}, \text{where }  d_W\Xi^{(3)}=-\frac{1}{2}\left(\lbrace\Phi^{(1)},\bar{\Pi}^{(2)}+\Xi^{(2)}\rbrace_\sim+\lbrace\bar{\Pi}^{(2)}+\Xi^{(2)},\Phi^{(1)}\rbrace_\sim\right)\\
&\vdots\\
\Phi^{(k)}&=\bar{\Pi}^{(k)}+\Xi^{(k)}, \text{where } d_W\Xi^{(k)}=-\frac{1}{2}\left(\lbrace\Phi,\Phi\rbrace_\sim\right)^{(k)}. 
\end{split}
\end{equation}  
In order to explicitly compute $\Xi$ we want to ``invert'' the differential $d_W$ and this can be done by choosing a homotopy operator. Let us recall that a homotopy operator is a homotopy $H$ of morphisms $p$ and $\iota$, namely $H\colon \Omega^{\bullet}(U)\to \Omega^{\bullet}[-1](U)$, $p\colon\Omega^{\bullet}(U)\to H^{\bullet}(U)$ and $\iota\colon H^{\bullet}(U)\to\Omega^{\bullet}(U)$ such that $\text{id}_{\Omega^\bullet}-\iota\circ p=d_WH+Hd_W$. 
Let us now explicitly define the homotopy operator $\mathbf{H}$. Let $U$ be an open affine neighbourhood of $\xi_0=\mathfrak{d}_1\cap \mathfrak{d}_2$, and fix $q_0\in \left(H_{-,m_1}\cap H_{-,m_2}\right)\cap U$. Then choose a set of coordinates centred in $q_0$ and denote by $(u_{m},u_{m^\perp})$ a choice of such coordinates such that with respect to a ray $\mathfrak{d}_m=\xi_0+\R_{\geq 0}m$, $u_{m^\perp}$ is the coordinate orthogonal to $\mathfrak{d}_m$ and $u_{m}$ is tangential to $\mathfrak{d}_m$. 
Moreover recall the definition of morphisms $p$ and $\iota$, namely 
$p\defeq\bigoplus_m p_m$ and $p_m$ maps functions $\alpha\mathfrak{w}^m\in\Omega^0(U)\mathfrak{w}^m$
to $\alpha(q_0)\mathfrak{w}^m$, while $\iota\defeq\bigoplus_m\iota_m$ and $\iota_m$ is the embedding of constant function at degree zero, and it is zero otherwise. 

\begin{definition}\label{def:homotopyH}
The homotopy operator $\mathbf{H}=\bigoplus_m\mathbf{H}_m\colon\bigoplus_m\Omega^{\bullet}(U)\mathfrak{w}^m\to\bigoplus_m\Omega^{\bullet}(U)[-1]\mathfrak{w}^m$ is defined as follows. 
For any $0$-form $\alpha\in\Omega^0(U)$, $\mathbf{H}(\alpha\mathfrak{w}^m)=0$, since there are no degree $-1$-forms. 
For any $1$-form $\alpha\in\Omega^1(U)$, in local coordinates we have $\alpha=f_0(u_m,u_{m^\perp})du_m+f_1(u_m,u_{m^\perp})du_{m^\perp}$ and 
\[
\mathbf{H}(\alpha\mathfrak{w}^m)\defeq\mathfrak{w}^m\left(\int_0^{u_m}f_0(s,u_{m^\perp})ds+\int_0^{u_{m^\perp}}f_1(0,r)dr\right)
\] 

Finally since any $2$-forms $\alpha\in\Omega^2(U)$ in local coordinates can be written $\alpha=f(u_m,u_{m^\perp})du_m\wedge du_{m^\perp}$, then 
\[
\mathbf{H}(\alpha\mathfrak{w}^m)\defeq\mathfrak{w}^m\left(\int_0^{u_m}f(s,u_{m^\perp})ds\right)du_{m^\perp}.
\]
\end{definition}

The homotopy $\mathbf{H}$ seems defined \textit{ad hoc} for each degree of forms, however it can be written in an intrinsic way for every degree, as in Definition 5.12 \cite{MCscattering}. We have defined $\mathbf{H}$ in this way because it is clearer how to compute it in practice. 

\begin{lemma}
The following identity 
\begin{equation}\label{eq:idH2}
\text{id}_{\Omega^\bullet}-\mathbf{\iota}_m\circ p_m=\mathbf{H}_md_W+d_W\mathbf{H}_m
\end{equation}
holds true for all $m\in\Lambda$. 
\end{lemma}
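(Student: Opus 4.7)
The plan is to verify the identity degree-by-degree, exploiting the fact that on the $m$-th Fourier component the Witten differential $d_W$ acts as the ordinary exterior derivative $d$ on the coefficient form. Indeed, as already used in the proof of \eqref{eq:PHi}, one has $d_W(\mathfrak{w}^m \alpha) = \mathfrak{w}^m \, d\alpha$ for any $\alpha \in \Omega^\bullet(U)$, so it suffices to check that $\mathrm{id}_{\Omega^\bullet(U)} - \mathrm{ev}_{q_0} = \mathbf{H}_m d + d \mathbf{H}_m$ on $\Omega^\bullet(U)$, where $\mathrm{ev}_{q_0}$ is evaluation at $q_0$ on $0$-forms and vanishes in higher degrees. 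This is a Poincaré-type identity specific to the L-shaped path hidden in the definition of $\mathbf{H}_m$.

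In degree $0$, for $\alpha \in \Omega^0(U)$, both $\mathbf{H}_m(\alpha \mathfrak{w}^m)$ and $\iota_m p_m$ applied to $0$-form outputs vanish after the first step, and one has $d\alpha = \partial_{u_m}\alpha \, du_m + \partial_{u_{m^\perp}}\alpha \, du_{m^\perp}$, so by the fundamental theorem of calculus
\[
\mathbf{H}_m(d\alpha \cdot \mathfrak{w}^m) = \mathfrak{w}^m \left( [\alpha(u_m,u_{m^\perp}) - \alpha(0,u_{m^\perp})] + [\alpha(0,u_{m^\perp}) - \alpha(q_0)] \right) = \mathfrak{w}^m(\alpha - \alpha(q_0)),
\]
which equals $(\mathrm{id} - \iota_m \circ p_m)(\alpha \mathfrak{w}^m)$.

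In degree $1$, writing $\alpha = f_0 \, du_m + f_1 \, du_{m^\perp}$, one computes $d\mathbf{H}_m(\alpha \mathfrak{w}^m)$ using $\partial_{u_m}\int_0^{u_m} f_0 \, ds = f_0$ and $\partial_{u_{m^\perp}}\int_0^{u_{m^\perp}} f_1(0,r)\, dr = f_1(0,u_{m^\perp})$, and computes $\mathbf{H}_m(d\alpha \cdot \mathfrak{w}^m)$ from $d\alpha = (\partial_{u_m}f_1 - \partial_{u_{m^\perp}}f_0) du_m \wedge du_{m^\perp}$; adding the two, the terms $\int_0^{u_m}\partial_{u_{m^\perp}}f_0 \, ds$ cancel and the leftover telescopes via the fundamental theorem of calculus to give back $f_0 \, du_m + f_1 \, du_{m^\perp} = \alpha$, matching $(\mathrm{id} - \iota_m \circ p_m)(\alpha \mathfrak{w}^m) = \alpha \mathfrak{w}^m$. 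In degree $2$, every $\alpha$ has the form $f \, du_m \wedge du_{m^\perp}$ (since $\dim U = 2$), so $d\alpha = 0$ and a single differentiation under the integral sign gives $d\mathbf{H}_m(\alpha \mathfrak{w}^m) = \alpha \mathfrak{w}^m$, again matching the right-hand side.

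The calculation is entirely routine once the key observation about $d_W$ acting as $d$ on the $\mathfrak{w}^m$-component is made; the only mild subtlety is the asymmetric role played by the two coordinates in the definition of $\mathbf{H}_m$, reflecting the L-shaped contraction path implicit in $\mathbf{H}_m$. There is no real obstacle, only careful bookkeeping in degree $1$.
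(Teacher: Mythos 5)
Your proof is correct and takes essentially the same approach as the paper: both verify the homotopy identity degree by degree (for $k=0,1,2$), using that $d_W(\alpha\mathfrak{w}^m)=\mathfrak{w}^m\,d\alpha$ so the problem reduces to an ordinary $d$-homotopy on $\Omega^\bullet(U)$, and then applying the fundamental theorem of calculus along the L-shaped path encoded in $\mathbf{H}_m$. The degree-$1$ cancellation of $\int_0^{u_m}\partial_{u_{m^\perp}}f_0\,ds$ that you highlight is exactly the step carried out in the paper.
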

\begin{proof} We are going to prove the identity separately for $0, 1$ and $2$ forms.

Let $\alpha=\alpha_0$ be of degree zero, then by definition $\mathbf{H}_m(\alpha\mathfrak{w}^m)=0$ and $\mathbf{\iota}_m\circ p_m(\alpha\mathfrak{w}^m)=\alpha_0(q_0)$. Then $d_W(\alpha\mathfrak{w}^m)=\mathfrak{w}^m\big(\frac{\partial\alpha_0}{\partial u_m}du_m+\frac{\partial\alpha_0}{\partial u_{m^\perp}}du_{m^\perp}\big)$. Hence
\[
\begin{split}
\mathbf{H}_md_W(\alpha_0\mathfrak{w}^m)&=\mathfrak{w}^m\int_{0}^{u_m}\frac{\partial\alpha_0(s,u_{m^\perp})}{\partial s}ds+\mathfrak{w}^m\int_0^{u_{m^\perp}}\frac{\partial\alpha_0}{\partial u_{m^\perp}}(0,r)dr \\
&=\mathfrak{w}^m[\alpha_0(u_m,u_{m^\perp})-\alpha_0(0,u_{m^\perp})+\alpha_0(0,u_{m^\perp})-\alpha_0(0,0)].
\end{split}
\]

Then consider $\alpha\in\Omega^1(U)\mathfrak{w}^m$. By definition $\mathbf{\iota}_m\circ p_m(\alpha\mathfrak{w}^m)=0$ and 
\[
\begin{split}
\mathbf{H}_md_W(\alpha\mathfrak{w}^m)&=\mathbf{H}d_W\left(\mathfrak{w}^m(f_0du_m+f_1du_{m^\perp})\right)\\
&=\mathbf{H}_m\left(\mathfrak{w}^m\left(\frac{\partial f_0}{\partial u_{m^\perp}}du_{m^\perp}\wedge du_m+\frac{\partial f_1}{\partial u_{m}}du_m\wedge du_{m^\perp}\right)\right)\\
&=\mathfrak{w}^m\left(\int_0^{u_m}\left(-\frac{\partial f_0}{\partial u_{m^\perp}}+\frac{\partial f_1}{\partial s}\right)ds\right)du_{m^\perp}\\
&=\left(-\int_0^{u_m}\frac{\partial f_0}{\partial u_{m^\perp}}(s,u_{m^\perp})ds +f_1(u_m,u_{m^\perp})-f_1(0,u_{m^\perp})\right)\mathfrak{w}^mdu_{m^\perp}
\end{split}
\] 
\[
\begin{split}
d_W\mathbf{H}_m(\alpha\mathfrak{w}^m)&=d_W\mathbf{H}(\mathfrak{w}^m\left(f_0du_m+f_1du_{m^\perp}\right))\\
&=d_W\left(\mathfrak{w}^m\left(\int_0^{u_m}f_0(s,u_{m^\perp})ds +\int_0^{u_{m^\perp}}f_1(0,r)dr\right)\right)\\
&=\mathfrak{w}^m d\left(\int_0^{u_m}f_0(s,u_{m^\perp})ds +\int_0^{u_{m^\perp}}f_1(0,r)dr\right)\\
&=\mathfrak{w}^m\left(f_0(u_m,u_{m^\perp})du_m+\frac{\partial}{\partial u_{m^\perp}}\left(\int_0^{u_m}f_0(s,u_{m^\perp})ds\right)du_{m^\perp}+f_1(0,u_{m^\perp})du_{m^\perp}\right).
\end{split}
\] 

We are left to prove the identity when $\alpha$ is of degree two: by degree reasons $d_W(\alpha\mathfrak{w}^m)=0$ and $\iota_m\circ p_m(\alpha\mathfrak{w}^m)=0$. Then 
\[
\begin{split}
d_W\mathbf{H}_m(\alpha\mathfrak{w}^m)&=\mathfrak{w}^md\left(\left(\int_{0}^{u_m}f(s,u_{m^\perp})ds\right)du_{m^\perp}\right)\\
&=\mathfrak{w}^mf(u_m,u_{m^\perp})du_m\wedge du_{m^\perp}. 
\end{split}
\] 
\end{proof} 

\begin{prop}[{see prop 5.1 in \cite{MCscattering}}]\label{prop:Kuranishi}
Assume that $\Phi$ is a solution of 
\begin{equation}\label{eq:Phi}
\Phi=\bar{\Pi}-\frac{1}{2}\mathbf{H}\left(\lbrace\Phi,\Phi\rbrace_\sim\right)   
\end{equation}
Then $\Phi$ is a solution of the Maurer-Cartan equation.
\end{prop}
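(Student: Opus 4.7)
The plan is to adapt the standard Kuranishi argument to our setting, taking advantage of the two-dimensional base. Define the Maurer-Cartan defect
\[
Q \defeq d_W\Phi + \tfrac{1}{2}\{\Phi,\Phi\}_\sim \in \Omega^2(\mathcal{L},\End E\oplus T\mathcal{L})[\![t]\!],
\]
so the goal becomes proving $Q=0$.

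First I will apply $d_W$ to the fixed-point equation \eqref{eq:Phi}. Using the homotopy identity \eqref{eq:idH2}, namely $d_W\mathbf{H}+\mathbf{H}d_W=\mathrm{id}-\iota\circ p$, together with the facts that (i) $\iota\circ p$ is zero on forms of positive degree, so in particular on the $2$-form $\{\Phi,\Phi\}_\sim$, and (ii) $d_W\bar{\Pi}=0$ (which was established in Proposition \ref{rmk:closed} for each single-wall ansatz $\bar{\Pi}_i$ and hence also for their sum), I obtain
\[
d_W\Phi = -\tfrac{1}{2}\{\Phi,\Phi\}_\sim + \tfrac{1}{2}\mathbf{H}\,d_W\{\Phi,\Phi\}_\sim.
\]

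Next I will invoke the graded Leibniz rule in the DGLA $(G,d_W,\{\cdot,\cdot\}_\sim)$ of Definition \ref{def:mirror DGLA}, combined with graded skew-symmetry, to rewrite $d_W\{\Phi,\Phi\}_\sim = 2\{d_W\Phi,\Phi\}_\sim$. Substituting and comparing with the definition of $Q$ yields
\[
Q \;=\; \mathbf{H}\,\{d_W\Phi,\Phi\}_\sim \;=\; \mathbf{H}\,\{Q,\Phi\}_\sim \;-\; \tfrac{1}{2}\mathbf{H}\,\{\{\Phi,\Phi\}_\sim,\Phi\}_\sim,
\]
after eliminating $d_W\Phi$ via $d_W\Phi = Q-\tfrac{1}{2}\{\Phi,\Phi\}_\sim$. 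The last bracket vanishes by the graded Jacobi identity applied to the odd-degree element $\Phi$: a short computation gives $3\{\{\Phi,\Phi\}_\sim,\Phi\}_\sim = 0$, hence the term is zero in characteristic zero. So we are reduced to the identity $Q=\mathbf{H}\{Q,\Phi\}_\sim$.

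The concluding step is a dimensional observation: $Q$ is a $2$-form and $\Phi$ is a $1$-form, so $\{Q,\Phi\}_\sim$ is a $3$-form on $\mathcal{L}(U)$. Since $\mathcal{L}$ is locally isomorphic to $M$ via $\pr$ and $\dim_\R M = 2$, every $3$-form on $\mathcal{L}(U)$ vanishes identically, forcing $Q=0$. The only delicate point is making these formal manipulations rigorous order-by-order in $t$: at each order $k$, the terms $(\{\Phi,\Phi\}_\sim)^{(k)}$ only involve $\Phi^{(j)}$ with $j<k$, so the recursion defining $\Xi^{(k)}$ via $\mathbf{H}$ is well-posed and the preceding algebraic identities can be applied inductively. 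Thus the only real obstacle — the possible appearance of a non-trivial ``higher bracket'' obstruction $\mathbf{H}\{Q,\Phi\}_\sim$ — is eliminated for free by the $2$-dimensionality of $M$.
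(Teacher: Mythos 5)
Your argument is correct, but it takes a noticeably longer route than is needed, and the paper's own proof is substantially shorter because it exploits the two-dimensionality of the base at the very first step rather than at the last. The paper simply observes (i) that $p(\{\Phi,\Phi\}_\sim)=0$ since $\{\Phi,\Phi\}_\sim$ has positive degree, and (ii) that $d_W\{\Phi,\Phi\}_\sim=0$ \emph{immediately} by degree reasons, since it would be a $3$-form on a surface; plugging these into the homotopy identity \eqref{eq:idH2} gives $\{\Phi,\Phi\}_\sim=d_W\mathbf{H}(\{\Phi,\Phi\}_\sim)$, whence $d_W\Phi=-\tfrac{1}{2}d_W\mathbf{H}(\{\Phi,\Phi\}_\sim)=-\tfrac{1}{2}\{\Phi,\Phi\}_\sim$, and the proof is over in three lines.

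Your version instead carries out the general-dimensional Kuranishi bootstrap: after the homotopy identity you invoke graded Leibniz to replace $d_W\{\Phi,\Phi\}_\sim$ with $2\{d_W\Phi,\Phi\}_\sim$, then eliminate $d_W\Phi$ in terms of $Q$, then appeal to the graded Jacobi identity to kill $\{\{\Phi,\Phi\}_\sim,\Phi\}_\sim$, and only then use $\dim_\R M=2$ to kill the residual $3$-form $\{Q,\Phi\}_\sim$. Every step is valid, but note that the Jacobi-identity step is in fact redundant in this setting: $\{\{\Phi,\Phi\}_\sim,\Phi\}_\sim$ is already a $3$-form on a $2$-dimensional base and so vanishes for the same reason as $\{Q,\Phi\}_\sim$. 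More to the point, you could have short-circuited the entire Leibniz--Jacobi manipulation the moment you wrote $\mathbf{H}d_W\{\Phi,\Phi\}_\sim$, by observing that the integrand is a $3$-form and hence zero. What your longer argument would buy you is applicability to higher-dimensional bases, where $d_W\{\Phi,\Phi\}_\sim$ does not vanish for free and one genuinely needs the $Q=\mathbf{H}\{Q,\Phi\}_\sim$ recursion plus an inductive estimate in the formal variable; but for the purposes of this paper, which restricts to $\dim_\C\check{X}=2$ throughout, the short argument suffices.
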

\begin{proof}
First notice that by definition $p(\lbrace\Phi,\Phi\rbrace_\sim)=0$ and by degree reasons $d_W(\lbrace\Phi,\Phi\rbrace_\sim)=0$ too. Hence by identity \eqref{eq:idH2} we get that $\lbrace\Phi,\Phi,\rbrace_\sim=d_W\mathbf{H}(\lbrace\Phi,\Phi\rbrace_\sim)$, and if $\Phi$ is a solution of equation \eqref{eq:Phi} then $d_W\Phi=d_W\bar{\Pi}-\frac{1}{2}d_W\mathbf{H}(\lbrace\Phi,\Phi\rbrace_\sim)=-\frac{1}{2}d_W\mathbf{H}(\lbrace\Phi,\Phi\rbrace_\sim)$. 
\end{proof}
From now on we will look for solutions $\Phi$ of equation \eqref{eq:Phi} rather than to the Maurer-Cartan equation. The advantage is that we have an integral equation instead of a differential equation, and $\Phi$ can be computed by its expansion in the formal parameter $t$, namely $\Phi=\sum_{j\geq 1}\Phi^{(j)}t^j$.  
\begin{notation}
Let $\mathfrak{d}_{m_1}=\xi_0-m_1\mathbb{R}$ and $\mathfrak{d}_{m_2}=\xi_0-m_2\mathbb{R}$ and let $(u_{m_1},u_{m_1^\perp})$ and $(u_{m_2},u_{m_2^\perp})$ be respectively two basis of coordinates in $U$, centred in $q_0$ as above. Let $m_a\defeq a_1m_1+a_2m_2$, consider the ray $\mathfrak{d}_{m_a}\defeq \xi_0-m_a\mathbb{R}_{\geq 0}$ and choose coordinates $u_{m_{a}}\defeq \left(-a_2u_{m_1^\perp}+a_1u_{m_2^\perp}\right)$ and $u_{m_a^\perp}\defeq\left(a_1u_{m_1^\perp}+a_2u_{m_2^\perp}\right)$.    
\end{notation}
\begin{remark}\label{rmk:H(delta wedge delta)}
If $\alpha=\delta_{m_1}\wedge\delta_{m_2}$, then by the previous choice of coordinates 
\[\delta_{m_1}\wedge\delta_{m_2}=\frac{e^{-\frac{u_{m_1^\perp}^2+u_{m_2^\perp}^2}{\hbar}}}{{\hbar\pi}}du_{m_1^\perp}\wedge du_{m_2^\perp}=\frac{e^{-\frac{u_{m_{a}}^2+u_{m_a^\perp}^2}{(a_1^2+a_2^2)\hbar}}}{{\hbar\pi}}du_{u_{m_a}^\perp}\wedge du_{m_{a}}.\]
In particular we explicitly compute $\mathbf{H}(\delta_{m_1}\wedge\delta_{m_2}\mathfrak{w}^{lm_a})$:
\begin{equation}
\begin{split}
\mathbf{H}(\alpha\mathfrak{w}^{lm_a})&=\mathfrak{w}^{lm_a}\left(\int_{0}^{u_{m_{a}}}\frac{e^{-\frac{s^2+u_{m_a^\perp}^2}{(a_1^2+a_2^2)\hbar}}}{\hbar\pi}ds\right) du_{u_a^\perp}=\mathfrak{w}^{lm_a}\left(\int_{0}^{u_{m_{a}}}\frac{e^{-\frac{s^2}{(a_1^2+a_2^2)\hbar}}}{\sqrt{\hbar\pi}}ds\right) \frac{e^{-\frac{u_{m_a^{\perp}}^2}{(a_1^2+a_2^2)\hbar}}}{\sqrt{\hbar\pi}} du_{m_a^{\perp}}
\end{split}
\end{equation}
Hence $\mathbf{H}\big(\delta_{m_1}\wedge\delta_{m_2}\mathfrak{w}^{lm_a}\big)=f(\hbar,u_{m_{a}})\delta_{m_a}$ where $f(\hbar,u_{m_{a}})=\int_{0}^{u_{m_{a}}}\frac{e^{-\frac{s^2}{(a_1^2+a_2^2)\hbar}}}{\sqrt{\hbar\pi}}ds\in O_{loc}(1)$.
\end{remark}

In order to construct a consistent scattering diagram from the solution $\Phi$ we introduce labeled ribbon trees. Indeed via the combinatorial of such trees we  can rewrite $\Phi$ as a sum over primitive Fourier mode, coming from the contribution of the out-going edge of the trees. 
\subsubsection{Labeled ribbon trees} 
Let us briefly recall the definition of labeled ribbon trees, which was introduced in \cite{MCscattering}. 
\begin{definition}[Definition 5.2 in \cite{MCscattering}]\label{def:trees}
A k-tree $T$ is the datum of a finite set of vertices $V$, together with a decomposition $V=V_{in}\sqcup V_0\sqcup\lbrace v_T\rbrace$, and a finite set of edges $E$, such that, given the two boundary maps $\partial_{in},\partial_{out}:E\rightarrow V$ (which respectively assign to each edge its incoming and outgoing vertices), satisfies the following assumption:
\begin{enumerate}
\item $\#V_{in}=k$ and for any vertex $v\in V_{in}$, $\#\partial_{in}^{-1}(v)=0$ and $\#\partial_{out}^{-1}(v)=1$; 
\item for any vertex $v\in V_0$, $\#\partial_{in}^{-1}(v)=1$ and $\#\partial_{out}^{-1}(v)=2$;
\item $v_T$ is such that $\#\partial_{in}^{-1}(v_T)=0$ and $\#\partial_{out}^{-1}(v)=1$.
\end{enumerate}
We also define $e_T=\partial_{in}^{-1}(v_T)$. 
\end{definition}
\begin{figure}[h]
\center
\begin{tikzpicture}
\draw (0,4) -- (1,3);
\draw (2,4) -- (1,3);
\draw (1,3) -- (2,2);
\draw (3,3) -- (2,2);
\draw (2,2) -- (2,1);
\node[below, font=\tiny] at (2,1) {$v_T$};
\node[above, font=\tiny] at (0,4) {$v_1$};
\node[above, font=\tiny] at (2,4) {$v_2$};
\node[below, font=\tiny] at (1,3) {$v_3$};
\node[above, font=\tiny] at (3,3) {$v_4$};
\node[right, font=\tiny] at (2,2) {$v_5$};
\node[font=\tiny] at (2,1) {$\bullet$};
\node[ font=\tiny] at (0,4) {$\bullet$};
\node[font=\tiny] at (2,4) {$\bullet$};
\node[font=\tiny] at (1,3) {$\bullet$};
\node[font=\tiny] at (3,3) {$\bullet$};
\node[font=\tiny] at (2,2) {$\bullet$};
\node[right, font=\tiny] at (2,1.5) {$e_T$};
\end{tikzpicture}
\caption{This is an example of a 3-tree, where the set of vertices is decomposed by $V_{in}=\lbrace v_1, v_2, v_4\rbrace$, $V_0=\lbrace v_3, v_5\rbrace$.}
\label{t:3tree}
\end{figure}
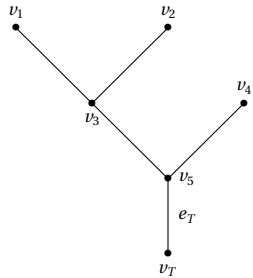
Two k-trees $T$ and $T'$ are \textit{isomorphic} if there are bijections $V\cong V'$ and $E\cong E'$ preserving the decomposition $V_0\cong V'_0$, $V_{in}\cong V'_{in}$ and $\lbrace v_T\rbrace\cong\lbrace v_{T'}\rbrace$ and the boundary maps $\partial_{in}, \partial_{out}$. 
  
It will be useful in the following to introduce the definition of topological realization $\mathcal{T}(T)$ of a k-tree $T$, namely $\mathcal{T}(T)\defeq\big(\coprod_{e\in E}[0,1]\big)/\thicksim$, where $\thicksim$ is the equivalence relation that identifies boundary points of edges with the same image in $V\setminus\lbrace v_T\rbrace$. 

Since we need to keep track of all the possible combinations while we compute commutators (for instance for $\Phi^{(3)}$  there is the contribution of $\lbrace\Phi^{(1)},\Phi^{(2)}\rbrace_\sim$ and $\lbrace\Phi^{(2)},\Phi^{(1)}\rbrace_\sim$), we introduce the notion of ribbon trees:
\begin{definition}[Definition 5.3 in \cite{MCscattering}]\label{def:ribbon trees}
A ribbon structure on a k-tree is a cyclic ordering of the vertices. It can be viewed as an embedding $\mathcal{T}(T)\hookrightarrow\mathbb{D}$, where $\mathbb{D}$ is the disk in $\mathbb{R}^2$, and the cyclic ordering is given according to the anticlockwise orientation of $\mathbb{D}$. 
\end{definition}
Two ribbon k-trees $T$ and $T'$ are \textit{isomorphic} if they are isomorphic as k-trees and the isomorphism preserves the cyclic order. The set of all isomorphism classes of ribbon k-trees will be denoted by $\mathbb{R}\mathbb{T}_k$. 
As an example, the following two 2-trees are not isomorphic:
\begin{figure}[h]
\begin{tikzpicture}
\draw (0,2) -- (1,1);
\draw (2,2) -- (1,1);
\draw (1,1) -- (1,0);
\node[below, font=\tiny] at (1,0) {$v_T$};
\node[above, font=\tiny] at (0,2) {$v_1$};
\node[above, font=\tiny] at (2,2) {$v_2$};
\node[right, font=\tiny] at (1,1) {$v_3$};
\node[font=\tiny] at (1,0) {$\bullet$};
\node[ font=\tiny] at (0,2) {$\bullet$};
\node[font=\tiny] at (2,2) {$\bullet$};
\node[font=\tiny] at (1,1) {$\bullet$};
\node[right, font=\tiny] at (1,0.5) {$e_T$};
\end{tikzpicture}
\begin{tikzpicture}
\draw (0,2) -- (1,1);
\draw (2,2) -- (1,1);
\draw (1,1) -- (1,0);
\node[below, font=\tiny] at (1,0) {$v_T'$};
\node[above, font=\tiny] at (0,2) {$v_2$};
\node[above, font=\tiny] at (2,2) {$v_1$};
\node[right, font=\tiny] at (1,1) {$v_4$};
\node[font=\tiny] at (1,0) {$\bullet$};
\node[ font=\tiny] at (0,2) {$\bullet$};
\node[font=\tiny] at (2,2) {$\bullet$};
\node[font=\tiny] at (1,1) {$\bullet$};
\node[right, font=\tiny] at (1,0.5) {$e_T'$};
\end{tikzpicture}
\end{figure}

In order to keep track of the $\hbar$ behaviour while we compute the contribution from the commutators, let us decompose the bracket on the DGLA as follows:
\begin{definition}\label{label}
Let $(A,\alpha)=(A_Jdx^J\mathfrak{w}^{m_1},\alpha_Jdx^J\mathfrak{w}^{m_1}\partial_{n_1})\in\Omega^p(U,\End E\oplus TM)\mathfrak{w}^{m_1}$ and $(B,\beta)=(B_Kdx^K\mathfrak{w}^{m_2},\beta_Kdx^K\mathfrak{w}^{m_2}\partial_{n_2})\in\Omega^q(U,\End E\oplus TM)\mathfrak{w}^{m_2}$. Then we decompose $\lbrace\cdot ,\cdot\rbrace_\sim$ as the sum of: 
\begin{enumerate}
\item[$\natural$] $\lbrace(A,\alpha),(B,\beta)\rbrace_{\natural}\defeq\left(\alpha\wedge B\langle n_1,m_2\rangle- \beta\wedge A\langle n_2,m_1\rangle+ \lbrace A,B\rbrace_{\End E},\lbrace\alpha,\beta\rbrace\right)$
\item[$\flat$] $\lbrace(A,\alpha),(B,\beta)\rbrace_{\flat}\defeq\left(i\hbar \beta_Kn_2^q\frac{\partial A_J}{\partial x_q}dx^J\wedge dx^K\mathfrak{w}^{m_1+m_2}, \beta(\nabla_{\partial_{n_2}}\alpha)\mathfrak{w}^{m_1+m_2}\partial_{n_1}\right)$
\item[$\sharp$] $\lbrace(A,\alpha),(B,\beta)\rbrace_{\sharp}\defeq\left(i\hbar \alpha_Jn_1^q\frac{\partial B_K}{\partial x_q}dx^K\wedge dx^J\mathfrak{w}^{m_1+m_2}, \alpha(\nabla_{\partial_{n_1}}\beta)\mathfrak{w}^{m_1+m_2}\partial_{n_2}\right)$
\item[$\star$] $\lbrace(A,\alpha),(N,\beta)\rbrace_{\star}\defeq i\hbar\left(\alpha_Jn_1^qB_KA_q(\phi)dx^J\wedge dx^K-n_2^q\beta_KA_q(\phi)A_Jdx^K\wedge  dx^J,0\right)$.
\end{enumerate}
\end{definition}

The previous definition is motivated by the following observation: the label $\natural$ contains terms of the Lie bracket $\lbrace\cdot,\cdot\rbrace_\sim$ which leave unchanged the behaviour in $\hbar$. Then both the labels $\flat$ and $\sharp$ contain terms which contribute with an extra $\hbar$ factor and at the same time contain derivatives. The last label $\star$ contains terms which contribute with an extra $\hbar$ but do not contain derivatives. 
 
\begin{definition}\label{def:labeled ribbon tree}
A labeled ribbon k-tree is a ribbon k-tree $T$ together with:
\begin{enumerate}
\item[(i)] a label $\natural$, $\sharp$, $\flat$, $\star$ -as defined in Definition \ref{label}- for each vertex in $V_0$;
\item[(ii)] a label $(m_e,j_e)$ for each incoming edge $e$, where $m_e$ is the Fourier mode of the incoming vertex and $j_e\in\Z_{>0}$ gives the order in the formal parameter $t$.
\end{enumerate} 
\end{definition}
There is an induced labeling of all the edges of the trees defined as follows: at any trivalent vertex with incoming edges $e_1,e_2$ and outgoing edge $e_3$ we define $(m_{e_3},j_{e_3})=(m_{e_1}+m_{e_2}, j_{e_1}+j_{e_2})$. We will denote by $(m_T,j_T)$ the label corresponding to the unique incoming edge of $\nu_T$. 
Two labeled ribbon k-trees $T$ and $T'$ are \textit{isomorphic} if they are isomorphic as ribbon k-trees and the isomorphism preserves the labeling. The set of equivalence classes of labeled ribbon k-trees will be denoted by $\mathbb{L}\mathbb{R}\mathbb{T}_k$. We also introduce the following notation for equivalence classes of labeled ribbon trees:
\begin{notation}
We denote by $\mathbb{LRT}_{k,0}$ the set of equivalence classes of $k$ labeled ribbon trees such that they have only the label $\natural$. We denote by $\mathbb{LRT}_{k,1}$ the complement set, namely $\mathbb{LRT}_{k,1}=\mathbb{LRT}_k-\mathbb{LRT}_{k,0}$.
\end{notation}
Let us now define the operator $\mathfrak{t}_{k,T}$ which allows to write the solution $\Phi$ in terms of labeled ribbon trees.
\begin{definition}
Let $T$ be a labeled ribbon k-tree, then the operator 
\begin{equation}
\mathfrak{t}_{k,T}:\Omega^1(U,\End E\oplus TM)^{\otimes k}\rightarrow\Omega^{1}(U, \End E\oplus TM)
\end{equation}
is defined as follows: it aligns the input with the incoming vertices according with the cyclic ordering and it labels the incoming edges (as in part (ii) of Definition \ref{def:labeled ribbon tree}). Then it assigns at each vertex in $V_0$ the commutator according with the part (i) of Definition \ref{label}. Finally it assigns the homotopy operator $-\mathbf{H}$ to each outgoing edge. 
\end{definition}
In particular the solution $\Phi$ of equation \eqref{eq:Phi} can be written as a sum on labeled ribbon k-trees as follows:
\begin{equation}\label{def:Phitree}
\Phi=\sum_{k\geq 1}\sum_{T\in\mathbb{LRT}_k}\frac{1}{2^{k-1}}\mathfrak{t}_{k,T}(\bar{\Pi},\cdots ,\bar{\Pi}).
\end{equation} 
Recall that by definition 
\[\lbrace\left(A,\alpha\partial_{n_1}\right)\mathfrak{w}^{k_1m_1},\left(B,\beta\partial_{n_2}\right)\mathfrak{w}^{k_2m_2}\rbrace_\sim=\left(C,\gamma\partial_{\langle k_2m_2,n_1\rangle n_2-\langle k_1m_1,n_2\rangle n_1}\right)\mathfrak{w}^{k_1m_1+k_2m_2}\]
for some $(A,\alpha)\in\Omega^s(U,\End E\oplus TM), (B,\beta)\Omega^r(U,\End E\oplus TM)$ and $(C,\gamma)\in\Omega^{r+s}(U,\End E\oplus TM)$, hence the Fourier mode of any labeled brackets has the same frequency $m_e=k_1m_1+k_2m_2$ independently of the label $\natural, \flat, \sharp,\star$. In particular each $m_e$ can be written as $m_e=l (a_1m_1+a_2m_2)$ for some primitive elements $(a_1,a_2)\in\big(\Z^2_{\geq 0}\big)_{\text{prim}}$. Let us introduce the following notation: 
\begin{notation}
Let $a=(a_1,a_2)\in\left(\Z_{\geq0}^2\right)_{\text{prim}}$ and define $m_a\defeq a_1m_1+a_2m_2$. Then we define $\Phi_a$ to be the sum over all trees of the contribution to $\mathfrak{t}_{k,T}(\bar{\Pi},\cdots, \bar{\Pi})$ with Fourier mode $\mathfrak{w}^{lm_a}$ for every $l\geq1$. In particular we define $\Phi_{(1,0)}\defeq\bar{\Pi}_1$ and $\Phi_{(0,1)}\defeq\bar{\Pi}_2$. 
\end{notation}
It follows that the solution $\Phi$ can be written as a sum on primitive Fourier mode as follows:
\begin{equation}\label{eq:Phi_primitive}
    \Phi=\sum_{a\in\big(\Z^2_{\geq 0}\big)_{\text{prim}}}\Phi_{a}.
\end{equation}

As an example, let us consider $\Phi^{(2)}$. From equation \eqref{eq:Phi} we get   
\[
\Phi^{(2)}=\bar{\Pi}^{(2)}-\frac{1}{2}\mathbf{H}\left(\lbrace\bar{\Pi}^{(1)},\bar{\Pi}^{(1)}\rbrace_\sim\right)\]
and the possible 2-trees $T\in\mathbb{LRT}_2$, up to choice of the initial Fourier modes, are represented in figure \ref{fig:2trees}.    
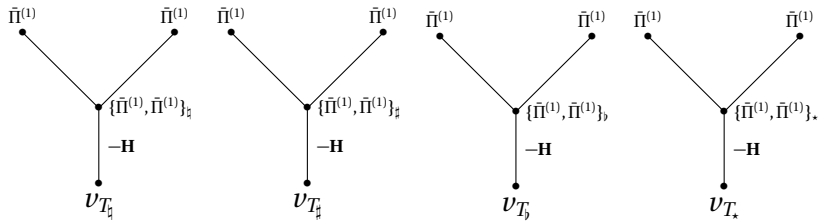
\begin{figure}[h]
\begin{tikzpicture}
\draw (0,2) -- (1,1);
\draw (2,2) -- (1,1);
\draw (1,1) -- (1,0);
\node[below] at (1,0) {$v_{T_\natural}$};
\node[above, font=\tiny] at (0,2) {$\bar{\Pi}^{(1)}$};
\node[above, font=\tiny] at (2,2) {$\bar{\Pi}^{(1)}$};
\node[right, font=\tiny] at (1,1) {$\lbrace \bar{\Pi}^{(1)},\bar{\Pi}^{(1)}\rbrace_\natural $};
\node[font=\tiny] at (1,0) {$\bullet$};
\node[ font=\tiny] at (0,2) {$\bullet$};
\node[font=\tiny] at (2,2) {$\bullet$};
\node[font=\tiny] at (1,1) {$\bullet$};
\node[right, font=\tiny] at (1,0.5) {$-\mathbf{H}$};
\end{tikzpicture}
\begin{tikzpicture}
\draw (0,2) -- (1,1);
\draw (2,2) -- (1,1);
\draw (1,1) -- (1,0);
\node[below] at (1,0) {$v_{T_\sharp}$};
\node[above, font=\tiny] at (0,2) {$\bar{\Pi}^{(1)}$};
\node[above, font=\tiny] at (2,2) {$\bar{\Pi}^{(1)}$};
\node[right, font=\tiny] at (1,1) {$\lbrace \bar{\Pi}^{(1)},\bar{\Pi}^{(1)}\rbrace_\sharp $};
\node[font=\tiny] at (1,0) {$\bullet$};
\node[ font=\tiny] at (0,2) {$\bullet$};
\node[font=\tiny] at (2,2) {$\bullet$};
\node[font=\tiny] at (1,1) {$\bullet$};
\node[right, font=\tiny] at (1,0.5) {$-\mathbf{H}$};
\end{tikzpicture}
\begin{tikzpicture}
\draw (0,2) -- (1,1);
\draw (2,2) -- (1,1);
\draw (1,1) -- (1,0);
\node[below] at (1,0) {$v_{T_\flat}$};
\node[above, font=\tiny] at (0,2) {$\bar{\Pi}^{(1)}$};
\node[above, font=\tiny] at (2,2) {$\bar{\Pi}^{(1)}$};
\node[right, font=\tiny] at (1,1) {$\lbrace \bar{\Pi}^{(1)},\bar{\Pi}^{(1)}\rbrace_\flat $};
\node[font=\tiny] at (1,0) {$\bullet$};
\node[ font=\tiny] at (0,2) {$\bullet$};
\node[font=\tiny] at (2,2) {$\bullet$};
\node[font=\tiny] at (1,1) {$\bullet$};
\node[right, font=\tiny] at (1,0.5) {$-\mathbf{H}$};
\end{tikzpicture}
\begin{tikzpicture}
\draw (0,2) -- (1,1);
\draw (2,2) -- (1,1);
\draw (1,1) -- (1,0);
\node[below] at (1,0) {$v_{T_\star}$};
\node[above, font=\tiny] at (0,2) {$\bar{\Pi}^{(1)}$};
\node[above, font=\tiny] at (2,2) {$\bar{\Pi}^{(1)}$};
\node[right, font=\tiny] at (1,1) {$\lbrace \bar{\Pi}^{(1)},\bar{\Pi}^{(1)}\rbrace_\star $};
\node[font=\tiny] at (1,0) {$\bullet$};
\node[ font=\tiny] at (0,2) {$\bullet$};
\node[font=\tiny] at (2,2) {$\bullet$};
\node[font=\tiny] at (1,1) {$\bullet$};
\node[right, font=\tiny] at (1,0.5) {$-\mathbf{H}$};
\end{tikzpicture}
\caption{$2$-trees labeled ribbon trees, which contribute to the solution $\Phi$.}\label{fig:2trees}
\end{figure}
Hence 
\begin{equation*}
\begin{split}
\Phi^{(2)}&=\bar{\Pi}_1^{(2)}+\bar{\Pi}_2^{(2)}-\frac{1}{2}\mathbf{H}\left(\lbrace \bar{\Pi}^{(1)},\bar{\Pi}^{(1)}\rbrace_\natural+\lbrace \bar{\Pi}^{(1)},\bar{\Pi}^{(1)}\rbrace_\sharp+\lbrace \bar{\Pi}^{(1)},\bar{\Pi}^{(1)}\rbrace_\flat+\lbrace \bar{\Pi}^{(1)},\bar{\Pi}^{(1)}\rbrace_\star \right)\\
&=\Phi_{(1,0)}^{(2)}+\Phi_{(0,1)}^{(2)}+\\
&-\Big(\left(a_{1,k_1}A_{1,k_2}\langle n_1,k_2m_2\rangle- a_{1,k_2}A_{1,k_1}\langle n_2,k_1m_1\rangle+[A_{1,k_1},A_{1,k_2}]\right), a_{1,k_1} a_{1,k_2}\partial_{\langle k_2m_2,n_1\rangle n_2-\langle k_1m_1,n_2\rangle n_1}\Big)\cdot\\
&\cdot\mathbf{H}(\delta_{m_1}\wedge \delta_{m_2}\mathfrak{w}^{k_1m_1+k_2m_2})\\
&+\Big(a_{1,k_2}A_{1,k_1}, -a_{1,k_2}a_{1,k_1}\partial_{n_1}\Big)\mathbf{H}\left(i\hbar n_2^q\frac{\partial \delta_{m_1}}{\partial x_q}\wedge \delta_{m_2}\mathfrak{w}^{k_1m_1+k_2m_2}\right)\\
&+\Big(a_{1,k_1}A_{1,k_2}, a_{1,k_1}a_{1,k_2}\partial_{n_2}\Big)\mathbf{H}\left(i\hbar n_1^q\frac{\partial \delta_{m_2}}{\partial x_q}\wedge \delta_{m_1}\mathfrak{w}^{k_1m_1+k_2m_2}\right)\\
&+i\hbar\Big(a_{1,k_1}A_{1,k_2}n_1^q\mathbf{H}\left(A_q(\phi)\mathfrak{w}^{k_1m_1+m_2}\delta_{m_1}\wedge \delta_{m_2}\right)-a_{1,k_2} A_{1}n_2^q\mathbf{H}\left(A_q(\phi)\mathfrak{w}^{k_1m_1+k_2m_2}\delta_{m_1}\wedge \delta_{m_2}\right) ,0\Big)
\end{split}
\end{equation*}
By Remark \ref{rmk:H(delta wedge delta)}
\[\mathbf{H}(\delta_{m_1}\wedge \delta_{m_2}\mathfrak{w}^{k_1m_1+k_2m_2})=f(\hbar,u_{m_{a}})\mathfrak{w}^{lm_a}\delta_{m_a}\] and $f(\hbar,u_{m_{a}})\in O_{loc}(1)$, for $k_1m_1+k_2m_2=lm_a$. Analogously \[\mathbf{H}\left(A_q(\phi)\mathfrak{w}^{k_1m_1+k_2m_2}\delta_{m_1}\wedge \delta_{m_2}\right)=f(\hbar,A(\phi),u_{m_{a}})\mathfrak{w}^{lm_a}\delta_{m_a}\] and $f(\hbar,A(\phi),u_{m_{a}})\in O_{loc}(1)$. Then \[\mathbf{H}\left(\hbar\frac{\partial \delta_{m_1}}{\partial x_q}\wedge \delta_{m_2}\mathfrak{w}^{k_1m_1+k_2m_2}\right)=\mathfrak{w}^{lm_a} f(\hbar,u_{m_{a}})\delta_{m_a}\] and $f(\hbar,u_{m_{a}})\in O_{loc}(\hbar^{1/2})$. This shows that every term in the sum above, is a function of some order in $\hbar$ times a delta supported along a ray of slope $m_{(a_1,a_2)}=a_1m_1+a_2m_2$. For any given $a\in\left(\Z^2_{\geq 0}\right)_{\text{prim}}$, these contributions are by definition $\Phi_{(a_1,a_2)}^{(2)}$, hence 
\[\Phi^{(2)}=\Phi_{(1,0)}^{(2)}+\sum_{a\in\left(\Z^2_{\geq0}\right)_{\text{prim}}}\Phi_{(a_1,a_2)}^{(2)}+\Phi_{(0,1)}^{(2)}.\]
In general, the expression of $\Phi_a$ will be much more complicated, but as a consequence of the definition of $\mathbf{H}$ it always contains a delta supported on a ray of slope $m_a$.

  
\subsection{From solutions of Maurer-Cartan to the consistent diagram $\mathfrak{D}^{\infty}$}

Let us first introduce the following notation:
\begin{notation}
Let $A\defeq U\setminus\lbrace \xi_0\rbrace$ be an annulus and let $\tilde{A}$ be the universal cover of $A$ with projection $\varpi\colon\tilde{A}\to A$. Then let us denote by $\tilde{\Phi}$ the pullback of $\Phi$ by $\varpi$, in particular by the decomposition in its primitive Fourier mode $\tilde{\Phi}=\sum_{a\in\left(\Z^2_{\geq 0}\right)}\varpi^*(\Phi_a)\defeq\sum_{a\in\left(\Z^2_{\geq 0}\right)}\tilde{\Phi}_a$. 
\end{notation}
\begin{notation}
We introduce polar coordinates in $\xi_0$, centred in $\xi_0=\mathfrak{d}_{m_1}\cup \mathfrak{d}_{m_2}$, denoted by $(r, \vartheta)$ and we fix a reference angle $\vartheta_0$ such that the ray with slope $\vartheta_0$ trough $\xi_0$ contains the base point $q_0$ (see Figure \ref{fig:rif_theta}). 

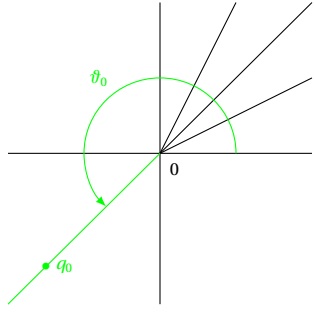
\begin{figure}[h]
\begin{tikzpicture}
\draw (2,0) -- (2,4);
\draw (0,2) -- (4,2);
\draw (2,2) -- (4,4) ;
\draw (2,2) -- (3,4) ;
\draw (2,2) -- (4,3) ;
\draw [green] (2,2) -- (0,0);
\node [below right, font=\tiny] at (2,2) {0};
\node[green, right, font=\tiny] at (0.5,0.5) {$q_0$};
\node[ green, font=\tiny] at (0.5,0.5) {$\bullet$};
\draw [-latex, green] (3,2) arc (0:225:1);
\node[green,font=\tiny] at (1.2,3) {$\vartheta_0$}; 
\end{tikzpicture}
\caption{The reference angle $\vartheta_0$.}
\label{fig:rif_theta}
\end{figure}

Then for every $a\in\left(\Z_{\geq0}^2\right)_{\text{prim}}$ we associate to the ray $\mathfrak{d}_{m_a}\defeq \xi_0+\R_{\geq 0}m_a$ an angle $\vartheta_a\in\left(\vartheta_0,\vartheta_0+2\pi\right)$. We identify $\mathfrak{d}_{m_a}\cap A$ with its lifting in $\tilde{A}$ and by abuse of notation we will denote it by $\mathfrak{d}_{m_a}$. We finally define $\tilde{A}_0\defeq\lbrace (r, \vartheta)\vert \vartheta_0-\epsilon_0<\vartheta <\vartheta_0+2\pi\rbrace$, for some small positive $\epsilon_0$.  
\end{notation}
   
\begin{lemma}[see Lemma 5.40 \cite{MCscattering}]\label{lem:Phi_aMC}
Let $\Phi$ be a solution of equation \eqref{eq:Phi} which has been decomposed as a sum over primitive Fourier mode, as in \eqref{eq:Phi_primitive}. Then for any $a\in\big(\Z_{\geq 0}^2\big)_{\text{prim}}$, $\tilde{\Phi}_{a}$ is a solution of the Maurer-Cartan equation in $\tilde{A}_0$, up to higher order in $\hbar$, namely $\lbrace\tilde{\Phi}_{a},\tilde{\Phi}_{a'}\rbrace_\sim\in\mathcal{W}_2^{-\infty}(\tilde{A}_0,\End E\oplus TM)\mathfrak{w}^{km_a+k'm_{a'}}$ and $d_W\tilde{\Phi}_{a}\in\mathcal{W}_{2}^{-\infty}(\tilde{A}_0,\End E\oplus TM)\mathfrak{w}^{km_a}$ for some $k,k'\geq 1$.
\end{lemma}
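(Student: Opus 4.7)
The plan follows the strategy of the analogous Lemma~5.40 in \cite{MCscattering}, adapted to the enlarged DGLA for holomorphic pairs. The key geometric observation is that on the universal cover $\tilde A$ of $A = U\setminus\{\xi_0\}$, hence on $\tilde A_0$, the lifted rays $\mathfrak{d}_{m_a}$ for distinct primitive $a\in(\Z^2_{\geq 0})_{\text{prim}}$ are disjoint, so that Gaussian-type forms localised near them are mutually exponentially small in $\hbar$. The first and main step is to prove inductively that each summand $\tilde\Phi_a$ has asymptotic support along $\mathfrak{d}_{m_a}$, in the sense that $\tilde\Phi_a\in\mathcal{W}^{s_a}_{\mathfrak{d}_{m_a}}(\tilde A_0,\End E\oplus TM)\mathfrak{w}^{lm_a}$ for an appropriate order $s_a$ depending on the contributing trees. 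The induction is on the number $k$ of vertices of the labeled ribbon trees $T\in\mathbb{LRT}_k$ entering the expansion \eqref{def:Phitree}; the base case $k=1$ is the construction of $\bar\Pi_1,\bar\Pi_2$ in Section \ref{sec:ansatz}. The inductive step combines Lemmas \ref{lem:wedge_sobolev} and \ref{lem:bracket} with the explicit computation in Remark \ref{rmk:H(delta wedge delta)}, which shows that $\mathbf{H}$ applied to a term of type $\delta_{m_{b_1}}\wedge\delta_{m_{b_2}}\mathfrak{w}^{lm_a}$ (with $l_1 m_{b_1}+l_2 m_{b_2}=lm_a$) outputs a form $f(\hbar,u_{m_a})\delta_{m_a}$ concentrated on $\mathfrak{d}_{m_a}$.

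From the first step one deduces the bracket estimate: for $a\neq a'$ the quantity $\{\tilde\Phi_a,\tilde\Phi_{a'}\}_\sim$ lies in $\mathcal{W}^{-\infty}_2(\tilde A_0,\End E\oplus TM)\mathfrak{w}^{km_a+k'm_{a'}}$. Indeed, since the two rays $\mathfrak{d}_{m_a}$ and $\mathfrak{d}_{m_{a'}}$ meet only at $\xi_0$, which has no lift in $\tilde A_0$, every point of $\tilde A_0$ admits a neighbourhood disjoint from at least one of them, on which, by the defining property of $\mathcal{W}^s_{\mathfrak d}$, the corresponding factor is exponentially small in $\hbar$. The four labeled pieces $\natural,\flat,\sharp,\star$ of the bracket (Definition \ref{label}) are then estimated separately: each introduces at worst a bounded number of derivatives times polynomial factors of $\hbar$, all of which are dominated by the exponential decay. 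The self-term $\{\tilde\Phi_a,\tilde\Phi_a\}_\sim$ is handled by degree considerations: by step one $\tilde\Phi_a$ asymptotically reduces to $\alpha\,\delta_{m_a}$ with a single wall-transverse one-form factor $du_{m_a^\perp}$, so both the wedge products and the derivative-type contributions vanish identically, as they would produce $du_{m_a^\perp}\wedge du_{m_a^\perp}$.

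For the second conclusion, $d_W\tilde\Phi_a\in\mathcal{W}^{-\infty}_2(\tilde A_0,\End E\oplus TM)\mathfrak{w}^{km_a}$, I would use that $\Phi$ solves \eqref{eq:Phi}, and hence the Maurer--Cartan equation, by Proposition \ref{prop:Kuranishi}. Projecting $d_W\Phi+\tfrac{1}{2}\{\Phi,\Phi\}_\sim=0$ onto Fourier modes proportional to $m_a$ expresses $d_W\tilde\Phi_a$ as a finite sum of brackets $\{\tilde\Phi_b,\tilde\Phi_{b'}\}_\sim$ with $lm_b+l'm_{b'}$ a positive multiple of $m_a$; the previous paragraph shows that each such bracket is exponentially small. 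The main obstacle I anticipate is the first step: compared with \cite{MCscattering}, the new labels $\flat,\sharp,\star$ contributed by the endomorphism part of the DGLA carry explicit factors of $\hbar$, of the background connection matrix $A_q(\phi)$, and of derivatives of the integrands, and one must verify carefully that after applying $\mathbf{H}$ the output is still concentrated along a single ray, with the correct $\hbar$-order consistent throughout the induction on trees. Granted analogues of Corollary \ref{lem:H(delta_m)} for these richer integrands, which are plausible in view of the computations already performed in Proposition \ref{rmk:closed} and in the proof of Lemma \ref{lem:bracket}, the remaining steps reduce to careful bookkeeping.
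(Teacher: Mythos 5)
Your argument is correct and follows essentially the same route as the paper: use that on the universal cover $\tilde A_0$ the rays $\mathfrak d_{m_a}$ are pairwise disjoint and miss $\xi_0$, so $\delta_{m_a}\wedge\delta_{m_{a'}}$ is a Gaussian concentrated off $\tilde A_0$ and hence in $\mathcal W_2^{-\infty}$; note $\lbrace\tilde\Phi_a,\tilde\Phi_a\rbrace_\sim=0$ because it produces $du_{m_a^\perp}\wedge du_{m_a^\perp}$; and then deduce the $d_W$ estimate by projecting the Maurer--Cartan equation onto the Fourier modes parallel to $m_a$. Your explicit preliminary induction showing $\tilde\Phi_a\in\mathcal W^{s_a}_{\mathfrak d_{m_a}}(\tilde A_0,\cdot)\mathfrak w^{lm_a}$, and your careful phrasing of the Fourier-mode projection for the $d_W$ step, make explicit what the paper's terse proof takes for granted, but they are the same underlying argument rather than a different one.
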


\begin{proof}
Recall that $\Phi$ is a solution of Maurer-Cartan $d_W\Phi+\lbrace\Phi,\Phi\rbrace_\sim=0$, hence its pullback $\varpi^*(\Phi)$ is such that $\sum_{a\in\big(\Z_{\geq0}^2\big)_{\text{prim}}}d_W\tilde{\Phi}_{a}+\sum_{a,a'\in\big(\Z_{\geq0}^2\big)_{\text{prim}}}\lbrace\tilde{\Phi}_{a},\tilde{\Phi}_{a'}\rbrace_\sim=0$. 
Looking at the bracket there are two possibilities: first of all, if $a\neq a'$ then $\lbrace\tilde{\Phi}_{a},\tilde{\Phi}_{a'}\rbrace_\sim$  is proportional to $\delta_{m_{a}}\wedge\delta_{m_{a'}}\mathfrak{w}^{km+k'm'}$. Since $\mathfrak{d}_{m_a}\cap \mathfrak{d}_{m{_a'}}\cap \tilde{A}=\varnothing$ then $\delta_{m_{a}}\wedge\delta_{m_{a'}}\in\mathcal{W}_2^{-\infty}(\tilde{A}_0)$, indeed writing $\delta_{m_{a}}\wedge\delta_{m_{a'}}$ in polar coordinates it is a $2$-form with coefficient a 
Gaussian function in two variables centred in $\xi_0\not\in\tilde{A}_0$. Hence it is bounded by $e^{-\frac{c_V}{\hbar}}$ in the open subset $V\subset\tilde{A}_0$.     
Secondly, if $a=a'$ then by definition $\lbrace\tilde{\Phi}_{a},\tilde{\Phi}_{a}\rbrace_\sim=0$. Finally, by the fact that $d_W\tilde{\Phi}_a=-\sum_{a',a''}\lbrace\tilde{\Phi}_{a'},\tilde{\Phi}_{a''}\rbrace_\sim$ it follows that $d_W\tilde{\Phi}_{a}\in\mathcal{W}_{2}^{-\infty}(\tilde{A}_0)\mathfrak{w}^{km_a}$ for some $k\geq 1$. 
\end{proof}


Now recall that the homotopy operator we have defined in Section \ref{sec:single wall} gives a gauge fixing condition, hence for every $a\in\left(\Z^2_{\geq 0}\right)_{\text{prim}}$ there exists a unique gauge $\varphi_a$ such that $e^{\varphi_a}\ast 0=\tilde{\Phi}_a$ and $p(\varphi_a)=0$. To be more precise we should consider $\tilde{p}\defeq\varpi^*( p)$ as gauge fixing condition and similarly $\tilde{\iota}\defeq\varpi^*(\iota)$ and $\tilde{H}\defeq\varpi^*(H)$ as homotopy operator, however if we consider affine coordinates on $\tilde{A}$, these operators are equal to $p$, $\iota$ and $H$ respectively. In addition in affine coordinates on $\tilde{A}$ the solution $\tilde{\Phi}_a$ is also equal to $\Phi_a$. Therefore in the following computations we will always use the original operators and the affine coordinates on $\tilde{A}$. We compute the asymptotic behaviour of the gauge $\varphi_a$ in the following theorem:

\begin{theorem}\label{thm:asymptotic_gauge}
Let $\varphi_a\in \Omega^0(\tilde{A}_0,\End E\oplus TM)$ be the unique gauge such that $p(\varphi_a)=0$ and $e^{\varphi_a}\ast 0=\tilde{\Phi}_{a}$. Then the asymptotic behaviour of $\varphi_a$ is 
\[
\varphi_a^{(s)}\in\begin{cases}\sum_{l}
\left(B_l, b_l\partial_{n_a}\right)t^{s}\mathfrak{w}^{l m_a}+\bigoplus_{l\geq 1}\mathcal{W}_0^{0}(\tilde{A}_0,\End E\oplus TM)\mathfrak{w}^{lm_a} & \text{on }  H_{m_a,+}\\
\bigoplus_{l\geq 1}\mathcal{W}_0^{-\infty}(\tilde{A}_0,\End E\oplus TM)\mathfrak{w}^{lm_a} & \text{on }  H_{m_a,-}
\end{cases}\]
for every $s\geq 0$, where $\big(B_l, b_l\partial_{n_a}\big)\mathfrak{w}^{lm_a}\in\tilde{\mathfrak{h}}$.
\end{theorem}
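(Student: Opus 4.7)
The plan is to carry over the argument of Proposition \ref{prop:asymp1} from the single-wall case, replacing $\Pi$ by $\tilde{\Phi}_a$ and $U$ by the lifted annulus $\tilde{A}_0$. First I would set up existence and uniqueness of $\varphi_a$: exactly as in Lemma \ref{lem:uniq} and equation \eqref{eq:sol_gauge}, the conditions $e^{\varphi_a}\ast 0=\tilde{\Phi}_a$ and $p(\varphi_a)=0$ are together equivalent to the integral equation
\[
\varphi_a = -H\bigl(\tilde{\Phi}_a\bigr) - H\Bigl(\sum_{k\geq 1}\tfrac{\ad_{\varphi_a}^k}{(k+1)!}\, d_W\varphi_a\Bigr),
\]
which can be solved recursively order by order in $t$. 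Here we may use the same homotopy operator $H$ built from the retraction $\varrho$ of \eqref{varro}, pulled back through $\varpi$, since $\tilde{A}_0$ is again contractible.

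Second, I would analyse the asymptotic shape of $\tilde{\Phi}_a$ itself. From the tree expansion \eqref{def:Phitree}, the component $\tilde{\Phi}_a$ is a sum over labeled ribbon $k$-trees with outgoing edge label $(lm_a,s)$. By Remark \ref{rmk:H(delta wedge delta)}, every application of $\mathbf{H}$ to a wedge $\delta_{m_1}\wedge\delta_{m_2}\mathfrak{w}^{lm_a}$ returns a form of the type $f(\hbar,u_{m_a})\delta_{m_a}\mathfrak{w}^{lm_a}$, so each tree contribution is a Gaussian $1$-form supported transversely to $\mathfrak{d}_{m_a}$. A bookkeeping of $\hbar$-orders through Definition \ref{label} shows that the labels $\flat,\sharp,\star$ each introduce an extra factor of $\hbar^{1/2}$, while the $\natural$-label preserves the order of asymptotic support. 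Consequently
\[
\tilde{\Phi}_a^{(s)} \in \sum_l \bigl(B_l^{(s)},\, b_l^{(s)}\partial_{n_a}\bigr)\delta_{m_a}\mathfrak{w}^{lm_a} \;+\; \bigoplus_{l\geq 1}\mathcal{W}_{\mathfrak{d}_{m_a}}^{-1}(\tilde{A}_0,\End E\oplus TM)\mathfrak{w}^{lm_a},
\]
with the leading coefficients $\bigl(B_l^{(s)}, b_l^{(s)}\partial_{n_a}\bigr)\in\tilde{\mathfrak{h}}$ obtained by summing the $\natural$-only trees; in particular the leading term lives in $\mathcal{W}_{\mathfrak{d}_{m_a}}^{1}$.

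Third, I would run induction on $s$ just as in the proof of Proposition \ref{prop:asymp1}, with the inductive claim that $\bigl(\sum_{k\geq 1}\tfrac{\ad_{\varphi_a^s}^k}{(k+1)!} d_W\varphi_a^s\bigr)^{(s+1)}\in\mathcal{W}_{\mathfrak{d}_{m_a}}^{0}$. Three ingredients drive the step: Lemma \ref{lem:H(W^s)} gives $H\bigl(\mathcal{W}_{\mathfrak{d}_{m_a}}^{1}\bigr)\subset \mathcal{W}_0^{0}$, so the previous orders $\varphi_a^{(r)}$ are themselves in $\mathcal{W}_0^{0}$; Lemma \ref{lem:bracket} propagates asymptotic support of order $1$ through iterated brackets; and Lemma \ref{lem:Phi_aMC} ensures $d_W\tilde{\Phi}_a\in\mathcal{W}_2^{-\infty}$, so up to exponentially small error $d_W\varphi_a^s$ has the same asymptotic support as $\tilde{\Phi}_a^s$. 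The leading behaviour of $\varphi_a^{(s+1)}$ then comes solely from $-H(\tilde{\Phi}_a^{(s+1)})$, and the jumping phenomenon between $H_{m_a,+}$ and $H_{m_a,-}$ is a direct consequence of the definition of $H$: its integration path from the basepoint $q_0\in H_{m_1,-}\cap H_{m_2,-}\subset H_{m_a,-}$ produces an error function in $u_{m_a^\perp}$ which is exponentially small on $H_{m_a,-}$ and equals the prescribed element of $\tilde{\mathfrak{h}}$ on $H_{m_a,+}$, modulo $\mathcal{W}_0^{0}$.

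The main obstacle will be the second step, namely the identification of the leading $\tilde{\mathfrak{h}}$-valued coefficients $(B_l^{(s)}, b_l^{(s)}\partial_{n_a})$ from the full tree sum. Unlike the single-wall case where the coefficient was directly $(A_{s+1,k},a_{s+1,k})$, here one must collect contributions from arbitrarily many trees with outgoing label $lm_a$, track how the brackets $\lbrace\cdot,\cdot\rbrace_\natural$ compose to land in $\tilde{\mathfrak{h}}$ rather than just in the larger Lie algebra on the symplectic side, and verify that the other three labels genuinely drop out in the $\hbar\to 0$ limit. This reduces to the asymptotic identification of $\lbrace\cdot,\cdot\rbrace_\sim$ with $[\cdot,\cdot]_{\tilde{\mathfrak{h}}}$ worked out in Section \ref{subsec:relation}, applied systematically to every trivalent vertex of the trees, so the step is conceptually straightforward but combinatorially delicate.
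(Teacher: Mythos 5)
Your proposal is correct and follows essentially the same route as the paper: set up the integral equation via $H$ and the normalisation $p(\varphi_a)=0$, analyse $\tilde{\Phi}_a$ through the labeled ribbon tree expansion, run an induction on $s$ using Lemma \ref{lem:H(W^s)}, Lemma \ref{lem:bracket}, and Lemma \ref{lem:bracket natural}, and extract the leading $\tilde{\mathfrak{h}}$-valued coefficient from $\natural$-only trees. One small imprecision: the $\star$-label does not introduce $\hbar^{1/2}$ but rather a full extra factor of $\hbar$ (it carries $i\hbar$ but no derivative, so it lowers the asymptotic support order by two rather than one, cf.\ Lemma \ref{lem:bracket natural}), whereas $\flat$ and $\sharp$ indeed lose one order. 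This error is in the direction of greater decay, so it does not affect the conclusion, but it is worth stating correctly since the whole $\hbar$-bookkeeping rests on that lemma.
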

\begin{remark}
Notice that, from Theorem \ref{thm:asymptotic_gauge} the gauge $\varphi_a$ is asymptotically an element of the DGLA $\tilde{\mathfrak{h}}$. Hence the saturated scattering diagram (see Definition \ref{def:D_infty}) is strictly contained in the mirror DGLA $G$ (see Definition \ref{def:mirror dgLa}). 
\end{remark}
We first need the following lemma (for a proof see Lemma 5.27 \cite{MCscattering}
) which gives the explicit asymptotic behaviour of each component of the Lie bracket $\lbrace\cdot,\cdot\rbrace_\sim$:
\begin{lemma}\label{lem:bracket natural}
Let $\mathfrak{d}_m$ and $\mathfrak{d}_{m'}$ be two rays on $U$ such that $\mathfrak{d}_{m_a}\cap \mathfrak{d}_{m_{a'}}=\lbrace \xi_0\rbrace$. 

If $(A\mathfrak{w}^{m},\alpha\mathfrak{w}^{m}\partial_n)\in\mathcal{W}_{0}^s(U,\End E\oplus TM)\mathfrak{w}^{m}$ and $(B\mathfrak{w}^{m'},\beta\mathfrak{w}^{m'}\partial_{n'})\in\mathcal{W}_{0}^r(U,\End E\oplus TM)\mathfrak{w}^{m'}$, then 
\begin{equation*}
\begin{split}
&\mathbf{H}\left(\lbrace (A\delta_m\mathfrak{w}^{m},\alpha\delta_m\mathfrak{w}^{m}\partial_n), (B\delta_{m'}\mathfrak{w}^{m'},\beta\delta_{m'}\mathfrak{w}^{m'}\partial_{n'})\rbrace_{\natural}\right)\in\mathcal{W}_{\mathfrak{d}_{m+m'}}^{s+r+1}(U,\End E\oplus TM)\mathfrak{w}^{m+m'}\\
&\mathbf{H}\left(\lbrace (\delta_m\mathfrak{w}^{m},\alpha\delta_m\mathfrak{w}^{m}\partial_n), (B\delta_{m'}\mathfrak{w}^{m'},\beta\delta_{m'}\mathfrak{w}^{m'}\partial_{n'})\rbrace_{\flat}\right)\in\mathcal{W}_{\mathfrak{d}_{m+m'}}^{s+r}(U,\End E\oplus TM)\mathfrak{w}^{m+m'}\\
&\mathbf{H}\left(\lbrace (A\delta_m\mathfrak{w}^{m},\alpha\delta_m\mathfrak{w}^{m}\partial_n), (B\delta_{m'}\mathfrak{w}^{m'},\beta\delta_{m'}\mathfrak{w}^{m'}\partial_{n'})\rbrace_{\sharp}\right)\in\mathcal{W}_{\mathfrak{d}_{m+m'}}^{s+r}(U,\End E\oplus TM)\mathfrak{w}^{m+m'}\\
&\mathbf{H}\left(\lbrace (A\delta_m\mathfrak{w}^{m},\alpha\delta_m\mathfrak{w}^{m}\partial_n), (B\delta_{m'}\mathfrak{w}^{m'},\beta\delta_{m'}\mathfrak{w}^{m'}\partial_{n'})\rbrace_{\star}\right)\in\mathcal{W}_{\mathfrak{d}_{m+m'}}^{s+r-1}(U,\End E\oplus TM)\mathfrak{w}^{m+m'}.
\end{split}
\end{equation*}
\end{lemma}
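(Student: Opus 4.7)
The plan is to reduce every labeled bracket, after multiplication by the delta form factors, to a sum of monomials of the form $F\cdot\omega\cdot\mathfrak{w}^{m+m'}$, where $F$ is a coefficient function lying in some $\mathcal{W}_0^{t}(U)$ and $\omega$ is a $2$-form built from $\delta_m,\delta_{m'}$ and possibly one partial derivative $\partial_q\delta_m$ or $\partial_q\delta_{m'}$. I will then apply $\mathbf{H}$ to each monomial in coordinates $(u_{m_a},u_{m_a^\perp})$ adapted to $\mathfrak{d}_{m+m'}$ (exactly as in Remark \ref{rmk:H(delta wedge delta)}), and recombine the $\mathcal{W}_0^{t}$ coefficient with the resulting $1$-form via Lemma \ref{lem:wedge_sobolev}. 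The whole calculation is essentially parallel to Lemma~5.27 of \cite{MCscattering}, and the orders are obtained by bookkeeping two universal rules: multiplication by $\hbar$ lowers the asymptotic support order by $2$, while one partial derivative raises it by $1$.

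For the $\natural$ case, every term is a product of coefficients in $\mathcal{W}_0^{s}$ and $\mathcal{W}_0^{r}$ (e.g.\ $[A,B]$, $\alpha B\langle n,m'\rangle$, $\alpha\beta$) wedged with $\delta_m\wedge\delta_{m'}$. By Remark \ref{rmk:H(delta wedge delta)} and the argument sketched in Step~1 of Section~\ref{sec:ansatz}, $\mathbf{H}(\delta_m\wedge\delta_{m'}\mathfrak{w}^{m+m'})=f(\hbar,u_{m_a})\delta_{m_a}\mathfrak{w}^{m+m'}$ with $f\in O_{loc}(1)\subset\mathcal{W}_0^{0}$; then Lemma \ref{lem:wedge_sobolev} gives $\mathcal{W}_0^{s+r}\wedge\mathcal{W}_{\mathfrak{d}_{m+m'}}^{1}\subset\mathcal{W}_{\mathfrak{d}_{m+m'}}^{s+r+1}$, which is the claim. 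The $\star$ case is identical except for an additional $i\hbar A_q(\phi)$ multiplier; since $A_q(\phi)\in\mathcal{W}_0^{0}$ and multiplication by $\hbar$ shifts coefficient order by $-2$, the combined coefficient lies in $\mathcal{W}_0^{s+r-2}$ and the $\mathbf{H}$--output in $\mathcal{W}_{\mathfrak{d}_{m+m'}}^{s+r-1}$.

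For $\flat$ and $\sharp$ I will use Leibniz to decompose each $\nabla$-term in two pieces, according to whether the derivative hits a coefficient or a delta form. The coefficient piece produces $\hbar(\partial_q A)\beta\,\delta_m\wedge\delta_{m'}$ (and its relatives for the TM components): the coefficient now lies in $\hbar\cdot\mathcal{W}_0^{(s+1)+r}=\mathcal{W}_0^{s+r-1}$, and applying $\mathbf{H}$ to the unchanged form factor $\delta_m\wedge\delta_{m'}$ yields, by the previous paragraph, an element of $\mathcal{W}_{\mathfrak{d}_{m+m'}}^{s+r}$. The delta-form piece produces $\hbar A\beta\,\partial_q\delta_m\wedge\delta_{m'}$, whose coefficient is in $\mathcal{W}_0^{s+r-2}$; here the essential ingredient is a direct computation, in the $(u_{m_a},u_{m_a^\perp})$-coordinates, showing
\begin{equation*}
\mathbf{H}\bigl(\hbar\,\partial_q(\delta_m\wedge\delta_{m'})\mathfrak{w}^{m+m'}\bigr)=g(\hbar,u_{m_a})\,\delta_{m_a}\mathfrak{w}^{m+m'}
\end{equation*}
with $g\in\mathcal{W}_0^{1}(U)$ (the extra linear factor $u_{m_a}$ or $u_{m_a^\perp}$ in front of the Gaussian, produced by $\partial_q$, gets absorbed by the $\hbar$ after integration and leaves a residual $\hbar^{1/2}$-loss). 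Combining through Lemma \ref{lem:wedge_sobolev} again gives $\mathcal{W}_{\mathfrak{d}_{m+m'}}^{s+r}$, as required.

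The main obstacle is the last computation: correctly tracking the $\hbar$ powers in $\mathbf{H}(\hbar\,\partial_q(\delta_m\wedge\delta_{m'}))$, because one must simultaneously integrate a Gaussian times a polynomial and control both the rapid decay in $u_{m_a^\perp}$ and the $O(1)$ behaviour in $u_{m_a}$. Everything else is an application of the rules summarized in the first paragraph. Once these four cases are verified on single monomials, the lemma follows by linearity since the labeled brackets are bilinear in the coefficient factors $(A,\alpha)$ and $(B,\beta)$.
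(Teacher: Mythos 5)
The paper does not actually prove this lemma: it defers to Lemma~5.27 of \cite{MCscattering}, extended to the $\End E$ component. Your plan reproduces the route of that reference and of the analogous computations elsewhere in this paper (the explicit estimates for $H(\nu_{T_\flat})$, $H(\nu_{T_\sharp})$, $H(\nu_{T_\star})$ in the proof of Theorem~\ref{thm:asymptotic_gauge}, plus Remark~\ref{rmk:H(delta wedge delta)} and Lemma~\ref{lem:wedge_sobolev}). The case-by-case bookkeeping for $\natural$ and $\star$ is correct, and the final orders you obtain in all four cases agree with the statement. So the overall approach is sound; there is no missing idea.

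Two points, however, need to be tightened, because as written they are not rigorous even though the arithmetic happens to land on the right answer. First, you ``apply $\mathbf{H}$ to each monomial $F\cdot\omega\cdot\mathfrak{w}^{m+m'}$\ldots and recombine the $\mathcal{W}_0^t$ coefficient with the resulting $1$-form via Lemma~\ref{lem:wedge_sobolev}.'' But $\mathbf{H}$ is an integral operator in the $u_{m_a}$ direction, so $\mathbf{H}(F\omega)\neq F\,\mathbf{H}(\omega)$: the coefficient $F$ is under the integral sign. The correct move is to estimate the integral directly, pulling out $\sup_{s}|\nabla^{j_1}F(s,u_{m_a^\perp})|\leq C\hbar^{-(t+j_1)/2}$ and using the Gaussian to bound the remaining integral; this yields the same exponents, but Lemma~\ref{lem:wedge_sobolev} on its own is not the mechanism. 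Second, in the $\flat$ case you count the $\hbar$ twice: you place it in the coefficient (to get $\hbar A\beta\in\mathcal{W}_0^{s+r-2}$) and also write $\mathbf{H}(\hbar\,\partial_q(\delta_m\wedge\delta_{m'}))$. Only one $\hbar$ is present in $\{\cdot,\cdot\}_{\flat}$. Moreover, the output of $\mathbf{H}(\partial_q\delta_m\wedge\delta_{m'})$ is not of the form $g(\hbar,u_{m_a})\,\delta_{m_a}$ with $g\in\mathcal{W}_0^{1}(U)$ depending only on $u_{m_a}$: after splitting $u_{m^\perp}=c_1u_{m_a}+c_2u_{m_a^\perp}$, the $u_{m_a^\perp}$-piece produces an extra transverse polynomial factor $u_{m_a^\perp}\delta_{m_a}$, so the precise conclusion should instead be stated as $\mathbf{H}(\partial_q\delta_m\wedge\delta_{m'})\in\mathcal{W}_{\mathfrak{d}_{m_a}}^{2}(U)$ (and correspondingly $\mathbf{H}(\hbar\,\partial_q\delta_m\wedge\delta_{m'})\in\mathcal{W}_{\mathfrak{d}_{m_a}}^{0}(U)$). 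With either the coefficient carrying the $\hbar$ or the form carrying it, not both, the count closes to $\mathcal{W}_{\mathfrak{d}_{m+m'}}^{s+r}$. These are presentation slips and an over-estimate that cancel, not a wrong idea, but a clean write-up should fix them.
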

\begin{remark}
The homotopy operators $H$ and $\mathbf{H}$ are different. However it is not a problem because the operator $H$ produce a solution of Maurer-Cartan and not of equation \eqref{eq:Phi}.\footnote{Recall that we were looking for a solution $\Phi$ of Maurer-Cartan of the form $\Phi=\bar{\Pi}+\Xi$ and since $d_W(\bar{\Pi})=0$, the correction term $\Xi$ is a solution of $d_W\Xi=-\frac{1}{2}\lbrace\Phi,\Phi\rbrace_\sim$. At this point we have introduced the homotopy operator $\mathbf{H}$ in order to compute $\Xi$ and we got $\Xi=-\frac{1}{2}\mathbf{H}(\lbrace\Phi,\Phi\rbrace_\sim)$.}     
\end{remark}
\begin{proof}{[Theorem \ref{thm:asymptotic_gauge}]}
First of all recall that for every $s\geq 0$
\begin{equation}
\varphi_a^{(s+1)}=-H\left(\tilde{\Phi}_a+\sum_{k\geq 1}\frac{\mathsf{ad}^k_{\varphi_a^s}}{k!}d_W\varphi_a^s\right)^{(s+1)}
\end{equation}
where $H$ is the homotopy operator defined in \eqref{def:homotopy_1wall} with the same choice of the path $\varrho$ as in \eqref{varro}. In addition as in the proof of Proposition \ref{prop:asymp1},
\begin{equation}
-H\left(\sum_{k\geq 1}\frac{\mathsf{ad}_{\varphi^s_a}^k}{k!}d_W\varphi_a^s\right)^{(s+1)}\in\bigoplus_{l\geq 1}\mathcal{W}_{0}^0(\tilde{A}_0,\End E\oplus TM)\mathfrak{w}^{lm_a}
\end{equation}
hence we are left to study the asymptotic of $H(\tilde{\Phi}_a^{(s+1)})$. By definition $\tilde{\Phi}_a^{(s+1)}$ is the sum over all $k\leq s$-trees such that they have outgoing vertex with label $m_T=lm_a$ for some $l\geq 1$. 

We claim the following:
\begin{equation}
H(\tilde{\Phi}_a^{(s+1)})\in H(\nu_{T_\natural})+\bigoplus_{l\geq 1}\mathcal{W}_{0}^r(\tilde{A}_0,\End E\oplus TM)\mathfrak{w}^{lm_a}
\end{equation}
for every $T\in\mathbb{LRT}_{k,0}$ such that $\mathfrak{t}_{k,T}(\bar{\Pi},\cdots ,\bar{\Pi})$ has Fourier mode $m_T=lm_a$, for every $k\leq s$ and for some $r\leq 0$. Indeed if $k=1$ the tree has only one root and there is nothing to prove because there is no label. In particular 
\[H(\nu_{T})=H(\bar{\Pi}^{(s+1)})=H(\tilde{\Phi}_{(1,0)}^{(s+1)}+\tilde{\Phi}_{(0,1)}^{(s+1)})\]
and we will explicitly compute it below. Then at $k\geq 2$, every tree can be considered as a $2$-tree where the incoming edges are the roots of two sub-trees $T_1$ and $T_2$, not necessary in $\mathbb{LRT}_{0}$, such that their outgoing vertices look like 
\[
\begin{split}
\nu_{T_1}&=(A_k\delta_{m_{a'}}\mathfrak{w}^{km_{a'}},\alpha_k\delta_{m_{a'}}\mathfrak{w}^{km_{a'}}\partial_{n_{a'}})\in\bigoplus_{k\geq 1}\mathcal{W}_{\mathfrak{d}_{m_{a'}}}^{r'}(\tilde{A}_0,\End E\oplus TM)\mathfrak{w}^{km_{a'}}\\
\nu_{T_2}&=(B_{k''}\delta_{m_{a''}}\mathfrak{w}^{k''m_{a''}},\beta_{k''}\delta_{m_{a''}}\mathfrak{w}^{k''m_{a''}}\partial_{n_{a''}})\in\bigoplus_{k''\geq 1}\mathcal{W}_{\mathfrak{d}_{m_{a''}}}^{r''}(\tilde{A}_0,\End E\oplus TM)\mathfrak{w}^{k''m_{a''}}
\end{split}
\]
where $k'm_{a'}+k''m_{a''}=lm_a$. Thus it is enough to prove the claim for a $2$-tree with ingoing vertex $\nu_{T_1}$ and $\nu_{T_2}$ as above. If $T\in\mathbb{LRT}_2$, then $\nu_T=\nu_{T_\natural}+\nu_{T_\flat}+\nu_{T_\sharp}+\nu_{T_\star}
$ and we explicitly compute $H(\nu_{T_\flat}), H(\nu_{T_\sharp})$ and $H(\nu_{T_\star})$. 
\begin{equation*}
\begin{split}
H(\nu_{T_\flat})&=-\frac{1}{2}H\left(\mathbf{H}\left(\lbrace\nu_{T_1},\nu_{T_2}\rbrace_\flat\right)\right)\\
&=-\frac{1}{2}H\Big(\mathbf{H}\Big(\lbrace(A_k\delta_{m_{a'}}\mathfrak{w}^{km_{a'}},\alpha_k\delta_{m_{a'}}\mathfrak{w}^{km_{a'}}\delta_{n_{a'}}), (B_{k''}\delta_{m_{a''}}\mathfrak{w}^{k''m_{a''}},\beta_{k''}\delta_{m_{a''}}\mathfrak{w}^{k''m_{a''}}\partial_{n_{a''}})\rbrace_{\flat}\Big)\Big)\\
&=-\frac{1}{2}i\hbar H(\mathbf{H}\Big(\beta_{k''}n_2^q\frac{\partial}{\partial x_q}(A_k\delta_{m_{a''}})\wedge\delta_{m_{a'}}\mathfrak{w}^{km_{a'}+k''m_{a''}},\\
&\quad\beta_{k''}n_2^q\delta_{m_{a''}}\wedge\frac{\partial}{\partial x_q}\left(\alpha_k\delta_{m_{a'}}\right)\mathfrak{w}^{km_{a'}+k''m_{a''}}\partial_{n_1}\Big))\\
&=\frac{1}{2}i\hbar H(\Big(\left(\int_0^{u_{m_a}}\beta_{k''}n_2^q\frac{\partial A_k}{\partial x_q}\frac{e^{-\frac{s^2}{\hbar}}}{\sqrt{\pi\hbar}}ds\right)\delta_{m_a}\mathfrak{w}^{lm_a}, \\
&\quad\left(\int_0^{u_{m_a}}\beta_{k''}\frac{\partial\alpha_k}{\partial x_q}n_2^q\frac{e^{-\frac{s^2}{\hbar}}}{\sqrt{\pi\hbar}}ds\right)\delta_{m_a}\mathfrak{w}^{km_{a'}+k''m_{a''}}\partial_{n_{a'}}\Big))+\\
&\quad+\frac{1}{2}i\hbar H(\left(\int_0^{u_{m_a}}\frac{e^{-\frac{s^2}{\hbar}}}{\sqrt{\pi\hbar}}\left(\beta_{k''}n_2^qA_k, \beta_{k''}n_2^q\partial_{n_{a'}}\right)\frac{2\gamma_q(s)}{\hbar}ds\right)\delta_{m_a}\mathfrak{w}^{lm_a})\\
&\in\bigoplus_{l\geq 1}\mathcal{W}_{0}^{r'+r''-2}(\tilde{A}_0)\mathfrak{w}^{lm_a}+\mathcal{W}_{0}^{r'+r''-1}(\tilde{A}_0)\mathfrak{w}^{lm_a}
\end{split}
\end{equation*}
where we assume $k'm_{a'}+k''m_{a''}=lm_a$ and in the last step we use Lemma \ref{lem:H(W^s)} to compute the asymptotic behaviour of $H(\delta_{m_a})$.
We denote by $\gamma^q(s)$ the coordinates $u_{m_a^\perp}$ written as functions of $x^q(s)$. In particular, since $\tilde{\Phi}_{(1,0)}^{(1)}\in\bigoplus_{k_1\geq 1}\mathcal{W}_{\mathfrak{d}_{m_1}}^1(\tilde{A}_0,\End E\oplus TM)\mathfrak{w}^{k_1m_1}$ and $\tilde{\Phi}_{(0,1)}^{(1)}\in\bigoplus_{k_2\geq 1}\mathcal{W}_{\mathfrak{d}_{m_2}}^1(\tilde{A}_0,\End E\oplus TM)\mathfrak{w}^{k_2m_2}$, we have $H(\tilde{\Phi}_{(a_1,a_2)}^{(2)})\in\bigoplus_{l\geq 1}\mathcal{W}_{0}^0\mathfrak{w}^{lm_a}$. 
The same holds true for $H(\nu_{T_\sharp})$ by permuting $A,\alpha$ and $B,\beta$.

Then we compute the behaviour of $H(\nu_{T_\star})$:
\begin{equation*}
\begin{split}
H(\nu_{T_\star})&=-\frac{1}{2}H\left(\mathbf{H}\left(\lbrace\nu_{T_1},\nu_{T_2}\rbrace_\star\right)\right)\\
&=H\left(\mathbf{H}\left(\lbrace (A_k\delta_{m_{a'}}\mathfrak{w}^{km_{a'}},\alpha_k\delta_{m_{a'}}\mathfrak{w}^{km_{a'}}\delta_{n_{a'}}), (B_{k''}\delta_{m_{a''}}\mathfrak{w}^{k''m_{a''}},\beta_{k''}\delta_{m_{a''}}\mathfrak{w}^{k''m_{a''}}\partial_{n_{a''}})\rbrace_{\star}\right)\right)\\
&=i\hbar H( \mathbf{H}\left((\alpha_kn_1^qB_{k''}A_q(\phi)\delta_{m_{a''}}\wedge\delta_{m_{a'}}-n_2^q\beta_{k''}A_q(\phi)A_k\delta_{m_{a'}}\wedge\delta_{m_{a''}}, 0)\right))\\
&=i\hbar H\Bigg(\Big(\Bigg(\int_0^{u_{m_{a}}}\alpha_kn_1^qB_{k''}A_q(\phi)\frac{e^{-\frac{s^2}{\hbar}}}{\sqrt{\pi\hbar}}ds-\int_0^{u_{m_{a}}}n_2^q\beta_{k''}A_q(\phi)A_k\frac{e^{-\frac{s^2}{\hbar}}}{\sqrt{\pi\hbar}}ds \Bigg)\delta_{m_a}\mathfrak{w}^{lm_a}, 0\Big)\Bigg)\\
&\in\bigoplus_{l\geq 1}\mathcal{W}_{\mathfrak{d}_{m_{a}}}^{r'+r''-2}(\tilde{A}_0)\mathfrak{w}^{lm_a}
\end{split}
\end{equation*}
where we denote by $m_a$ the primitive vector such that for some $l\geq1$ $lm_a=k'm_{a'}+k''m_{a''}$. 
Finally let us compute $H(\nu_{T_\natural})$: at $k=1$ there is only a $1$-tree, hence $H(\nu_{T_\natural})=H(\tilde{\Phi}_{(1,0)}^{(s+1)}+\tilde{\Phi}_{(0,1)}^{(s+1)})$ and for all $k_1,k_2\geq 1$
\begin{equation*}
\begin{split}
H(\tilde{\Phi}_{(1,0)}^{(s+1)})&\in(A_{s+1,k_1}t^{s+1}, a_{s+1,k_1}t^{s+1}\partial_{n_1})H(\delta_{m_1}\mathfrak{w}^{k_1m_1})+\mathcal{W}_{\mathfrak{d}_{m_1}}^0(\tilde{A}_0,\End E\oplus TM)\mathfrak{w}^{k_1m_1} \\
&\in (A_{s+1,k_1}t^{s+1}, a_{s+1,k_1}t^{s+1}\partial_{n_1})\mathfrak{w}^{k_1m_1}+\mathcal{W}_{\mathfrak{d}_{m_1}}^{-1}(\tilde{A}_0\End E\oplus TM)\mathfrak{w}^{k_1m_1}\\
& \\
H(\tilde{\Phi}_{(0,1)}^{(s+1)})&\in (A_{s+1,k_2}t, a_{s+1,k_2}t\partial_{n_2})H(\delta_{m_2}\mathfrak{w}^{k_2m_2})+\mathcal{W}_{\mathfrak{d}_{m_2}}^{-1}(\tilde{A}_0,\End E\oplus TM)\mathfrak{w}^{k_2m_2}\\
&\in(A_{s+1,k_2}t^{s+1}, a_{s+1,k_2}t^{s+1}\partial_{n_2})\mathfrak{w}^{k_2m_2}+\mathcal{W}_{\mathfrak{d}_{m_2}}^{-1}(\tilde{A}_0,\End E\oplus TM)\mathfrak{w}^{k_2m_2}
\end{split}
\end{equation*} 
Then every other $k$-tree ($k\leq s$) can be decomposed in two sub-trees $T_1$ and $T_2$ as above, and we can further assume $T_1, T_2\in\mathbb{LRT}_0$, because if either $T_1$ or $T_2$ contains at least a label different from $\natural$ then by Lemma \ref{lem:bracket natural} their asymptotic behaviour is of higher order in $\hbar$.
We explicitly compute $H(\nu_{T_\natural})$ at $s=1$: 
\begin{equation*}
\begin{split}
H(\nu_{T_\natural})&=H(-\frac{1}{2}\mathbf{H}\left(\lbrace\bar{\Pi}^{(1)},\bar{\Pi}^{(1)}\rbrace_\natural\right))\\
&=-\frac{1}{2}H\left(\mathbf{H}\left(\lbrace\bar{\Pi}_1^{(1)},\bar{\Pi}_2^{(1)}\rbrace_{\natural}+\lbrace\bar{\Pi}_2^{(1)},\bar{\Pi}_1^{(1)}\rbrace_{\natural}\right)\right)\\
&=-H\Big(\mathbf{H}\Big(\big(a_{1,k_1}A_{1,k_2}\langle n_1,k_2m_2\rangle- a_{1,k_2}A_{1,k_1}\langle n_2,k_1m_1\rangle+ [A_{1,k_1},A_{1,k_2}],\\
&\quad a_{1,k_1}a_{1,k_2}\partial_{\langle n_1,k_2m_2\rangle n_2+\langle n_2,k_1m_1\rangle n_1}\big)t^2\delta_{m_1}\wedge\delta_{m_2}\mathfrak{w}^{k_1m_1+k_2m_2})\Big)\Big)
\end{split}
\end{equation*}

\begin{equation*}
\begin{split}
&=-H\Big(\Big(a_{1,k_1}A_{1,k_2}\langle n_1,k_2m_2\rangle- a_{1,k_2}A_{1,k_1}\langle n_2,k_1m_1\rangle+ [A_{1,k_1},A_{1,k_2}],\\
&a_{1,k_1}a_{1,k_2}\partial_{\langle n_1,k_2m_2\rangle n_2+\langle n_2,k_1m_1\rangle n_1}\Big)t^2\left(\int_0^{u_{m_a}}\frac{e^{-\frac{s^2}{\hbar}}}{\sqrt{\hbar\pi}}ds\right)\delta_{m_a}\mathfrak{w}^{lm_a}\Big)\\
&=-\Big(a_{1,k_1}A_{1,k_2}\langle n_1,k_2m_2\rangle- a_{1,k_2}A_{1,k_1}\langle n_2,k_1m_1\rangle+ [A_{1,k_1},A_{1,k_2}],\\
&\quad a_{1,k_1}a_{1,k_2}\partial_{\langle n_1,k_2m_2\rangle n_2+\langle n_2,k_1m_1\rangle n_1}\Big)t^2 H\left(\left(\int_0^{u_{m_a}}\frac{e^{-\frac{s^2}{\hbar}}}{\sqrt{\hbar\pi}}ds\right)\delta_{m_a}\mathfrak{w}^{lm_a}\right)\\
&=-\Big(a_{1,k_1}A_{1,k_2}\langle n_1,k_2m_2\rangle- a_{1,k_2}A_{1,k_1}\langle n_2,k_1m_1\rangle+ [A_{1,k_1},A_{1,k_2}],\\
&\quad a_{1,k_1}a_{1,k_2}\partial_{\langle n_1,k_2m_2\rangle n_2+\langle n_2,k_1m_1\rangle n_1}\Big)t^2 \mathfrak{w}^{lm_a}H_{lm_a}\left(\left(\int_0^{u_{m_a}}\frac{e^{-\frac{s^2}{\hbar}}}{\sqrt{\hbar\pi}}ds\right)\delta_{m_a}\right)
\end{split}
\end{equation*}
Now 
\begin{equation*}
\mathfrak{w}^{lm_a}H_{lm_a}\left(\left(\int_0^{u_{m_a}}\frac{e^{-\frac{s^2}{\hbar}}}{\sqrt{\hbar\pi}}ds\right)\delta_{m_a}\right)\in\mathfrak{w}^{lm_a}+\mathcal{W}_0^0(\tilde{A}_0)\mathfrak{w}^{lm_a}
\end{equation*}
Therefore the leading order term of $H(\nu_{T_\natural})$ with labels $m_T=lm_a=k_1m_1+k_2m_2$ at $s=1$ is
\[
\big(a_{1,k_1}A_{1,k_2}\langle n_1,k_2m_2\rangle- a_{1,k_1}A_{1,k_2}\langle n_2,k_1m_1\rangle+ [A_{1,k_1},A_{1,k_2}],a_{1,k_1}a_{1,k_2}\partial_{\langle n_1,k_2m_2\rangle n_2+\langle n_2,k_1m_1\rangle n_1}\big)t^2\mathfrak{w}^{lm_a}
\]
At $s\geq 2$, every other $k$-tree $T$ ($k\leq s$) can be decomposed in two sub-trees, say $T_1$ and $T_2$ such that $\nu_{T_\natural}=-\mathbf{H}(\lbrace\nu_{T_{1,\natural}},\nu_{T_{2,\natural}}\rbrace_\natural)+\bigoplus_{l\geq  1}\mathcal{W}_{\mathfrak{d}_{m_a}}^1(\tilde{A}_0)\mathfrak{w}^{lm_a}$. 

Notice that the leading order term of $H\left(\mathbf{H}(\lbrace\bar{\Pi}_1^{(1)},\bar{\Pi}_2^{(1)}\rbrace_{\natural})\right)$ is the Lie bracket of \[\lbrace(A_{1,k_1}\mathfrak{w}^{k_1m_1}, a_{1,k_1}\mathfrak{w}^{k_1m_1}\partial_{n_1}),(A_{1,k_2}\mathfrak{w}^{k_2m_2},a_{1,k_2}\mathfrak{w}^{k_2m_2}\partial_{n_2})\rbrace_{\tilde{\mathfrak{h}}}\] hence the leading order term of $H(\nu_{T_\natural})$ belongs to $\tilde{\mathfrak{h}}$.  

\end{proof}

Notice that at any order in the formal parameter $t$, there are only a finite number of terms which contribute to the solution $\Phi$ in the sum \eqref{def:Phitree}, hence we define the set $\mathbb{W}(N)$ as
\begin{equation}
\mathbb{W}(N)\defeq\lbrace a\in\left(\Z^2_{\geq 0}\right)_{\text{prim}}\vert lm_a=m_T \text{ for some } l\geq 0 \text{and } T\in\mathbb{LRT}_k \text{with } 1\leq j_T\leq N\rbrace.
\end{equation}
\begin{definition}[Scattering diagram $\mathfrak{D}^{\infty}$]\label{def:D_infty}
The order $N$ scattering diagram $\mathfrak{D}_N$ associated to the solution $\Phi$ is \[\mathfrak{D}_N\defeq\big\lbrace\mathsf{w}_1, \mathsf{w}_2\big\rbrace\cup\big\lbrace\mathsf{w}_a=\big(m_a, \mathfrak{d}_{m_a}, \theta_a\big)\big\rbrace_{a\in\mathbb{W}(N)}\] where
\begin{itemize}
    \item $m_a=a_1m_1+a_2m_2$;
    \item $\mathfrak{d}_{m_a}=\xi_0+m_a\R_{\geq 0}$
    \item $\log(\theta_a)$ is the leading order term of the unique gauge $\varphi_a$, as computed in Theorem \eqref{thm:asymptotic_gauge}.
\end{itemize}
The scattering diagram $\mathfrak{D}^{\infty}\defeq\varinjlim_N\mathfrak{D}_N$.
\end{definition}

\subsubsection{Consistency of  $\,\,\mathfrak{D}^{\infty}$}
We are left to prove consistency of the scattering diagram $\mathfrak{D}^{\infty}$ associated to the solution $\Phi$. In order to do that we are going to use a monodromy argument (the same approach was used in \cite{MCscattering}).

Let us define the following regions
\begin{align}
&\tilde{\mathbb{A}}\defeq\lbrace(r,\vartheta)\vert \vartheta_0-\epsilon_0+2\pi<\vartheta<\vartheta_0+2\pi\rbrace\\
&\tilde{\mathbb{A}}-2\pi\defeq\lbrace(r,\vartheta)\vert \vartheta_0-\epsilon_0<\vartheta<\vartheta_0\rbrace.
\end{align}  
for small enough $\epsilon_0>0$, such that $\tilde{\mathbb{A}}-2\pi$ is away from all possible walls in $\mathfrak{D}^{\infty}$.

\begin{theorem}\label{thm:consistentD}
Let $\mathfrak{D}^{\infty}$ be the scattering diagram defined in \eqref{def:D_infty}. Then it is consistent, i.e. $\Theta_{\mathfrak{D}^{\infty}, \gamma}=\text{Id}$ for any closed path $\gamma$ embedded in $U\setminus\lbrace \xi_0\rbrace$, which intersects $\mathfrak{D}^{\infty}$ generically. 
\end{theorem}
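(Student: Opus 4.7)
The proof will follow a monodromy argument on the universal cover $\varpi\colon\tilde{A}\to A=U\setminus\{\xi_0\}$. Since the path-ordered product $\Theta_{\mathfrak{D}^\infty,\gamma}$ depends only on the homotopy class of $\gamma$, it suffices to treat a simple loop winding once around $\xi_0$; I would lift it to a path $\tilde{\gamma}$ in $\tilde{A}$ connecting $\tilde{q}_0\in\tilde{\mathbb{A}}-2\pi$ to its deck translate $\tilde{q}_0+2\pi\in\tilde{\mathbb{A}}$, crossing each wall of $\mathfrak{D}^\infty$ exactly once in a definite cyclic order $\mathsf{w}_{a_1},\dots,\mathsf{w}_{a_N}$. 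The goal then becomes showing that the Baker--Campbell--Hausdorff product $\log\theta_{a_1}\bullet\cdots\bullet\log\theta_{a_N}$ vanishes in $\tilde{\mathfrak{h}}$.

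The main object will be a single global gauge $\tilde{\Psi}\in\Omega^0(\tilde{A}_0,\End E\oplus TM)[\![t]\!]$ satisfying $e^{\tilde{\Psi}}\ast 0=\tilde{\Phi}$ together with the normalisation $p(\tilde{\Psi})=0$ at $\tilde{q}_0$. Existence and uniqueness follow exactly as in Lemma \ref{lem:uniq} and equation \eqref{eq:sol_gauge}, using the pullback of the homotopy operator $H$ based at $\tilde{q}_0$. I plan to compute its leading asymptotics on the two regions $\tilde{\mathbb{A}}-2\pi$ and $\tilde{\mathbb{A}}$ in two independent ways. Because $\epsilon_0$ is chosen so that both regions lie away from every wall of $\mathfrak{D}^\infty$, each Gaussian $\delta_{m_{a_i}}$ entering $\tilde{\Phi}$ is exponentially small there; the integral equation for $\tilde{\Psi}$ then forces $\tilde{\Psi}|_{\tilde{\mathbb{A}}-2\pi}\in\bigoplus_l\mathcal{W}_0^{-\infty}(\tilde{\mathbb{A}}-2\pi,\End E\oplus TM)\mathfrak{w}^{lm_a}$. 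For $\tilde{\Psi}|_{\tilde{\mathbb{A}}}$ I would combine the labelled ribbon tree expansion \eqref{def:Phitree} with the jumping behaviour of each $\tilde{\varphi}_a$ established in Theorem \ref{thm:asymptotic_gauge}; arguing inductively along the walls crossed by $\tilde{\gamma}$, the leading order of $\tilde{\Psi}$ accumulates successively each $\log\theta_{a_i}$ via BCH, yielding $\tilde{\Psi}|_{\tilde{\mathbb{A}}}=\log\Theta_{\mathfrak{D}^\infty,\gamma}$ modulo terms of lower asymptotic order in $\hbar$.

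The closing step is to combine these two computations with the $2\pi$-invariance of $\tilde{\Phi}$ as a pullback of $\Phi$ from $A$. If $T_{2\pi}$ denotes the generating deck transformation, then $T_{2\pi}^{\ast}\tilde{\Phi}|_{\tilde{\mathbb{A}}-2\pi}=\tilde{\Phi}|_{\tilde{\mathbb{A}}}$, so the translate of $\tilde{\Psi}|_{\tilde{\mathbb{A}}-2\pi}$ along $T_{2\pi}$ is a second gauge on $\tilde{\mathbb{A}}$ sending $0$ to $\tilde{\Phi}|_{\tilde{\mathbb{A}}}$. The BCH-difference of the two gauges is then an infinitesimal symmetry of the trivial solution, hence $d_W$-closed; by the previous step it equals $\log\Theta_{\mathfrak{D}^\infty,\gamma}$ up to elements in $\bigoplus_l\mathcal{W}_0^{-1}$. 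However, a non-vanishing constant-coefficient element of $\tilde{\mathfrak{h}}$ at Fourier mode $m\neq 0$ is never $d_W$-closed: via the Fourier isomorphism $\FF$ such an element corresponds to a Dolbeault monomial proportional to $e^{2\pi i(m,z)}$, whose $\bar\partial_{\mathbf{A}(E)}$-image is proportional to $(m,d\bar z)\,e^{2\pi i(m,z)}\neq 0$. Sending $\hbar\to 0$ separates the orders and forces every Fourier-mode component of $\log\Theta_{\mathfrak{D}^\infty,\gamma}$ to vanish, giving $\Theta_{\mathfrak{D}^\infty,\gamma}=\text{Id}$ as required.

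The principal obstacle will be making the inductive identification $\tilde{\Psi}|_{\tilde{\mathbb{A}}}\sim\log\theta_{a_1}\bullet\cdots\bullet\log\theta_{a_N}$ rigorous. The subtlety is that the global gauge $\tilde{\Psi}$ is not literally the BCH product of the individual $\tilde{\varphi}_{a_i}$, so one must trace through the labelled ribbon tree sum \eqref{def:Phitree} and invoke Lemma \ref{lem:bracket natural} to argue that only the all-$\natural$ labelled trees survive the $\hbar\to 0$ limit, and then verify that their contributions -- summed over the cyclic ordering dictated by $\tilde{\gamma}$ -- reassemble precisely the desired BCH product. This mirrors the inductive step in the proof of Theorem \ref{thm:asymptotic_gauge}, but has to be carried out for the global gauge $\tilde{\Psi}$ rather than mode-by-mode for the individual $\tilde{\varphi}_a$.
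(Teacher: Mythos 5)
Your overall architecture — lift to the universal cover $\tilde A$, compare the gauge on $\tilde{\mathbb{A}}-2\pi$ (where everything is exponentially small) with the gauge on $\tilde{\mathbb{A}}$ (where the walls have already been crossed), and invoke the fact that the global gauge descends to $U$ — is correct in spirit and matches the structure of the paper's argument. However there are two substantive problems.

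First, you flag the identification $\tilde\Psi|_{\tilde{\mathbb{A}}}\sim\log\theta_{a_1}\bullet\cdots\bullet\log\theta_{a_N}$ as the principal obstacle and propose to extract it by re-expanding $\tilde\Psi$ in labelled ribbon trees. This is avoidable, and avoiding it is really the point of the paper's proof. The paper never computes the asymptotics of $\tilde\Psi=\varpi^*\varphi$ on $\tilde{\mathbb{A}}$ directly. Instead it observes that the ordered composition $\prod^{\gamma}_a e^{\varphi_a}$ of the individual wall-gauges already sends $0$ to $\tilde\Phi$: the gauge-action formula gives $e^{\varphi_a}\ast\tilde\Phi_{a'}=\tilde\Phi_{a'}+\tilde\Phi_a$ once the cross-term $\lbrace\varphi_a,\tilde\Phi_{a'}\rbrace_\sim$ is killed (Lemma \ref{lem:Phi_aMC}), and iterating gives $\prod^{\gamma}_a e^{\varphi_a}\ast 0=\sum_a\tilde\Phi_a=\tilde\Phi$. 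Since $e^{\varpi^*\varphi}\ast 0=\tilde\Phi$ as well and both sides satisfy $p(\cdot)=0$, the uniqueness of the normalised gauge (Lemma \ref{lem:uniq}) forces $e^{\varpi^*\varphi}=\prod^{\gamma}_a e^{\varphi_a}$. At that point the BCH product of the $\log\theta_a$ is read off as the leading $\hbar$-order on $\tilde{\mathbb{A}}$ directly from Theorem \ref{thm:asymptotic_gauge}, with no ribbon-tree re-derivation needed. Your route is not wrong in principle, but you have not explained how the nonlinear relation $e^{\tilde\Psi}\ast 0=\tilde\Phi$ would actually convert the ribbon-tree expansion of $\tilde\Phi$ into a BCH expansion of $\tilde\Psi$; this is precisely the step the paper's factorisation trick sidesteps.

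Second, the closing step has a genuine error. You claim that a constant-coefficient element of $\tilde{\mathfrak h}$ at Fourier mode $m\neq 0$ is never $d_W$-closed because $\bar\partial_{\mathbf A(E)}\big(e^{2\pi i(m,z)}\cdot c\big)$ is proportional to $(m,d\bar z)\,e^{2\pi i(m,z)}$. This is false: $z_j=\check y_j+i\hbar x_j$ are holomorphic coordinates for $J_\hbar$, so $e^{2\pi i(m,z)}$ is holomorphic and $\bar\partial e^{2\pi i(m,z)}=0$; you have written down $d$ rather than $\bar\partial$. Elements $\mathfrak w^m(\text{const})\in\tilde{\mathfrak h}$ are always $d_W$-closed, so the $d_W$-closedness of the BCH difference gives you no information. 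Fortunately this last argument is also unnecessary: once you define $\tilde\Psi$ with $p(\tilde\Psi)=0$ at $\tilde q_0$, it coincides with $\varpi^*\varphi$, which is automatically invariant under the deck transformation $T_{2\pi}$; hence $\tilde\Psi|_{\tilde{\mathbb{A}}}=T_{2\pi,*}\big(\tilde\Psi|_{\tilde{\mathbb{A}}-2\pi}\big)$ exactly, the ``second gauge'' in your comparison is literally the same gauge, and its exponential smallness on $\tilde{\mathbb{A}}-2\pi$ transfers verbatim to $\tilde{\mathbb{A}}$. Replace the $d_W$-closedness argument with this deck-invariance observation and the conclusion follows.
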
 
\begin{proof}
It is enough to prove that $\mathfrak{D}_N$ is consistent for any fixed $N>0$. First of all recall that $\Theta_{\gamma,\mathfrak{D}_N}=\prod_{a\in\mathbb{W}(N)}^\gamma \theta_a $.  
Then let us prove that the following identity \begin{equation}\label{eq:step1}
\prod_{a\in\mathbb{W}(N)}^\gamma e^{\varphi_a}\ast 0=\sum_{a\in\mathbb{W}(N)}\tilde{\Phi}_{a}
\end{equation}
holds true. 
Indeed \[\left(e^{\varphi_a}\ast e^{\varphi_{a'}}\right)\ast 0=e^{\varphi_a}\ast\left(\tilde{\Phi}_{a'}\right)=\tilde{\Phi}_{a'}-\sum_k\frac{\textsf{ad}^k_{\varphi_a}}{k!}\left(d_W\varphi_a+\lbrace\varphi_a,\tilde{\Phi}_{a'}\rbrace_\sim\right).\]
For degree reason $\lbrace \varphi_a,\tilde{\Phi}_{a'}\rbrace_\sim=0$ and by definition \[-\sum_k\frac{\textsf{ad}^k_{\varphi_a}}{k!}\big(d_W\varphi_a\big)=e^{\varphi_a}\ast 0=\tilde{\Phi}_{a} .\] 
Iterating the same procedure for more than two rays, we get the result. 

Recall that if $\varphi$ is the unique gauge such that $p(\varphi)=0$ and $e^\varphi\ast0=\Phi$, then $e^{\varpi^*(\varphi)}\ast 0=\varpi^*(\Phi)$ on $\tilde{A}$. Hence
$e^{\varpi^*(\varphi)}\ast 0=\varpi^*(\Phi)=\sum_{a\in\mathbb{W}(N)}\varpi^*(\Phi_{a})=\sum_{a\in\mathbb{W}(N)}\tilde{\Phi}_{a}$ and by equation \eqref{eq:step1}
\[e^{\varpi^*(\varphi)}\ast 0=\prod_{a\in\mathbb{W}(N)}^\gamma e^{\varphi_a}\ast 0.\]
In particular, by uniqueness of the gauge, $e^{\varpi^*(\varphi)}=\prod_{a\in\mathbb{W}(N)}^\gamma e^{\varphi_a}.$ Since $\varpi^*(\varphi)$ is defined on all $U$, $e^{\varpi^*(\varphi)}$ is monodromy free, i.e.
\[\prod_{a\in\mathbb{W}(N)}^\gamma e^{\varphi_a}\vert_{\tilde{\mathbb{A}}}=\prod_{a\in\mathbb{W}(N)}^\gamma e^{\varphi_a}\vert_{\tilde{\mathbb{A}}-2\pi}.\]
Notice that $\tilde{\mathbb{A}}-2\pi$ does not contain the support of $\varphi_a$ $\forall a\in\left(\Z_{\geq 0}^2\right)_{\text{prim}}$, therefore \[\prod_{a\in\mathbb{W}(N)}^\gamma e^{\varphi_a}\vert_{\tilde{\mathbb{A}}-2\pi}=\prod_{a\in\mathbb{W}(N)}^\gamma e^{0}=\text{Id}.\]

\end{proof}
\chapter{Relation with the wall-crossing formulas in coupled $2d$-$4d$ systems}\label{sec:WCF}

We are going to show how wall-crossing formulas in coupled $2d$-$4d$ systems, introduced by Gaiotto, Moore and Nietzke in \cite{WCF2d-4d},  can be interpreted in the framework we were discussing before. Let us first recall the setting for the $2d$-$4d$ WCFs: 

\begin{itemize}
	\item let $\Gamma$ be a lattice, whose elements are denoted by $\gamma$;
	\item  define an antisymmetric bilinear form $\langle\cdot ,\cdot\rangle_D\colon\Gamma\times\Gamma\to\Z$, called the Dirac pairing;
	\item  let $\Omega\colon\Gamma\to\Z$ be a homomorphism;
	\item denote by $\mathcal{V}$ a finite set of indices, $\mathcal{V}=\lbrace i,j,k,\cdots\rbrace$;
    \item define a $\Gamma$-torsor $\Gamma_i$, for every $i\in\mathcal{V}$. Elements of $\Gamma_i$ are denoted by $\gamma_i$ and the action of $\Gamma$ on $\Gamma_i$ is $\gamma+\gamma_i=\gamma_i+\gamma$;
    \item define another $\Gamma$-torsor $\Gamma_{ij}\defeq\Gamma_i-\Gamma_j$ whose elements are formal differences $\gamma_{ij}\defeq\gamma_i-\gamma_j$ up to equivalence, i.e. $\gamma_{ij}=(\gamma_{i}+\gamma)-(\gamma_j+\gamma)$ for every $\gamma\in\Gamma$. If $i=j$, then $\Gamma_{ii}$ is identified with $\Gamma$. The action of $\Gamma$ on $\Gamma_{ij}$ is $\gamma_{ij}+\gamma=\gamma+\gamma_{ij}$. Usually it is not possible to sum elements of $\Gamma_{ij}$ and $\Gamma_{kl}$, for instance $\gamma_{ij}+\gamma_{kl}$ is well defined only if $j=k$ and in this case it is an element of $\Gamma_{il}$;
    \item let $Z\colon\Gamma\to\C$ be a homomorphism and define its extension as an additive map $Z\colon\amalg_{i\in\mathcal{V}}\Gamma_{i}\to\C$, such that $Z({\gamma+\gamma_i})=Z(\gamma)+Z({\gamma_i})$. In particular, by additivity $Z$ is a map from $\amalg_{i,j\in\mathcal{V}}\Gamma_{ij}$ to $\C$, namely $Z(\gamma_{ij})=Z(\gamma_i)-Z(\gamma_j)$. The map $Z$ is usually called the central charge;
    \item let $\sigma(a,b)\in\lbrace \pm 1\rbrace$ be a \textit{twisting} function defined whenever $a+b$ is defined for $a,b\in\Gamma\sqcup\amalg_i\Gamma_i\sqcup\amalg_{i\neq j}\Gamma_{ij}$ and valued in $\lbrace\pm 1\rbrace$. Moreover it satisfies the following conditions:
\begin{equation}
\begin{split}
&\text{(i)}\quad \sigma(a,b+c)\sigma(b,c)=\sigma(a,b)\sigma(a+b,c) \\
&\text{(ii)}\quad {\sigma(a,b)=\sigma(b,a)} \text{ if both $a+b$ and $b+a$ are defined} \\
&\text{(iii)}\quad \sigma(\gamma,\gamma')=(-1)^{\langle\gamma,\gamma'\rangle_D}\,\,\forall\gamma,\gamma'\in\Gamma;
\end{split}
\end{equation}
    \item let $X_a$ denote formal variables, for every $a\in\Gamma\sqcup\amalg_i\Gamma_i\sqcup\amalg_{i\neq j}\Gamma_{ij}$. There is a notion of associative product: 
\[
X_{a}X_{b}\defeq\begin{cases}
\sigma(a,b)X_{a+b} & \text{if the sum $a+b$ is defined }\\
0 & \text{otherwise}
\end{cases}
\]
\end{itemize}

The previous data fit well in the definition of a pointed groupoid $\mathbb{G}$, as it is defined in \cite{WCF2d-4d}. In particular $\Ob(\mathbb{G})=\mathcal{V}\sqcup\lbrace o\rbrace$ and $\Mor(\mathbb{G})=\amalg_{i,j\in\Ob(\mathbb{G})}\Gamma_{ij}$, where the torsor $\Gamma_i$ is identified with $\Gamma_{io}$ and elements of $\Gamma$ are identified with $\amalg_i\Gamma_{ii}$. The composition of morphism is written as a sum, and the formal variables $X_a$ are elements of the groupoid ring $\C[\mathbb{G}]$. 
In this setting, BPS rays are defined as 
\[
\begin{split}
&l_{ij}\defeq Z(\gamma_{ij})\R_{>0}\\
&l\defeq Z(\gamma)\R_{>0}
\end{split}
\] and they are decorated with automorphisms of $\C[\mathbb{G}][\![ t ]\!]$ respectively of type $S$ and of type $K$, defined as follows: let $X_a\in\C[\mathbb{G}]$, then

\begin{equation}\label{eq:autS}
S_{\gamma_{ij}}^{\mu}(X_a)\defeq\big(1-\mu({\gamma_{ij}})tX_{\gamma_{ij}}\big)X_a\big(1+\mu(\gamma_{ij})tX_{\gamma_{ij}}\big)    
\end{equation}
where $\mu\colon\amalg_{i,j\in\mathcal{V}}\Gamma_{ij}\to\Z$ is a homomorphism;
\begin{equation}\label{eq:autK}
 K_{\gamma}^{\omega}(X_a)\defeq\big(1-X_{\gamma}t\big)^{-\omega(\gamma,a)}X_a   
\end{equation}
where $\omega\colon\Gamma\times\amalg_{i\in\mathcal{V}}\Gamma_i\to\Z$ is a homomorphism such that $\omega(\gamma,\gamma')=\Omega(\gamma)\langle\gamma,\gamma'\rangle_D$ and $\omega(\gamma,a+b)=\omega(\gamma,a)+\omega(\gamma,b)$ for $a,b\in\mathbb{G}$.

In particular under the previous assumption, the action of $S_{\gamma_{ij}}^\mu$ and $K_\gamma^\omega$ can be explicitly computed on variables $X_\gamma$ and $X_{\gamma_k}$ as follows:
\begin{equation}\label{eq:actionP}
\begin{split}
    S_{\gamma_{ij}}^\mu &\colon X_{\gamma'}\to X_{\gamma'}\\
    S_{\gamma_{ij}}^\mu &\colon X_{\gamma_k}\to\begin{cases}  X_{\gamma_k} & \text{if } k\neq j\\
    X_{\gamma_j}- \mu(\gamma_{ij})tX_{\gamma_{ij}}X_{\gamma_j} & \text{if } k=j
     \end{cases}\\
    K_\gamma^\omega &\colon X_{\gamma'}\to (1-tX_\gamma)^{-\omega(\gamma,\gamma')}X_{\gamma'}\\
    K_\gamma^\omega &\colon X_{\gamma_k}\to (1-tX_\gamma)^{-\omega(\gamma,\gamma_k)}X_{\gamma_k}
   \end{split}
\end{equation}

In order to interpret the automorphisms $S$ and $K$ as elements of $\exp(\tilde{\mathfrak{h}})$ we are going to introduce their infinitesimal generators. Let $\Der\left(\C[\mathbb{G}]\right)$ be the Lie algebra of the derivations of $\C[\mathbb{G}]$ and define:
\begin{equation}
    \mathfrak{d}_{\gamma_{ij}}\defeq \textsf{ad}_{X_{\gamma_{ij}}}
\end{equation}
where $\mathfrak{d}_{\gamma_{ij}}(X_a)\defeq \left( X_{\gamma_{ij}}X_a-X_aX_{\gamma_{ij}}\right)$, for every $X_a\in\C[\mathbb{G}]$; 
\begin{equation}
    \mathfrak{d}_\gamma\defeq \omega(\gamma,\cdot)X_\gamma
\end{equation}
where $\mathfrak{d}_\gamma(X_a)\defeq \left(\omega(\gamma,a)X_{\gamma}X_{a}\right)$, for every $X_a\in\C[\mathbb{G}]$. 
\begin{definition}
Let $\mathbf{L}_\Gamma$ be the $\C[\Gamma]$- module generated by $\mathfrak{d}_{\gamma_{ij}}$ and $\mathfrak{d}_\gamma$, for every $i\neq j\in\mathcal{V}$, $\gamma\in\Gamma$. 
\end{definition}
For instance a generic element of $\mathbf{L}_\Gamma$ is given by \[\sum_{i,j\in\mathcal{V}}\sum_{l\geq 1}c_l^{(\gamma_{ij})}X_{a_{l}}\mathfrak{d}_{\gamma_{ij}}+\sum_{\gamma\in\Gamma}\sum_{l\geq 1}c_l^{(\gamma)}X_{a_l}\mathfrak{d}_{\gamma}\] 
where $c_l^{(\bullet)}X_{a_l}\in\C[\Gamma]$.  
\begin{lemma}\label{lem:CGamma_module}
Let $\mathbf{L}_\Gamma$ be the $\C[\Gamma]$-module defined above. Then, it is a Lie ring\footnote{A Lie ring $\mathbf{L}_\Gamma$ is an abelian group $(L,+)$ with a bilinear form $[,]\colon:L\times L\to L$ such that 
\begin{enumerate}
\item $[a,b+c]=[a,b]+[a,c]$ and $[a+b,c]=[a,c]+[b,c]$;
\item $[\cdot,\cdot]$ is antisymmetric, i.e. $[a,b]=-[b,a]$;
\item $[\cdot,\cdot]$ satisfy the Jacobi identity. 
\end{enumerate}
} with the the Lie bracket $[\cdot, \cdot]_{\Der(\C[\mathbb{G}])}$ induced by $\Der(\C[\mathbb{G}])$\footnote{$\mathbf{L}_\Gamma$ is not a Lie algebra over $\C[\Gamma]$ because the bracket induced from $\Der(\C[\mathbb{G}])$ is not $\C[\Gamma]-$linear.}.  
\end{lemma}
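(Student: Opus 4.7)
My plan is to observe first that the three Lie ring axioms split into two very different tasks. Bilinearity (over $\Z$), antisymmetry, and the Jacobi identity all hold automatically: they follow simply from the fact that $\bigl(\Der(\C[\mathbb{G}]),[\cdot,\cdot]\bigr)$ is already a Lie $\C$-algebra and $\mathbf{L}_\Gamma$ is an abelian subgroup of it (being a $\C[\Gamma]$-submodule). So the only real content of the lemma is closure: I must show that $[\mathbf{L}_\Gamma,\mathbf{L}_\Gamma]_{\Der(\C[\mathbb{G}])}\subseteq \mathbf{L}_\Gamma$. By the Leibniz property of the commutator bracket on derivations and the $\C[\Gamma]$-module structure, this reduces to verifying that the commutator of any two generators, multiplied by arbitrary elements of $\C[\Gamma]$, lies in $\mathbf{L}_\Gamma$.

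Thus the core of the proof is a case analysis on three pairings of generators, computed using the formulas $\mathfrak{d}_\gamma=\omega(\gamma,\cdot)X_\gamma$ and $\mathfrak{d}_{\gamma_{ij}}=\ad_{X_{\gamma_{ij}}}$ together with the multiplicative rule $X_aX_b=\sigma(a,b)X_{a+b}$ whenever the sum is defined. The type $K$--$K$ case $[X_a\mathfrak{d}_\gamma,X_b\mathfrak{d}_{\gamma'}]$ I expect to simplify cleanly: both operators act as ``multiplication by a scalar depending on the target times $X_\gamma$'', so the commutator, evaluated on $X_c$, produces an expression of the form $\bigl(\omega(\gamma,b+c)\omega(\gamma',c)-\omega(\gamma',a+c)\omega(\gamma,c)\bigr)X_{a+\gamma+b+\gamma'}X_c$ up to signs; collecting powers of $X_c$ by bilinearity of $\omega$ shows this equals an element of $\C[\Gamma]\cdot\mathfrak{d}_{\gamma+\gamma'}$.

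For the type $S$--$S$ case $[X_a\mathfrak{d}_{\gamma_{ij}},X_b\mathfrak{d}_{\gamma_{kl}}]$ I will exploit that $X_{\gamma_{ij}}X_{\gamma_{kl}}$ vanishes unless $j=k$, in which case the result lies in the torsor $\Gamma_{il}$ (and equals an element of $\Gamma$ precisely when $i=l$). Writing the bracket out using the Jacobi-type identity $\ad_{[X,Y]}=[\ad_X,\ad_Y]$, I will obtain contributions of the form $X_{a+b}\mathfrak{d}_{\gamma_{il}}$ when the indices match in one order and similarly with $i,j,k,l$ permuted when they match in the other order, producing an expression in $\mathbf{L}_\Gamma$. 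The mixed $K$--$S$ case is analogous and in fact simpler, since $\mathfrak{d}_\gamma$ just rescales $X_{\gamma_{ij}}$ by $\omega(\gamma,\gamma_{ij})$ and adds an $X_\gamma$ factor, so the commutator becomes an element of $\C[\Gamma]\cdot\mathfrak{d}_{\gamma_{ij}}$.

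The main obstacle, and the reason to do this by direct calculation rather than by an abstract closure argument, is bookkeeping of the twisting signs $\sigma(\cdot,\cdot)$ and of the torsor arithmetic: at each step one must check that the relevant sums $a+b$, $\gamma_{ij}+\gamma_{kl}$ etc.\ are \emph{defined}, and that the cocycle identities (i)--(iii) on $\sigma$ ensure the expected cancellations between the two orderings in the commutator. Once these sign-tracking identities are in place, each bracket is manifestly in $\mathbf{L}_\Gamma$, and the lemma follows.
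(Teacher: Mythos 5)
Your proposal takes essentially the same route as the paper: both reduce the lemma to closure of the bracket (the other Lie-ring axioms being inherited from $\Der(\C[\mathbb{G}])$) and verify closure by the same three-case computation on generator pairings, using $\ad_{[X,Y]}=[\ad_X,\ad_Y]$ for the $S$--$S$ case and bilinearity of $\omega$ for the other two. A minor slip in your $K$--$K$ scalar (the inner $\mathfrak{d}_\gamma$ sees the factor $X_{\gamma'}$ as well, so the relevant argument is $b+\gamma'+c$ rather than $b+c$) does not change the outcome since you correctly collect terms by bilinearity of $\omega$.
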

\begin{proof}
It is enough to prove that $\mathbf{L}_\Gamma$ is a Lie sub-algebra of $\Der(\C[\mathbb{G}])$, i.e. it is closed under $[\cdot,\cdot]_{\Der(\C[\mathbb{G}])}$. By $\C$-linearity it is enough to prove the following claims:
\begin{itemize}
    \item[$(1)$] $[X_{\gamma}\mathfrak{d}_{\gamma_{ij}}, X_{\gamma'}\mathfrak{d}_{\gamma_{kl}}]\in \mathbf{L}_\Gamma$: indeed 
    \begin{equation*}
        \begin{split}
        [X_{\gamma}\mathfrak{d}_{\gamma_{ij}}, X_{\gamma'}\mathfrak{d}_{\gamma_{kl}}]&=X_{\gamma}\mathsf{ad}_{\gamma_{ij}}(X_\gamma'\mathsf{ad}_{\gamma_{kl}})-X_{\gamma'}\mathsf{ad}_{\gamma_{kl}}(X_\gamma\mathsf{ad}_{\gamma_{ij}})\\
        &=X_{\gamma}X_{\gamma'}\mathsf{ad}_{\gamma_{ij}}(\ad_{\gamma_{kl}})-X_{\gamma'}X_{\gamma}\mathsf{ad}_{\gamma_{kl}}(\mathsf{ad}_{\gamma_{ij}})\\
        &={\sigma(\gamma_{ij},\gamma_{kl})}X_{\gamma}X_{\gamma'}\ad_{\gamma_{ij}+\gamma_{kl}}-{\sigma(\gamma_{kl},\gamma_{ij})}X_{\gamma}X_{\gamma'}\ad_{\gamma_{kl}+\gamma_{ij}};
        \end{split}
    \end{equation*}
    \item[$(2)$] $[X_\gamma\mathfrak{d}_{\gamma_{ij}}, X_{\gamma'}\mathfrak{d}_{\gamma''}]\in\mathbf{L}_{\Gamma}$:indeed for every $X_a\in\C[\mathbb{G}]$
    \begin{equation*}
        \begin{split}
        [X_\gamma\mathfrak{d}_{\gamma_{ij}}, X_{\gamma'}\mathfrak{d}_{\gamma''}]X_a&=X_\gamma\mathfrak{d}_{\gamma_{ij}}\left(X_{\gamma'}\mathfrak{d}_{\gamma''}X_a\right)-X_{\gamma'}\mathfrak{d}_{\gamma''}\left(X_\gamma\mathfrak{d}_{\gamma_{ij}}X_a\right)\\
        &=X_{\gamma}\mathfrak{d}_{\gamma_{ij}}\left(X_{\gamma'}\omega(\gamma'',a)X_{\gamma''}X_{a}\right)-X_{\gamma'}\mathfrak{d}_{\gamma''}\left(X_{\gamma}X_{\gamma_{ij}}X_{a}-X_{\gamma}X_{a}X_{\gamma_{ij}}\right)\\
        &=\omega(\gamma'',a)X_{\gamma}\left(X_{\gamma_{ij}}X_{\gamma'}X_{\gamma''}X_{a}-X_{\gamma'}X_{\gamma''}X_{a}X_{\gamma_{ij}}\right)\\
        &-X_{\gamma'}\omega(\gamma'',\gamma+\gamma_{ij}+a)X_{\gamma''}X_{\gamma}X_{\gamma_{ij}}X_{a}+X_{\gamma'}\omega(\gamma'',\gamma+a+\gamma_{ij})X_{\gamma''}X_{\gamma}X_{a}X_{\gamma_{ij}}\\
        &=\left(\omega(\gamma'',a)-\omega(\gamma'',\gamma+\gamma_{ij})-\omega(\gamma'',a)\right)X_{\gamma}X_{\gamma'}X_{\gamma''}X_{\gamma_{ij}}X_{a}\\
        &-\left(\omega(\gamma'',a)-\omega(\gamma'',a)-\omega(\gamma'',\gamma+\gamma_{ij})\right)X_{\gamma}X_{\gamma'}X_{\gamma''}X_{a}X_{\gamma_{ij}}\\
        &=-\omega(\gamma'',\gamma+\gamma_{ij})X_{\gamma}X_{\gamma'}X_{\gamma''}\mathfrak{d}_{\gamma_{ij}}(X_a);
       \end{split}
    \end{equation*}
    \item[$(3)$] $[X_\gamma\mathfrak{d}_{\gamma'}, X_{\gamma''}\mathfrak{d}_{\gamma'''}]\in L$;indeed for every $X_a\in\C[\mathbb{G}]$

\begin{equation*}
\begin{split}
[X_\gamma\mathfrak{d}_{\gamma'}, X_{\gamma''}\mathfrak{d}_{\gamma'''}]X_a &= X_{\gamma} \mathfrak{d}_{\gamma'}\left(X_{\gamma''}\omega({\gamma'''} ,a) X_{{\gamma'''}}X_{a} \right)-X_{\gamma''}\mathfrak{d}_{\gamma'''}\left(\omega(\gamma',a)X_{\gamma'}X_{a}\right)\\ 
&=\omega(\gamma''',a)\omega(\gamma',\gamma'''+\gamma''+a)X_{\gamma}X_{\gamma'}X_{\gamma''}X_{\gamma'''}X_{a}+\\
&-\omega(\gamma',a)\omega(\gamma''',\gamma+\gamma'+a)X_{\gamma''}X_{\gamma'''}X_{\gamma}X_{\gamma'}X_{a}\\
&=\left(\omega(\gamma''' ,a)\left(\omega(\gamma' ,a)+\omega(\gamma',\gamma'''+\gamma'')\right)\right)X_{\gamma}X_{\gamma'}X_{\gamma''}X_{\gamma'''}X_{a}+\\
&-\left(\omega(\gamma' ,a)\left(\omega(\gamma''' ,a)+\omega(\gamma''',\gamma+\gamma')\right)\right)X_{\gamma}X_{\gamma'}X_{\gamma''}X_{\gamma'''}X_{a}\\
&=\omega(\gamma',\gamma'''+\gamma'')X_{\gamma'}X_{\gamma''}X_{\gamma}\mathfrak{d}_{\gamma'''}(X_a)-\omega(\gamma''',\gamma+\gamma')X_{\gamma'''}X_{\gamma''}X_{\gamma}\mathfrak{d}_{\gamma'}(X_a).
\end{split}
\end{equation*}
\end{itemize}

\end{proof}

We can now define the infinitesimal generators of $S_{\gamma_{ij}}^\mu$ and $K_\gamma^\omega$ as elements of $\mathbf{L}_\Gamma$: we first define  
\begin{equation}
    \mathfrak{s}_{\gamma_{ij}}\defeq -\mu(\gamma_{ij})t\mathfrak{d}_{\gamma_{ij}}
\end{equation}
then  $\exp(\mathfrak{s}_{\gamma_{ij}})=S_{\gamma_{ij}}^\mu$, indeed
\begin{equation*}
    \begin{split}
     \exp(\mathfrak{s}_{\gamma_{ij}})(X_a)&=\sum_{k\geq0 }\frac{1}{k!}\mathfrak{s}_{\gamma_{ij}}^k(X_a)\\
     &=\sum_{k\geq0}\frac{(-1)^k}{k!}t^k\mu(\gamma_{ij})^k\mathsf{ad}_{X_{\gamma_{ij}}}^k(X_a)\\
     &=X_a-\mu(\gamma_{ij})t\ad_{\gamma_{ij}}(X_a)+\frac{1}{2}t^2\mu(\gamma_{ij})^2\mathsf{ad}_{\gamma_{ij}}^2(X_a),
  \end{split}
\end{equation*}
where $\mathsf{ad}_{\gamma_{ij}}(X_a)=X_{\gamma_{ij}}X_{a}-X_{a}X_{\gamma_{ij}}$. Hence 

\[\mathsf{ad}_{\gamma_{ij}}^2(X_a)=-2X_{\gamma_{ij}}X_aX_{\gamma_{ij}}-t^2X_{\gamma_{ij}}X_{a}X_{\gamma_{ij}}\]

and since $\gamma_{ij}$ can not be composed with $\gamma_{ij}+a+\gamma_{ij}$, then  $\ad_{\gamma_{ij}}^3(X_a)=0$. Moreover if $a\in\Gamma$ then $\ad_{\gamma_{ij}}X_a=0$, while if $a=\gamma_{ok}$ then $\mathsf{ad}^2X_a=0$ and we recover the formulas \eqref{eq:actionP}.
 
Then we define 
\begin{equation}
    \mathfrak{k}_{\gamma}\defeq\sum_{l\geq 1}\frac{1}{l}t^{l}X_{\gamma}^{(l-1)}\mathfrak{d}_\gamma
\end{equation}
and we claim $\exp(\mathfrak{k}_\gamma)=K_{\gamma}^\omega$, indeed
     
\begin{equation*}
    \begin{split}
     \exp(\mathfrak{k}_\gamma)(X_a)&=\sum_{k\geq 0} \frac{1}{k!}\mathfrak{k}_\gamma^k(X_a)  \\
&=\sum_{k\geq0}\frac{1}{k!}\left(\sum_{l_k\geq1}\frac{1}{l_k}t^{l_k}X_{\gamma}^{l_k}\omega(\gamma,\cdot)\left(\cdots\left(\sum_{l_2\geq1}t^{l_2}\frac{1}{l_2}X_{\gamma}^{l_2}\omega(\gamma,\cdot)\left(\sum_{l_1\geq1}\frac{1}{l_1}t^{l_1}\omega(\gamma,a)X_{l_1\gamma}X_{a}\right)\right)\cdots\right)\right)\\
&=\sum_{k\geq0}\frac{1}{k!}\left(\sum_{l_k\geq1}\frac{1}{l_k}t^{l_k}X_{\gamma}^{l_k}\omega(\gamma,\cdot)\left(\cdots\left(\sum_{l_2\geq1}\sum_{l_1\geq1}\frac{1}{l_1l_2}t^{l_1+l_2}\omega(\gamma,l_1\gamma+a)\omega(\gamma,a)X_{\gamma}^{l_2}X_{\gamma}^{l_1}X_{a}\right)\cdots\right)\right)\\
&=\sum_{k\geq0}\frac{1}{k!}\left(\sum_{l_k\geq1}\frac{1}{l_k}t^{l_k}X_{\gamma}^{l_k}\omega(\gamma,\cdot)\left(\cdots\left(\sum_{l_2\geq1}\sum_{l_1\geq1}\frac{1}{l_1l_2}t^{l_1+l_2}\omega(\gamma,a)\omega(\gamma,a)X_{\gamma}^{l_2+l_1}X_{a}\right)\cdots\right)\right)\\
     &=\sum_{k\geq0}\frac{1}{k!}\omega(\gamma+a)^k\left(\sum_{l\geq1}\frac{1}{l}t^lX_{\gamma}^l\right)^kX_a\\ 
&=\exp\left(-\omega(\gamma,a)\log(1-tX_\gamma)\right)X_a.
    \end{split}
\end{equation*}

From now on we are going to assume that $\Gamma\cong\Z^2\cong\Lambda$. We distinguish between polynomial in $\C[\Gamma]$ and $\C[\Lambda]$ by writing $X_\gamma$ for a variable in $\C[\Gamma]$ and $\mathfrak{w}^\gamma$ as a variable in $\C[\Lambda]$.   
\begin{remark}
The group ring $\C[\Gamma]$ is isomorphic to $\C[\Lambda]$ even if there two different products: on $\C[\Gamma]$ the product is $X_{\gamma}X_{\gamma'}\defeq\sigma(\gamma,\gamma')X_{\gamma+\gamma'}=(-1)^{\langle\gamma,\gamma'\rangle_D}X_{\gamma+\gamma'}$ while the product in $\C[\Lambda]$ is defined by $\mathfrak{w}^{\gamma}\mathfrak{w}^{\gamma'}=\mathfrak{w}^{\gamma+\gamma'}$. In particular the isomorphism depends on the choice of $\sigma$. 
\end{remark}

Let us choose an element $e_{ij}\in\Gamma_{ij}$ for every $i\neq j\in\mathcal{V}$ and set $e_{ii}\defeq0\in\Gamma$ for every $i\in\mathcal{V}$. Under this assumption $\mathbf{L}_\Gamma$ turns out to be generated by $ \mathfrak{d}_{e_{ij}}$ for all $i\neq j\in\mathcal{V}$ and by $\mathfrak{d}_{\gamma}$ for every $\gamma\in\Gamma$. Indeed every $\gamma_{ij}\in\Gamma_{ij}$ can be written as $e_{ij}+\gamma$ for some $\gamma\in\Gamma$ and $\mathfrak{d}_{\gamma_{ij}}=\mathfrak{d}_{e_{ij}+\gamma}=X_\gamma\mathfrak{d}_{e_{ij}}$. Then, we define an additive map \[
\begin{split}
m&\colon\amalg_{i,j\in\mathcal{V}}\Gamma_{ij}\to\Gamma\\
m&(\gamma_{ij})\defeq \gamma_{ij}-e_{ij}
\end{split}
\]
In particular, notice that $m(\gamma_{ii})=\gamma_{ii}-e_{ii}=\gamma_{ii}$, hence, since $\Gamma=\amalg_i\Gamma_{ii}$, $m(\Gamma)=\Gamma$.  

We now define a $\C[\Gamma]$-module in the Lie algebra $\tilde{\mathfrak{h}}$: 
\begin{definition}
Define $\tilde{\mathbf{L}}$ as the $\C[\Lambda]$-module generated by $\tilde{\mathfrak{l}}_{\gamma_{ij}}\defeq\left(E_{ij}\mathfrak{w}^{m(\gamma_{ij})},0\right)$ for every $i\neq j\in\mathcal{V}$ and $\tilde{\mathfrak{l}}_\gamma\defeq\left(0, \Omega(\gamma)\mathfrak{w}^{\gamma}\partial_{n_\gamma}\right)$ for every $\gamma\in\Gamma$, where $E_{ij}\in\mathfrak{gl}(r)$ is an elementary matrix with all zeros and a $1$ in position $ij$.
\end{definition}
 
\begin{lemma}\label{lem:CLambdamodule}
The $\C[\Lambda]$-module $\tilde{\mathbf{L}}$ is a Lie ring with respect to the Lie bracket induced by $\tilde{\mathfrak{h}}$ (see Definition \eqref{eq:Liebracket}).\footnote{$\tilde{\mathbf{L}}$ is not a Lie sub-algebra of $\tilde{\mathfrak{h}}$ because the Lie bracket is not $\C[\Lambda]-$linear.}
\end{lemma}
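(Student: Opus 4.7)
The plan is to mirror the strategy of Lemma \ref{lem:CGamma_module}: since antisymmetry and the Jacobi identity for $[\cdot,\cdot]_{\tilde{\mathfrak{h}}}$ are already established by Lemma \ref{lem:dgLa}, the only thing to verify is that $\tilde{\mathbf{L}}$ is closed under the bracket. By $\C[\Lambda]$-bilinearity (and the fact that any $\C[\Lambda]$-multiple of a generator is again in $\tilde{\mathbf{L}}$), it suffices to compute brackets of pairs of generators decorated with monomials $\mathfrak{w}^\mu, \mathfrak{w}^{\mu'}\in\C[\Lambda]$, and to show the output is in $\tilde{\mathbf{L}}$. There are three distinct cases to analyze, corresponding to the bidegree of the generators involved.

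First, for two matrix-type generators, using \eqref{eq:Liebracket} with $\partial_n = \partial_{n'} = 0$,
\[
[\mathfrak{w}^\mu \tilde{\mathfrak{l}}_{\gamma_{ij}},\mathfrak{w}^{\mu'}\tilde{\mathfrak{l}}_{\gamma_{kl}}]_{\tilde{\mathfrak{h}}}=\bigl([E_{ij},E_{kl}]_{\mathfrak{gl}}\,\mathfrak{w}^{\mu+\mu'+m(\gamma_{ij})+m(\gamma_{kl})},\,0\bigr),
\]
and since $[E_{ij},E_{kl}]_{\mathfrak{gl}}=\delta_{jk}E_{il}-\delta_{li}E_{kj}$, the result is a $\C[\Lambda]$-multiple of either $\tilde{\mathfrak{l}}_{e_{il}}$ or $\tilde{\mathfrak{l}}_{e_{kj}}$ (or zero), hence in $\tilde{\mathbf{L}}$. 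Second, the mixed bracket is
\[
[\mathfrak{w}^\mu\tilde{\mathfrak{l}}_{\gamma_{ij}},\mathfrak{w}^{\mu'}\tilde{\mathfrak{l}}_{\gamma}]_{\tilde{\mathfrak{h}}}=\bigl(-\Omega(\gamma)\langle m(\gamma_{ij}),n_\gamma\rangle\,E_{ij}\,\mathfrak{w}^{\mu+\mu'+m(\gamma_{ij})+\gamma},\,0\bigr),
\]
which is an integer multiple of $\mathfrak{w}^{\mu+\mu'+\gamma}\tilde{\mathfrak{l}}_{\gamma_{ij}}\in\tilde{\mathbf{L}}$.

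The delicate case is the bracket of two derivation-type generators. Applying \eqref{eq:Liebracket} with $A=A'=0$,
\[
[\mathfrak{w}^\mu\tilde{\mathfrak{l}}_{\gamma},\mathfrak{w}^{\mu'}\tilde{\mathfrak{l}}_{\gamma'}]_{\tilde{\mathfrak{h}}}=\bigl(0,\,\Omega(\gamma)\Omega(\gamma')\,\mathfrak{w}^{\mu+\mu'+\gamma+\gamma'}\,\partial_{\langle\gamma',n_\gamma\rangle n_{\gamma'}-\langle\gamma,n_{\gamma'}\rangle n_\gamma}\bigr).
\]
The key observation is that $v\defeq\langle\gamma',n_\gamma\rangle n_{\gamma'}-\langle\gamma,n_{\gamma'}\rangle n_\gamma\in\Lambda^*$ is orthogonal to $\gamma+\gamma'$: a direct computation gives $\langle\gamma+\gamma',v\rangle=\langle\gamma,n_{\gamma'}\rangle\langle\gamma',n_\gamma\rangle-\langle\gamma',n_\gamma\rangle\langle\gamma,n_{\gamma'}\rangle=0$. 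Hence $v=c\,n_{\gamma+\gamma'}$ for some integer $c$, and the output is a scalar multiple of $\mathfrak{w}^{\mu+\mu'+\gamma+\gamma'}\,\partial_{n_{\gamma+\gamma'}}$. Provided $\Omega(\gamma+\gamma')\neq 0$ this is a $\C[\Lambda]$-multiple of $\tilde{\mathfrak{l}}_{\gamma+\gamma'}$; the degenerate case $\Omega(\gamma+\gamma')=0$ is where the assumption on the BPS spectrum mentioned in Theorem \ref{thm:main3} will be used to ensure closure, and this is the main conceptual obstacle in the proof.

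Once closure is established in all three cases, the $\C[\Lambda]$-bilinearity of $[\cdot,\cdot]_{\tilde{\mathfrak{h}}}$ extends the statement to arbitrary elements of $\tilde{\mathbf{L}}$, and antisymmetry plus Jacobi are inherited from the ambient Lie algebra $(\tilde{\mathfrak{h}},[\cdot,\cdot]_{\tilde{\mathfrak{h}}})$ of Lemma \ref{lem:dgLa}. This shows $\tilde{\mathbf{L}}$ is a Lie sub-ring (but not a Lie sub-algebra over $\C[\Lambda]$, as emphasized in the footnote, because the bracket fails to be $\C[\Lambda]$-linear owing to the pairing terms $\langle m',n\rangle$ appearing in \eqref{eq:Liebracket}).
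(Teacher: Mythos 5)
Your overall structure matches the paper's: reduce by $\C$-bilinearity to brackets of generators decorated with monomials, treat the three bidegree cases, and inherit antisymmetry and Jacobi from $\tilde{\mathfrak{h}}$. Case 1 is correct, and case 2 reaches the right conclusion. But case 3 contains a genuine gap.

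The issue is twofold. First, a computational slip that runs through cases 2 and 3: in the bracket $\bigl[(A,\partial_n)z^m,(A',\partial_{n'})z^{m'}\bigr]_\sim$ the pairings involve the \emph{total} exponents $m$, $m'$, including the decorating monomials. For $\mathfrak{w}^\mu\tilde{\mathfrak{l}}_{\gamma_{ij}}$ the relevant $m$ is $\mu+m(\gamma_{ij})$, not $m(\gamma_{ij})$; for $\mathfrak{w}^\mu\tilde{\mathfrak{l}}_{\gamma}$ it is $\mu+\gamma$. In case 2 this does not affect membership in $\tilde{\mathbf{L}}$, but in case 3 it invalidates your "key observation": the correct vector is $v=\langle\mu'+\gamma',n_\gamma\rangle n_{\gamma'}-\langle\mu+\gamma,n_{\gamma'}\rangle n_\gamma$, and a direct check gives $\langle\gamma+\gamma',v\rangle=\langle\mu',n_\gamma\rangle\langle\gamma,n_{\gamma'}\rangle-\langle\mu,n_{\gamma'}\rangle\langle\gamma',n_\gamma\rangle$, which is generically nonzero. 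So $v$ is \emph{not} a multiple of $n_{\gamma+\gamma'}$, and the attempt to rewrite the output as a scalar times $\tilde{\mathfrak{l}}_{\gamma+\gamma'}$ breaks down.

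Second, and more importantly, the rewriting was unnecessary to begin with — and is where your proof takes the wrong turn. The output of case 3 is
\[
\Omega(\gamma)\Omega(\gamma')\,\mathfrak{w}^{\mu+\mu'+\gamma+\gamma'}\bigl(\langle\mu'+\gamma',n_\gamma\rangle\,\partial_{n_{\gamma'}}-\langle\mu+\gamma,n_{\gamma'}\rangle\,\partial_{n_\gamma}\bigr),
\]
which is already, term by term, a $\C$-linear combination of $\C[\Lambda]$-multiples of the two generators $\tilde{\mathfrak{l}}_{\gamma'}$ and $\tilde{\mathfrak{l}}_\gamma$ themselves: the first summand equals $\Omega(\gamma)\langle\mu'+\gamma',n_\gamma\rangle\,\mathfrak{w}^{\mu+\mu'+\gamma}\,\tilde{\mathfrak{l}}_{\gamma'}$ and the second is handled symmetrically. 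This is what the paper does. In particular closure holds with no hypothesis on $\Omega(\gamma+\gamma')$ and no appeal to the BPS spectrum — that assumption enters only in Theorem \ref{thm:homomLiering}, where the map $\Upsilon$ is constructed, not in this lemma. Finally, a smaller point: the reduction step and the final extension both rely on $\C$-bilinearity (i.e.\ additivity in each slot), not $\C[\Lambda]$-bilinearity; you correctly note at the end that the bracket is not $\C[\Lambda]$-linear, so the earlier phrasing is inconsistent with your own conclusion.
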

\begin{proof}
As we have already comment in the proof of Lemma \ref{lem:CGamma_module}, since the bracket is induced by the Lie bracket $[\cdot,\cdot]_{{\tilde{\mathfrak{h}}}}$, we are left to prove that $\tilde{\mathbf{L}}$ is closed under $[\cdot,\cdot]_{\tilde{\mathfrak{h}}}$. In particular by $\C$-linearity it is enough to show the following: 
\begin{align*}
(1)&\, [\mathfrak{w}^{\gamma}\left(E_{ij}\mathfrak{w}^{m(\gamma_{ij})},0\right), \mathfrak{w}^{\gamma'}\left(E_{kl}\mathfrak{w}^{m(\gamma_{kl})},0\right)]\in\tilde{\mathbf{L}}\\
(2)&\, [\mathfrak{w}^{\gamma}\left(E_{ij}\mathfrak{w}^{m(\gamma_{ij})},0\right), \mathfrak{w}^{\gamma'}\left(0, \Omega(\gamma)\mathfrak{w}^{\gamma'}\partial_{n_{\gamma'}}\right)]\in \tilde{\mathbf{L}}\\
(3)&\, [\mathfrak{w}^{\gamma}\left(0, \Omega(\gamma)\mathfrak{w}^{\gamma'}t\partial_{n_{\gamma'}}\right), \mathfrak{w}^{\gamma''}\left(0, \Omega(\gamma''')\mathfrak{w}^{\gamma'''}\partial_{n_{\gamma'''}}\right)]\in \tilde{\mathbf{L}}
\end{align*}
and they are explicitly computed below:
\begin{align*}
(1)&\,[\mathfrak{w}^{\gamma}\left(E_{ij}t\mathfrak{w}^{m(\gamma_{ij})},0\right), \mathfrak{w}^{\gamma'}\left(E_{kl}t\mathfrak{w}^{m(\gamma_{kl})},0\right)]=\left(\mathfrak{w}^{\gamma}\mathfrak{w}^{\gamma'}[E_{ij},E_{kl}]_{\mathfrak{gl}(n)}\mathfrak{w}^{m(\gamma_{ij})}\mathfrak{w}^{m(\gamma_{kl})},0\right) \\
(2)&\,[\mathfrak{w}^{\gamma}\left(E_{ij}\mathfrak{w}^{m(\gamma_{ij})},0\right), \mathfrak{w}^{\gamma''}\left(0, \Omega(\gamma)\mathfrak{w}^{\gamma'}\partial_{n_{\gamma'}}\right)]=\left(-E_{ij}\Omega(\gamma')\langle\gamma+m(\gamma_{ij}),n_{\gamma'}\rangle \mathfrak{w}^{m(\gamma_{ij})}\mathfrak{w}^{\gamma}\mathfrak{w}^{\gamma''}\mathfrak{w}^{\gamma'},0\right)\\
(3)&\,[\mathfrak{w}^{\gamma}\left(0, \Omega(\gamma)\mathfrak{w}^{\gamma'}\partial_{n_{\gamma'}}\right), \mathfrak{w}^{\gamma''}\left(0, \Omega(\gamma''')\mathfrak{w}^{\gamma'''}\partial_{n_{\gamma'''}}\right)]=\Big(0,\Omega(\gamma)\Omega(\gamma''')\mathfrak{w}^{\gamma}\mathfrak{w}^{\gamma'}\mathfrak{w}^{\gamma''}\mathfrak{w}^{\gamma'''}\cdot\\
&\qquad\qquad\cdot\left(\langle\gamma''+\gamma''',n_{\gamma'}\rangle\partial_{n_{\gamma'''}}-\langle\gamma+\gamma',n_{\gamma'''}\rangle\partial_{n_{\gamma'}}\right)\Big).
\end{align*}

\end{proof}

\begin{theorem}\label{thm:homomLiering}Let $\left(\mathbf{L}_\Gamma,[\cdot,\cdot]_{\Der(\C[\mathbb{G}])}\right)$ and $\left(\tilde{\mathbf{L}},[\cdot,\cdot]_{\tilde{\mathfrak{h}}}\right)$ be the $\C[\Gamma]$-modules defined before. 
Assume $\omega(\gamma,a)=\Omega(\gamma)\langle{ a},n_{\gamma}\rangle$, then there exists a homomorphism of $\C[\Gamma]$-modules and of Lie rings $\Upsilon\colon \mathbf{L}_\Gamma\to\tilde{\mathbf{L}}$, which is defined as follows:
\begin{equation}
\begin{split}
&\Upsilon(X_{\gamma}\mathfrak{d}_{\gamma_{ij}})\defeq\mathfrak{w}^{\gamma}\left(-E_{ij}\mathfrak{w}^{m(\gamma_{ij})},0\right), \forall i\neq j\in\mathcal{V}, \forall\gamma\in\Gamma;\\
&\Upsilon(X_{\gamma'}\mathfrak{d}_\gamma)\defeq \mathfrak{w}^{\gamma'}\left(0, \Omega(\gamma)\mathfrak{w}^{\gamma}\partial_{n_\gamma}\right), \forall \gamma',\gamma\in\Gamma
\end{split} 
\end{equation} 
\end{theorem}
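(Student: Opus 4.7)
My plan is to prove this by a direct case-by-case verification on a generating set, together with a well-definedness check for the $\C[\Gamma]$-module structure under the isomorphism $\C[\Gamma]\cong\C[\Lambda]$ determined by the twist $\sigma$.

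First I would check that $\Upsilon$ is well-defined. The ambiguity is that every $X_{\gamma}\mathfrak{d}_{\gamma_{ij}}$ can be rewritten as $X_{\gamma'}\mathfrak{d}_{\gamma_{ij}+\gamma-\gamma'}$ up to a sign coming from $\sigma(\gamma',\gamma-\gamma')$. Using the twist identities (i)--(iii) for $\sigma$, this sign matches the corresponding one in $\C[\Lambda]$ once we identify $X_{\gamma}\leftrightarrow\mathfrak{w}^{\gamma}$ via the (fixed) section $\gamma_{ij}\mapsto e_{ij}+m(\gamma_{ij})$; concretely, well-definedness reduces to the cocycle-type identity $\sigma(\gamma,\gamma')\sigma(\gamma+\gamma',\gamma_{ij})=\sigma(\gamma,\gamma'+\gamma_{ij})\sigma(\gamma',\gamma_{ij})$, which is exactly condition (i). This also confirms $\C[\Gamma]$-linearity when $\C[\Gamma]$ acts on $\tilde{\mathbf{L}}$ via the isomorphism with $\C[\Lambda]$.

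Next I would verify Lie-ring compatibility on pairs of generators. The three cases are exactly the three bracket computations appearing in the proof of Lemma~\ref{lem:CGamma_module}. In case (1), the bracket $[X_\gamma\mathfrak{d}_{\gamma_{ij}},X_{\gamma'}\mathfrak{d}_{\gamma_{kl}}]$ vanishes unless $j=k$ or $l=i$, and when it is nonzero it equals (up to $\sigma$-twists) a multiple of $\mathfrak{d}_{\gamma_{il}}$ or $\mathfrak{d}_{\gamma_{kj}}$. On the $\tilde{\mathbf{L}}$ side this matches the identity $[E_{ij},E_{kl}]_{\mathfrak{gl}}=\delta_{jk}E_{il}-\delta_{li}E_{kj}$ in case (1) of Lemma~\ref{lem:CLambdamodule}, and the exponents $m(\gamma_{ij})+m(\gamma_{kl})=m(\gamma_{il})+(e_{il}-e_{ij}-e_{jl})$ differ by a correction absorbed precisely by the $\sigma$-twist in $\C[\Gamma]$. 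In case (2), the $\mathbf{L}_\Gamma$-side gives $-\omega(\gamma',\gamma+\gamma_{ij})X_\gamma X_{\gamma''}X_{\gamma'}\mathfrak{d}_{\gamma_{ij}}$; under $\Upsilon$ this becomes $-\Omega(\gamma')\langle \gamma+\gamma_{ij},n_{\gamma'}\rangle$ times $(-E_{ij}\mathfrak{w}^{\cdots},0)$, which is exactly the second component of case (2) of Lemma~\ref{lem:CLambdamodule} once the assumption $\omega(\gamma,a)=\Omega(\gamma)\langle a,n_\gamma\rangle$ is used to convert $\omega$-factors into pairings. Case (3) reduces to the tropical-vertex calculation: the $\omega$-factors $\omega(\gamma',\gamma'''+\gamma'')$ and $\omega(\gamma''',\gamma+\gamma')$ become $\Omega(\gamma')\langle\gamma'''+\gamma'',n_{\gamma'}\rangle$ and $\Omega(\gamma''')\langle\gamma+\gamma',n_{\gamma'''}\rangle$, matching exactly the $\langle m',n\rangle$-pairing in the bracket~\eqref{eq:Liebracket} of $\tilde{\mathfrak{h}}$.

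The main obstacle will be bookkeeping the $\sigma$-twists consistently: the product in $\C[\Gamma]$ carries signs $(-1)^{\langle\gamma,\gamma'\rangle_D}$ absent in $\C[\Lambda]$, so matching brackets on both sides requires tracking how these signs feed into the identifications $X_\gamma\leftrightarrow\mathfrak{w}^\gamma$ and how the shift by the chosen $e_{ij}$ interacts with compositions $\gamma_{ij}+\gamma_{jl}=\gamma_{il}$. I expect to isolate this into a single compatibility lemma relating $\sigma$ and $m$, after which the three bracket verifications become mechanical. The hypothesis $\omega(\gamma,a)=\Omega(\gamma)\langle a,n_\gamma\rangle$ is what bridges the derivation-based description of $\mathbf{L}_\Gamma$ (where commutators are controlled by $\omega$) and the geometric description of $\tilde{\mathbf{L}}\subset\tilde{\mathfrak{h}}$ (where commutators are controlled by the pairing $\langle\cdot,\cdot\rangle$), and it is the only place where the ``BPS spectrum'' assumption enters.
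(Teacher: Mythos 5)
Your overall strategy is the same as the paper's: reduce by $\C$-linearity to the three bracket types $[\mathfrak{d}_{\gamma_{ij}},\mathfrak{d}_{\gamma_{kl}}]$, $[\mathfrak{d}_{\gamma_{ij}},\mathfrak{d}_{\gamma}]$, $[\mathfrak{d}_{\gamma},\mathfrak{d}_{\gamma'}]$ computed in Lemma~\ref{lem:CGamma_module}, and match them term by term against the three cases of Lemma~\ref{lem:CLambdamodule}, using $\omega(\gamma,a)=\Omega(\gamma)\langle a,n_\gamma\rangle$ to translate $\omega$-factors into $\langle\cdot,\cdot\rangle$-pairings. That is exactly what the paper does, and your reading of each case (the $[E_{ij},E_{kl}]=\delta_{jk}E_{il}-\delta_{li}E_{kj}$ identity in case (1), the conversion $-\omega(\gamma',\gamma+\gamma_{ij})\mapsto -\Omega(\gamma')\langle \gamma+\gamma_{ij},n_{\gamma'}\rangle$ in case (2), and the reduction of case (3) to the tropical-vertex bracket) is accurate.

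Where you go beyond the paper is in flagging the well-definedness of $\Upsilon$ and the bookkeeping of $\sigma$-twists. This concern is legitimate — the paper's computation silently passes from $X_\gamma X_{\gamma'}$ to $\mathfrak{w}^\gamma\mathfrak{w}^{\gamma'}$ and from $\mathsf{ad}_{\gamma_{ij}+\gamma_{kl}}$ to $E_{ij}E_{kl}\mathfrak{w}^{m(\gamma_{ij})+m(\gamma_{kl})}$ without tracking the signs. However, one claim in your plan is imprecise and you should be careful before relying on it: you assert that the discrepancy $m(\gamma_{ij})+m(\gamma_{jl})=m(\gamma_{ij}+\gamma_{jl})+(e_{ij}+e_{jl}-e_{il})$ is ``absorbed precisely by the $\sigma$-twist.'' But these live in different places — the first is a shift in $\Lambda$ affecting the monomial exponent $\mathfrak{w}^{(\cdot)}$, while $\sigma$ is a sign in $\lbrace\pm1\rbrace$ affecting the coefficient — so a sign cannot cancel an exponent shift. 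What is actually needed is either a coherence condition on the chosen base points (e.g. $e_{ij}+e_{jl}=e_{il}$ whenever the composition is defined, which amounts to choosing $e_{ij}=\gamma_i-\gamma_j$ for a fixed section $i\mapsto\gamma_i\in\Gamma_i$), or an explicit compensating factor in the definition of $\Upsilon$. Your proposed ``single compatibility lemma relating $\sigma$ and $m$'' should therefore involve two independent pieces of data, not one; once you make that precise the three verifications do become mechanical, as you say.
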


\begin{remark}
The assumption on $\omega$ is compatible with its Definition \eqref{eq:autK}, indeed by linearity of the pairing $\langle\cdot,\cdot\rangle$, 
\[\omega(\gamma,a+b)=\Omega(\gamma)\langle {a+b},n_\gamma\rangle=\Omega(\gamma)\langle{a},n_\gamma\rangle+\Omega(\gamma)\langle {b},n_\gamma\rangle=\omega(\gamma,a)+\omega(\gamma, b).\] 
Moreover notice that by the assumption on $\omega$, $\mathbf{L}_\Gamma$ turns out to be the $\C[\Gamma]$-module generated by $\mathfrak{d}_{e_{ij}}$ for every $i\neq j\in\mathcal{V}$ and by $\mathfrak{d}_{\gamma}$ for every primitive $\gamma\in\Gamma$. Indeed if $\gamma'$ is not primitive, then there exists a $\gamma\in\Gamma_{\text{prim}}$ such that $\gamma'=k\gamma$. Hence $\mathfrak{d}_{k\gamma}=\omega(k\gamma,\cdot)X_{\gamma}^{(k-1)}X_\gamma=CX_{(k-1)\gamma}\mathfrak{d}_{\gamma}$, where $C=\frac{k\Omega(k\gamma)}{\Omega(\gamma)}$.  In particular, if $\gamma,\gamma'$ are primitive vectors in $\Gamma$, then $\omega(\gamma,\gamma')=\Omega(\gamma)\langle {\gamma'},n_{\gamma}\rangle=\Omega(\gamma)\langle \gamma,\gamma'\rangle_D$.     
\end{remark}

\begin{proof}

We have to prove that $\Upsilon$ preserves the Lie-bracket, i.e. that for every $l_1,l_2\in L$, then $\Upsilon\left(\left[l_1,l_2\right]_{\mathbf{L}_\Gamma}\right)=\left[\Upsilon(l_1),\Upsilon(l_2)\right]_{\tilde{\mathbf{L}}}$. In particular, by $\C$-linearity it is enough to prove the following identities:
\begin{equation*}
\begin{split}
(1)& \Upsilon\left(\left[X_\gamma\mathfrak{d}_{\gamma_{ij}},X_{\gamma'}\mathfrak{d}_{\gamma_{kl}}\right]_{\mathbf{L}_\Gamma}\right)=\left[\Upsilon(X_\gamma\mathfrak{d}_{\gamma_{ij}}),\Upsilon(X_{\gamma'}\mathfrak{d}_{\gamma_{kl}})\right]_{\tilde{\mathbf{L}}}\\
(2)& \Upsilon\left(\left[X_\gamma\mathfrak{d}_{\gamma_{ij}},X_{\gamma'}\mathfrak{d}_{\gamma''}\right]_{\mathbf{L}_\Gamma}\right)=\left[\Upsilon(X_{\gamma}\mathfrak{d}_{\gamma'}),\Upsilon(X_{\gamma''}\mathfrak{d}_{\gamma_{kl}})\right]_{\tilde{\mathbf{L}}}\\
(3)& \Upsilon\left(\left[X_\gamma\mathfrak{d}_{\gamma'},X_{\gamma''}\mathfrak{d}_{\gamma'''}\right]_{\mathbf{L}_\Gamma}\right)=\left[\Upsilon(X_{\gamma}\mathfrak{d}_{\gamma'}),\Upsilon(X_{\gamma''}\mathfrak{d}_{\gamma'''})\right]_{\tilde{\mathbf{L}}}.
\end{split}
\end{equation*}
The identity $(1)$ is proved below:
\begin{equation*}
\begin{split}
   \text{LHS}&=\Upsilon\left(X_{\gamma}X_{\gamma'}{\sigma(\gamma_{ij},\gamma_{kl})}\mathsf{ad}_{\gamma_{ij}+\gamma_{kl}}-X_{\gamma}X_{\gamma'}{\sigma(\gamma_{kl},\gamma_{ij})}\mathsf{ad}_{\gamma_{kl}+\gamma_{ij}}\right)\\
            &=\mathfrak{w}^{\gamma}\mathfrak{w}^{\gamma'}\left(E_{ij}E_{kl}\mathfrak{w}^{m(\gamma_{ij})}\mathfrak{w}^{m(\gamma_{kl})}-E_{kl}E_{ij}\mathfrak{w}^{m(\gamma_{kl})}\mathfrak{w}^{m(\gamma_{ij})},0\right)\\
            &=\left(\mathfrak{w}^{\gamma}\mathfrak{w}^{\gamma'}[E_{ij}\mathfrak{w}^{m(\gamma_{ij})},E_{kl}\mathfrak{w}^{m(\gamma_{kl})}],0\right)\\
            \text{RHS}&=\left[\left(\mathfrak{w}^{\gamma} E_{ij}\mathfrak{w}^{m(\gamma_{ij})},0\right),\left(\mathfrak{w}^{\gamma'}E_{kl}\mathfrak{w}^{m(\gamma_{kl})},0\right)\right]_{\tilde{\mathfrak{h}}}\\
            &=\left(\mathfrak{w}^{\gamma}\mathfrak{w}^{\gamma'}[E_{ij}\mathfrak{w}^{m(\gamma_{ij})},E_{kl}\mathfrak{w}^{m(\gamma_{kl})}]_{\mathfrak{gl}(n)},0\right)\\
            &=\left(\mathfrak{w}^{\gamma}\mathfrak{w}^{\gamma'}[E_{ij}\mathfrak{w}^{m(\gamma_{ij})},E_{kl}\mathfrak{w}^{m(\gamma_{kl})}],0\right).
        \end{split}
    \end{equation*}
Then the second identity can be proved as follows:    
\begin{equation*}
    \begin{split}
            \text{LHS}&=\Upsilon\left(\omega(\gamma'',\gamma+\gamma_{ij})X_{\gamma+\gamma'+\gamma''}\mathfrak{d}_{\gamma_{ij}}\right)\\
            &=-\omega(\gamma'',\gamma+\gamma_{ij})\mathfrak{w}^{\gamma}\mathfrak{w}^{\gamma'}\mathfrak{w}^{\gamma''}\left(E_{ij}\mathfrak{w}^{m(\gamma_{ij})},0\right)\\
            \text{RHS}&=\left[\left(\mathfrak{w}^{\gamma} E_{ij}\mathfrak{w}^{m(\gamma_{ij})},0\right),\left(0,\mathfrak{w}^{\gamma'}\Omega(\gamma'')\mathfrak{w}^{\gamma''}\partial_{n_{\gamma''}}\right)\right]_{\tilde{\mathfrak{h}}}\\
            &=-\left(\mathfrak{w}^{\gamma}\mathfrak{w}^{\gamma'}E_{ij}\Omega(\gamma'')\langle m(\gamma_{ij})+\gamma, n_{\gamma''} \rangle \mathfrak{w}^{m(\gamma_{ij})}\mathfrak{w}^{\gamma''},0\right).
        \end{split}
    \end{equation*}
Finally the third identity is proved below:    
\begin{equation*}
            \begin{split}
            \text{LHS}&=\Upsilon\left(\omega(\gamma',\gamma''+\gamma''')X_{\gamma}X_{\gamma''}X_{\gamma'''}\mathfrak{d}_{\gamma'''}-\omega(\gamma''',\gamma+\gamma')X_{\gamma'''}X_{\gamma''}X_{\gamma}\mathfrak{d}_{\gamma'}\right)\\
            &=\left(0,\omega(\gamma',\gamma''+\gamma''')\mathfrak{w}^{\gamma}\mathfrak{w}^{\gamma'}\mathfrak{w}^{\gamma''}\Omega(\gamma''')\mathfrak{w}^{\gamma'''}\partial_{n_{\gamma'''}}-\omega(\gamma''',\gamma+\gamma')\mathfrak{w}^{\gamma'''}\mathfrak{w}^{\gamma''}\mathfrak{w}^{\gamma}\Omega(\gamma')\mathfrak{w}^{\gamma'}\partial_{n_{\gamma'}}\right)\\
            \text{RHS}&=\left[\left(0,\mathfrak{w}^{\gamma}\Omega(\gamma')\mathfrak{w}^{\gamma'}\partial_{n_{\gamma'}} \right),\left(0,\mathfrak{w}^{\gamma''}\Omega(\gamma''')\mathfrak{w}^{\gamma'''}\partial_{n_{\gamma'''}}\right)\right]_{\tilde{\mathfrak{h}}}\\
            &=\left(0, \Omega(\gamma')\Omega(\gamma''')\left[\mathfrak{w}^{\gamma}\mathfrak{w}^{\gamma'}\partial_{n_{\gamma'}},\mathfrak{w}^{\gamma''}\mathfrak{w}^{\gamma'''}\partial_{n_{\gamma'''}}\right]_{\tilde{\mathfrak{h}}}\right)\\
            &=\left(0, \Omega(\gamma')\Omega(\gamma''')\mathfrak{w}^{\gamma}\mathfrak{w}^{\gamma'}\mathfrak{w}^{\gamma''}\mathfrak{w}^{\gamma'''}\left(\langle\gamma''+\gamma''', n_{\gamma'}\rangle\partial_{n_{\gamma'''}}-\langle\gamma+\gamma', n_{\gamma'''}\rangle\partial_{n_{\gamma'}}\right)\right).
        \end{split}
    \end{equation*}    
\end{proof}

Let us now show which is the correspondence between WCFs in coupled $2d$-$4d$ systems and scattering diagrams which come from solutions of the Maurer-Cartan equation for deformations of holomorphic pairs:
\begin{enumerate}
    \item to every BPS ray $l_a=Z(a)\R_{>0}$ we associate a ray $\mathfrak{d}_{a}=m(a)\R_{>0}$ if either $\mu(a)\neq\mu(a)'$ or $\omega(a,\cdot)\neq\omega(a,\cdot)'$. Conversely we associate a line $\mathfrak{d}_a=m(a)\R$;
    \item to the automorphism $S_{\gamma_{ij}}^\mu$ we associate an automorphism $\theta_S\in\exp(\tilde{\mathfrak{h}})$ such that $\log(\theta_S)=\Upsilon(\mathfrak{s}_{\gamma_{ij}})=\left(-\mu(\gamma_{ij})tE_{ij}\mathfrak{w}^{m(\gamma_{ij})},0\right)$;
\item to the automorphism $K_\gamma^\omega$ we associate an automorphism $\theta_K\in\exp(\tilde{\mathfrak{h}})$ such that $\log(\theta_K)=\Upsilon(\mathfrak{k}_\gamma)=\left(0,\Omega(\gamma)\sum_l\frac{1}{l}t^l\mathfrak{w}^{l\gamma}\partial_{n_{\gamma}}\right)$. 
\end{enumerate}
\begin{remark}
If $m(\gamma_{ij})=m(\gamma_{il}+\gamma_{lj})$ then $\log(\theta_S)=\Upsilon(\mathfrak{s}_{\gamma_{ij}})=\Upsilon(\mathfrak{s}_{\gamma_{il}})\Upsilon(\mathfrak{s}_{\gamma_{lj}})$. Analogously if $m(\gamma_{ij}')=m(\gamma_{ij})+k\gamma$ then $\log(\theta_S')=\Upsilon(\mathfrak{s}_{\gamma_{ij}'})=t^k\Upsilon(\mathfrak{s}_{\gamma_{ij}})$. 
\end{remark}
In the following examples we will show this correspondence in practice: we consider two examples of WCFs computed in \cite{WCF2d-4d} and we construct the corresponding consistent scattering diagram. 

\subsection{Example 1}\label{Ex:1}
Let $\mathcal{V}=\lbrace i,j,k=l\rbrace$ and set $\gamma_{kk}=\gamma\in\Gamma$. Assume $\omega(\gamma,\gamma_{ij})=-1$ and $\mu(\gamma_{ij})=1$, then the wall-crossing formula (equation 2.39 in \cite{WCF2d-4d}) is 
\begin{equation}
    K_\gamma^\omega S_{\gamma_{ij}}^\mu=S_{\gamma_{ij}}^{\mu'}S_{\gamma_{ij}+\gamma}^{\mu'}K_\gamma^{\omega'} 
\end{equation}
with $\mu'(\gamma_{ij})=1$, $\mu'(\gamma+\gamma_{ij})=-1$ and $\omega'=\omega$.

Since $\mu'(\gamma_{ij})=\mu(\gamma_{ij})$ and $\omega'=\omega$ the initial scattering diagram has two lines. In addition, since $-1=\omega(\gamma,\gamma_{ij})=\Omega(\gamma)\langle m(\gamma_{ij}), n_\gamma\rangle$, we can assume $\Omega(\gamma)=1$, $m(\gamma_{ij})=(1,0)$ and $\gamma=(0,1)$. Therefore the initial scattering diagram is   
\[
\mathfrak{D}=\left\lbrace\mathsf{w}_S=\left(m_S=m(\gamma_{ij}), \mathfrak{d}_S, \theta_S\right),\mathsf{w}_K=\left(m_K=\gamma, \mathfrak{d}_K, \theta_K\right) \right\rbrace\] where $\log\theta_S=\left(-tE_{ij}\mathfrak{w}^{m(\gamma_{ij})}, 0\right)$ and $\log\theta_K=\left(0, \sum_{l\geq 1} \frac{1}{l}t^l \mathfrak{w}^{l \gamma}\partial_{n_\gamma}\right)$. Then the wall crossing formula says that the complete scattering diagram $\mathfrak{D}^{\infty}$ has one more S-ray, $\mathfrak{d}_{S+K}=(\gamma+m(\gamma_{ij}))\R_{\geq 0}$ and wall-crossing factor $\log\theta_{S+K}={\left(t^2E_{ij}\mathfrak{w}^{\gamma+m(\gamma_{ij})},0\right)}$. 
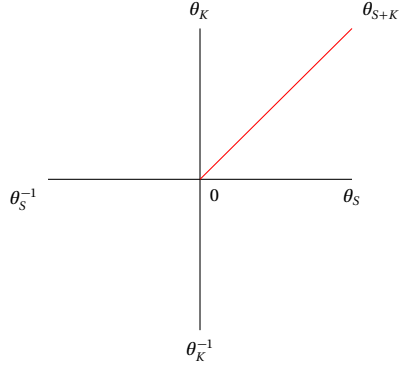
\begin{figure}[h!]
    \centering
    \begin{tikzpicture}
    \draw (0,2) -- (4,2);
    \draw (2,0) -- (2,4);
    \draw[red](2,2) -- (4,4);
    \node[font=\tiny, below right] at (2,2) {0};
    \node[font=\tiny, below] at (4,2) {$\theta_S$};
    \node[font=\tiny, above] at (2,4) {$\theta_{K}$};
    \node[font=\tiny, below] at (2,0) {$\theta_{K}^{-1}$};
    \node[font=\tiny, below left] at (0,2) {$\theta_S^{-1}$};
    \node[font=\tiny, above right] at (4,4) {$\theta_{S+K}$};
    \end{tikzpicture}
    \caption{The complete scattering diagram with K and S rays.}
    \label{fig:es2}
\end{figure}

We can check that $\mathfrak{D}^{\infty}$ is consistent (see Definition \ref{def:pathorderedprod}). In particular we need to prove the following identity:
\begin{equation}
\theta_{K}\circ\theta_S\circ\theta_{K^{-1}}=\theta_S\circ\theta_{S+K}
\end{equation}
\begin{equation*}
\begin{split}
\text{RHS}&=\theta_S\circ\theta_{S+K}\\
&=\exp(\log\theta_S)\circ\exp(\log\theta_{S+K})\\
&=\exp(\log\theta_S{\bullet}_{\text{BCH}}\log\theta_{S+K})\\
&=\exp\left(\log\theta_S+\log\theta_{S+K}\right)\\
\text{LHS}&=\theta_{K}\circ\theta_S\circ\theta_{K^{-1}}\\
&=\exp(\log\theta_K\bullet_{\text{BCH}}\log\theta_S\bullet_{\text{BCH}}\log\theta_{K^{-1}})\\
&=\exp\left(\log\theta_S+\sum_{l\geq1}\frac{1}{l!}\ad_{\log\theta_K}^l\log\theta_S\right)\\
&=\exp\left(\log\theta_S+[\log\theta_K,\log\theta_s]+\sum_{l\geq2}\frac{1}{l!}\ad_{\log\theta_K}^l\log\theta_S\right)\\
&=\exp\left(\log\theta_S-\left(E_{ij}\sum_{k\geq1}\frac{1}{k}\mathfrak{w}^{m(\gamma_{ij})+k\gamma}\langle m(\gamma_{ij}), n_{\gamma}\rangle,0\right)+\sum_{l\geq2}\frac{1}{l!}\ad_{\log\theta_K}^l\log\theta_S\right)\\
&=\exp\left(\log\theta_S+\left(t^2E_{ij}\mathfrak{w}^{m(\gamma_{ij})+\gamma},0\right)+\left(E_{ij}\sum_{k\geq2}\frac{1}{k}t^{k+1}\mathfrak{w}^{m(\gamma_{ij})+k\gamma},0\right)+\sum_{l\geq2}\frac{1}{l!}\ad_{\log\theta_K}^l\log\theta_S\right).
\end{split}
\end{equation*}
We claim that 
\begin{equation}\label{comput:claim}
-\left(E_{ij}\sum_{k\geq2}\frac{1}{k}t^{k+1}\mathfrak{w}^{m(\gamma_{ij})+k\gamma},0\right)=\sum_{l\geq2}\frac{1}{l!}\ad_{\log\theta_K}^l\log\theta_S
\end{equation}
and we compute it explicitly:
\begin{equation}\label{comput:ex1}
\begin{split}
\sum_{l\geq2}\frac{1}{l!}\ad_{\log\theta_K}^l\log\theta_S&=\sum_{l\geq2}\frac{1}{l!}\left(-tE_{ij}(-1)^l\sum_{k_1,\cdots,k_l\geq1}\frac{1}{k_1\cdots k_l}t^{k_1+\cdots+k_l}\mathfrak{w}^{(k_1+\cdots+k_l)\gamma+m(\gamma_{ij})},0\right)\\
&=-\sum_{l\geq2}\frac{(-1)^l}{l!}\left(tE_{ij}\mathfrak{w}^{m(\gamma_{ij})}\left(\sum_{k_1,\cdots,k_l\geq1}\frac{1}{k_1\cdots k_l}t^{k_1+\cdots+k_l}\mathfrak{w}^{(k_1+\cdots+k_l)\gamma}\right),0\right)\\
&=-\sum_{l\geq2}\frac{(-1)^l}{l!}\left(tE_{ij}\mathfrak{w}^{m(\gamma_{ij})}\left(\sum_{k\geq1}\frac{1}{k}t^k\mathfrak{w}^{k\gamma}\right)^l,0\right)\\
&=-\left(tE_{ij}\mathfrak{w}^{m(\gamma_{ij})}\sum_{l\geq2}\frac{(-1)^l}{l!}\left(\sum_{k\geq 1}\frac{1}{k}t^k\mathfrak{w}^{k\gamma}\right)^l,0\right)\\
&=-\left(tE_{ij}\mathfrak{w}^{m(\gamma_{ij})}\left(\exp\left(-\sum_{k\geq1}\frac{1}{k}t^k\mathfrak{w}^{k\gamma}\right)+\sum_{k\geq1}\frac{1}{k}t^k\mathfrak{w}^{k\gamma}-1\right),0\right)\\
&=-\left(tE_{ij}\mathfrak{w}^{m(\gamma_{ij})}\left( \left(1-t\mathfrak{w}^{\gamma}\right)+\sum_{k\geq 1}\frac{1}{k}t^k\mathfrak{w}^{k\gamma}-1\right),0\right)\\
&=-\left(tE_{ij}\mathfrak{w}^{m(\gamma_{ij})} \sum_{k\geq 2}\frac{1}{k}t^k\mathfrak{w}^{k\gamma},0\right).
\end{split}
\end{equation}

\subsection{Example 2}\label{Ex:2}
Finally let us give a example with only $S$-rays: assume $\mathcal{V}={i=l,j=k}$, then the wall-crossing formula (equation (2.35) in \cite{WCF2d-4d}) is
\begin{equation}
S_{\gamma_{ij}}^\mu S_{\gamma_{il}}^\mu S_{\gamma_{jl}}^\mu=S_{\gamma_{jl}}^\mu S_{\gamma_{il}}^{\mu'} S_{\gamma_{ij}}^\mu
\end{equation}
with $\gamma_{il}\defeq\gamma_{ij}+\gamma_{jl}$ and $\mu'(\gamma_{il})=\mu(\gamma_{il})-\mu(\gamma_{ij})\mu(\gamma_{jl})$. Let us further assume that $\mu(il)=0$, then the associated initial scattering has two lines:
\begin{equation*}
\mathfrak{D}=\left\lbrace\mathsf{w}_1=\left(m_1=m(\gamma_{ij}),\mathfrak{d}_1=m_1+\R,\theta_{S_1}\right), \mathsf{w}_2=\left\lbrace m_2=m(\gamma_{jk}), \mathfrak{d}_2=m_2\R, \theta_{S_2}\right\rbrace\right\rbrace
\end{equation*}
with \begin{equation*}
\begin{split}
\log\theta_{S_1}&=-\left(\mu(\gamma_{ij})tE_{ij}\mathfrak{w}^{m(\gamma_{ij})},0\right)\\
\log\theta_{S_2}&=-\left(\mu(\gamma_{jl})tE_{jl}\mathfrak{w}^{m(\gamma_{jl})},0\right).\\
\end{split}
\end{equation*}
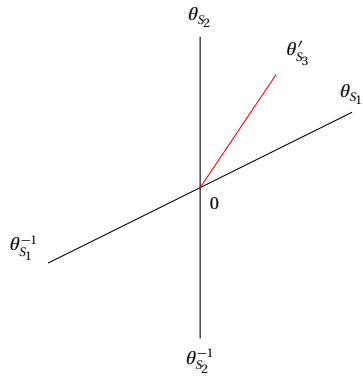
\begin{figure}[h]
    \centering
    \begin{tikzpicture}
    \draw (0,1) -- (4,3);
    \draw (2,0) -- (2,4);
    \draw [red](2,2) -- (3,3.5);
    \node[font=\tiny, below right] at (2,2) {0};
    \node[font=\tiny, below] at (4,3.5) {$\theta_{S_1}$};
    \node[font=\tiny, above] at (2,4) {$\theta_{S_2}$};
    \node[font=\tiny, below] at (2,0) {$\theta_{S_2}^{-1}$};
    \node[font=\tiny, below left] at (0,1.5) {$\theta_{S_1}^{-1}$};
    \node[font=\tiny, above right] at (3,3.5) {$\theta_{S_3}'$};
    \end{tikzpicture}
    \caption{The complete scattering diagram with only S rays.}
    \label{fig:es3}
\end{figure} 

Its completion has one more ray $\mathfrak{d}_3'=(m_1+m_2)\R_{\geq0}$ decorated with the automorphism $\theta_{3}'$ such that  
\[\log\theta_3'=\left(\mu(\gamma_{ij})\mu(\gamma_{jl})t^2E_{il}\mathfrak{w}^{m(\gamma_{il})},0\right).\] 
 
Since the path order product involves matrix commutators, the consistency of $\mathfrak{D}^{\infty}$ can be easily verified. 

\begin{remark}
In the latter example we assume $\mu(\gamma_{il})=0$ in order to have an initial scattering diagram with only two rays, as in our construction of solution of Maurer--Cartan equation in Section \ref{sec:two_walls}. However the general formula can be computed with the same rules, by adding the wall \[\mathsf{w}=\left(-(m_1+m_2),\log\theta_3=\left(-\mu(\gamma_{il})t^2E_{il}\mathfrak{w}(\gamma_{il})\right),0\right)\] to the initial scattering diagram.   
\end{remark}

\chapter{Gromov--Witten invariants in the extended tropical vertex}\label{cha:enumertaive geom}

In this chapter all scattering diagrams are defined in the extended tropical vertex group $\tilde{\mathbb{V}}$. We start with some preliminaries, following the approach of \cite{GPS}. 

\begin{notation}
We denote $\mathfrak{gl}(r,\C)_e\subset\mathfrak{gl}(r,\C)$ the subset of elementary matrices in $\mathfrak{gl}(r,\C)$, namely the matrices with only a non zero entry in position $ij$ for $i<j$. 
\end{notation}

\textbf{Deformation technique}: let $\mathfrak{D}=\left\lbrace(\mathfrak{d}_i=m_i\R,\overrightarrow{f}_{i})\vert i=1,... ,n \right\rbrace$ be a scattering diagram which consists of walls through the origin, as in figure \ref{fig:Dloop}. Assume each $\overrightarrow{f}_i$ can be factored as $\overrightarrow{f}_i=\prod_j\overrightarrow{f}_{ij}=\prod_j\left((1+A_{ij}z^{m_{i}'}), f_{ij}\right)$. Then replace each line $(\mathfrak{d}_i,\overrightarrow{f}_{i})$ with the collection of lines $\lbrace(\xi_{ij}+\mathfrak{d}_i, \overrightarrow{f}_{ij})\rbrace$, where $\xi_{ij}\in \Lambda_{\R}$ is chosen generically. This new scattering diagram is denoted by $\tilde{\mathfrak{D}}$. 

As an example let us consider the scattering diagram: 
\[
\begin{split}
\mathfrak{D}=&\lbrace\left((1,0)\R, \left(1,(1+u_{11}x)\right)\left(1,(1+u_{12}x)\right)\left(1,(1-u_{11}u_{12}x^2)\right)\right), \\
&\left((0,1)\R, \left((1+Au_{21}y),(1+u_{21}y)\right)\left((1+Au_{22}y),(1+u_{22}y)\right)\left(1, (1-u_{21}u_{22}y^2)\right)\right)\rbrace
\end{split}
\]   
where $A\in\mathfrak{gl}_e(r,\C)$. Then if we apply the deformation techniques we get the following diagram of Figure \ref{fig:D_deform}.

\begin{figure}
\center
\begin{tikzpicture}
\draw (2,0) -- (2,6);
\draw (3,0) -- (3,6);
\draw (5.5,0)-- (5.5,6);
\draw (0,1) -- (6,1);
\draw (0, 2.5) -- (6,2.5);
\draw (0, 4) -- (6,4);
\node [below left, font=\tiny] at (2,0) {$\left((1+Au_{21}y),(1+u_{21}y)\right) $};
\node [below , font=\tiny] at (3.5,0) {$\left((1+Au_{22}y),(1+u_{22}y)\right)$};
\node [below right, font=\tiny] at (5.5,0) {$\left(1,(1-u_{21}u_{22}y^2)\right)$};
\node [below, font=\tiny] at (0,1) {$\left(1,(1+u_{11}x)\right)$};
\node [below , font=\tiny] at (0,2.5) {$\left(1,(1+u_{12}x)\right)$};
\node [below,  font=\tiny] at (0,4) {$\left(1,(1-u_{11}u_{12}x^2)\right)$};

\end{tikzpicture}
\caption{Deformed scattering diagram}
\label{fig:D_deform}
\end{figure}
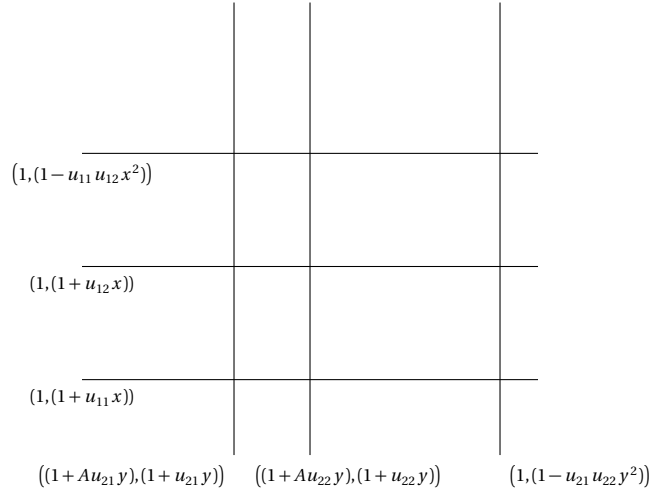

\begin{definition}
Let $\mathfrak{D}$ be a scattering diagram, then the \textit{asymptotic scattering diagram} $\mathfrak{D}_{as}$ is obtained by replacing each ray $(\xi_i+m_i\R_{\geq 0},\overrightarrow{f}_i)$ with $(\R_{\geq 0}m_i,\overrightarrow{f}_{i})$ and every line $(\xi_i+m_i\R,\overrightarrow{f}_i)$ with a parallel line through the origin $(m_i\R,\overrightarrow{f}_i )$. If two lines/rays in $\mathfrak{D}$ have the same slope given by the vector $m_i$ and functions $\overrightarrow{f}_i, \overrightarrow{f}_{i'}$, then the function $\overrightarrow{f}$ attached to the line/ray through the origin is the product of the $\overrightarrow{f}_i$ and $\overrightarrow{f}_{i'}$. 
\end{definition}


\begin{notation}
Let $m_1,m_2$ be two primitive vectors in $\Lambda$ with coordinates $m_1=(m_{11}, m_{12})$ and $m_2=(m_{21},m_{22})$. Let also choose the anticlockwise orientation on $\Lambda_{\R}$ and assume $m_1$ and $m_2$ are positive oriented. We denote by
\begin{align}
|m_1\wedge m_2|=|m_{11}m_{22}-m_{12}m_{21}| 
\end{align}
Moreover, let $n_i=(-m_{i2}, m_{i1})$ be the primitive vector orthogonal to $m_i$ and positive oriented, then  
\begin{align*}
\langle m_2, n_1\rangle=-\langle m_1,n_2\rangle = |m_1\wedge m_2|
\end{align*}
where $\langle-,-\rangle$ is the pairing between $\Lambda$ and $\Lambda^*$.
Finally if $m'=wm$ for some primitive $m$ and some integer $w$, then $w$ is called \textit{index} of $m'$.  
\end{notation}

\begin{notation}
Let $\mathfrak{d}_1, \mathfrak{d}_2$ be two rays (or lines) in $\tilde{\mathfrak{D}}_k$ such that $\mathfrak{d}_1\cap\mathfrak{d}_2\neq \emptyset$ and let $\mathfrak{d}_{\mathrm{out}}$ a third ray emanating from $\mathfrak{d}_1$ and $\mathfrak{d}_2$. Then we define 
\begin{equation}
\text{Parents}(\mathfrak{d}_{\mathrm{out}})=\begin{cases}\lbrace\mathfrak{d}_1,\mathfrak{d}_2\rbrace & \text{ if } \mathfrak{d}_1,\mathfrak{d}_2 \text{ are rays}\\
\emptyset & \text{ otherwise}
\end{cases}
\end{equation}
in which case $\mathfrak{d}_{\mathrm{out}}\in\text{Child}(\mathfrak{d}_1), \text{Child}(\mathfrak{d}_2)$;
\begin{equation}
\text{Ancestors}(\mathfrak{d}_{\mathrm{out}})=\lbrace\mathfrak{d}_{\mathrm{out}}\rbrace\cup \bigcup_{\mathfrak{d}'\in\text{Parents }(\mathfrak{d}_{\mathrm{out}})} \text{ Ancestors}(\mathfrak{d}')
\end{equation}
and 
\begin{equation}
\text{Leaves}(\mathfrak{d}_{\mathrm{out}})=\lbrace \mathfrak{d}'\in\text{Ancestor }(\mathfrak{d}_{\mathrm{out}})\vert \mathfrak{d}' \text{ is a line}\rbrace. 
\end{equation}
\end{notation} 

\begin{definition}
A standard scattering diagram $\mathfrak{D}=\left\lbrace \mathsf{w}_i=\left(\mathfrak{d}_i, \theta_i\right), 1\leq i\leq n\right\rbrace$ over $R=\C [\![t_1,... ,t_n]\!]$, consists of finite collection of lines $\mathfrak{d}_i=m_i\R$ through the origin and automorphisms $\theta_i$ such that $\log\theta_i\in \C[z^{m_i}][\![t_i]\!]\cdot\left(\mathfrak{gl}(r,\mathbb{C}),\partial_{n_i}\right)$. We allow $m_{j}=m_{j'}$ for $j\neq j'$, but we keep $t_j\neq t_{j'}$. 
\end{definition}

Given a standard scattering diagram, there is a method to compute its completion iteratively over the ring $R_N=\C [t_1,... ,t_n]/(t_1^{N+1},... , t_n^{N+1})$. We start with a preliminary lemma

\begin{lemma}\label{lem:1.9}
Let 
\[
\mathfrak{D}=\left\lbrace\left(m_1\R, \overrightarrow{f}_1=\left(1+A_1t_1z^{l_1m_1},1+c_1t_1z^{l_1m_1}\right)\right), \overrightarrow{f}_2=\left(m_2\R, \left(1+A_2t_2z^{l_2m_2}, 1+c_2t_2z^{l_2m_2}\right)\right) \right\rbrace
\]
with $A_1,A_2\in\mathfrak{gl}(r,\C)$, $m_1,m_2$ primitive vector in $\Lambda$, $l_1,l_2\in\Z_{> 0}$ and $c_1,c_2\in\C$. Then the consistent scattering diagram $\mathfrak{D}^{\infty}$ on $R_2$ is obtained by adding a single wall 
\[\mathsf{w}_{\mathrm{out}}=\left(\mathfrak{d}_{\mathrm{out}}=\R_{\geq 0}(l_1m_1+l_2m_2),\overrightarrow{f}_{\mathrm{out}}\right)\] 
with function
\begin{equation}\label{eq:1.9}
\begin{split}
\overrightarrow{f}_{\mathrm{out}}=&\Big(1+([A_1,A_2]+A_2c_1l_2|m_1\wedge m_2|+A_1c_2l_1|m_1\wedge m_2|)t_1t_2z^{l_1m_1+l_2m_2},\\
& 1+c_1c_2t_1t_2l_{\mathrm{out}}|m_1\wedge m_2|z^{l_1m_1+l_2m_2}\Big)
\end{split}
\end{equation}
where $l_{\mathrm{out}}$ is the index of $l_1m_1+l_2m_2$.
If $m_{\mathrm{out}}=0$, then no wall is added since $|m_1\wedge m_2|=0$ and $A_i$ commutes with itself.  
\end{lemma}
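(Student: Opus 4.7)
The plan is to exploit that, modulo total $(t_1, t_2)$-degree $\geq 3$, only a single Baker--Campbell--Hausdorff (BCH) commutator in $\tilde{\mathfrak{h}}$ contributes to the monodromy, reducing the proof to a direct bracket computation using Lemma \ref{lem:dgLa}.

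\textbf{Step 1 (reduce consistency to one commutator).} Set $X_i \defeq \log \theta_i \in \tilde{\mathfrak{h}}$; by hypothesis each $X_i$ has leading part of order $t_i$, so any iterated BCH term of total $t$-degree $\geq 3$ may be discarded. A small generic counter-clockwise loop $\gamma$ around the origin crosses the two lines of $\mathfrak{D}$ (each twice, with factors $\theta_i$ and $\theta_i^{-1}$) together with the candidate outgoing ray $\mathfrak{d}_{\mathrm{out}}$. Consistency (Definition \ref{def:pathorderedprod}), in analogy with Figure \ref{fig:Dloop}, reads
\begin{equation*}
\theta_1 \circ \theta_{\mathrm{out}} \circ \theta_2 \circ \theta_1^{-1} \circ \theta_2^{-1} = \id.
\end{equation*}
Isolating $\theta_{\mathrm{out}}$ and applying BCH to the remaining four factors collapses, modulo $(t_1, t_2)^3$, to
\begin{equation*}
\log \theta_{\mathrm{out}} \equiv \epsilon\, [X_1, X_2]_{\sim},
\end{equation*}
for some sign $\epsilon = \pm 1$ fixed by the orientation convention. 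In particular $\log \theta_{\mathrm{out}}$ is concentrated in the single Fourier mode $l_1 m_1 + l_2 m_2$, which forces $\mathfrak{d}_{\mathrm{out}} = \R_{\geq 0}(l_1 m_1 + l_2 m_2)$ and shows that exactly one ray is added.

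\textbf{Step 2 (evaluate the bracket).} Extending Lemma \ref{lem:dgLa} bilinearly to allow an independent scalar coefficient in the derivation slot, and writing $X_i = (A_i t_i z^{l_i m_i},\, c_i t_i z^{l_i m_i} \partial_{n_i})$ modulo $t_i^2$, the matrix slot of $[X_1, X_2]_{\sim}$ equals
\begin{equation*}
\bigl([A_1, A_2] + c_1 \langle l_2 m_2, n_1\rangle A_2 - c_2 \langle l_1 m_1, n_2\rangle A_1\bigr)\, t_1 t_2\, z^{l_1 m_1 + l_2 m_2},
\end{equation*}
and the derivation slot equals
\begin{equation*}
c_1 c_2\, t_1 t_2\, z^{l_1 m_1 + l_2 m_2}\bigl(\langle l_2 m_2, n_1\rangle \partial_{n_2} - \langle l_1 m_1, n_2\rangle \partial_{n_1}\bigr).
\end{equation*}
Substituting $\langle l_2 m_2, n_1\rangle = l_2 |m_1 \wedge m_2|$ and $\langle l_1 m_1, n_2\rangle = -l_1 |m_1 \wedge m_2|$, and using the primitivity identity $l_1 n_1 + l_2 n_2 = l_{\mathrm{out}} n_{\mathrm{out}}$, these expressions match (up to $\epsilon$) the logarithms of both slots of $\overrightarrow{f}_{\mathrm{out}}$ in \eqref{eq:1.9}; since each slot of $\overrightarrow{f}_{\mathrm{out}}$ has the form $1 + O(t_1 t_2)$, its logarithm coincides with the correction itself modulo $(t_1, t_2)^3$.

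\textbf{Step 3 (sign and degenerate case).} The sign $\epsilon$ is pinned down by the classical Kontsevich--Soibelman example ($A_i = 0$, $c_i = 1$, $m_1 = (1, 0)$, $m_2 = (0, 1)$) around Figure \ref{fig:Dloop}, whose completion is the diagonal ray carrying $1 + t_1 t_2\, xy$. Uniqueness of $\mathsf{w}_{\mathrm{out}}$ is automatic because the Fourier mode $l_1 m_1 + l_2 m_2$ is the only one appearing at order $t_1 t_2$. If $m_{\mathrm{out}} = 0$, then $m_1$ and $m_2$ are antiparallel (so $l_1 = l_2$), $|m_1 \wedge m_2| = 0$, and every pairing $\langle l_j m_j, n_i\rangle$ vanishes; the putative correction lives in the excluded Fourier mode $z^0 \notin \tilde{\mathfrak{h}}$, so no ray can or need be added, exactly as claimed. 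The main (mild) obstacle is keeping consistent sign conventions between the path-ordered product and the BCH expansion, which is bookkeeping rather than a serious issue.
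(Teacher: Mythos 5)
Your proof is correct and follows essentially the same path as the paper's: reduce the path-ordered product around the origin to a single BCH commutator on the relevant truncation, compute $[X_1,X_2]_\sim$ in $\tilde{\mathfrak{h}}$ via Lemma \ref{lem:dgLa}, and read off $\overrightarrow{f}_{\mathrm{out}}$. You make the identity $l_1n_1+l_2n_2=l_{\mathrm{out}}n_{\mathrm{out}}$ explicit and pin the sign via the Kontsevich--Soibelman example rather than by the step-by-step conjugation of the paper, but these are organizational shortcuts, not a different argument.
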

\begin{proof}
Let $\gamma$ be a generic loop around the origin, such that the path order product $\Theta_{\gamma,\mathfrak{D}^{\infty}}=\theta_2^{-1}\theta_1^{-1}\theta_2\theta_1$, where
\begin{align*}
&\log\theta_1=(\left(A_1t_1z^{l_1m_1}, c_1t_1z^{l_1m_1}\partial_{n_1}\right)\\
&\log\theta_2=\left(A_2t_2z^{l_2m_2}, c_2t_2z^{l_2m_2}\partial_{n_2}\right).
\end{align*} 

We now compute $\theta_2^{-1}\theta_1^{-1}\theta_2\theta_1$: we begin with $\theta_1^{-1}\theta_2\theta_1$
\begin{align*}
\theta_1^{-1}\theta_2\theta_1&=\exp\left(-\log\theta_1\bullet\log\theta_2\bullet\log\theta_1\right)\\
&=\exp(\left(A_2t_2z^{l_2m_2}, c_2t_2z^{l_2m_2}\partial_{n_2}\right)+\sum_{l\geq 1}\frac{1}{l!}\ad^l_{-\log\theta_1}\left(\left(A_2t_2z^{l_2m_2}, c_2t_2z^{l_2m_2}\partial_{n_2}\right)\right))
\end{align*}

\begin{align*}
&=\exp(\left(A_2t_2z^{l_2m_2}, c_2t_2z^{l_2m_2}\partial_{n_2}\right)+\\
&\qquad+((-[A_1,A_2]-A_2c_1l_2|m_1\wedge m_2|-A_1c_2l_1|m_1\wedge m_2|)t_1t_2z^{l_1m_1+l_2m_2},\\
&\qquad-c_1c_2t_1t_2w_{\mathrm{out}}|m_1\wedge m_2|z^{l_1m_1+l_2m_2})+\\
&\qquad+\sum_{l\geq 2}\frac{1}{l!}\ad^l_{(-\log\theta_1)}\left(\left(A_2t_2z^{l_2m_2}, c_2t_2z^{l_2m_2}\partial_{n_2}\right)\right)))\\
&\exp\Big(\left(A_2t_2z^{l_2m_2}, c_2t_2z^{l_2m_2}\partial_{n_2}\right)+\\
&\qquad+\big((-[A_1,A_2]-A_2c_1l_2|m_1\wedge m_2|-A_1c_2l_1|m_1\wedge m_2|)t_1t_2z^{l_1m_1+l_2m_2},\\
\qquad&-c_1c_2t_1t_2l_{\mathrm{out}}|m_1\wedge m_2|z^{l_1m_1+l_2m_2}\partial_{n_{\mathrm{out}}}\big)\Big)
\end{align*}
where in the last step $\sum_{l\geq 2}\frac{1}{l!}\ad^l_{(-\log\theta_1)}\left(\left(A_2t_2z^{l_2m_2}, c_2t_2z^{l_2m_2}\partial_{n_2}\right)\right)$ vanish on $R_2$. Then 
\begin{align*}
\theta_2^{-1}\theta_1^{-1}\theta_2\theta_1&=\exp\bigg(-\log\theta_2\bullet\Big(\left(A_2t_2z^{l_2m_2}, c_2t_2z^{l_2m_2}\partial_{n_2}\right)+\\
&\qquad+\big((-[A_1,A_2]-A_2c_1l_2|m_1\wedge m_2|-A_1c_2l_1|m_1\wedge m_2|)t_1t_2z^{l_1m_1+l_2m_2},\\
&\qquad-c_1c_2t_1t_2l_{\mathrm{out}}|m_1\wedge m_2|z^{l_1m_1+l_2m_2}\partial_{n_{\mathrm{out}}}\big)\Big)\bigg)\\
&\exp\Big(-\left(A_2t_2z^{l_2m_2}, c_2t_2z^{lw_2m_2}\partial_{n_2}\right)+\left(A_2t_2z^{l_2m_2}, c_2t_2z^{l_2m_2}\partial_{n_2}\right)+\\
&\qquad-\big(([A_1,A_2]+A_2c_1l_2|m_1\wedge m_2|+A_1c_2l_1|m_1\wedge m_2|)t_1t_2z^{l_1m_1+l_2m_2},\\
&\qquad c_1c_2t_1t_2l_{\mathrm{out}}|m_1\wedge m_2|z^{l_1m_1+l_2m_2}\partial_{n_{\mathrm{out}}}\big)\Big)\\
&=\exp\Big(-\big(([A_1,A_2]+A_2c_1l_2|m_1\wedge m_2|+A_1c_2l_1|m_1\wedge m_2|)t_1t_2z^{l_1m_1+l_2m_2},\\ &\qquad c_1c_2t_1t_2l_{\mathrm{out}}|m_1\wedge m_2|z^{l_1m_1+l_2m_2}\partial_{n_{\mathrm{out}}}\big)\Big)
\end{align*}
where we omit the other terms in the BCH product because they vanish on $R_2$. Hence the automorphism of the new wall is 
\begin{equation}
\begin{split}
&\log\theta_{\mathrm{out}}=\big(([A_1,A_2]+A_2c_1l_2|m_1\wedge m_2|+A_1c_2l_1|m_1\wedge m_2|)t_1t_2z^{l_1m_1+l_2m_2},\\ &c_1c_2t_1t_2l_{\mathrm{out}}|m_1\wedge m_2|z^{l_1m_1+l_2m_2}\partial_{n_{\mathrm{out}}}\big)
\end{split}
\end{equation}
and the corresponding function $\overrightarrow{f}_{\mathrm{out}}$ is as \eqref{eq:1.9}.
\end{proof}

Let us introduce the ring $\tilde{R}_N$
\begin{equation}
\tilde{R}_N=\frac{\C[\lbrace u_{ij}\vert 1\leq i\leq n, 1\leq j\leq N\rbrace]}{\langle u_{ij}^2 \vert 1\leq i\leq n, 1\leq j\leq N \rangle}
\end{equation} 
while $\overrightarrow{f}_i$ may not factor on $R_N$, it does on $\tilde{R}_N$ by replacing $t_i$ with $t_i=\sum_{j=1}^N u_{ij}$. For instance, let 
\begin{equation}
\log\overrightarrow{f}_i=\sum_{j=1}^N\sum_{l\geq 1}\left(A_{ijl}t_i^jz^{lm_i}, a_{ijl}t_{i}^jz^{lm_i}\right)\quad \text{on } R_N
\end{equation}
then, since \[t_i^j=\sum_{\substack{J\subset\lbrace 1,...  N\rbrace\\ |J|=j}}j!\prod_{j'\in J}u_{ij'}\] 
we get:
\begin{equation}\label{eq:f_fact}
\log\overrightarrow{f}_i=\sum_{j=1}^N\sum_{l\geq 1}\sum_{\substack{J\subset\lbrace 1,... , N\rbrace\\ |J|=j}}j!\left(A_{ijl}\left(\prod_{j'\in J}u_{ij'}\right)z^{lm_i},a_{ijl}\left(\prod_{j'\in J}u_{ij'}\right)z^{lm_i}\right)\quad \text{on } \tilde{R}_N
\end{equation}
Hence we can define a new scattering diagram  
\[\tilde{\mathfrak{D}}_N=\left\lbrace (\mathfrak{d}_{iJl}, \theta_{iJl})\vert 1\leq i\leq n, l\geq 1, J\subset\lbrace 1,... , N\rbrace, \# J\geq 1 \right\rbrace\] such that for a generic choice of $\xi_{il}\in\R^2$, $\mathfrak{d}_{iJl}=\xi_{il}+m_i\R$ is a line parallel to $\mathfrak{d}_i$ and the associated function $\overrightarrow{f}_{iJl} $ is a single term in  Equation \eqref{eq:f_fact}, i.e.
\begin{align*}
\overrightarrow{f}_{ijl}=\left(1+ (\#J)! A_{i(\#J)l}\left(\prod_{j'\in J}u_{ij'}\right)z^{lm_i}, 1+ (\#J)! a_{i(\#J)l}\left(\prod_{j'\in J}u_{ij'}\right)z^{lm_i}\right).
\end{align*}
Then we produce a sequence of scattering diagrams $\tilde{\mathfrak{D}}_N=\tilde{\mathfrak{D}}_N^0, \tilde{\mathfrak{D}}_N^1, \tilde{\mathfrak{D}}_N^2, ...  $ which eventually stabilizes, and $\tilde{\mathfrak{D}}_N^{\infty}=\tilde{\mathfrak{D}}_N^i$ for $i$ large enough. Inductively we assume the following:
\begin{enumerate}
\item each wall in $\tilde{\mathfrak{D}}_N^i $ is of the form $\mathsf{w}=\left(\mathfrak{d},\overrightarrow{f}_{\mathfrak{d}}\right)$ with  \[\log\overrightarrow{f}_{\mathfrak{d}}=\left(A_{\mathfrak{d}}u_{I(\mathfrak{d})}z^{m_\mathfrak{d}}, a_\mathfrak{d} u_{I(\mathfrak{d})}z^{m_\mathfrak{d}}\partial_{n_{\mathfrak{d}}}\right)\]
with $u_{I(\mathfrak{d})}=\prod_{(i,j')\in I(\mathfrak{d})}u_{ij'}$ for some index set $I(\mathfrak{d})\subset\lbrace1,... ,n\rbrace\times\lbrace1,... , N\rbrace$;
\item for each $\xi\in\text{Sing}(\tilde{\mathfrak{D}}_k^i)$, there are no more than two rays emanating from $\xi$ and if $\mathfrak{d}_1,\mathfrak{d}_2$ are these rays, then $I(\mathfrak{d}_1)\cap I(\mathfrak{d}_2)=\emptyset$. 
\end{enumerate} 
These assumptions hold for $\tilde{\mathfrak{D}}_N^0$. Then to construct $\tilde{\mathfrak{D}}_N^{i+1}$ from $\tilde{\mathfrak{D}}_N^i$ we look at every pair of lines or rays $\mathfrak{d}_1,\mathfrak{d}_2\in\tilde{\mathfrak{D}}_N^i$ such that 
\begin{itemize}
\item[(i)] $\lbrace\mathfrak{d}_1,\mathfrak{d}_2\rbrace\notin\tilde{\mathfrak{D}}_N^{i-1}$;
\item[(ii)] $\mathfrak{d}_1\cap\mathfrak{d}_2=\lbrace \xi\rbrace$ and $\xi\neq\lbrace\partial\mathfrak{d}_1, \partial\mathfrak{d}_2\rbrace$;
\item[(iii)] $I(\mathfrak{d}_1)\cap I(\mathfrak{d}_2)=\emptyset$. 
\end{itemize}
Notice that by Lemma \ref{lem:1.9}, if $\mathfrak{d}_1\cap\mathfrak{d}_2=\lbrace \xi\rbrace$ and $I(\mathfrak{d}_1)\cap I(\mathfrak{d}_2)=\emptyset$, then there is only one ray emanating from $\xi$: $\mathfrak{d}_{\mathrm{out}}=\mathfrak{d}(\mathfrak{d}_1,\mathfrak{d}_2)=\xi+\left(l_{\mathfrak{d}_1}m_{\mathfrak{d}_1}+l_{\mathfrak{d}_2}m_{\mathfrak{d}_2}\right)\R_{\geq 0}=\xi+l_{\mathfrak{d}_{\mathrm{out}}}m_{\mathfrak{d}_{\mathrm{out}}}\R_{\geq 0}$ with function
\begin{equation}\label{eq:1.4}
\begin{split}
\overrightarrow{f}_{\mathrm{out}}&=\Big(1+([A_{\mathfrak{d}_1},A_{\mathfrak{d}_2}]+A_{\mathfrak{d}_2}a_{\mathfrak{d}_1}l_{\mathfrak{d}_2}|m_{\mathfrak{d}_1}\wedge m_{\mathfrak{d}_2}|+A_{\mathfrak{d}_1}a_{\mathfrak{d}_2}l_{\mathfrak{d}_1}|m_{\mathfrak{d}_1}\wedge m_{\mathfrak{d}_2}|)u_{I(\mathfrak{d}_1)\cup I(\mathfrak{d}_2)}z^{l_{\mathfrak{d}_1}m_{\mathfrak{d}_1}+l_{\mathfrak{d}_2}m_{\mathfrak{d}_2}},\\
& 1+a_{\mathfrak{d}_1}a_{\mathfrak{d}_2}u_{I(\mathfrak{d}_1)\cup I(\mathfrak{d}_2)}l_{\mathfrak{d}_{\mathrm{out}}}|m_{\mathfrak{d}_1}\wedge m_{\mathfrak{d}_2}|z^{l_{\mathfrak{d}_1}m_{\mathfrak{d}_1}+l_{\mathfrak{d}_2}m_{\mathfrak{d}_2}})\Big).
\end{split}
\end{equation}
Hence we take
\begin{equation}
\tilde{\mathfrak{D}}_N^{i+1}=\tilde{\mathfrak{D}}_N^{i}\cup\left\lbrace\mathsf{w}=\left(\mathfrak{d}(\mathfrak{d}_1,\mathfrak{d}_2),\overrightarrow{f}_{\mathfrak{d}}\right)\vert\mathfrak{d}_1,\mathfrak{d}_2 \text{satsify } \text{(i)-(iii)}\right\rbrace.
\end{equation}

Of course $\tilde{\mathfrak{D}}_N^{i+1}$ satisfies assumption $(1)$ with $I(\mathfrak{d})=\bigcup_{\mathfrak{d}'\in\text{Parents}(\mathfrak{d})}I(\mathfrak{d}')$ for any $\mathfrak{d}\in\tilde{\mathfrak{D}}_N^{i+1}\setminus\tilde{\mathfrak{D}}_N^i$. Then let $\xi\in \text{Sing}(\tilde{\mathfrak{D}}_N^{i+1})$ and assume that $(2)$ does not hold, i.e. there are at least $3$ rays from $\xi$. Since $I(\mathfrak{d}_1)\cap I(\mathfrak{d}_2)=\emptyset$ then $ \text{Leaves}(\mathfrak{d}_1)\cap \text{Leaves}(\mathfrak{d}_1)=\emptyset$ hence their lines can be slightly moved independently. In particular by the assumption $\tilde{\mathfrak{D}}_N$ is generic we can move the lines from the point $\xi\in\mathfrak{d}_1\cap\mathfrak{d}_2$ violating assumption $(2)$. 
This procedure stops since at any step $I(\mathfrak{d})$ is the union of the $I(\mathfrak{d}_i)$ with $\mathfrak{d}_i\in\text{Leaves}(\mathfrak{d})$ and the maximal cardinality of $I(\mathfrak{d})$ is $nN$.   

Let us explain how the previous construction works in practice in the following example.

\begin{example}\label{ex:diagram1}

Let us consider the initial scattering diagram
\[
\mathfrak{D}=\left\lbrace\left(\mathfrak{d}_1=(0,1)\R, \left(1+At_1y, 1+t_1y\right)\right), \left(\mathfrak{d}_2=(1,0)\R, \left(1, 1+t_2x\right)\right) \right\rbrace
\] 
 with $A\in\mathfrak{gl}_e(r,\C)$. By the deformation techniques we get $\tilde{\mathfrak{D}}_2=\tilde{\mathfrak{D}}_2^0$ as in Figure \ref{fig:D_deform}.
 
\begin{figure}
\center
\begin{tikzpicture}
\draw (2,0) -- (2,7);
\draw (3,0) -- (3,7);
\draw (5.5,0)-- (5.5,7);
\draw (0,1) -- (7,1);
\draw (0, 2.5) -- (7,2.5);
\draw (0, 4) -- (7,4);
\node [below left, font=\tiny] at (2,0) {$\left((1+Au_{11}y),(1+u_{11}y)\right) $};
\node [below , font=\tiny] at (3.5,0) {$\left((1+Au_{12}y),(1+u_{12}y)\right)$};
\node [below right, font=\tiny] at (5.5,0) {$\left(1,(1-u_{11}u_{12}y^2)\right)$};
\node [below, font=\tiny] at (0,1) {$\left(1,(1+u_{21}x)\right)$};
\node [below , font=\tiny] at (0,2.5) {$\left(1,(1+u_{22}x)\right)$};
\node [below,  font=\tiny] at (0,4) {$\left(1,(1-u_{21}u_{22}x^2)\right)$};
\node at (2,1){$\bullet$};
\node at (2,2.5){$\bullet$};
\node at (2,4){$\bullet$};
\node at (3,1){$\bullet$};
\node at (3,2.5){$\bullet$};
\node at (3,4){$\bullet$};
\node at (5.5,1){$\bullet$};
\node at (5.5,2.5){$\bullet$};
\node at (5.5,4){$\bullet$};
\draw [red](2,1) -- (6,5);
\draw [red](3,1) -- (6,4);
\draw [red](5.5,4)-- (7,5.5);
\draw [red](2,2.5) -- (5,5.5);
\draw [red](3, 2.5) -- (6,5.5);
\draw [cyan](5.5,1) -- (6.5,3);
\draw [cyan](5.5,2.5) -- (6.5,4.5);
\draw [blue](2,4) -- (6,6);
\draw [blue](3,4) -- (7,6);
\end{tikzpicture}
\caption{Construction of $\tilde{\mathfrak{D}}_2^1$. We color distinctly rays with different slope: slope $1$ are red, slope $2$ are cyan and slope $1/2$ are blue. }
\label{fig:D_deform1}
\end{figure}
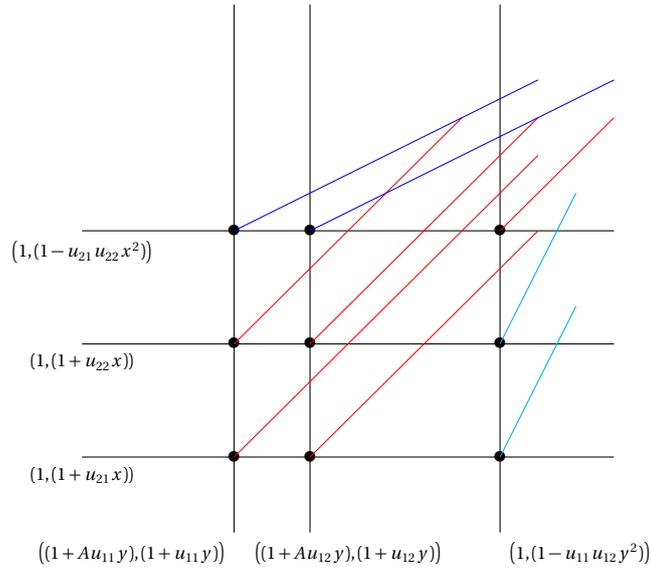

In order to construct $\tilde{\mathfrak{D}}_2^1$ we consider the marked points in Figure \ref{fig:D_deform1}, that are intersection of lines which satisfy conditions (i)-(iii). For each such point we draw a new ray as prescribed by Lemma \ref{lem:1.9}. We collect the functions $\overrightarrow{f}_{i}$ associated to the new rays below:
\begin{enumerate}
\item[\textbf{Slope 1}] $(1+Au_{21}u_{11}xy, 1+u_{21}u_{11}xy)$, $(1+Au_{21}u_{12}xy, 1+u_{21}u_{12}xy)$, $(1+Au_{22}u_{11}xy, 1+u_{22}u_{11}xy)$, $(1+Au_{22}u_{12}xy, 1+u_{22}u_{12}xy)$ and $(1, 1+2u_{21}u_{11}u_{22}u_{12}x^2y^2)$. 
\item[\textbf{Slope 1/2}] $(1-Au_{21}u_{22}u_{11}x^2y,1-u_{21}u_{22}u_{11}x^2y)$ and $(1-Au_{22}u_{21}u_{12}x^2y,1-u_{21}u_{22}u_{12}x^2y)$.
\item[\textbf{Slope 2}] $(1,1-u_{21}u_{11}u_{12}xy^2)$ and  $(1,1-u_{22}u_{12}u_{11}xy^2)$.
\end{enumerate}
 
\begin{figure}
\center
\begin{tikzpicture}
\draw (2,0) -- (2,7);
\draw (3,0) -- (3,7);
\draw (5.5,0)-- (5.5,7);
\draw (0,1) -- (7,1);
\draw (0, 2.5) -- (7,2.5);
\draw (0, 4) -- (7,4);
\node [below left, font=\tiny] at (2,0) {$\left((1+Au_{11}y),(1+u_{11}y)\right) $};
\node [below , font=\tiny] at (3.5,0) {$\left((1+Au_{12}y),(1+u_{12}y)\right)$};
\node [below right, font=\tiny] at (5.5,0) {$\left(1,(1-u_{11}u_{12}y^2)\right)$};
\node [below, font=\tiny] at (0,1) {$\left(1,(1+u_{21}x)\right)$};
\node [below , font=\tiny] at (0,2.5) {$\left(1,(1+u_{22}x)\right)$};
\node [below,  font=\tiny] at (0,4) {$\left(1,(1-u_{21}u_{22}x^2)\right)$};
\node at (3.5,2.5){$\bullet$};
\node at (4.5,2.5){$\bullet$};
\node at (6.25,2.5){$\bullet$};
\node at (3,3.5){$\bullet$};
\node at (3,4.5){$\bullet$};
\node at (3,2) {$\bullet$};
\draw [red](2,1) -- (6,5);
\draw [red](3,1) -- (6,4);
\draw [red](5.5,4)-- (7,5.5);
\draw [red](2,2.5) -- (5,5.5);
\draw [red](3, 2.5) -- (6,5.5);
\draw [cyan](5.5,1) -- (6.5,3);
\draw [cyan](5.5,2.5) -- (6.5,4.5);
\draw [blue](2,4) -- (6,6);
\draw [blue](3,4) -- (7,6);
\draw [thick, red] (3,4.5) -- (5,6.5);
\draw [thick, red] (6.25,2.5) -- (7,3.25);
\draw [thick, blue] (3.5,2.5) -- (6.5,4);
\draw [thick, blue] (4.5,2.5) -- (6.5,3.5);
\draw [thick, cyan] (3,3.5) -- (4.5,6.5);
\draw [thick, cyan] (3,2) -- (5,6);
\end{tikzpicture}
\caption{Construction of $\tilde{\mathfrak{D}}_2^2$. We color distinctly rays with different slope: slope $1$ are red, slope $2$ are cyan and slope $1/2$ are blue. Last rays added are thicker. }
\label{fig:D_deform2}
\end{figure} 
There is one step we can go further, since marked points in Figure \ref{fig:D_deform2} are the only intersection points of either lines or rays for which conditions (i)-(iii) hold true. The functions $\overrightarrow{f}_i$ associate to the new rays are:
\begin{enumerate}
\item[\textbf{Slope 1}] $(1,1-4u_{21}u_{22}u_{11}u_{12}x^2y^2)$ and $(1-4Au_{21}u_{22}u_{11}u_{12}x^2y^2,1-4u_{21}u_{22}u_{11}u_{12}x^2y^2)$
\item[\textbf{Slope 1/2}] $(1+Au_{21}u_{22}u_{11}x^2y, 1+u_{21}u_{11}u_{22}x^2y)$ and $(1+Au_{21}u_{12}u_{22}x^2y, 1+u_{21}u_{12}u_{22}x^2y) $
\item[\textbf{Slope 2}] $(1+2Au_{21}u_{11}u_{12}xy^2, 1+u_{21}u_{11}u_{12}xy^2)$ and  $(1+2Au_{12}u_{11}u_{22}xy^2, 1+u_{12}u_{11}u_{22}xy^2)$.
\end{enumerate}
 
The last step is represented as in Figure \ref{fig:D_deform3} and it contains one more ray of slope one and with function
\begin{equation}
\left(1+4Au_{21}u_{22}u_{11}u_{12}x^2y^2,1+4u_{11}u_{12}u_{21}u_{22}x^2y^2\right).
\end{equation}

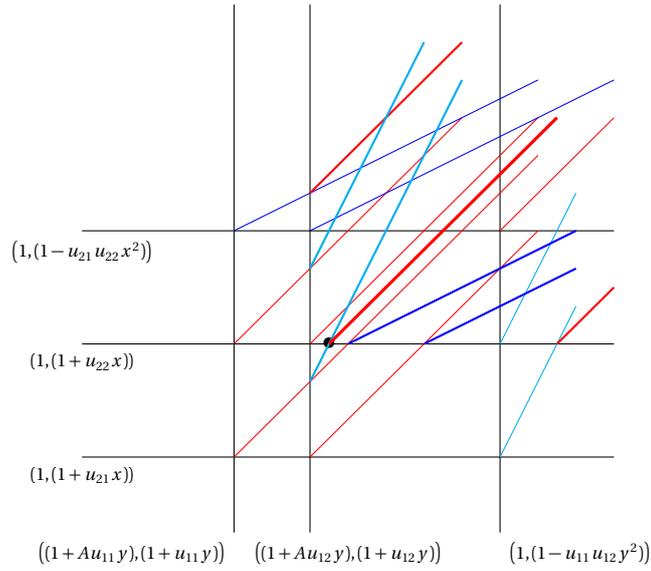
\begin{figure}
\center
\begin{tikzpicture}
\draw (2,0) -- (2,7);
\draw (3,0) -- (3,7);
\draw (5.5,0)-- (5.5,7);
\draw (0,1) -- (7,1);
\draw (0, 2.5) -- (7,2.5);
\draw (0, 4) -- (7,4);
\node [below left, font=\tiny] at (2,0) {$\left((1+Au_{11}y),(1+u_{11}y)\right) $};
\node [below , font=\tiny] at (3.5,0) {$\left((1+Au_{12}y),(1+u_{12}y)\right)$};
\node [below right, font=\tiny] at (5.5,0) {$\left(1,(1-u_{11}u_{12}y^2)\right)$};
\node [below, font=\tiny] at (0,1) {$\left(1,(1+u_{21}x)\right)$};
\node [below , font=\tiny] at (0,2.5) {$\left(1,(1+u_{22}x)\right)$};
\node [below,  font=\tiny] at (0,4) {$\left(1,(1-u_{21}u_{22}x^2)\right)$};
\node at (3.25,2.5){$\bullet$};
\draw [red](2,1) -- (6,5);
\draw [red](3,1) -- (6,4);
\draw [red](5.5,4)-- (7,5.5);
\draw [red](2,2.5) -- (5,5.5);
\draw [red](3, 2.5) -- (6,5.5);
\draw [cyan](5.5,1) -- (6.5,3);
\draw [cyan](5.5,2.5) -- (6.5,4.5);
\draw [blue](2,4) -- (6,6);
\draw [blue](3,4) -- (7,6);
\draw [thick, red] (3,4.5) -- (5,6.5);
\draw [thick, red] (6.25,2.5) -- (7,3.25);
\draw [thick, blue] (3.5,2.5) -- (6.5,4);
\draw [thick, blue] (4.5,2.5) -- (6.5,3.5);
\draw [thick, cyan] (3,3.5) -- (4.5,6.5);
\draw [thick, cyan] (3,2) -- (5,6);
\draw [very thick, red] (3.25,2.5)-- (6.25,5.5);
\end{tikzpicture}
\caption{Construction of $\tilde{\mathfrak{D}}_2^3$. We color distinctly rays with different slope: slope $1$ are red, slope $2$ are cyan and slope $1/2$ are blue. Last ray added is thicker. }
\label{fig:D_deform3}
\end{figure}   

Now if we take the product of all functions many factors cancel and we finally get the scattering diagram $\mathfrak{D}_2^3$ with two more rays:
\begin{equation}\label{eq:D_2^3}
\mathfrak{D}_2^3\setminus\mathfrak{D}=\left\lbrace\left((1,1)\R_{\geq 0},(1+At_1t_2xy, 1+t_1t_2xy)\right), \left((1,2)\R_{\geq 0}, (1+Axy^2t_2t_1^2,1)\right)\right\rbrace
\end{equation}
Notice that the ray of slope $2$ has only the matrix contribution, which was not there in the tropical vertex group $\mathbb{V}$ (see the analogous Example 1.11 of \cite{GPS}).
\begin{figure}[h]
\center
\begin{tikzpicture}
\draw (2,0) -- (2,4);
\draw (0,2) -- (4,2);
\draw (2,2) -- (4,4);
\draw (2,2) -- (3, 4);
\node [below right, font=\tiny] at (2,2) {0};
\end{tikzpicture}
\caption{The asymptotic scattering diagram $\mathfrak{D}_2^3$.}
\label{fig:D_2^3}
\end{figure}
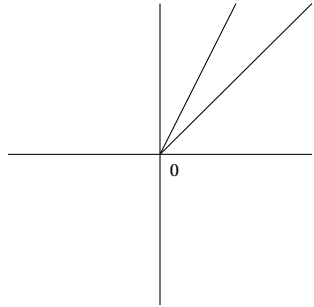
\end{example}

\section{Tropical curve count}\label{sec:tropical curve count}

\begin{notation}
Let $\overline{\Gamma}$ be a weighted, connected, finite graph without divalent vertices and denote the set of vertices by $\overline{\Gamma}^{[0]}$ and the set of edges by $\overline{\Gamma}^{[1]}$. The weight function $w_{\overline{\Gamma}}\colon\overline{\Gamma}^{[1]}\to \Z_{> 0}$ assigns a weight to each edge. We denote the set of univalent vertices by $\overline{\Gamma}^{[0]}_{\infty}$ and we define $\Gamma=\overline{\Gamma}\setminus\overline{\Gamma}^{[0]}_\infty$. The set of edges and vertices of $\Gamma$ is denoted by $\Gamma^{[0]}$ and $\Gamma^{[1]}$ respectively. The edges of $\Gamma$ which are not compact are called \textit{unbounded} and denoted by $\Gamma^{[1]}_\infty\subset\Gamma^{[1]}$.   
\end{notation}

\begin{definition}
A proper map $h\colon\Gamma\to\Lambda_\R$ is called a \emph{parametrized tropical curve} if it satisfies the following conditions:
\begin{enumerate}
\item[(1)] for every edge $E\in\Gamma^{[1]}$, $h\vert_E$ in an embedding with image $h(E)$ contained in an affine line with rational slope;
\item[(2)] for every vertex $V\in\Gamma^{[0]}$, the following \emph{balancing condition} holds true: let $E_1,... ,E_s\in\Gamma^{[1]}$ be the adjacent edges of $V$, let $w_1,...  w_s$ be the weights of $E_1,... , E_s$ and let $m_1,...  ,m_s$ be the primitive integer vectors at the point $h(V)$ in the direction of $h(E_1),...  ,h(E_s)$, then 
\begin{equation}
\sum_{j=1}^nw_{\Gamma}(E_j)m_j=0
\end{equation} 
is the balancing condition.
\end{enumerate}
\end{definition}
Two parametrized tropical curves $h\colon\Gamma\to\Lambda_\R$ and $h'\colon\Gamma'\to\Lambda_\R$ are isomorphic if there exists a homeomorphism $\Phi\colon\Gamma\to\Gamma'$ respecting the weights such that $h'=\Phi\circ h$. 
\begin{definition}
A tropical curve is an isomorphism class of parametrized tropical curves.
\end{definition}
\begin{definition}
The \emph{genus} of a tropical curve $h\colon\Gamma\to\Lambda_\R$ is the first Betti number of the underling graph $\overline{\Gamma}$. Genus zero tropical curves are called rational.
\end{definition}  
\begin{definition}
Let $h\colon\Gamma\to\Lambda_{\R}$ be a tropical curve such that $\overline{\Gamma}$ has only vertices of valency one and three. The \emph{multiplicity of a vertex} $V\in\Gamma^{[0]}$ in $h$ is 
\begin{equation}
\mathrm{Mult}_V(h)=w_1w_2|m_1\wedge m_2|=w_1w_3|m_1\wedge m_3|=w_2w_3|m_2\wedge m_3|
\end{equation}
where $E_1,E_2,E_3\in\Gamma^{[1]}$ are the edges containing $V$ with $w_i=w_{\Gamma}(E_i)$ and $m_i\in\Lambda$ is a primitive vector in the direction $h(E_i)$ emanating from $h(V)$. The equality of three expressions follows from the balancing condition. 
\end{definition}
\begin{definition}
The multiplicity of a tropical curve $h$ is 
\begin{equation*}
\mathrm{Mult}(h)=\prod_{V\in\Gamma^{[0]}}\mathrm{Mult}_V(h)
\end{equation*}
\end{definition}

\begin{theorem}\label{thm:2.4}
Let $\tilde{\mathfrak{D}}_N=\left\lbrace (\mathfrak{d}_{iJl},\overrightarrow{f}_{iJl})\vert 1\leq i\leq n, \, l\geq 1,\, J\in\lbrace 1,... ,N\rbrace\, \#J\geq 1 \right\rbrace$ be a scattering diagram, such that  
\begin{equation}
\overrightarrow{f}_{iJl}=\left(1+(\#J)!A_{i(\#J)l}z^{lm_i}\prod_{j\in J}u_{ij},1+(\#J)!a_{i(\#J)l}l\prod_{j\in J}u_{ij}z^{lm_i}\right)
\end{equation}
and assume $[A_{i(\#J)l}, A_{i'(\#J')l'}]=0$ for all $i,i'\in\lbrace1,...,n\rbrace$.

Then there is a bijective correspondence between elements in the complete scattering diagram $\tilde{\mathfrak{D}}_N^{\infty}$ and rational tropical curve $h\colon\Gamma\to \Lambda_{\R}$ such that:
\begin{enumerate}
\item there is an edge $E_{\mathrm{out}}\in\Gamma_{\infty}^{[1]}$ with $h(E_{\mathrm{out}})=\mathfrak{d}$;
\item if $E\in\Gamma_{\infty}^{[1]}\setminus\lbrace E_{\mathrm{out}}\rbrace$ or if $E_{\mathrm{out}}$ is the only edge of $\Gamma$ (in which case $E=E_{\mathrm{out}}$), then $h(E)$ is contained in some $\mathfrak{d}_{iJl}$, where $1\leq i\leq n$, $J\subset\lbrace 1,... , N\rbrace$ and $l\geq 1$. Moreover if $E\neq E_{\mathrm{out}}$, the unbounded direction of $h(E)$ is given by $-m_i$ and its weight is $w_{\Gamma}(E)=l$;
\item if $E,E'\in \Gamma_{\infty}^{[1]}\setminus\lbrace E_{\mathrm{out}}\rbrace$ and $h(E)\subset\mathfrak{d}_{iJl}$ and $h(E')\subset\mathfrak{d}_{iJ'l'}$, then $J\cap J'=\emptyset$.
\end{enumerate}

If $\mathfrak{d}$ is a ray, the corresponding curve $h$ is trivalent and
\begin{equation}\label{eq:2.1}
\overrightarrow{f}_{\mathfrak{d}}=\left(1+\mathrm{Mult}(h)\left(\sum_{i_pJ_pl_p\in\text{Leaves}(\mathfrak{d})}(\#J_p)!A_{i\#J_pl_p}\prod_{\substack{i_qJ_ql_q\in\text{Leaves}(\mathfrak{d})\\ q\neq p}}(\#J_q)! a_{i_q(\#J_q)l_q}\prod_{j\in J_{q}}u_{i_qj}\prod_{j\in J_p}u_{i_pj}\right)z^{l_{\mathfrak{d}}m_{\mathfrak{d}}}, f_\mathfrak{d}\right)
\end{equation}
where $f_\mathfrak{d}$ is given by
\begin{equation}
f_{\mathfrak{d}}=1+l_{\mathfrak{d}}\mathrm{Mult}(h)\left(\prod_{\mathfrak{d}_{iJl}\in\text{Leaves}(\mathfrak{d})}(\#J)!a_{iJl}\prod_{j'\in J} u_{ij'}\right)z^{l_{\mathfrak{d}}m_{\mathfrak{d}}}
\end{equation}
and $l_{\mathfrak{d}}m_{\mathfrak{d}}=\sum_{i=1}^n\sum_{l\geq 1}lm_i$, with $m_{\mathfrak{d}}$ primitive vector in $\Lambda$ and $l_{\mathfrak{d}}=w_{\Gamma}(E_{\mathrm{out}})$. 

\end{theorem}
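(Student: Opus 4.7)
The plan is to prove both the bijective correspondence and the explicit formula (2.1) simultaneously by induction on the iteration index $i$ in the construction of $\tilde{\mathfrak{D}}_N^i$. At the base step $i=0$, each line $\mathfrak{d}_{iJl}$ is assigned the rational tropical curve $h \colon \Gamma \to \Lambda_\R$ consisting of a single edge mapped homeomorphically onto $\mathfrak{d}_{iJl}$ with weight $l$; adopting the convention that the empty product equals $1$, formula (2.1) then reduces directly to the defining function $\overrightarrow{f}_{iJl}$.

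For the inductive step, suppose the correspondence has been established on $\tilde{\mathfrak{D}}_N^i$. A new ray $\mathfrak{d}_{\mathrm{out}} \in \tilde{\mathfrak{D}}_N^{i+1} \setminus \tilde{\mathfrak{D}}_N^i$ arises from two parent rays $\mathfrak{d}_1, \mathfrak{d}_2$ intersecting transversally at a point $\xi$ with $I(\mathfrak{d}_1) \cap I(\mathfrak{d}_2) = \emptyset$. By the inductive hypothesis there are tropical curves $h_j \colon \Gamma_j \to \Lambda_\R$ whose outgoing edges $E_{\mathrm{out},j}$ satisfy $h_j(E_{\mathrm{out},j}) \subset \mathfrak{d}_j$. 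I would construct $h \colon \Gamma \to \Lambda_\R$ by gluing $\Gamma_1$ and $\Gamma_2$ at a new trivalent vertex $V$ (with $h(V)=\xi$) at which the two edges $E_{\mathrm{out},j}$ become bounded, and attaching a new unbounded edge $E_{\mathrm{out}}$ of weight $l_{\mathfrak{d}_{\mathrm{out}}}$ in the direction $m_{\mathfrak{d}_{\mathrm{out}}}$. The balancing condition at $V$ is exactly the identity $l_{\mathfrak{d}_1}m_{\mathfrak{d}_1}+l_{\mathfrak{d}_2}m_{\mathfrak{d}_2} = l_{\mathfrak{d}_{\mathrm{out}}}m_{\mathfrak{d}_{\mathrm{out}}}$ built into Lemma \ref{lem:1.9}, and properties (1)--(3) carry over since the disjointness $I(\mathfrak{d}_1) \cap I(\mathfrak{d}_2) = \emptyset$ gives the required disjointness of leaves. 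The inverse map is obtained by iteratively collapsing trivalent vertices of any given $h$.

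The core computation is to verify that Lemma \ref{lem:1.9} applied to the inductive expressions for $\overrightarrow{f}_{\mathfrak{d}_1}$ and $\overrightarrow{f}_{\mathfrak{d}_2}$ reproduces formula (2.1) for $\overrightarrow{f}_{\mathfrak{d}_{\mathrm{out}}}$. Using the commutativity assumption $[A_{i(\#J)l}, A_{i'(\#J')l'}] = 0$, the bracket term in (1.9) vanishes, and the remaining matrix and scalar coefficients become
\[
A_{\mathfrak{d}_2}\, a_{\mathfrak{d}_1}\, l_{\mathfrak{d}_2}\, |m_{\mathfrak{d}_1}\wedge m_{\mathfrak{d}_2}| + A_{\mathfrak{d}_1}\, a_{\mathfrak{d}_2}\, l_{\mathfrak{d}_1}\, |m_{\mathfrak{d}_1}\wedge m_{\mathfrak{d}_2}|, \qquad a_{\mathfrak{d}_1}\, a_{\mathfrak{d}_2}\, l_{\mathfrak{d}_{\mathrm{out}}}\, |m_{\mathfrak{d}_1}\wedge m_{\mathfrak{d}_2}|.
\]
Substituting the inductive expressions and invoking the multiplicativity $\mathrm{Mult}(h) = \mathrm{Mult}(h_1)\, \mathrm{Mult}(h_2)\cdot \mathrm{Mult}_V(h)$ with $\mathrm{Mult}_V(h) = l_{\mathfrak{d}_1} l_{\mathfrak{d}_2} |m_{\mathfrak{d}_1}\wedge m_{\mathfrak{d}_2}|$, the scalar part collapses to the GPS formula. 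For the matrix part, the sum over $p \in \mathrm{Leaves}(\mathfrak{d}_{\mathrm{out}}) = \mathrm{Leaves}(\mathfrak{d}_1) \sqcup \mathrm{Leaves}(\mathfrak{d}_2)$ splits into two halves: the summand $A_{\mathfrak{d}_2}\, a_{\mathfrak{d}_1}\, l_{\mathfrak{d}_2}\, |m_{\mathfrak{d}_1}\wedge m_{\mathfrak{d}_2}|$ accounts for the choices of $p$ lying in $\mathrm{Leaves}(\mathfrak{d}_2)$ (the $A$-factor arriving from the $A_{\mathfrak{d}_2}$-sum, the pure $a$-product arriving from $a_{\mathfrak{d}_1}$), and symmetrically for $p \in \mathrm{Leaves}(\mathfrak{d}_1)$.

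The main obstacle is the bookkeeping in this last rearrangement: unlike the scalar-only count of \cite{GPS}, each leaf $\mathfrak{d}_{iJl} \in \mathrm{Leaves}(\mathfrak{d})$ may play either the role of the distinguished matrix-factor index $p$ or of a scalar-factor index $q \neq p$, and one must track these roles consistently through the recursion. Commutativity of the matrix coefficients is essential here, since without it the order of the matrix factors in any single summand — and hence the very meaning of the unordered sum in (2.1) — would be ambiguous. The factorial factors $(\#J)!$ propagate without modification because Lemma \ref{lem:1.9} treats the coefficients $A_{\mathfrak{d}_j}, a_{\mathfrak{d}_j}$ as opaque data, and the combinatorial identity $|\mathrm{Leaves}(\mathfrak{d}_1)\sqcup\mathrm{Leaves}(\mathfrak{d}_2)| = |\mathrm{Leaves}(\mathfrak{d}_{\mathrm{out}})|$ ensures the $u$-monomial indexing matches.
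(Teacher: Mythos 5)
Your proof is correct and follows essentially the same route as the paper: both arguments set up a bijection by inducting on the construction of the complete diagram (base case = lines, inductive step = gluing at a trivalent vertex corresponding to a new scattering point), both invoke Lemma \ref{lem:1.9} with the bracket term killed by commutativity, both split the sum in the matrix component over $p\in\text{Leaves}(\mathfrak{d}_1)\sqcup\text{Leaves}(\mathfrak{d}_2)$ according to which parent supplies the $A$-factor, and both use the multiplicativity $\mathrm{Mult}(h)=\mathrm{Mult}(h_1)\mathrm{Mult}(h_2)\mathrm{Mult}_V(h)$ with $\mathrm{Mult}_V(h)=l_{\mathfrak{d}_1}l_{\mathfrak{d}_2}|m_{\mathfrak{d}_1}\wedge m_{\mathfrak{d}_2}|$. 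The only cosmetic difference is that you index the induction by the iteration $i$ of $\tilde{\mathfrak{D}}_N^i$ while the paper inducts on the parent structure of the ray, but these are the same induction.
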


This theorem is a generalization of Theorem $2.4$ in \cite{GPS} to scattering diagram in the extended tropical vertex group $\tilde{\mathbb{V}}$, thus in the proof we follow the same path of \cite{GPS}. 

\begin{proof}
Let $\mathfrak{d}=\mathfrak{d}_{iJl}$ be a line in $\tilde{\mathfrak{D}}_N$, then $\Gamma_{\infty}^{[1]}$ is defined by a single edge $E_{\mathfrak{d}}$ such that $h(E_{\mathfrak{d}})$ has slope $-m_i$ and weight $w_{\Gamma}(E_{\mathfrak{d}})=l$. Conversely, if $h\colon\Gamma\to\Lambda_{\R}$ is a tropical curve satisfying $(1)-(3)$, which consists of a single edge $E_{\mathrm{out}}=\Gamma_{\infty}^{[1]}$, then from $(2)$ $h(E_{\mathrm{out}}\subset\mathfrak{d}_{iJl}$, and by assumption $(\mathfrak{d}_{iJl},\overrightarrow{f}_{iJl})\in\tilde{\mathfrak{D}}_N^{\infty}$. 
Now let $\mathfrak{d}\in\tilde{\mathfrak{D}}_N^{\infty}\setminus\tilde{\mathfrak{D}}_N$ be a ray, then we define the graph $\Gamma$ by
\begin{align*}
\Gamma^{[0]}&=\left\lbrace V_{\mathfrak{d}'}\vert\mathfrak{d}'\in\text{Ancestors}(\mathfrak{d})\,\, \text{and  } \mathfrak{d}'\, \text{ ray}\right\rbrace\\
\Gamma^{[1]}&=\left\lbrace E_{\mathfrak{d}'}\vert \mathfrak{d}'\in\text{Ancestors}(\mathfrak{d})\right\rbrace.
\end{align*}
If $\mathfrak{d}'\in\text{Ancestors}(\mathfrak{d})$ then one of the following options can occur:
\begin{enumerate}
\item[(a)] $\mathfrak{d}'\neq\mathfrak{d}$ and $\mathfrak{d}'$ is a ray, then $E_{\mathfrak{d}'}$ have vertices $V_{\mathfrak{d}'}$ and $V_{\text{Child}(\mathfrak{d}')}$;
\item[(b)] $\mathfrak{d}=\mathfrak{d}'$, then $E_{\mathfrak{d}'}$ is an unbounded edge with vertex $V_{\mathfrak{d}}$;
\item[(c)] $\mathfrak{d}'$ is a line, then $E_{\mathfrak{d}'}$ is an unbounded edge with vertex $V_{\text{Child}(\mathfrak{d}')}$. 
\end{enumerate}
Thus, for any $\mathfrak{d}'\in\text{Ancestors}(\mathfrak{d})$, $f_{\mathfrak{d}'}=1+c_{\mathfrak{d}'}z^{l_{\mathfrak{d}'}m_{\mathfrak{d}'}}$ with $m_{\mathfrak{d}'}\in\Lambda$ primitive, and we define the weight $w_{\Gamma_{\mathfrak{d}}}(E_{\mathfrak{d}'})=l_{\mathfrak{d}'}$. The tropical curve $h$ is defined by mapping $E_{\mathfrak{d}'}$ in 
\begin{enumerate}
\item[-] a line of slope $m_{\mathfrak{d}'}$ joining $\mathrm{Init}(\mathfrak{d}')$ and $\mathrm{Init}(\text{Child}(\mathfrak{d}))$, if $\mathfrak{d}'$ in case (a);
\item[-] the line $\mathfrak{d}$, if $\mathfrak{d}'$ in case (b);
\item[-] the ray $\mathrm{Init}(\text{Child})(\mathfrak{d}')+\R_{\geq 0}m_{\mathfrak{d}'}$, if $\mathfrak{d}'$ in case (c). 
\end{enumerate}
Since $\Gamma_{\mathfrak{d}}$ is trivalent, the genus zero condition is satisfied. The balancing condition follows from equation \eqref{eq:1.4}: indeed let $\text{Parents}({\mathfrak{d}})=\lbrace\mathfrak{d}_1,\mathfrak{d}_2\rbrace$, then $l_{\mathfrak{d}}m_{\mathfrak{d}}=l_{\mathfrak{d}_1}m_{\mathfrak{d}_1}+l_{\mathfrak{d}_2}m_{\mathfrak{d}_2}$ and since $h(E_{\mathfrak{d}_1}), h(E_{\mathfrak{d}_2})$ are incoming edges while $h(E_{\mathfrak{d}})$ is outgoing, we get 
\begin{equation}\label{eq:balcond}
w_{\Gamma_{\mathfrak{d}}}(E_{\mathfrak{d}})m_{\mathfrak{d}}=-w_{\Gamma_{\mathfrak{d}}}(E_{\mathfrak{d}_1})m_{\mathfrak{d}_1}-w_{\Gamma_{\mathfrak{d}}}(E_{\mathfrak{d}_2})m_{\mathfrak{d}_2}.
\end{equation}

We can prove the expression \eqref{eq:2.1} by induction: indeed if $\mathfrak{d}=\mathfrak{d}_{iJl}$ is a line in $\tilde{\mathfrak{D}}_N$, then $\Gamma_{\mathfrak{d}}$ has only an edge $E_{\mathrm{out}}=\Gamma^{[1]}_{\infty}$, hence $\mathrm{Mult}(h)=1$ because no trivalent vertexes occur. 
The inductive step is the following: let $\mathfrak{d}$ be a ray and assume \eqref{eq:2.1} holds for $\mathfrak{d}_1,\mathfrak{d}_2\in\text{Parents}(\mathfrak{d})$, and let $h_1,h_2$ be the tropical curves associated respectively to $\mathfrak{d}_1,\mathfrak{d}_2$. Then, by equation \eqref{eq:1.4}   
\begin{align*}
\overrightarrow{f}_{\mathfrak{d}}&=\Big(1+\mathrm{Mult}(h_1)\mathrm{Mult}(h_2)\sum_{\substack{\mathfrak{d}_{i_rJ_rl_r}\in\\\text{Leaves}({\mathfrak{d}_1})}}\sum_{\substack{\mathfrak{d}_{i_r'J_r'l_r'}\in\\\text{Leaves}({\mathfrak{d}_2})}}(\#J_r)!(\#J_r')!\left[A_{i_r(\#J_r)l_r}, A_{i_r'(\#J_r')l_r'}\right]\prod_{\substack{i_qJ_ql_q\\ q\neq r}}(\#J_q)!a_{i_q(\#J_q)l_q}\cdot\\
&\qquad\cdot\prod_{\substack{i_q'(\#J_q')l_q'\\ q'\neq r'}}(\#J_q')!a_{i_q'(\#J_q')l_q'}\prod_{j\in J_q}u_{i_qj}\prod_{j\in J_r}u_{i_rj}\prod_{j'\in J_q'}u_{i_q'j'}\prod_{j'\in J_r'}u_{i_r'j'}+\\
&\qquad+\mathrm{Mult}(h_1)\mathrm{Mult}(h_2)\left(\sum_{\mathfrak{d}_{{i_rJ_rl_r}}\in\text{Leaves}({\mathfrak{d}_2})}A_{i_r(\#J_r)l_r}(\#J_r)!\prod_{\substack{i_qJ_ql_q\\ q\neq r}}(\#J_q)!a_{i_q(\#J_q)l_q}\prod_{j\in J_q}u_{i_qj}\prod_{j\in J_r}u_{i_rj}\right)\cdot\\
&\qquad\cdot\left(l_{\mathfrak{d}_1}\prod_{\mathfrak{d}_{iJl}\in\text{Leaves}(\mathfrak{d}_1)}(\#J)!a_{iJl}\prod_{j\in J}u_{ij}\right)l_{\mathfrak{d}_2}|m_{\mathfrak{d}_1}\wedge m_{\mathfrak{d}_2}|z^{l_{\mathfrak{d}_1}m_{\mathfrak{d}_1}+l_{\mathfrak{d}_2}m_{\mathfrak{d}_2}}+\\
&\qquad+\mathrm{Mult}(h_1)\mathrm{Mult}(h_2)\left(\sum_{\mathfrak{d}_{{i_rJ_rl_r}}\in\text{Leaves}({\mathfrak{d}_1})}A_{i_r(\#J_r) l_r}(\#J_r)!\prod_{\substack{i_qJ_ql_q\\ q\neq r}}(\#J_q)!a_{i_q(\#J_q)l_q}\prod_{j\in J_q}u_{i_qj}\prod_{j\in J_r}u_{i_rj}\right)\cdot\\
&\qquad\cdot\left(l_{\mathfrak{d}_2}\prod_{\mathfrak{d}_{iJl}\in\text{Leaves}(\mathfrak{d}_2)}(\#J)!a_{i(\#J)l}\prod_{j\in J}u_{ij}\right)l_{\mathfrak{d}_1}|m_{\mathfrak{d}_1}\wedge m_{\mathfrak{d}_2}|z^{l_{\mathfrak{d}_1}m_{\mathfrak{d}_1}+l_{\mathfrak{d}_2}m_{\mathfrak{d}_2}},
\end{align*}

\begin{align*}
&\quad 1+l_{\mathfrak{d}_1}\mathrm{Mult}(h_1)\left(\prod_{\mathfrak{d}_{iJl}\in\text{Leaves}(\mathfrak{d}_1)}(\#J)!a_{i(\#J)l}\prod_{j\in J}u_{ij}\right)l_{\mathfrak{d}_2}\mathrm{Mult}(h_2)\left(\prod_{\mathfrak{d}_{iJl}\in\text{Leaves}(\mathfrak{d}_2)}(\#J)!a_{i(\#J)l}\prod_{j\in J}u_{ij}\right)\cdot\\
&\qquad\cdot l_{\mathfrak{d}}|m_{\mathfrak{d}_1}\wedge m_{\mathfrak{d}_2}|z^{l_{\mathfrak{d}_1}m_{\mathfrak{d}_1}+l_{\mathfrak{d}_2}m_{\mathfrak{d}_2}}\Big)\\
&=\Bigg(1+\mathrm{Mult}(h_1)\mathrm{Mult}(h_2)\mathrm{Mult}_{V_{\mathfrak{d}}}(h)\Bigg(\left(\sum_{\substack{\mathfrak{d}_{{i_rJ_rl_r}}\in\\\text{Leaves}({\mathfrak{d}_2})}}(\#J_r)!A_{i_r(\#J_r)l_r}\prod_{\substack{{i_qJ_ql_q}\\
\in\text{Leaves}(\mathfrak{d})\\ q\neq r}}(\#J_q)!a_{i_q(\#J_q)l_q}\prod_{j\in J_q}u_{i_qj}\prod_{j\in J_r}u_{i_rj}\right)+\\
&\qquad+\left(\sum_{\mathfrak{d}_{{i_rJ_rl_r}}\in\text{Leaves}({\mathfrak{d}_1})}(\#J_r)!A_{i_r(\#J_r)l_r}\prod_{\substack{\mathfrak{d}_{{i_qJ_ql_q}}\in\text{Leaves}(\mathfrak{d})\\ q\neq r}}(\#J_q)!a_{i_q(\#J_q)l_q}\prod_{j\in J_q}u_{i_qj}\prod_{j\in J_r}u_{i_rj}\right)\Bigg)z^{l_{\mathfrak{d}_1}m_{\mathfrak{d}_1}+l_{\mathfrak{d}_2}m_{\mathfrak{d}_2}},\\
&\quad 1+\mathrm{Mult}(h_1)\mathrm{Mult}(h_2)\mathrm{Mult}_{V_{\mathfrak{d}}}(h)l_{\mathfrak{d}}\left(\prod_{\mathfrak{d}_{iJl}\in\text{Leaves}(\mathfrak{d})}(\#J)!a_{i(\#J)l}\prod_{j\in J}u_{ij}\right) z^{l_{\mathfrak{d}_1}m_{\mathfrak{d}_1}+l_{\mathfrak{d}_2}m_{\mathfrak{d}_2}}\Bigg)\\
&=\Bigg(1+\mathrm{Mult}(h)\left(\sum_{\mathfrak{d}_{{i_rJ_rl_r}}\in\text{Leaves}({\mathfrak{d}})}(\#J_r)!A_{i_r(\#J_r)l_r}\prod_{\substack{\mathfrak{d}_{{i_qJ_ql_q}}\\ q\neq r}}(\#J_q)!a_{i_q(\#J_q)l_q}\prod_{j\in J_q}u_{i_qj}\prod_{j\in J_r}u_{i_rj}\right)z^{l_{\mathfrak{d}_1}m_{\mathfrak{d}_1}+l_{\mathfrak{d}_2}m_{\mathfrak{d}_2}},\\
&\quad 1+\mathrm{Mult}(h)l_{\mathrm{out}}(\mathfrak{d})\left(\prod_{\mathfrak{d}_{iJl}\in\text{Leaves}(\mathfrak{d})}(\#J)!a_{i(\#J)l}\prod_{j\in J}u_{ij}\right) z^{l_{\mathfrak{d}_1}m_{\mathfrak{d}_1}+l_{\mathfrak{d}_2}m_{\mathfrak{d}_2}}\Bigg)
\end{align*} 

where in the second step we use that $[A_{i_r(\#J_r)l_r}, A_{i_r'(\#J_r')l_r'}]$ vanishes by assumption.

Conversely, if $h\colon\Gamma\to \Lambda_{\R}$ is a tropical curve satisfying $(1)-(3)$ and $\mathfrak{d}$ is a ray, then by the previous computations we have $(\mathfrak{d},\overrightarrow{f}_{\mathfrak{d}})\in\tilde{\mathfrak{D}}_N^{\infty}$.  
\end{proof}

We are now going to introduce invariants to count tropical curves: let $\mathbf{w}=(w_{1},... , w_s)$ be a s-tuple of non-zero vectors $w_i\in\Lambda$ and fix a set of points $\xi=(\xi_{1},...  ,\xi_s)$. Then a parametrized tropical curve $h\colon \Gamma\to\Lambda_{\R}$ \textit{of type} $(\mathbf{w},\xi)$ is the datum of
\begin{itemize}
\item $\Gamma_{\infty}^{[1]}=\lbrace E_{r}\vert 1\leq r\leq s \rbrace\cup\lbrace E_{\mathrm{out}}\rbrace$;
\item $h(E_{r})$ asymptotically coincide with the ray $\mathfrak{d}_{r}=\xi_{r}-\R_{\geq 0}w_r$ and $w_{\Gamma}(E_{r})=|w_{r}|$;
\item $h(E_{\mathrm{out}})$ pointing in the direction of $w_{\mathrm{out}}\defeq\sum_{r=1}^sw_r$ and $w_{\Gamma}(E_{\mathrm{out}})=|w_{\mathrm{out}}|$. 
\end{itemize}
We denote by $T_{\mathbf{w},\xi}$ the set of tropical curves $h\colon\Gamma\to\Lambda_{\R}$ as above. Then we define 
\begin{equation}\label{def:N^(trop)}
N_{\mathbf{w}}^{\mathrm{trop}}\defeq \sum_{h\in T_{\mathbf{w},\xi}}\text{Mult}(h)
\end{equation}
as the number of tropical curve in $T_{\mathbf{w},\xi}$ counted with multiplicity. 

The definition is well-posed since according to to Proposition 4.13 of \cite{Mik05} the set $T_{\mathbf{w},\xi}$ is finite. In addition the number $N_{\mathbf{w}}^{\mathrm{trop}}$ does not depend on the generic choice of the vectors $\xi_j$. 

\begin{notation}\label{not:k partition}
Let $P=(P_1,...  ,P_n)$ be a n-tuple of integer, and let $\mathbf{k}=(k_1,...  ,k_n)$ be a n-tuple of vectors such that $k_j=(k_{jl})_{l\geq 1}$ is a partition of $P_j$ with $\sum_{l\geq 1}lk_{jl}=P_j$. We denote the partition $\mathbf{k}$ by $\mathbf{k}\vdash \mathbf{P}$.

Given a partition $\mathbf{k}\vdash\mathbf{P}$, we define 
\begin{equation}
s(\mathbf{k})\defeq\sum_{j=1}^n\sum_{l\geq 1}k_{jl}
\end{equation}
and the $s(\mathbf{k})$-tuple $\mathbf{w}(\mathbf{k})=\left(w_1(\mathbf{k}),...  ,w_{s(\mathbf{k})}(\mathbf{k})\right)$
of non zero vectors in $\Lambda$, such that 

\[w_r(\mathbf{k})\defeq lm_j,\] 

for every $1+\sum_{j'=1}^j\sum_{l'=1}^{l-1}k_{l'j'}\leq r\leq k_{jl}+\sum_{j'=1}^j\sum_{l'=1}^{l-1}k_{l'j'} $. Notice that $\sum_{r=1}^{s(\mathbf{k})} w_r(\mathbf{k})=l_{\mathfrak{d}}m_{\mathfrak{d}}$.
\end{notation}

We can now state the first result which provide a link between consistent scattering diagrams in the extended tropical vertex group $\tilde{\mathbb{V}}$ and tropical curves count.  
 
\begin{theorem}\label{thm:2.8}
Let $\mathfrak{D}=\left\lbrace\mathsf{w}_i=(\mathfrak{d}_i=m_i\R,\overrightarrow{f}_i)\vert 1\leq i\leq n\right\rbrace$ such that for every $i=1,...  ,n$ 

\[\overrightarrow{f}_i=\left(1+A_it_iz^{m_i} , 1+t_iz^{m_i}\right)\] 

on $\C[\![t_1,...  ,t_n]\!]$ and assume $[A_i,A_{i'}]=0$ for all index $i, i'\in\lbrace 1,...,n\rbrace$.
 
Then for every wall $\left(\mathfrak{d}=m_{\mathfrak{d}}\R_{\geq 0}, \overrightarrow{f}_{\mathfrak{d}}\right)\in\mathfrak{D}^{\infty}\setminus\mathfrak{D}$ where $m_{\mathfrak{d}}\in\Lambda$ is a primitive non-zero vector, the function $\overrightarrow{f}_{\mathfrak{d}}$ is explicitly given by the following expression:

\begin{multline}\label{eq:f_trop}
\log\overrightarrow{f}_{\mathfrak{d}}=\sum_{l_{\mathfrak{d}}\geq 1}\sum_{\mathbf{P}}\sum_{\mathbf{k}\vdash \mathbf{P}}N_{\mathbf{w}(\mathbf{k})}^{\mathrm{trop}}\Bigg(\sum_{i=1}^n\sum_{l\geq 1}lA_{i}^lk_{il}\left(\prod_{1\leq i\leq n}\prod_{l'\geq 1}\left(\frac{(-1)^{l'-1}}{(l')^2}\right)^{k_{ij'}}\frac{1}{k_{il'}!}t_i^{P_i}\right)z^{\sum_iP_im_i}, \\
 l_{\mathfrak{d}}\left(\prod_{1\leq i\leq n}\prod_{l\geq 1}\left(\frac{(-1)^{l-1}}{l^2}\right)^{k_{il}}\frac{1}{k_{il}!}t_i^{P_i}\right)z^{\sum_iP_im_i}
\Bigg)
\end{multline}  

where the second sum is over all n-tuple $\mathbf{P}=(P_1,...  ,P_n)\in\N^n$ such that $\sum_{i=1}^nP_im_i=l_{\mathfrak{d}}m_{\mathfrak{d}}$. 
\end{theorem}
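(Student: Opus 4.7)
The strategy is the one pioneered by Gross--Pandharipande--Siebert in \cite{GPS}, adapted to the enlarged group $\tilde{\mathbb{V}}$. The plan is first to apply the deformation technique of Section~\ref{sec:scattering}: truncate modulo $(t_1^{N+1},\ldots,t_n^{N+1})$ and, on the ring $\tilde{R}_N$, substitute $t_i=\sum_{j=1}^N u_{ij}$. Since the $A_i$ pairwise commute, $\log\overrightarrow{f}_i = \sum_{l\geq 1}\tfrac{(-1)^{l-1}}{l}(A_i^l,\partial_{n_i})\,t_i^l z^{lm_i}$, and the identity $t_i^l = l!\sum_{|J|=l}\prod_{j\in J}u_{ij}$ puts the deformed diagram $\tilde{\mathfrak{D}}_N$ into precisely the form required by Theorem~\ref{thm:2.4}, with coefficients $A_{ill}=\tfrac{(-1)^{l-1}}{l}A_i^l$ and $a_{ill}=\tfrac{(-1)^{l-1}}{l^2}$ (and all other $A_{ijl},a_{ijl}$ vanishing).

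Second, I would invoke Theorem~\ref{thm:2.4}: the commutativity hypothesis $[A_i,A_{i'}]=0$ is exactly the input required, so the theorem produces a bijection between walls of $\tilde{\mathfrak{D}}_N^\infty\setminus\tilde{\mathfrak{D}}_N$ and rational tropical curves of the prescribed type, together with the explicit formula \eqref{eq:2.1} for the attached $\overrightarrow{f}_{\mathfrak{d}}$. I would then pass to the asymptotic scattering diagram by collapsing all rays with the same primitive slope $m_{\mathfrak{d}}$ to a single ray through the origin and taking the product of their functions.

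Third, I would reorganise the resulting sum according to the partition data $\mathbf{k}\vdash\mathbf{P}$. A tropical curve $h\in T_{\mathbf{w}(\mathbf{k}),\xi}$ lifts to a tropical curve recorded by Theorem~\ref{thm:2.4} by decorating its leaves with pairwise disjoint sets $J\subset\{1,\ldots,N\}$ of cardinalities prescribed by $\mathbf{w}(\mathbf{k})$; the number of such decorations, quotiented by the $\prod_{i,l}k_{il}!$ symmetry permuting leaves of the same type $(i,l)$, combines with the substitution $\prod_{j\in J}u_{ij}\mapsto\tfrac{1}{(\#J)!}t_i^{\#J}$ to produce the universal factor $\prod_{i,l}\bigl(\tfrac{(-1)^{l-1}}{l^2}\bigr)^{k_{il}}\tfrac{1}{k_{il}!}t_i^{P_i}$ appearing in \eqref{eq:f_trop}. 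Letting $N\to\infty$ then yields the identity over $\C[\![t_1,\ldots,t_n]\!]$.

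The technical heart of the argument is the $\mathfrak{gl}(r,\C)$-component, which is absent from \cite{GPS}. In \eqref{eq:2.1} it is a sum over the choice of which leaf carries the distinguished factor $A$ rather than $a$; after substituting the values of $A_{ill}$ and $a_{ill}$ computed above and using that the $A_i$ commute, each leaf of type $(i,l)$ contributes $l\,A_i^l$ times the same universal monomial (the extra factor $l$ arising from the normalisation discrepancy $A_{ill}=l\cdot a_{ill}\,A_i^l$). Summing over leaves of type $(i,l)$ in $h$ therefore gives $l\,k_{il}\,A_i^l$, and aggregating over $(i,l)$ produces the first coordinate of \eqref{eq:f_trop}. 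The $\partial_{n_{\mathfrak{d}}}$-component reduces verbatim to the Gross--Pandharipande--Siebert calculation.
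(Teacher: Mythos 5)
Your proposal is correct and follows essentially the same route as the paper's proof: deform to $\tilde{\mathfrak{D}}_N$ over $\tilde R_N$ via $t_i=\sum_j u_{ij}$, invoke Theorem~\ref{thm:2.4} with the commuting-matrix hypothesis, then reorganise the sum over leaves/decorations and revert the $u_{ij}$ back to $t_i^{P_i}$. Your identification $A_{ill}=\tfrac{(-1)^{l-1}}{l}A_i^l$, $a_{ill}=\tfrac{(-1)^{l-1}}{l^2}$, and the resulting discrepancy factor $l$ that yields the $l\,k_{il}\,A_i^l$ weighting in the matrix component, is exactly the computation carried out in the paper.
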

\begin{proof}
Recall that since $\mathfrak{D}$ is a standard scattering diagram we can consider the associate deformed diagram $\tilde{\mathfrak{D}}$, and on $R_N$ the complete scattering diagrams $\mathfrak{D}^{\infty}_N$  and $(\tilde{\mathfrak{D}}_N^{\infty})_{as}$ are equivalent. In particular $\tilde{\mathfrak{D}}_N$ is defined by taking the logarithmic expansion of $\overrightarrow{f}_i\in\mathfrak{D}_N$ and substituting $t_i^l=\sum_{\substack{J_{il}\in\lbrace1,... ,N\rbrace\\ \#J_{il}=l}}l!\prod_{j\in J_{il}}u_{ij}$:
\begin{align*}
\overrightarrow{f}_{iJ_{il}l}=\left(1+\frac{(-1)^{l-1}}{l}A_i^l(l)!\prod_{j'\in J_{il}}u_{ij'}z^{lm_i}, 1+l \frac{(-1)^{l-1}}{l^2}(l)!\prod_{j'\in J_{il}}u_{ij'} z^{lm_i}\right)
\end{align*}
and the diagram $\tilde{\mathfrak{D}}_N$ is 
\begin{align*}
\tilde{\mathfrak{D}}_N=\left\lbrace (\mathfrak{d}_{iJ_{il}l}, \overrightarrow{f}_{iJ_{il}l})\vert i=1,...  ,n,\, 1\leq l\leq N,\, J_{il}\subset\lbrace 1,... , N\rbrace,\, \#J_{il}=l \right\rbrace
\end{align*}
where $\mathfrak{d}_{iJ_{il}l}\defeq\xi_{iJ_{il}l}-\R_{\geq}m_i$ for a generic choice $\xi_{iJ_{il}l}$. Now, for every $\mathfrak{d}'\in\tilde{\mathfrak{D}}_N^{\infty}\setminus \tilde{\mathfrak{D}}_N$ there is a unique tropical curve $h\colon\Gamma\to\Lambda_\R$ defined as in Theorem \ref{thm:2.4}. Assuming $\mathfrak{d}'=\xi'+m_{\mathfrak{d}'}\R_{\geq 0}$ for some generic $\xi'\in\Lambda_\R$ and some primitive vector $m_{\mathfrak{d}'}\in\Lambda$, and $w_{\Gamma}(E_{\mathrm{out}})=l_{\mathfrak{d}'}\geq 1$, let us define $\mathbf{P}=(P_1,... , P_n)\in\N^n$ such that $\sum_{j=1}^nP_jm_j=l_{\mathfrak{d}'}m_{\mathfrak{d}'}$ and $\mathbf{k}\vdash \mathbf{P}$, as in Notation \ref{not:k partition}. Then the function $\overrightarrow{f}_{\mathfrak{d}'}$ is
\begin{multline}
\log\overrightarrow{f}_{\mathfrak{d}'}=\Bigg(\mathrm{Mult}(h)\sum_{i=1}^n\sum_{l\geq 1}\sum_{J_{il}}(l)!A_{i}^l\frac{(-1)^{l-1}}{l}\left(\prod_{1\leq i'\leq n}\prod_{l'\geq 1}\prod_{\substack{J_{i'l'}\\J_{i'l'}\cap J_{il}=\emptyset}}(l')! \frac{(-1)^{l'-1}}{(l')^2}\right)\cdot \\
\cdot\left(\prod_{j'\in J_{i'l'}}u_{i'j'}\prod_{j\in J_{1l}}u_{ij}\right)z^{\sum_iP_im_i}, l_{\mathfrak{d}}\mathrm{Mult}(h)\left(\prod_{1\leq i\leq n}\prod_{l\geq 1}\prod_{J_{il}}(l)!\frac{(-1)^{l-1}}{l^2}\prod_{j'\in J_{ij}} u_{ij'}\right)z^{\sum_iP_im_i}
\Bigg)
\end{multline}
where both the products and the sums over $J_{il}$ are on the subset $J_{il}\subset\lbrace 1,...  ,N\rbrace$ of size $l$ such that $\mathfrak{d}_{iJ_{il}l}\in\mathrm{Leaves}(\mathfrak{d}')$.

For every $1\leq i\leq n$ and $l\geq 1$, $k_{il}$ counts how many subset $J_{il}\subset\lbrace 1,...  ,N\rbrace$ of size $l$ are in $\mathrm{Leaves}(\mathfrak{d}')$, hence we can write $\log\overrightarrow{f}_{\mathfrak{d}'}$

\begin{align*}
\log\overrightarrow{f}_{\mathfrak{d}'}&=\Bigg(\mathrm{Mult}(h)\sum_{i=1}^n\sum_{l\geq 1}lA_{i}^l\left((l)!\frac{(-1)^{l-1}}{l^2}\right)\sum_{J_{il}}\left(\prod_{l'\geq 1}\prod_{\substack{J_{il'}\\ J_{il'}\cap J_{il}=\emptyset}}\left((l')! \frac{(-1)^{l'-1}}{(l')^2}\right)\prod_{j'\in J_{il'}}u_{ij'}\right)\prod_{j\in J_{il}}u_{ij}\cdot \\
&\qquad\cdot \left(\prod_{i'\neq i}\prod_{l'\geq 1}\left((l')! \frac{(-1)^{l'-1}}{(l')^2}\right)^{k_{i'j'}}\prod_{J_{i'j'}}\prod_{j'\in J_{i'l'}}u_{i'j'}\right)z^{\sum_iP_im_i},\\
&\qquad\qquad\qquad\qquad\qquad\qquad l_{\mathfrak{d}}\mathrm{Mult}(h)\left(\prod_{1\leq i\leq n}\left(\prod_{l\geq 1}(l)!\frac{(-1)^{l-1}}{l^2}\right)^{k_{il}}\prod_{J_{il}}\prod_{j'\in J_{ij}} u_{ij'}\right)z^{\sum_iP_im_i}
\Bigg)
\end{align*}

\begin{align*}
&=\Bigg(\mathrm{Mult}(h)\sum_{i=1}^n\sum_{l\geq 1}lA_{i}^l\left((l)!\frac{(-1)^{l-1}}{l^2}\right)\left((l)! \frac{(-1)^{l-1}}{(l)^2}\right)^{k_{il}-1}\sum_{J_{il}}\left(\prod_{\substack{J_{il}'\\ J_{il}'\cap J_{il}=\emptyset}}\prod_{j'\in J_{il}'}u_{ij'}\right)\prod_{j\in J_{il}}u_{ij}\cdot\\
&\qquad\cdot\left(\prod_{l'\neq l}\left((l')! \frac{(-1)^{l'-1}}{(l')^2}\right)^{k_{il'}}\prod_{j'\in J_{il'}}u_{ij'}\right)\left(\prod_{i'\neq i}\prod_{l'\geq 1}\left((l')! \frac{(-1)^{l'-1}}{(l')^2}\right)^{k_{i'j'}}\prod_{J_{i'j'}}\prod_{j'\in J_{i'l'}}u_{i'j'}\right)z^{\sum_iP_im_i},\\
&\qquad\qquad\qquad\qquad\qquad\qquad l_{\mathfrak{d}}\mathrm{Mult}(h)\left(\prod_{1\leq i\leq n}\left(\prod_{l\geq 1}(l)!\frac{(-1)^{l-1}}{l^2}\right)^{k_{il}}\prod_{J_{il}}\prod_{j'\in J_{ij}} u_{ij'}\right)z^{\sum_iP_im_i}
\Bigg)\\
&=\Bigg(\mathrm{Mult}(h)\sum_{i=1}^n\sum_{l\geq 1}lA_{i}^l\left((l)!\frac{(-1)^{l-1}}{l^2}\right)^{k_{il}}k_{il}\left(\prod_{J_{il}}\prod_{j'\in J_{il}}u_{ij'}\right)\left(\prod_{l'\neq l}\left((l')! \frac{(-1)^{l'-1}}{(l')^2}\right)^{k_{il'}}\prod_{j'\in J_{il'}}u_{ij'}\right)\cdot \\
&\qquad\cdot \left(\prod_{i'\neq i}\prod_{l'\geq 1}\left((l')! \frac{(-1)^{l'-1}}{(l')^2}\right)^{k_{i'j'}}\prod_{J_{i'j'}}\prod_{j'\in J_{i'l'}}u_{i'j'}\right)z^{\sum_iP_im_i},\\
&\qquad\qquad\qquad\qquad\qquad\qquad l_{\mathfrak{d}}\mathrm{Mult}(h)\left(\prod_{1\leq i\leq n}\left(\prod_{l\geq 1}(l)!\frac{(-1)^{l-1}}{l^2}\right)^{k_{il}}\prod_{J_{il}}\prod_{j'\in J_{ij}} u_{ij'}\right)z^{\sum_iP_im_i}
\Bigg)\\
&=\Bigg(\mathrm{Mult}(h)\sum_{i=1}^n\sum_{l\geq 1}lA_{i}^lk_{il}\left(\prod_{l'\geq 1}\left((l')! \frac{(-1)^{l'-1}}{(l')^2}\right)^{k_{il'}}\prod_{J_{il'}}\prod_{j'\in J_{il'}}u_{ij'}\right)\cdot\\
&\qquad\cdot\left(\prod_{i'\neq i}\prod_{l'\geq 1}\left((l')! \frac{(-1)^{l'-1}}{(l')^2}\right)^{k_{i'j'}}\prod_{J_{i'j'}}\prod_{j'\in J_{i'l'}}u_{i'j'}\right)z^{\sum_iP_im_i}, \\
&\qquad\qquad\qquad\qquad\qquad\qquad l_{\mathfrak{d}}\mathrm{Mult}(h)\left(\prod_{1\leq i\leq n}\left(\prod_{l\geq 1}(l)!\frac{(-1)^{l-1}}{l^2}\right)^{k_{il}}\prod_{J_{il}}\prod_{j'\in J_{ij}} u_{ij'}\right)z^{\sum_iP_im_i}
\Bigg)
\\
&=\Bigg(\mathrm{Mult}(h)\sum_{i=1}^n\sum_{l\geq 1}lA_{i}^lk_{il}\left(\prod_{1\leq i'\leq n}\prod_{l'\geq 1}\left((l')! \frac{(-1)^{l'-1}}{(l')^2}\right)^{k_{i'j'}}\prod_{J_{i'j'}}\prod_{j'\in J_{i'l'}}u_{i'j'}\right)z^{\sum_iP_im_i}, \\
&\qquad\qquad\qquad\qquad\qquad\qquad l_{\mathfrak{d}}\mathrm{Mult}(h)\left(\prod_{1\leq i\leq n}\left(\prod_{l\geq 1}(l)!\frac{(-1)^{l-1}}{l^2}\right)^{k_{il}}\prod_{J_{il}}\prod_{j'\in J_{ij}} u_{ij'}\right)z^{\sum_iP_im_i}
\Bigg)
\end{align*}

In addition, defining the $s(\mathbf{k})$-tuple of vectors $\mathbf{w}(\mathbf{k})$ as in Notation \ref{not:k partition}, then 
\begin{align*}
w_{\Gamma}(E_{\mathrm{out}})=\sum_{r=1}^{s(\mathbf{k})}w_r(\mathbf{k})
\end{align*} 
and there are $k_{il}$ unbounded edges $E_{iJ_{il}l}$ such that $h(E_{iJ_{il}l})\subset\xi_{iJ_{il}l}+lm_{i}\R_{\geq 0}$; thus $\Gamma$ is of type $\left(\mathbf{w}(\mathbf{k}),\xi\right)$. 

Recall, by construction of $\tilde{\mathfrak{D}}_N^{\infty}$ that for every $\mathfrak{d}\in\mathfrak{D}_N^{\infty}\setminus{\mathfrak{D}}_N$ such that $\mathfrak{d}=m_{\mathfrak{d}'}\R_{\geq 0}$ 

\begin{align*}
\log\overrightarrow{f}_{\mathfrak{d}}=\sum_{l_{\mathfrak{d}}\geq 1}\sum_{\substack{\mathfrak{d}'\in\tilde{\mathfrak{D}}_N^{\infty}\\\mathfrak{d}'=\xi'+l_{\mathfrak{d}} m_{\mathfrak{d}\R_{\geq 0}}}}\log\overrightarrow{f}_{\mathfrak{d}'}.
\end{align*} 
At fixed $\mathbf{P}$ and $\mathbf{k}\vdash \mathbf{P}$: for every $1\leq i\leq n $, $l\geq 1$ let $\A_{il}$ be the set of $k_{il}$ disjoint subsets $J_{il}\subset\lbrace 1,... , N\rbrace$ of size $l$. Then, by plugging-in the contribution from each $\mathfrak{d}'$ we have

\begin{align*}
\log\overrightarrow{f}_{\mathfrak{d}}=\sum_{l_{\mathfrak{d}}\geq 1}\sum_{\mathbf{P}}\sum_{\mathbf{k}\vdash \mathbf{P}}\sum_{\A_{il}}\sum_{h\in T_{\mathbf{w}(\mathbf{k}),\xi'}}&\Bigg(\mathrm{Mult}(h)\sum_{i=1}^n\sum_{l\geq 1}lA_{i}^lk_{il}\Bigg(\left(\prod_{1\leq i'\leq n}\prod_{l'\geq 1}\left((l')! \frac{(-1)^{l'-1}}{(l')^2}\right)^{k_{i'j'}}\right)\cdot\\
&\qquad\qquad\cdot \prod_{J_{i'j'}\in\A_{i'l'}}\prod_{j'\in J_{i'l'}}u_{i'j'}\Bigg)z^{\sum_iP_im_i},\\
& l_{\mathfrak{d}}\mathrm{Mult}(h)\left(\prod_{1\leq i\leq n}\prod_{l\geq 1}\left((l)!\frac{(-1)^{l-1}}{l^2}\right)^{k_{il}}\prod_{J_{il}\in\A_{il}}\prod_{j'\in J_{ij}} u_{ij'}\right)z^{\sum_iP_im_i}
\Bigg)
\end{align*}
where the second sum is over all n-tuple $\mathbf{P}=(P_1,... , P_n)\in\N^n$ such that $\sum_iP_im_i=l_{\mathfrak{d}}m_{\mathfrak{d}}$. Given $\A_{il}$, we can rearrange the sum in the following way: let $B_i\defeq\bigcup_{J_{il}\in\A_{il}}J_{il}$, then $B_i$ is of size $\sum_{l\geq 1}lk_{il}=P_i$ and there are  
\[
\frac{P_i!}{\prod_{l\geq 1}k_{il}!(l!)^{k_{il}}}
\]
different ways of writing $B_i$ as disjoint union of $k_{il}$ subsets of size $l$. Therefore, $\log\overrightarrow{f}_{\mathfrak{d}}$ can be rewritten as

\begin{align*}
\log\overrightarrow{f}_{\mathfrak{d}}=\sum_{l_{\mathfrak{d}}\geq 1}\sum_{\mathbf{P}}\sum_{\mathbf{k}\vdash \mathbf{P}}N_{\mathbf{w}(\mathbf{k})}^{\mathrm{trop}}\Bigg(\sum_{i=1}^n\sum_{l\geq 1}lA_{i}^lk_{il} \prod_{\substack{1\leq i'\leq n\\ l'\geq 1}}\left((l')! \frac{(-1)^{l'-1}}{(l')^2}\right)^{k_{i'j'}}\frac{P_{i'}!}{\prod_{l\geq 1}k_{i'l}!(l!)^{k_{i'l}}}\cdot\\
\cdot\left(\sum_{\substack{B_{i'}\subset\lbrace1,... ,N\rbrace\\|B_{i'}|=P_{i'}}}\prod_{j'\in B_{i'}} u_{i'j'}\right)z^{\sum_iP_im_i}, \\
l_{\mathfrak{d}}\prod_{1\leq i\leq n}\prod_{l\geq 1}\left((l)!\frac{(-1)^{l-1}}{l^2}\right)^{k_{il}}\frac{P_i!}{\prod_{l\geq 1}k_{il}!(l!)^{k_{il}}}\left(\sum_{\substack{B_i\subset\lbrace1,... ,N\rbrace\\|B_i|=P_i}}\prod_{j'\in B_i} u_{ij'}\right)z^{\sum_iP_im_i}\Bigg)
\end{align*}
and by recalling the combinatorics of $t_i^{P_i}$ as sum of $u_{ij}$ we finally get the expected result:

\begin{multline}
\log\overrightarrow{f}_{\mathfrak{d}}=\sum_{l_{\mathfrak{d}}\geq 1}\sum_{\mathbf{P}}\sum_{\mathbf{k}\vdash \mathbf{P}}N_{\mathbf{w}(\mathbf{k})}^{\mathrm{trop}}\sum_{i=1}^n\sum_{l\geq 1}lA_{i}^lk_{il}\left(\prod_{1\leq i'\leq n}\prod_{l'\geq 1}\left(\frac{(-1)^{l'-1}}{(l')^2}\right)^{k_{i'l'}}\frac{1}{k_{i'l'}!}t_{i'}^{P_{i'}}\right)z^{\sum_iP_im_i}, \\
 l_{\mathfrak{d}}\left(\prod_{1\leq i\leq n}\prod_{l\geq 1}\left(\frac{(-1)^{l-1}}{l^2}\right)^{k_{il}}\frac{1}{k_{il}!}t_i^{P_i}\right)z^{\sum_iP_im_i}
\Bigg).
\end{multline}
\end{proof}

\section{Gromov--Witten invariants}\label{sec:GW}


Let $\mathbf{m}=(m_1,...  ,m_n)$ be an n-tuple of primitive non-zero vectors $m_j\in\Lambda$ and consider the toric surface $\overline{Y}_{\mathbf{m}}$ whose fan in $\Lambda_\R$ has rays $-\R_{\geq 0}m_1,...  ,-\R_{\geq 0}m_n$. If the fan is not complete (i.e. its rays do not span $\Lambda_\R$) we can add some more rays and we still denote the compact toric surface by $\overline{Y}_{\mathbf{m}}$\footnote{Adding extra rays is irrelevant, because Gromov--Witten invariants are equivalent under birational transformations.}. Let $D_{m_1},...  ,D_{m_n}$ be the toric divisors corresponding to the rays $-\R_{\geq 0}m_1,...  ,-\R_{\geq 0}m_n$: we blow-up a point $\xi_j$ in general position on the divisor $D_{m_j}$, for $j=1,... , n$. Since we allow $m_j=m_k$ for $k\neq j$, we may blow-up more than one distinct points on the same toric divisor $D_{m_j}$. We denote by $E_j$ the exceptional divisor of the blow-up of the point $\xi_j$ and by $Y_{\mathbf{m}}$ the resulting projective surface. The strict transform of the toric boundary divisor $\partial Y_{\mathbf{m}}$ is an anti-canonical cycle of rational curves and the pair $(Y_{\mathbf{m}}, \partial Y_{\mathbf{m}})$ is a log Calabi--Yau pair. 

Following \cite{Pierrick} we introduce genus $0$ Gromov--Witten invariants both for a projective surface $Y_{\mathbf{m}}$ relative to the divisor $\partial Y_{\mathbf{m}}$ and for the toric surface $\overline{Y}_{\mathbf{m}}$ relative to the full toric boundary divisor $\bar{\partial}Y_{\mathbf{m}}=D_1\cup...\cup D_n$. In the following sections, we review the definitions and we recall the relation with tropical curve count (Proposition \ref{thm:trop-toric}) and the degeneration formula (Proposition \ref{thm:deg}).

\subsection{Gromov--Witten invariants for $(Y_{\mathbf{m}},\partial Y_{\mathbf{m}})$}\label{sec:blowup GW}

Let $\mathbf{P}=(P_1,...  ,P_n)$ be a n-tuple $\mathbf{P}\in\N^n$. We assume \[l_{\mathbf{P}}m_{\mathbf{P}}\defeq\sum_{j=1}^nP_jm_j\neq 0\] for some primitive $m_{\mathbf{P}}\in\Lambda$. The vector $m_{\mathbf{P}}$ can be written as a combination of vectors $m_i$ which generates the fan of $\overline{Y}_{\mathbf{m}}$: 
\[
m_{\mathbf{P}}=a_{L,\mathbf{P}}m_{L}+a_{R,\mathbf{P}}m_R
\] 
$a_{L,\mathbf{P}}, a_{R,\mathbf{P}}\in\C$. Then, denote by $D_{m_L}$ and $D_{m_R}$ the toric divisors corresponding to $m_L$ and $m_R$. We are going to define a class $\beta_{\mathbf{P}}\in H_2(\overline{Y}_{\mathbf{m}},\Z)$ which represents curves in $Y_{\mathbf{m}}$ with tangency conditions prescribed by $\mathbf{P}$: let $\beta$ be the class whose intersection numbers are:
\begin{itemize}
\item for every divisor $D_{m_j}$, $j=1,... ,n$, distinct from $D_{m_R}$ and $D_{m_L}$
\[\beta\cdot D_{m_j}=\sum_{j':D_{m_{j'}}=D_{m_j}}P_{j'}\]
\item for $D_{m_L}$
\[\beta\cdot D_{m_L}=l_{\mathbf{P}}a_{L,\mathbf{P}}+\sum_{j':D_{m_{j'}}=D_{m_L}}P_{j'}\]
\item for $D_{m_R}$
\[\beta\cdot D_{m_R}=l_{\mathbf{P}}a_{R,\mathbf{P}}+\sum_{j':D_{m_{j'}}=D_{m_R}}P_{j'}\]
\item for every divisor $D$ different from all $D_{m_L}, D_{m_R}$ and all $D_{m_i}$, then $\beta\cdot D=0$.  
\end{itemize}


Then we define $\beta_{\mathbf{P}}\in H_2(Y_{\mathbf{m}},\Z)$ as 
\[\beta_{\mathbf{P}}\defeq\nu^*\beta-\sum_{j=1}^n{P_j}[E_j]\]
where $\nu\colon \overline{Y}_{\mathbf{m}}\to Y_{\mathbf{m}}$ is the blow-up morphism. 
Let $M_{g,\mathbf{P}}(Y_{\mathbf{m}}/\partial Y_{\mathbf{m}})$ be the moduli space of stable log maps of genus $g$ and class $\beta_\mathbf{P}$ to a target log space $Y_{\mathbf{m}}$ that is log smooth over $\partial Y_{\mathbf{m}}$. In \cite{logGW}, the authors prove that $M_{g,\mathbf{P}}(Y_{\mathbf{m}}/\partial Y_{\mathbf{m}})$ is a proper Deligne--Mumford stack and it admits a virtual fundamental class 
\[[M_{g,\mathbf{P}}(Y_{\mathbf{m}}/\partial Y_{\mathbf{m}})]^{\vir}\in A_g(M_{g,\mathbf{P}}(Y_{\mathbf{m}},\partial Y_{\mathbf{m}}), \Q).\]  
In particular for genus $0$, $M_{0,\mathbf{P}}(Y_{\mathbf{m}})$ has virtual dimension zero, hence the log Gromov--Witten invariants of $Y_{\mathbf{m}}$ are defined as
\begin{equation}\label{eq:N_(0,P)}
N_{0,\mathbf{P}}(Y_{\mathbf{m}})\defeq\int_{[M_{0,\mathbf{P}}(Y_{\mathbf{m}}/\partial Y_{\mathbf{m}})]^{\vir}} 1
\end{equation}
where $1\in A^0(\overline{M}_{0,\mathbf{P}}(Y_{\mathbf{m}}/\partial Y_{\mathbf{m}}),\Q)$ is the dual class of a point.

\subsection{Gromov--Witten invariants for $(\overline{Y}_{\mathbf{m}},\partial \overline{Y}_{\mathbf{m}})$}\label{sec:toric GW}

We now introduce Gromov--Witten invariants for the toric surface $\overline{Y}_{\mathbf{m}}$. Let $s$ be an integer number and let $\mathbf{w}=(w_1,... , w_s)$ be a $s$-tuple of \textit{weight} vectors such that for every $r=1,... , s$ there is an index $i\in\lbrace 1,... ,n\rbrace$ such that $-m_i\R_{\geq 0}=-w_r\R_{\geq 0}$. In particular $-w_r\R_{\geq 0}$ is contained in the fan of $\overline{Y}_{\mathbf{m}}$ and we denote by $D_{w_r}$ the corresponding divisor in $\partial\overline{Y}_{\mathbf{m}}$. We also assume $\sum_{r=1}^sw_i\neq 0$.  
In order to ``count'' curves in $\overline{Y}_{\mathbf{m}}$ meeting $\partial\overline{Y}_{\mathbf{m}}$ in $s$ prescribed points with multiplicity $|w_r|$ and in a single unprescribed point with multiplicity $|\sum_{r=1}^s w_r|$, we are going to define a suitable curve class $\beta_{\mathbf{w}}\in H_2(\overline{Y}_{\mathbf{m}},\Z)$. Let $m_{\mathbf{w}}\in\Lambda$ be a primitive vector in $\Lambda$, such that 
\[
\sum_{r=1}^sw_r=l_{\mathbf{w}}m_{\mathbf{w}}
\]
where $l_{\mathbf{w}}=\left|\sum_{r=1}^sw_r \right|$. In particular $m_{\mathbf{w}}$ belongs to a cone of the fan of $\overline{Y}_{\mathbf{m}}$ and it can be written uniquely as 
\[ m_{\mathbf{w}}=a_{L}m_L+a_Rm_R\]
where $m_L,m_R$ are primitive generator of the rays of the fan and $a_L,a_R\in\N$.  

Then $\beta_{\mathbf{w}}$ is determined by the following intersection numbers:
\begin{itemize}
\item for every divisor $D_{w_r}$, $r=1,... ,s$, distinct from $D_{m_R}$ and $D_{m_L}$
\[\beta_{\mathbf{w}}\cdot D_{w_r}=\sum_{r':D_{w_{r'}}=D_{w_r}}|w_r'|\]
\item for both the divisors $D_{m_R}$ and $D_{m_L}$
\[\beta_{\mathbf{w}}\cdot D_{m_L}=l_{\mathbf{w}}a_L+\sum_{r':D_{w_{r'}}=D_{m_L}}|w_r'|\]
\[\beta_{\mathbf{w}}\cdot D_{m_R}=l_{\mathbf{w}}a_R+\sum_{r':D_{w_{r'}}=D_{m_R}}|w_r'|\]
\item for every divisor $D$ different from all $D_{w_r}$ and all $D_{m_i}$, then $\beta_{\mathbf{w}}\cdot D=0$.  
\end{itemize} 
The existence of this class follows from toric geometry: since $\overline{Y}_{\mathbf{m}}$ is complete, $A_1(\overline{Y}_{\mathbf{m}})$ is generated by the class of the divisors associated to the rays of the fan. 
Let $M_{0,\mathbf{w}}(\overline{Y}_{\mathbf{m}}, \partial \overline{Y}_{\mathbf{m}})$ be the moduli space of stable log maps of genus $0$ and class $\beta_{\mathbf{w}}$; it is a proper Deligne--Mumford stack of virtual dimension $s$. In addition, there are $s$ evaluation maps $\ev_1,...,\ev_s$ such that 
\begin{align*}
\ev_r&\colon M_{0,\mathbf{w}}(\overline{Y}_{\mathbf{m}},\partial\overline{Y}_{\mathbf{m}})\to D_{w_r}
\end{align*}
and the Gromov--Witten invariants are defined as follows
\begin{equation}\label{eq:N_w}
N_{0,\mathbf{w}}({\overline{Y}_{\mathbf{m}}})\defeq\int_{[M_{0,\mathbf{w}}(\overline{Y}_{\mathbf{m}},\partial \overline{Y}_{\mathbf{m}})]^{\vir}} \prod_{r=1}^s\ev^*_r(\pt_r)
\end{equation}
where $\pt_r\in A^1(D_{w_r})$ is the dual class of a point. 

\begin{prop}\label{thm:trop-toric}
Let $\mathbf{m}=(m_1...  ,m_n)$ be a n-tuple of non-zero primitive vectors in $\Lambda$. Then for every n-tuple $\mathbf{P}=(P_1,... ,P_n)\in\N^n$ and every partition $\mathbf{k}\vdash \mathbf{P}$
\begin{equation}\label{eq:N_trop N_rel}
N_{0,\mathbf{w}(\mathbf{k})}^{\mathrm{trop}}=N_{0,\mathbf{w}(\mathbf{k})}(\overline{Y}_{\mathbf{m}})\prod_{r=1}^n\prod_{l\geq 1}l^{k_{rl}}
\end{equation}
\end{prop}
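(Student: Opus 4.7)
\begin{proofof}{Proposition \ref{thm:trop-toric} (sketch of plan)}
The plan is to reduce the statement to Mikhalkin's tropical--algebraic correspondence theorem in its log-geometric incarnation due to Nishinou--Siebert, and then to track the discrepancy between the purely tropical multiplicity $\mathrm{Mult}(h)$ used in \eqref{def:N^(trop)} and the virtual multiplicity with which a log stable map degenerating to $h$ contributes to the invariant \eqref{eq:N_w}. The weight datum $\mathbf{w}(\mathbf{k})$ is designed precisely so that a genus zero parametrized tropical curve $h\colon\Gamma\to\Lambda_{\R}$ of type $(\mathbf{w}(\mathbf{k}),\xi)$ has $k_{rl}$ unbounded legs of weight $l$ in direction $-m_r$ and a single outgoing leg in the direction of $\sum_{r,l}lk_{rl}m_r$. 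By the balancing condition these legs record exactly the tangency profile along $\partial\overline{Y}_{\mathbf{m}}$ that defines the class $\beta_{\mathbf{w}(\mathbf{k})}$, so the relevant moduli problems can be compared.

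First I would fix a generic $\xi$ and recall from Nishinou--Siebert (or equivalently from Section~5 of~\cite{GPS}) that if $\overline{Y}_{\mathbf{m}}$ is equipped with its toric log structure and one degenerates the point constraints $\ev_r^*(\mathrm{pt}_r)$ to the toric boundary, then the resulting degenerate moduli space decomposes as a disjoint union indexed by tropical curves $h\in T_{\mathbf{w}(\mathbf{k}),\xi}$. For each such $h$ there is a unique (up to a finite group of automorphisms of the source curve) log stable map $f_h$ to the central fibre whose associated tropicalization equals $h$, and the whole formal neighbourhood of $f_h$ inside $M_{0,\mathbf{w}(\mathbf{k})}(\overline{Y}_{\mathbf{m}},\partial\overline{Y}_{\mathbf{m}})$ is unobstructed with virtual degree equal to an explicit combinatorial factor attached to $h$.

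The second step, and the key quantitative point, is the computation of this combinatorial factor. For a trivalent vertex $V$ of $h$ with adjacent edges of weights $w_1,w_2,w_3$ and primitive directions $m^{(1)},m^{(2)},m^{(3)}$, the smoothing of $f_h$ at $V$ contributes the vertex multiplicity $w_1w_2|m^{(1)}\wedge m^{(2)}|=\mathrm{Mult}_V(h)$, so one obtains a factor $\prod_{V\in\Gamma^{[0]}}\mathrm{Mult}_V(h)=\mathrm{Mult}(h)$ from the vertices. The difference with the tropical count comes from the legs: each unbounded edge $E$ of weight $w_\Gamma(E)=l$ mapping to $-\mathbb{R}_{\geq 0}m_r$ contributes a factor $l$ to the degree of the evaluation map $\ev_r$ onto the toric divisor $D_{m_r}$, because the point constraint $\mathrm{pt}_r$ is pulled back along a map of local degree $l$. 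Grouping these factors by $(r,l)$ gives precisely $\prod_{r=1}^{n}\prod_{l\geq 1}l^{k_{rl}}$. The outgoing leg $E_{\mathrm{out}}$ is unconstrained and therefore contributes no analogous factor.

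Putting the two steps together, each $h\in T_{\mathbf{w}(\mathbf{k}),\xi}$ contributes $\mathrm{Mult}(h)$ to $N^{\mathrm{trop}}_{0,\mathbf{w}(\mathbf{k})}$ and $\mathrm{Mult}(h)\cdot\prod_{r,l}l^{k_{rl}}$ to $N_{0,\mathbf{w}(\mathbf{k})}(\overline{Y}_{\mathbf{m}})$, which yields the stated identity after summing over the finite set $T_{\mathbf{w}(\mathbf{k}),\xi}$ (finiteness being Proposition~4.13 of~\cite{Mik05}, already invoked to make \eqref{def:N^(trop)} well-posed). The main obstacle in making this rigorous is not the combinatorics but rather the log geometric input: one needs the virtual fundamental class of $M_{0,\mathbf{w}(\mathbf{k})}(\overline{Y}_{\mathbf{m}},\partial\overline{Y}_{\mathbf{m}})$ to split as a sum indexed by tropical types with the expected multiplicities, and the cleanest reference for this in the toric setting is Nishinou--Siebert together with the degeneration-of-point-conditions argument of~\cite{GPS}, which could alternatively be cited directly in place of the argument sketched above.
\end{proofof}
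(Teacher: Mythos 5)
The paper itself gives no proof of this proposition; it is recalled from the literature, the underlying result being essentially Theorem~3.4 / Proposition~3.9 of \cite{GPS}, which rests on the Nishinou--Siebert correspondence. Your choice of strategy therefore matches the argument the paper implicitly relies on.

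There is however a genuine error: you have the leg-weight factor pointing in the wrong direction. You conclude that each tropical curve $h$ contributes $\mathrm{Mult}(h)$ to $N^{\mathrm{trop}}_{0,\mathbf{w}(\mathbf{k})}$ and $\mathrm{Mult}(h)\cdot\prod_{r,l}l^{k_{rl}}$ to $N_{0,\mathbf{w}(\mathbf{k})}(\overline{Y}_{\mathbf{m}})$, which after summing over $T_{\mathbf{w}(\mathbf{k}),\xi}$ gives $N_{0,\mathbf{w}(\mathbf{k})}(\overline{Y}_{\mathbf{m}})=N^{\mathrm{trop}}_{0,\mathbf{w}(\mathbf{k})}\cdot\prod_{r,l}l^{k_{rl}}$. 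The proposition asserts the reciprocal identity, $N^{\mathrm{trop}}_{0,\mathbf{w}(\mathbf{k})}=N_{0,\mathbf{w}(\mathbf{k})}(\overline{Y}_{\mathbf{m}})\cdot\prod_{r,l}l^{k_{rl}}$, so it is the tropical count that is the larger. You can also see the correct direction from internal consistency of the paper: in the proof of Theorem~\ref{thm:m_out}, equation \eqref{eq:N_trop N_rel} is substituted into the degeneration formula \eqref{eq:degeneration} precisely so as to cancel the explicit $l^{k_{il}}$ factors appearing there and land on the tropical generating function of Theorem~\ref{thm:2.8}; your version would instead square those factors. The heuristic you offer (that $\ev_r$ has local degree $l$, so $\ev_r^*(\mathrm{pt}_r)$ carries a factor $l$) is what produces the inversion and does not reflect the actual mechanism of the Nishinou--Siebert toric degeneration, where the discrepancy arises because the tropical moduli problem overcounts by the leg weights rather than the log count picking up extra multiplicity. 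Moving the factor $\prod_{r,l}l^{k_{rl}}$ to the tropical side in your final step fixes the identity; the remaining elements of the sketch (vertex multiplicities $\mathrm{Mult}_V(h)$ from the correspondence, finiteness of $T_{\mathbf{w}(\mathbf{k}),\xi}$ from \cite{Mik05}) are the expected route.
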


\begin{prop}\label{thm:deg}
Let $\mathbf{m}=(m_1,...  m_n)$ be a n-tuple of primitive, non zero vectors in $\Lambda$, and let $\mathbf{P}=(p_1,...  ,p_n)\in\N^n$ be a n-tuple of positive integers, then 
\begin{equation}\label{eq:degeneration}
N_{0,\mathbf{P}}(Y_{\mathbf{m}})=\sum_{\mathbf{k}\,\vdash\,\mathbf{P}}N_{0,\mathbf{w}(\mathbf{k})}(\overline{Y}_{\mathbf{m}})\prod_{j=1}^n\prod_{l=1}^{l_j}\frac{l^{k_{jl}}}{k_{jl}!}(R_l)^{k_{jl}}
\end{equation}
where the sum is over all partition $\mathbf{k}$ of $\mathbf{P}$ and $R_l=\frac{(-1)^{l-1}}{l^2}$. 
\end{prop}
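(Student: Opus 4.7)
The plan is to reduce the proposition to the standard Gross--Pandharipande--Siebert degeneration argument (Proposition 5.3 of \cite{GPS}) via Li's degeneration formula \cite{Lideg}, or equivalently the logarithmic degeneration formula of Abramovich--Chen--Gross--Siebert. The invariants $N_{0,\mathbf{P}}(Y_{\mathbf{m}})$ count rational curves in the blow-up $Y_{\mathbf{m}}$ with full tangency $P_j$ at a single unspecified point on each exceptional divisor $E_j$, while the $N_{0,\mathbf{w}(\mathbf{k})}(\overline{Y}_{\mathbf{m}})$ count rational curves in the toric model $\overline{Y}_{\mathbf{m}}$ with tangency profile $\mathbf{w}(\mathbf{k})$ along the toric divisors $D_{m_j}$, where each part $k_{jl}$ records the multiplicity $l$ contact points lying over $D_{m_j}$.

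First I would construct a suitable degeneration of $Y_{\mathbf{m}}$. Starting from the trivial family $Y_{\mathbf{m}}\times \mathbb{A}^1$, one blows up the exceptional divisors $E_j\times\{0\}$ (iterating over $j$, or equivalently degenerating to the normal cone of $\partial Y_{\mathbf{m}}$). The generic fibre is $Y_{\mathbf{m}}$ and the central fibre is
\[
\overline{Y}_{\mathbf{m}}\ \cup_{D_{m_1}}F_1\ \cup_{D_{m_2}}\cdots\cup_{D_{m_n}} F_n,
\]
where each $F_j$ is a $\mathbb{P}^1$-bundle over $D_{m_j}$ containing the strict transform of the exceptional divisor $E_j$ as a $(-1)$-section. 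The log structure inherited from $\partial Y_{\mathbf{m}}$ is compatible with this degeneration, so the log degeneration formula applies and expresses $N_{0,\mathbf{P}}(Y_{\mathbf{m}})$ as a sum over decomposed stable log maps into the central fibre, with matched tangency data at each node lying over $D_{m_j}$.

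The second step is to enumerate the admissible splittings. On the toric component $\overline{Y}_{\mathbf{m}}$ the curve class is forced, by projection formula, to be $\beta_{\mathbf{w}(\mathbf{k})}$ for some partition $\mathbf{k}\vdash\mathbf{P}$; the node data over $D_{m_j}$ records a set of contact orders whose multiplicities define $\mathbf{k}$. On each bubble component $F_j$ the only curve classes meeting the $(-1)$-section with the prescribed profile and satisfying the point condition on $E_j$ are disjoint unions of multiple covers of the exceptional section, with $k_{jl}$ covers of degree $l$ for each $l$. Contributions then factorize: the toric factor gives precisely $N_{0,\mathbf{w}(\mathbf{k})}(\overline{Y}_{\mathbf{m}})$, and the bubble factor is a product of rubber multiple-cover integrals. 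A single degree-$l$ cover of a $(-1)$-curve with one full-tangency marking contributes the classical Aspinwall--Morrison number $R_l=(-1)^{l-1}/l^2$, which is where the factor $R_l^{k_{jl}}$ comes from.

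The remaining step is bookkeeping of the combinatorial factors. The gluing identity in the log degeneration formula produces a factor equal to the product of contact orders at the nodes, which yields $\prod_l l^{k_{jl}}$; the symmetry factor $1/k_{jl}!$ arises because the $k_{jl}$ covers of the same degree $l$ on a single bubble component $F_j$ are indistinguishable, so one overcounts by this much when listing ordered profiles $\mathbf{w}(\mathbf{k})$ versus unordered ones. Multiplying these with $R_l^{k_{jl}}$ gives the factor $\prod_j\prod_l \frac{l^{k_{jl}}}{k_{jl}!} R_l^{k_{jl}}$ of the statement, and summing over all partitions $\mathbf{k}\vdash\mathbf{P}$ produces the identity. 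The main obstacle will not be the geometric setup, which is parallel to \cite{GPS}, but the careful matching of: (i) the node-contact multiplicities from the log degeneration formula, (ii) the symmetry factors coming from unordered collections of identical covers, and (iii) the rubber multiple-cover integrals evaluating to $R_l$; these must combine to give exactly the coefficient stated, with no stray powers of $l$ or signs.
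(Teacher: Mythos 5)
The paper does not actually prove this proposition: it cites Proposition~5.3 of \cite{GPS} (via Li's degeneration formula) and Proposition~11 of \cite{Pierrick} as sources, remarking only that in genus~0 these agree. Your sketch therefore supplies an argument the paper outsources, and it correctly identifies the three ingredients that make the GPS proof work: degeneration to a normal-cone-type family, the Aspinwall--Morrison multiple-cover contribution $R_l=(-1)^{l-1}/l^2$ for a degree-$l$ cover of a $(-1)$-curve with one maximal-tangency marking, and the combinatorial factor $\prod_{j,l}\bigl(l^{k_{jl}}/k_{jl}!\bigr)$ coming from the gluing multiplicities and the automorphism group of an unordered collection of identical covers. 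These match the coefficient in \eqref{eq:degeneration}.

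There is, however, a concrete imprecision in your degeneration. If you blow up $E_j\times\{0\}\subset Y_{\mathbf{m}}\times\mathbb{A}^1$, the strict transform of the central fibre is $\mathrm{Bl}_{E_j}Y_{\mathbf{m}}=Y_{\mathbf{m}}$ (blowing up along a divisor is trivial), so the central fibre is $Y_{\mathbf{m}}\cup_{E_j}\mathbb{P}(N_{E_j}\oplus\mathcal{O})$, not $\overline{Y}_{\mathbf{m}}\cup_{D_{m_j}}F_j$. The exceptional bubble is fibred over $E_j$, not over $D_{m_j}$, and $\overline{Y}_{\mathbf{m}}$ never appears. The degeneration actually used in GPS goes the other way: start from $\overline{Y}_{\mathbf{m}}\times\mathbb{A}^1$, degenerate to the normal cone of the toric divisors $D_{m_j}$ containing the blow-up points (central fibre $\overline{Y}_{\mathbf{m}}\cup_{D_{m_j}}\mathbb{P}(\mathcal{O}_{D_{m_j}}\oplus N_{D_{m_j}})$), then blow up the total space along the constant sections through the points $\xi_j$. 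This modifies only the bubble components to blown-up Hirzebruch surfaces $\tilde F_j$, makes the general fibre $Y_{\mathbf{m}}$, and places the exceptional $(-1)$-curves $E_j$ inside the bubbles, which is exactly where the $R_l$ multiple-cover integrals live. With that correction the rest of your bookkeeping (gluing factor $\prod l^{k_{jl}}$, automorphism factor $\prod 1/k_{jl}!$, bubble factor $\prod R_l^{k_{jl}}$) goes through as in GPS.
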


A first proof of this result is Proposition 5.3 \cite{GPS}, and it has been computed by applying Li's degeneration formula. In \cite{Pierrick}, the author proves a general version of \eqref{eq:degeneration} (see Proposition 11 of \cite{Pierrick}) using a more sophisticated approach. However in genus $0$ the two approaches give the same formula, as Gromov--Witten invariants are the same in Li's theory and in the logarithmic theory.

\section{Gromov--Witten invariants from commutators in $\tilde{\mathbb{V}}$}\label{sec:GW and commutators}

In this section we finally collect together the previous results and we get a generating function for genus $0$ relative Gromov--Witten invariants in terms of consistent scattering diagrams in the extended tropical vertex group.

\begin{theorem}\label{thm:m_out}
Let $\mathbf{m}=(m_1,...,m_n)$ be n primitive non zero vectors in $\Lambda$ and let $\mathfrak{D}$ be a standard scattering diagram over $\C[\![t_1,...,t_n]\!]$ with
\begin{align*}
\mathfrak{D}=\left\lbrace \left(\mathfrak{d}_i=m_i\R,\overrightarrow{f}_{\mathfrak{d}_i}=\left(1+A_it_iz^{m_i},1+t_iz^{m_i}\right)\right)\vert 1\leq i\leq n\right\rbrace
\end{align*}
where $A_{i}\in\mathfrak{gl}(r,\C)$ and assume $[A_i,A_{i'}]=0$ for all $i,i'\in\lbrace 1,...,n\rbrace$.
Then for every wall $(\mathfrak{d}=m_{\mathfrak{d}}\R_{\geq 0},\overrightarrow{f}_{\mathfrak{d}})\in\mathfrak{D}^{\infty}\setminus\mathfrak{D}$: 
\begin{multline}\label{eq:GW_rel}
\log\overrightarrow{f}_{\mathfrak{d}}=\Bigg(\sum_{l_{\mathfrak{d}}\geq 1}\sum_{\mathbf{P}}\sum_{\mathbf{k}\vdash \mathbf{P}}N_{0,\mathbf{w}(\mathbf{k})}(\overline{Y}_{\mathbf{m}}){\sum_{i=1}^n\sum_{l\geq 1}lA_{i}^lk_{il}\left(\prod_{i'=1}^n\prod_{l'\geq 1}\left(\frac{(-1)^{l'-1}}{l'}\right)^{k_{i'l'}}\frac{1}{k_{i'l'}!}\right)}\prod_{i=1}^nt_{i}^{P_{i}}z^{\sum_iP_im_i}, \\
 \sum_{l_{\mathfrak{d}}\geq 1}\sum_{\mathbf{P}} l_{\mathfrak{d}}N_{0,\mathbf{P}}(Y_{\mathbf{m}})\left(\prod_{i=1}^nt_i^{P_i}\right)z^{\sum_iP_im_i}
\Bigg)
\end{multline}
where the second is over all n-tuples $\mathbf{P}=(P_1,...,P_n)$ satisfying $\sum_{i=1}^nP_im_i=l_{\mathfrak{d}}m_{\mathfrak{d}}$.  
\end{theorem}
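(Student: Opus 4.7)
\begin{proofof}{Theorem \ref{thm:m_out} (sketch)}
The plan is to combine three ingredients already in place: the tropical expression for $\log \overrightarrow{f}_{\mathfrak{d}}$ obtained in Theorem \ref{thm:2.8}, the correspondence between tropical counts and toric relative Gromov--Witten invariants from Proposition \ref{thm:trop-toric}, and the degeneration formula from Proposition \ref{thm:deg}. Since the initial matrices $A_i$ commute, the hypothesis of Theorem \ref{thm:2.8} is satisfied and the whole argument reduces to a bookkeeping exercise on the coefficients.

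The first step is to invoke Theorem \ref{thm:2.8} verbatim, which writes both components of $\log \overrightarrow{f}_{\mathfrak{d}}$ as weighted sums over partitions $\mathbf{k}\vdash\mathbf{P}$ of the tropical numbers $N^{\mathrm{trop}}_{\mathbf{w}(\mathbf{k})}$. Next, using Proposition \ref{thm:trop-toric}, I would substitute
\[
N^{\mathrm{trop}}_{\mathbf{w}(\mathbf{k})}=N_{0,\mathbf{w}(\mathbf{k})}(\overline{Y}_{\mathbf{m}})\prod_{i=1}^n\prod_{l\geq 1} l^{k_{il}}
\]
in both components simultaneously. In the matrix slot this extra product $\prod l^{k_{il}}$ absorbs one factor of $l'$ from each $(l')^{-2}$, producing exactly the coefficient $\left(\tfrac{(-1)^{l'-1}}{l'}\right)^{k_{i'l'}}\tfrac{1}{k_{i'l'}!}$ that appears in the first component of \eqref{eq:GW_rel}, with no further rearrangement needed.

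The work then concentrates on the derivation slot. After substitution its coefficient at $\prod_i t_i^{P_i}z^{\sum_i P_i m_i}$ reads
\[
l_{\mathfrak{d}}\sum_{\mathbf{k}\vdash \mathbf{P}}N_{0,\mathbf{w}(\mathbf{k})}(\overline{Y}_{\mathbf{m}})\prod_{i=1}^n\prod_{l\geq 1}\frac{l^{k_{il}}}{k_{il}!}\bigl(R_l\bigr)^{k_{il}},\qquad R_l=\frac{(-1)^{l-1}}{l^2},
\]
which is precisely the right-hand side of the degeneration formula \eqref{eq:degeneration}. Applying Proposition \ref{thm:deg} collapses the inner sum to $l_{\mathfrak{d}}N_{0,\mathbf{P}}(Y_{\mathbf{m}})$, yielding the second component of \eqref{eq:GW_rel}.

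There is no genuine obstacle here beyond aligning indices carefully; the main thing to watch is that the primitive vector $m_{\mathfrak{d}}$ implicit in Theorem \ref{thm:2.8} coincides with the one used to define the curve classes $\beta_{\mathbf{w}(\mathbf{k})}$ and $\beta_{\mathbf{P}}$ in Sections \ref{sec:blowup GW}--\ref{sec:toric GW}, so that Propositions \ref{thm:trop-toric} and \ref{thm:deg} apply with exactly the same $\mathbf{w}(\mathbf{k})$ and $\mathbf{P}$ that index the tropical sum. This is immediate from the balancing condition \eqref{eq:balcond}: the outgoing edge of any tree contributing to $\log\overrightarrow{f}_{\mathfrak{d}}$ has slope $\sum_{i}P_im_i$, so $\mathbf{P}$ records exactly the tangency orders along the toric divisors $D_{m_i}$ prescribed by the definitions of $\beta_{\mathbf{w}(\mathbf{k})}$ and $\beta_{\mathbf{P}}$. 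Once this identification is in place, the two substitutions above complete the proof.
\end{proofof}
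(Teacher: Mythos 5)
Your proof is correct and follows essentially the same route as the paper's: both invoke Theorem \ref{thm:2.8} and then convert the tropical counts into the relative invariants $N_{0,\mathbf{w}(\mathbf{k})}(\overline{Y}_{\mathbf{m}})$ and $N_{0,\mathbf{P}}(Y_{\mathbf{m}})$ via Propositions \ref{thm:trop-toric} and \ref{thm:deg}. The only difference is cosmetic — you substitute the tropical-to-toric identity into both slots first and then apply the degeneration formula, while the paper first combines the two propositions into the single identity $N_{0,\mathbf{P}}(Y_{\mathbf{m}})=\sum_{\mathbf{k}\vdash\mathbf{P}}N^{\mathrm{trop}}_{\mathbf{w}(\mathbf{k})}\prod_{i,l}\frac{1}{k_{il}!}R_l^{k_{il}}$ and plugs that directly into the derivation component — but the computations and conclusions are identical.
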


%
%
%

\begin{proof}
The proof is a consequence of Theorem \ref{thm:2.8}, Theorem \ref{thm:deg} and Theorem \ref{thm:trop-toric}. Indeed from equation \eqref{eq:degeneration} and Proposition \ref{thm:trop-toric}

\begin{align}\label{eq:N_P-N_trop}
N_{0,\mathbf{P}}(Y_{\mathbf{m}})&=\sum_{\mathbf{k}\vdash\mathbf{P}}N_{0,\mathbf{w}(\mathbf{k})}(\overline{Y}_{\mathbf{m}})\prod_{i=1}^n\prod_{l\geq 1}\frac{l^{k_{il}}}{k_{il}!}R_{l_i}^{k_{il}}\\
&=\sum_{\mathbf{k}\vdash\mathbf{P}}N_{\mathbf{w}(\mathbf{k})}^{\mathrm{trop}}\prod_{i=1}^n\prod_{l\geq 1}\frac{1}{k_{il}!}R_l^{k_{il}}
\end{align}

Then, for every every wall $(\mathfrak{d}=m_{\mathfrak{d}}\R_{\geq 0},\overrightarrow{f}_{\mathfrak{d}})\in\mathfrak{D}^{\infty}\setminus\mathfrak{D}$ is explicitly given by \eqref{eq:f_trop}:
\begin{multline}
\log\overrightarrow{f}_{\mathfrak{d}}=\sum_{l_{\mathfrak{d}}\geq 1}\sum_{\mathbf{P}}\sum_{\mathbf{k}\vdash \mathbf{P}}N_{\mathbf{w}(\mathbf{k})}^{\mathrm{trop}}\Bigg(\sum_{i=1}^n\sum_{l\geq 1}lA_{i}^lk_{il}\left(\prod_{i'=1}^n\prod_{l'\geq 1}\left(\frac{(-1)^{l'-1}}{(l')^2}\right)^{k_{i'l'}}\frac{1}{k_{i'l'}!}t_{i'}^{P_{i'}}\right)z^{\sum_iP_im_i}, \\
 l_{\mathfrak{d}}\left(\prod_{i=1}^n\prod_{l\geq 1}\left(\frac{(-1)^{l-1}}{l^2}\right)^{k_{il}}\frac{1}{k_{il}!}t_i^{P_i}\right)z^{\sum_iP_im_i}
\Bigg)
\end{multline}
  
and plugging-in equation \ref{eq:N_trop N_rel}, we get

\begin{align*}
\log\overrightarrow{f}_{\mathfrak{d}}=\Bigg(\sum_{l_{\mathfrak{d}}\geq 1}\sum_{\mathbf{P}}\sum_{\mathbf{k}\vdash \mathbf{P}}N_{0,\mathbf{w}(\mathbf{k})}(\overline{Y}_{\mathbf{m}})\sum_{i=1}^n\sum_{l\geq 1}lA_{i}^lk_{il}\left(\prod_{i'=1}^n\prod_{l'\geq 1}\left(\frac{(-1)^{l'-1}}{l'}\right)^{k_{i'l'}}\frac{1}{k_{i'l'}!}t_{i'}^{P_{i'}}\right)z^{\sum_iP_im_i}, \\
 \sum_{l_{\mathfrak{d}}\geq 1}\sum_{\mathbf{P}} l_{\mathfrak{d}}N_{0,\mathbf{P}}(Y_{\mathbf{m}})\left(\prod_{i=1}^nt_i^{P_i}\right)z^{\sum_iP_im_i}
\Bigg)
\end{align*}
where we use the relation between tropical curve counting and open Gromov--Witten of Theorem \ref{thm:trop-toric}, namely $N_{0,\mathbf{w}(\mathbf{k})}^{\mathrm{trop}}\prod_{r=1}^n\prod_{l\geq 1}\frac{1}{l^{k_{rl}}}=N_{0,\mathbf{w}(\mathbf{k})}(\overline{Y}_{\mathbf{m}})$.
\end{proof}

\begin{example}
Let $m_1=(0,1)$, $m_2=(1,0)$ and consider the scattering diagram
\begin{align*}
\mathfrak{D}=\left\lbrace \left(\mathfrak{d}_1=m_1\R,\overrightarrow{f}_{\mathfrak{d}_1}=\left(1+A_1t_1z^{m_1},1+t_1z^{m_1}\right)\right), \left(\mathfrak{d}_2=m_2\R,\overrightarrow{f}_{\mathfrak{d}_2}=\left(1, 1+t_2z^{m_2}\right)\right)\right\rbrace
\end{align*}
with $A_1\in\mathfrak{gl}(r,\C)_e$, which corresponds to the initial standard scattering diagram of Example \ref{ex:diagram1}.   
Let us consider the ray $\mathfrak{d}=(1,2)\R_{\geq 0}\in\mathfrak{D}^{\infty}\setminus \mathfrak{D}$. From Theorem \ref{thm:m_out} we have
\begin{multline}
\log\overrightarrow{f}_{(1,2)}=\sum_{l_{\mathfrak{d}}\geq 1}\sum_{\mathbf{P}}\Bigg(\sum_{\mathbf{k}\vdash \mathbf{P}}A_1k_{11}N_{0,\mathbf{w}(\mathbf{k})}(\overline{Y}_{\mathbf{m}})\prod_{i=1}^2\left(\prod_{l'\geq 1}\left(\frac{(-1)^{l'-1}}{l'}\right)^{k_{il'}}\frac{1}{k_{il'}!}\right)t_i^{P_i}z^{\sum_iP_im_i}, \\
  l_{\mathfrak{d}}N_{0,\mathbf{P}}(Y_{\mathbf{m}})\left(\prod_{i=1}^2t_i^{P_i}\right)z^{\sum_iP_im_i}
\Bigg)
\end{multline}
where the second sum is over all n-tuples $\mathbf{P}=(2 l_{\mathfrak{d}},l_{\mathfrak{d}})$. Now as a consequence of the pentagon identity for the di--logarithm we know that $N_{0,\mathbb{P}}(Y_{\mathbf{m}})=0$ unless $\mathbf{P}=\alpha(1,1)$ for some $\alpha\geq 1$, hence we have

\begin{align*}
\log\overrightarrow{f}_{(1,2)}=\left(\sum_{l_{\mathfrak{d}}\geq 1}\sum_{\mathbf{P}=(2l_{\mathfrak{d}},l_{\mathfrak{d}})}\sum_{\mathbf{k}\vdash \mathbf{P}}A_1k_{11}N_{0,\mathbf{w}(\mathbf{k})}(\overline{Y}_{\mathbf{m}})\prod_{i=1}^2\prod_{l'\geq 1}\left(\frac{(-1)^{l'-1}}{l'}\right)^{k_{il'}}\frac{1}{k_{il'}!}t_1^{2l_{\mathfrak{d}}}t_2^{l_{\mathfrak{d}}}x^{l_{\mathfrak{d}}}y^{2l_{\mathfrak{d}}}, 0\right).
\end{align*}

For $l_{\mathfrak{d}}=1$ we can explicitly compute the partitions $\mathbf{k}\vdash (2,1)$ and the weights $\mathbf{w}(\mathbf{k})$:
\begin{itemize}
\item[(a)] $k_1=(k_{1l})_{l\geq 1}=(2)$ and $k_2=(k_{2l})_{l\geq 1}=(1)$ with weight $\mathbf{w}(\mathbf{k})=(m_1 ,m_1, m_2)$;
\item[(b)] $k_1=(k_{1l})_{l\geq 1}=(0,1)$ and $k_2=(k_{2l})_{l\geq 1}=(1)$ with weight $\mathbf{w}(\mathbf{k})=(2m_1, m_2)$.
\end{itemize}
Therefore the non trivial contributions to $\log\overrightarrow{f}_{(1,2)}$ are
\begin{align*}
&A_1\left(\cdot 2 \cdot N_{0,\mathbf{w}(\mathbf{k})_{(a)}}(\overline{Y}_{\mathbf{m}})\frac{1}{2!}\cdot 1 +0 \cdot N_{0,\mathbf{w}(\mathbf{k})_{(b)}}(\overline{Y}_{\mathbf{m}})\frac{(-1)}{2}\cdot 1\right) t_1^2t_2xy^2\\
&=A_1N_{0,\mathbf{w}(\mathbf{k})_{(a)}}(\overline{Y}_{\mathbf{m}})t_12t_2xy^2
\end{align*}
and comparing it with the function $\overrightarrow{f}_{(1,2)}$ of Example \ref{ex:diagram1} we get $N_{0,\mathbf{w}(\mathbf{k})_{(a)}}(\overline{Y}_{\mathbf{m}})=1$. In this simple case $N_{0,\mathbf{w}(\mathbf{k})_{(a)}}(\overline{Y}_{\mathbf{m}})$ can be also explicitly computed by applying formula \eqref{eq:N_P-N_trop}: indeed
\[
N_{0,\mathbf{w}(\mathbf{k})_{(a)}}(\overline{Y}_{\mathbf{m}})=\frac{N_{\mathbf{w}(\mathbf{k})_{(a)}}^{\mathrm{trop}}}{\prod_{1\leq i\leq 2}\prod_{l\geq 1}l^{k_{il}}}=\frac{N_{\mathbf{w}(\mathbf{k})_{(a)}}^{\mathrm{trop}}}{1\cdot 1}=N_{\mathbf{w}(\mathbf{k})_{(a)}}^{\mathrm{trop}}
\]
and the there is a unique tropical curve, counted with multiplicity with weight $\mathbf{w}(\mathbf{k})_{(a)}=(m_1, m_1, m_2)$, drawn in Figure \ref{fig:trop2}. 
\begin{figure}[h]
\begin{tikzpicture}
\draw (1,0) -- (1,1);
\draw (0,1) -- (1,1);
\draw (1,1) -- (2,2);
\draw (2,0) -- (2,2);
\draw (2,2) -- (3,4);
\node [font=\tiny, left] at (1,0.2) {$w_{1}=1$};
\node [font=\tiny,above] at (0.5,1) {$w_{3}=1$};
\node [font=\tiny, left] at (2,0.5) {$w_{2}=1$};
\node [font=\tiny, left] at (2.7,3.5) {$w_{\mathrm{out}}=1$};
\end{tikzpicture}
\caption{Case $(a)$}
\label{fig:trop2}
\end{figure} 

In conclusion, the non trivial contribution in the matrix component of $\overrightarrow{f}_{(1,2)}$ allow to compute a Gromov--Witten invariant for the toric surface associated to the fan with rays $(-1,0), (0,-1), (1,2)$. 
\end{example}

\subsection{The generating function of $N_{0,\mathbf{w}}(\overline{Y}_{\mathbf{m}})$}

In this section we introduce a new setting in order to study when the automorphism associated to a ray of the consistent scattering diagram is a generating function for the invariants $N_{0,\mathbf{w}}(\overline{Y}_{\mathbf{m}})$ (as in Theorem \ref{thm:m_out}). In particular, let $\ell_1,\ell_2\geq 1$ and let us now consider the initial scattering diagram 
\begin{align*}
\mathfrak{D}=\left\lbrace \left(\mathfrak{d}_i=m_1\R,\overrightarrow{f}_{\mathfrak{d}_i}=\left(1+A_it_iz^{m_1},1+t_iz^{m_1}\right)\right) , \left(\mathfrak{d}_i=m_2\R,\overrightarrow{f}_{\mathfrak{d}_j}=\left(1+Q_js_jz^{m_2},1+s_jz^{m_2}\right)\right)\right\rbrace_{\substack{1\leq i\leq \ell_1 \\ 1\leq j\leq \ell_2}}
\end{align*}
with $[A_i,Q_j]=[A_i,A_{i'}]=[Q_j,Q_{j'}]=0$ for all $i,i'=1,...,\ell_1$ and $j,j'=1,...,\ell_2$. We will not put any restrictions on the rank of the matrices $A_i,Q_j$. As a consequence of Theorem \ref{thm:m_out}, for every ray $(\mathfrak{d}=m_{\mathfrak{d}}\R,\overrightarrow{f}_{\mathfrak{d}})\in\mathfrak{D}^{\infty}\setminus\mathfrak{D}$, the function $\overrightarrow{f}_{\mathfrak{d}}$ can be written as follows:

\begin{multline}\label{eq:GW_rel GPS}
\log\overrightarrow{f}_{\mathfrak{d}}=\Bigg(\sum_{l_{\mathfrak{d}}\geq 1}\sum_{\mathbf{P}}\sum_{\mathbf{k}\vdash \mathbf{P}}N_{0,\mathbf{w}(\mathbf{k})}(\overline{Y}_{\mathbf{m}})\left(\sum_{i=1}^{\ell_1}\sum_{l\geq 1}lA_{i}^lk_{il}+\sum_{i=1}^{\ell_2}\sum_{l\geq 1}lQ_{i}^lk_{(i+\ell_1)l}\right)\cdot\\
\left(\prod_{i'=1}^{\ell_1+\ell_2}\prod_{l'\geq 1}\left(\frac{(-1)^{l'-1}}{l'}\right)^{k_{i'l'}}\frac{1}{k_{i'l'}!}\right)\prod_{i=1}^{\ell_1}t_i^{P_i}\prod_{i=1}^{\ell_2}s_i^{P_{i+\ell_1}}z^{\sum_iP_im_i},\\
 \sum_{l_{\mathfrak{d}}\geq 1}\sum_{\mathbf{P}} l_{\mathfrak{d}}N_{0,\mathbf{P}}(Y_{\mathbf{m}})\prod_{i=1}^{\ell_1}t_i^{P_i}\prod_{i=1}^{\ell_2}s_i^{P_{i+\ell_1}} z^{\sum_iP_{i}m_i}
\Bigg)
\end{multline}
where for every $l_{\mathfrak{d}}$, $\sum_{i=1}^{\ell_1}P_im_1+\sum_{i=1}^{\ell_2}P_{\ell_1+i}m_2=l_{\mathfrak{d}}m_{\mathfrak{d}}$. In particular, $\log\overrightarrow{f}_{\mathfrak{d}}$ is a generating function for the invariants $N_{0,\mathbf{P}}(Y_{\mathbf{m}})$. We also expect that $\log\overrightarrow{f}_{\mathfrak{d}}$ is a generating function for $N_{0,\mathbf{w}}(\overline{Y}_{\mathbf{m}})$. 

Since the tangency conditions for $N_{0,\mathbf{w}}(\overline{Y}_{\mathbf{m}})$ are specified by weight vectors $\mathbf{w}$, it is natural to rewrite \eqref{eq:GW_rel GPS} summing over all $\mathbf{w}$ rather than $\mathbf{P}$. We do it below: let $\mathbf{w}=(w_1,...,w_s)$ such that $\sum_{i=1}^sw_i=l_{\mathfrak{d}}m_{\mathfrak{d}}$ and $w_i=|w_i|m_1$ for $i=1,...,s_1$ and $w_i=|w_i|m_2$ for $i=s_1+1,...,s_2+s_1$. Then we can rewrite \eqref{eq:GW_rel GPS} as follows:
\begin{multline}\label{eq:GW_rel GPS2}
\log\overrightarrow{f}_{\mathfrak{d}}=\Bigg(\sum_{l_{\mathfrak{d}}\geq 1}\sum_{\mathbf{w}}N_{0,\mathbf{w}}(\overline{Y}_{\mathbf{m}})\sum_{\mathbf{k}:\mathbf{w}(\mathbf{k})=\mathbf{w}}\left(\sum_{i=1}^{\ell_1}\sum_{l\geq 1}lA_{i}^lk_{il}+\sum_{i=1}^{\ell_2}\sum_{l\geq 1}lQ_{i}^lk_{(i+\ell_1)l}\right)\cdot\\
\cdot\left(\prod_{i'=1}^{\ell_1+\ell_2}\prod_{l'\geq 1}\left(\frac{(-1)^{l'-1}}{l'}\right)^{k_{i'l'}}\frac{1}{k_{i'l'}!}\right)\prod_{i=1}^{\ell_1}t_i^{\sum_{l\geq 1}lk_{il}}\prod_{i=1}^{\ell_2}s_i^{\sum_{l\geq 1}lk_{i+s_1 l}}z^{\sum_iw_i},\\
 \sum_{l_{\mathfrak{d}}\geq 1}\sum_{\mathbf{P}} l_{\mathfrak{d}}N_{0,\mathbf{P}}(Y_{\mathbf{m}})\prod_{i=1}^{\ell_1}t_i^{P_i}\prod_{i=1}^{\ell_2}s_i^{P_{i+\ell_1}} z^{\sum_iP_{i}m_i}
\Bigg)
\end{multline}
where for every $l_{\mathfrak{d}}$, $\sum_{i=1}^{\ell_1}P_im_1+\sum_{i=1}^{\ell_2}P_{\ell_1+i}m_2=l_{\mathfrak{d}}m_{\mathfrak{d}}$.

Let us define vectors $e_{\mathbf{k}(\mathbf{w})}\in\mathfrak{gl}(r,\C)\otimes_{\C}\C[t_1,...,t_{\ell_1},s_1,...,s_{\ell_2}]$ 

\begin{equation}
e_{\mathbf{k}(\mathbf{w})}\defeq\left(\sum_{i=1}^{\ell_1}\sum_{l\geq 1}lA_{i}^lk_{il}+\sum_{i=1}^{\ell_2}\sum_{l\geq 1}lQ_{i}^lk_{(i+\ell_1)l}\right)\prod_{i=1}^{\ell_1}t_i^{\sum_{l\geq 1}lk_{il}}\prod_{i=1}^{\ell_2}s_i^{\sum_{l\geq 1}lk_{i+s_1 l}}
\end{equation}

and set \[V_\mathbf{w}\defeq \sum_{\mathbf{k}:\mathbf{w}(\mathbf{k})=\mathbf{w}}\left(\prod_{i'=1}^{\ell_1+\ell_2}\prod_{l'\geq 1}\left(\frac{(-1)^{l'-1}}{l'}\right)^{k_{i'l'}}\frac{1}{k_{i'l'}!}\right)e_{\mathbf{k}(\mathbf{w})}.\]

To simplify the notation we rewrite the vectors $V_{\mathbf{w}}$, $e_{\mathbf{k}(\mathbf{w})}$ as follows:
\begin{notation}\label{not:base}
Assume $l_{\mathfrak{d}}m_{\mathfrak{d}}$ is fixed, then there are $\lbrace \mathbf{w}_i\rbrace_{i=1,...,s}$ vectors such that $\sum_{j=1}^s\mathbf{w}_j=l_{\mathfrak{d}}m_{\mathfrak{d}}$. Moreover for every $\mathbf{w}_i$ there are $\mathbf{k}_j=\mathbf{k}_j(\mathbf{w}_i)$ such that $\mathbf{w}_i(\mathbf{k}_j)=\mathbf{w}_i$, for $j=1,...,s_i$. In particular, for every $i=1,...,s$ there are $s_i$ vectors $e_{\mathbf{k}_j(\mathbf{w}_i)}$ which we denote by $e_j^{(i)}$. The vectors $V_{\mathbf{w}_i}$ are rewritten as follows:
\[
V_{\mathbf{w}_i}=\sum_{j=1}^{s_i}\lambda_j^{(i)}e_{j}^{(i)} 
\]
where $\lambda_{j}^{(i)}=\lambda_{\mathbf{k}_j(\mathbf{w}_i)}=\left(\prod_{i'=1}^{\ell_1+\ell_2}\prod_{l'\geq 1}\left(\frac{(-1)^{l'-1}}{l'}\right)^{k_{i'l'}}\frac{1}{k_{i'l'}!}\right)$.   
\end{notation} 

\begin{conjecture}\label{conj}
If $\ell_1,\ell_2, \lbrace A_i\rbrace_{1\leq i\leq \ell_1},\lbrace Q_j\rbrace_{1\leq j\leq \ell_2}$ are regarded as arbitrary parameters, and $l_{\mathfrak{d}}m_{\mathfrak{d}}$ is chosen such that $\mathfrak{d}=l_{\mathfrak{d}}m_{\mathfrak{d}}\R_{\geq 0}$ is a ray in the consistent scattering diagram $\mathfrak{D}^\infty$. Then $\overrightarrow{f}_{\mathfrak{d}}$ is a generating function for $N_{0,\mathbf{w}}(\overline{Y}_{\mathbf{m}})$.
\end{conjecture}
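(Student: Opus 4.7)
The starting point is the explicit formula \eqref{eq:GW_rel GPS2}, which already expresses the matrix component of $\log\overrightarrow{f}_{\mathfrak{d}}$ as a sum $\sum_{\mathbf{w}} N_{0,\mathbf{w}}(\overline{Y}_{\mathbf{m}})\,V_{\mathbf{w}}\, z^{\sum w_i}$ over weight vectors. The conjecture is equivalent to showing that these coefficients $V_{\mathbf{w}} \in \mathfrak{gl}(r,\C)\otimes_{\C}\C[t_1,\dots,t_{\ell_1},s_1,\dots,s_{\ell_2}]$ are linearly independent when $\ell_1,\ell_2$ are arbitrary and the matrices $A_i, Q_j$ are regarded as generic commuting parameters: indeed, once the family $\{V_\mathbf{w}\}$ is independent, the numbers $N_{0,\mathbf{w}}(\overline{Y}_\mathbf{m})$ can be read off as coordinates of $\log\overrightarrow{f}_\mathfrak{d}$ in this basis.

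First I would isolate a \emph{signature monomial} attached to each weight $\mathbf{w}=(w_1,\dots,w_{s_1},w_{s_1+1},\dots,w_s)$ when $\ell_1 \geq s_1$ and $\ell_2 \geq s_2$. Among all partitions $\mathbf{k}$ with $\mathbf{w}(\mathbf{k})=\mathbf{w}$, there is a distinguished one $\mathbf{k}^{\ast}(\mathbf{w})$ that assigns each weight $|w_r|m_1$ (resp.\ $|w_r|m_2$) to a distinct index $i(r)\in\{1,\dots,\ell_1\}$ (resp.\ $j(r)\in\{1,\dots,\ell_2\}$). Its contribution has the form $\prod_{r=1}^{s_1} t_{i(r)}^{|w_r|}\prod_{r=s_1+1}^{s} s_{j(r)}^{|w_r|}$, and by construction this particular product of powers, in which each variable appears with exactly one weight exponent, cannot arise from any other $\mathbf{k}'$ associated to a different weight $\mathbf{w}'$: any other partition either uses fewer $t_i,s_j$-variables, or uses the same variables with different individual exponents. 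I would then extract $N_{0,\mathbf{w}}(\overline{Y}_\mathbf{m})$ from the coefficient of this signature monomial, once the matrix coefficient has been shown to be nonzero.

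To handle the matrix factor, I would specialize $A_i = x_i I$ and $Q_j = y_j I$ with $x_i, y_j$ formal scalar parameters (the commutativity hypothesis becoming automatic), and work inside $\C[x_i,y_j,t_i,s_j]$. In these coordinates $e_{\mathbf{k}^{\ast}(\mathbf{w})}$ becomes a polynomial whose nonvanishing is a combinatorial statement: it is the symmetric-type expression $\sum_i \sum_l l x_i^l k^{\ast}_{il} + \sum_j\sum_l l y_j^l k^{\ast}_{(j+\ell_1)l}$ (up to the prefactor $\lambda_{\mathbf{k}^{\ast}(\mathbf{w})}$), and for the spread partition each $k^{\ast}_{il}$ is $0$ or $1$, so this expression is a nonzero linear combination of distinct monomials $x_i^l$ and $y_j^l$. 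Coupled with the extraction of the signature monomial in $t,s$, this gives a scalar relation determining $N_{0,\mathbf{w}}(\overline{Y}_\mathbf{m})$ up to an explicit nonzero combinatorial constant, which can be cross-checked against Proposition \ref{thm:trop-toric} and \ref{thm:deg} via the second component of \eqref{eq:GW_rel GPS2}.

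The main obstacle is that in the regimes of interest the weight $\mathbf{w}$ may not admit a maximally spread partition (for instance when $s_1 > \ell_1$, forcing distinct weights onto a common variable), and in that situation the signature-monomial argument collapses. The genuine hard step is therefore to show that the full family $\{V_\mathbf{w}\}$ is linearly independent \emph{for the values of $\ell_1,\ell_2$ imposed by the initial data of $\mathfrak{D}$}, rather than after a free enlargement. I expect this to reduce to proving invertibility of a combinatorial matrix indexed by partitions of a fixed $\mathbf{P}$, with entries the coefficients $\lambda_{\mathbf{k}(\mathbf{w})}$ paired with $e_{\mathbf{k}(\mathbf{w})}$; this matrix is a generalized version of the M\"obius-type transition between $N_{0,\mathbf{P}}$ and $N_{0,\mathbf{w}(\mathbf{k})}$ appearing in \eqref{eq:N_P-N_trop}, so its nondegeneracy should ultimately follow from the combinatorial structure of the partition lattice together with the freedom in $A_i,Q_j$. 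Establishing this rank condition in full generality is the step where I expect the real work to lie, and is presumably why only partial results (Theorem \ref{thm:partial}) have so far been obtained.
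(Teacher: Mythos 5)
This statement is a Conjecture (\ref{conj}) which the paper itself leaves open; the paper establishes only the weaker Theorem~\ref{thm:partial}, recovering specific families of invariants $N_{0,\mathbf{w}}(\overline{Y}_{\mathbf{m}})$ by isolating certain monomials in $\log\overrightarrow{f}_{\mathfrak{d}}$. Your ``signature monomial'' strategy with the maximally spread partition $\mathbf{k}^{\ast}(\mathbf{w})$ is essentially the same mechanism as in the paper's cases $(a)$, $(b_j)$, $(c_j)$, and you are right that the conjecture reduces to linear independence of the coefficient family $\{V_{\mathbf{w}}\}$, which is exactly the paper's Remark after the Conjecture.

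There are, however, two genuine problems with the proposal. First, your claim that the $t,s$-monomial $\prod_r t_{i(r)}^{|w_r|}\prod_r s_{j(r)}^{|w_r|}$ alone cannot arise from a different weight is false: for instance $t_1^2 s_1$ is produced both by $\mathbf{w}=(2m_1,m_2)$ with the spread partition and by $\mathbf{w}'=(m_1,m_1,m_2)$ with the partition placing both unit weights on index~$1$. The two contributions differ only in the matrix factor ($A_1^2$ versus $A_1$), and treating that factor as a separate nonvanishing check, rather than as an indispensable part of the signature, leaves a real gap.

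Second, you locate the obstruction at weights that do not admit a spread partition (your $s_1>\ell_1$ scenario), but this is not where the paper actually gets stuck. The real difficulty persists even when spread partitions exist, and it stems from the additive structure of the matrix factor of $e_{\mathbf{k}(\mathbf{w})}$, namely
\begin{equation*}
\sum_{i=1}^{\ell_1}\sum_{l\geq 1}lA_i^l k_{il}+\sum_{j=1}^{\ell_2}\sum_{l\geq 1}lQ_j^l k_{(j+\ell_1)l},
\end{equation*}
so that every monomial appearing in $e_{\mathbf{k}(\mathbf{w})}$ carries exactly one factor among $A_i^l$ or $Q_j^l$, never a mixed product $A_i^j Q_{j'}^r$. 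For a weight $\mathbf{w}(*)=(jm_1,m_1,\dots,m_1,rm_2,m_2,\dots,m_2)$ with $j,r\geq 2$, the candidate signature $A_1^j t_1^j t_2\cdots t_{\ell_1-j}\, s_1^r s_2\cdots s_{\ell_2-r}$ fixes the $m_1$-side of the partition (through the $A_1^j$ power) but leaves the $m_2$-side undetermined (the power $s_1^r$ can arise from several partitions of $r$), so several $\mathbf{w}'$ share that monomial. This is precisely why the cases $\mathbf{w}(*)$, $\mathbf{w}(\clubsuit)$, $\mathbf{w}(\spadesuit)$ are excluded from Theorem~\ref{thm:partial}, and it is why a monomial-by-monomial argument cannot establish the conjecture. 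A proof would need to show linear independence of the full family $\{V_{\mathbf{w}}\}$ directly --- the invertibility argument you gesture at in your final paragraph --- and neither you nor the paper currently supplies one.
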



\begin{remark}
If the collection of $\lbrace e_j^{(i)}\rbrace_{\substack{i=1,...,s\\j=1,...,s_i}}$ is a basis, then Conjecture \ref{conj} will follow immediately; indeed for every $\lbrace c_{i}\rbrace_{i=1,...,s}\in\Q$
\begin{align*}
0=\sum_{i=1}^sc_{i}V_{\mathbf{w}_i}=\sum_{i=1}^sc_{i}\sum_{j=1}^{s_i}\lambda_{j}^{(i)}e_j^{(i)}=\sum_{i=1}^s\sum_{j=1}^{s_i}c_{i}\lambda_{j}^{(i)}e_j^{(i)} \Leftrightarrow c_{i}\lambda_{j}^{(i)}=0\,\,\forall i=1,...,s,\,\forall j=1,...,s_i.
\end{align*}
In particular since $\lambda_j^{(i)}\neq 0$ for all $j=1,...,s_i$, then $c_{i}=0$ for all $\mathbf{w}_i$, $i=1,...,s$.
\end{remark}

We do not have a proof of the conjecture yet, but in the following examples we show that even if for $\ell_1=\ell_2=1$ the vectors $e_{\mathbf{k}(\mathbf{w})}$ are linearly dependent, then for $\ell_1=\ell_2=2$ they are indeed a basis. Then we conclude the section with a partial result, which states how to compute some invariants from $\log\overrightarrow{f}_{\mathfrak{d}}$ (see Theorem \ref{thm:partial}).  

We are going to consider the initial scattering diagram 
\begin{align*}
\mathfrak{D}=\left\lbrace \left(\mathfrak{d}_i=m_1\R,\overrightarrow{f}_{\mathfrak{d}_i}=\left(1+A_it_iz^{m_1},1+t_iz^{m_1}\right)\right) , \left(\mathfrak{d}_i=m_2\R,\overrightarrow{f}_{\mathfrak{d}_j}=\left(1+Q_js_jz^{m_2},1+s_jz^{m_2}\right)\right)\right\rbrace_{\substack{1\leq i\leq \ell_1 \\ 1\leq j\leq \ell_2}}
\end{align*}
with $m_1=(1,0)$ and $m_2=(0,1)$ and such that $[A_i, A_{i'}]=[Q_j, Q_{j'}]=[A_i,Q_j]=0$ for every $i,i'=1,...,\ell_1$, $j,j'=1,...,\ell_2$. We remark that $\lbrace A_i\rbrace_{1\leq i\leq \ell_1},\lbrace Q_j\rbrace_{1\leq j\leq \ell_2}$ are regarded as formal parameters. 

\begin{example}\label{ex:1a}
Let $\ell_1=\ell_2=1$, then the consistent scattering diagram $\mathfrak{D}^{\infty}$ has the ray $\mathfrak{d}=(1,1)\R_{\geq 0}$. Let $l_{\mathfrak{d}}=2$: the possible vector $\mathbf{w}$ such that $\sum_jw_j=(2,2)$ are
\begin{enumerate}
\item[$(a)$] $\mathbf{w}=(m_1,m_1,m_2,m_2)$;
\item[$(b)$] $\mathbf{w}=(2m_1,m_2,m_2)$;
\item[$(c)$] $\mathbf{w}=(m_1,m_1,2m_2)$;
\item[$(d)$] $\mathbf{w}=(2m_1,2m_2)$
\end{enumerate}
and the partition $\mathbf{k}=(k_1,k_2)$ such that $\mathbf{w}(\mathbf{k})=\mathbf{w}$ are respectively
\begin{enumerate}
\item[$(a)$] $k_1=(2)$ and $k_2=(2)$;
\item[$(b)$] $k_1=(0,1)$ and $k_2=(2)$;
\item[$(c)$] $k_1=(2)$ and $k_2=(0,1)$;
\item[$(d)$] $k_1=(0,1)$ and $k_2=(0,1)$.
\end{enumerate}
Hence from equation \eqref{eq:GW_rel GPS2} we get
\begin{align*}
\log\overrightarrow{f}_{(2,2)}&=N_{\mathbf{w}(a)}(2A_1+2Q_1)\left(\frac{1}{4}\right)t_1^2s_1^2+N_{\mathbf{w}(b)}(2A_1^2+2Q_1)\left(\frac{-1}{4}\right)t_1^2s_1^2+\\
&+N_{\mathbf{w}(c)}(2A_1+2Q_1^2)\left(\frac{-1}{4}\right)t_1^2s_1^2+N_{\mathbf{w}(d)}(2A_1^2+2Q_1^2)\left(\frac{1}{4}\right)t_1^2s_1^2.
\end{align*}
According to Notation \ref{not:base} the vectors $e_{\mathbf{k}(\mathbf{w})}$ are respectively:
\begin{align*}
e_{(a)}=(2A_1+2Q_1)t_1^2s_1^2; \quad e_{(b)}=(2A_1^2+2Q_1)t_1^2s_1^2; \\
e_{(c)}=(2A_1+2Q_1^2)t_1^2s_1^2; \quad e_{(d)}=(2A_1^2+2Q_1^2)t_1^2s_1^2,
\end{align*} 
but they are not linearly independent vectors ($e_{(a)}=-e_{(d)}-e_{(c)}-e_{(d)} $) hence $ \log\overrightarrow{f}_{(2,2)}$ is not a generating function for $N_{\mathbf{w}}$.  
\end{example}

\begin{example}\label{ex:1b}
Let $\ell_1=\ell_2=2$, the consistent scattering diagram $\mathfrak{D}$ has many more rays, and we consider $\mathfrak{d}=(1,1)\R_{\geq0}$ with $l_{\mathfrak{d}}=2$ as in Example \ref{ex:1a}. The vectors $\mathbf{w}$ are the same as before but there are more combinations for the vectors $\mathbf{k}=(k_1^{(1)}, k_1^{(2)},k_2^{(1)}, k_2^{(2)})$, where we denote by $k_j^{(i)}$ the partition which corresponds to $A_i$ if $j=1$, $Q_i$ if $j=2$: 
\begin{enumerate}
\item[$(a)$] $k_1^{(i)}=(2)$ and $k_2^{(i')}=(2)$ for $i,i'=1,2$, $k_1^{(1)}=(1), k_1^{(2)}=(1)$ and $k_2^{(i)}=(2)$ for $i=1,2$, $k_1^{(i)}=(2)$ and $k_2^{(1)}=(1), k_2^{(2)}=(1)$ for $i=1,2$, $k_1^{(1)}=(1),k_1^{(2)}=(1)$ and $k_2^{(1)}=(1), k_2^{(2)}=(1)$; 
\item[$(b)$] $k_1^{(i)}=(0,1)$ and $k_2^{(i')}=(2)$ for $i,i'=1,2$, $k_1^{(i)}=(0,1)$ and $k_2^{(1)}=(1), k_2^{(2)}=(1)$ for $i=1,2$ ;
\item[$(c)$] $k_1^{(i)}=(2)$ and $k_2^{(i')}=(0,1)$ for $i,i'=1,2$, $k_1^{(1)}=(1), k_1^{(2)}=(1)$ and $k_2^{(i')}=(0,1)$ for $i'=1,2$ ;
\item[$(d)$] $k_1^{(i)}=(0,1)$ and $k_2^{(i')}=(0,1)$ for $i,i'=1,2$.
\end{enumerate}
Hence the vector $e_{\mathbf{k}(\mathbf{w})}$ are the following:
\begin{align*}
e_{(a)}&=\sum_{i,i'=1,2}(2A_i+2Q_{i'})t_i^2s_{i'}^2+\sum_{i=1,2}(A_1+A_2+2Q_i)t_1t_2s_i^2+\sum_{i=1,2}(2A_i+Q_1+Q_2)t_i^2s_1s_2+\\
&\qquad+(A_1+A_2+Q_1+Q_2)t_1t_2s_1s_2,\\  
e_{(b)}&=\sum_{i,i'=1,2}(2A_i^2+2Q_{i'})t_i^2s_{i'}^2+\sum_{i=1,2}(2A_i^2+Q_1+Q_2)t_i^2s_1s_2, \\
e_{(c)}&=\sum_{i,i'=1,2}(2A_i+2Q_{i'}^2)t_i^2s_{i'}^2+\sum_{i=1,2}(A_1+A_2+2Q_i^2)t_1t_2s_i^2, \\
e_{(d)}&=\sum_{i,i'=1,2}(2A_i^2+2Q_j^2)t_i^2s_{i'}^2. 
\end{align*} 
These vectors are linearly independent, indeed the monomial $A_1t_1t_2s_1s_2$ is only in $e_{(a)}$, the monomial $Q_1t_1^2s_1s_2$ is only in $e_{(b)}$ and the monomial $A_1t_1t_2s_1^2$ is only in $e_{(c)}$. 
\end{example}

\begin{example}\label{ex:2a}
Let $\ell_1=\ell_2=1$ and consider the ray $\mathfrak{d}=(1,2)\R_{\geq 0}$ with $l_{\mathfrak{d}}=2$. The vectors $\mathbf{w}$ and the possible vector $\mathbf{k}=(k_1,k_2)$ are written in the following tables:

\vspace{5mm}

\begin{tabular}{|c|c|c|c|c|c|}
\hline
$2m_1 / 4m_2$ & $(m_2,m_2,m_2,m_2)$ & $(m_2,3m_2)$ & $(m_2,m_2,2m_2)$ & $(2m_2,2m_2)$ & $(4m_2)$ \\
\hline 
$(m_1,m_1)$ & ${\mathbf{w}(a)}$ & ${\mathbf{w}(b)}$ & ${\mathbf{w}(c)}$ & ${\mathbf{w}(d)}$ & ${\mathbf{w}(e)}$\\
\hline 
$(2m_1)$ & ${\mathbf{w}(f)}$ & ${\mathbf{w}(g)}$ &${\mathbf{w}(h)}$ & ${\mathbf{w}(i)}$ & ${\mathbf{w}(l)}$\\
\hline
\end{tabular}

\vspace{5mm}

\begin{tabular}{|c|c|c|c|c|c|}
\hline
 $k_1 / k_2$ & $(4)$ & $(1,0,1)$ & $(2,1)$ & $(0,2)$ & $(0,0,0,1)$ \\
\hline 
$(2)$ & $\mathbf{k}(\mathbf{w}(a))$ & $\mathbf{k}({\mathbf{w}(b)})$ & $\mathbf{k}({\mathbf{w}(c)})$ & $\mathbf{k}(\mathbf{w}(d))$ & $\mathbf{k}({\mathbf{w}(e)})$\\
\hline 
$(0,1)$ & $\mathbf{k}({\mathbf{w}(f)})$ & $\mathbf{k}({\mathbf{w}(g)})$ &$\mathbf{k}({\mathbf{w}(h)})$ & $\mathbf{k}({\mathbf{w}(i)})$ & $\mathbf{k}({\mathbf{w}(l)})$\\
\hline
\end{tabular}

\vspace{5mm}
In this example we have $10$ vectors $e_{\bullet}$
\begin{align*}
e_{(a)}&= (2A_1+4Q_1)t_1^2s_1^4; \qquad e_{(b)}= (2A_1+Q_1+3Q_1^3)t_1^2s_1^4; \qquad e_{(c)}= (2A_1+ 2Q_1+2Q_1^2)t_1^2s_1^4; \\
e_{(d)}&= (2A_1+ 4Q_1^2)t_1^2s_1^4; \qquad e_{(e)}= (2A_1+ 4Q_1^4)t_1^2s_1^4; \qquad e_{(f)}= (2A_1^2+ 4Q_1)t_1^2s_1^4; \\
e_{(g)}&= (2A_1^2+ Q_1+3Q_1^3)t_1^2s_1^4; \qquad e_{(h)}= (2A_1^2+2Q_1+2Q_1^2)t_1^2s_1^4; \qquad e_{(i)}= (2A_1^2+  4Q_1^2)t_1^2s_1^4; \\ e_{(l)}&=(2A_1^2+ 4Q_1^4)t_1^2s_1^4
\end{align*}
but they are linear combination of $6$ monomials (namely $A_1t_1^2s_1^4$, $Q_1t_1^2s_1^4$, $A_1^2s_1^4t_1^2$, $Q_1^2t_1^2s_1^4$, 
$Q_1^3t_1^2s_1^4$, $Q_1^4t_1^2s_1^4 $), hence they are linearly dependent (e.g. $e_{(l)}=e_{(e)}-e_{(i)}-e_{(d)}$). 
\end{example}

\begin{example}\label{ex:2_b}
Let $\ell_1=\ell_2=2$ and consider the same vector $l_{\mathfrak{d}}m_{\mathfrak{d}}=(2,4)$ as in Example \ref{ex:2a}. The vectors $\mathbf{w}_{\bullet}$ are as before, while there are more possibilities for the vectors $\mathbf{k}=(k_1^{(1)},k_1^{(2)},k_2^{(1)}, k_2^{(2)})$ hence $e_{\bullet}$ are listed below: 
\begin{align*}
e_{(a)}&=\sum_{i,i'=1,2}(2A_i+4Q_{i'})t_i^2s_{i'}^4+\sum_{i=1,2}(2A_i+Q_{1}+3Q_2)t_i^2s_1s_2^3+\sum_{i=1,2}(2A_i+Q_{2}+3Q_1)t_i^2s_1^3s_2+\\
&+ \sum_{i=1,2}(2A_i+2Q_{1}+2Q_2)t_i^2s_1^2s_2^2+\sum_{i'=1,2}(A_1+A_2+4Q_{i'})t_1t_2s_{i'}^4+(A_1+A_2+Q_{1}+3Q_2)t_1t_2s_1s_2^3+\\
&+(A_1+A_2+Q_{2}+3Q_1)t_1t_2s_1^3s_2+ (A_1+A_2+2Q_{1}+2Q_2)t_1t_2s_1^2s_2^2; \\
e_{(b)}&= \sum_{i,i'=1,2}(2A_i+Q_{i'}+3Q_{i'}^3)t_i^2s_{i'}^4+ \sum_{i=1,2}(2A_i+Q_{1}+3Q_{2}^3)t_i^2s_1s_2^3+ \sum_{i=1,2}(2A_i+Q_{2}+3Q_{1}^3)t_i^2s_1^3s_2 \\
&+\sum_{i'=1,2}(A_1+A_2+Q_{i'}+3Q_{i'}^3)t_1t_2s_{i'}^4+ (A_1+A_2+Q_{1}+3Q_{2}^3)t_1t_2s_1s_2^3+(A_1+A_2+Q_{2}+3Q_{1}^3)t_1t_2s_1^3s_2;\\ 
e_{(c)}&=\sum_{i,i'=1,2}(2A_i+ 2Q_{i'}+2Q_{i'}^2)t_{i}^2s_{i'}^4+\sum_{i=1,2}(2A_i+ 2Q_{1}+2Q_{2}^2)t_{i}^2s_1^2s_2^2+\sum_{i=1,2}(2A_i+ 2Q_{2}+2Q_{1}^2)t_{i}^2s_1^2s_2^2+\\
&+\sum_{i'=1,2}(A_1+A_2+ 2Q_{i'}+2Q_{i'}^2)t_1t_2s_{i'}^4+(A_1+A_2+ 2Q_{1}+2Q_{2}^2)t_1t_2s_1^2s_2^2+\\
&+(A_1+A_2+ 2Q_{2}+2Q_{1}^2)t_1t_2s_1^2s_2^2+ (A_1+A_2+ Q_{1}+Q_2+2Q_{2}^2)t_1t_2s_1s_2^3+\\ 
&+(A_1+A_2+ Q_{2}+Q_1+2Q_{1}^2)t_1t_2s_1^3s_2+\sum_{i=1,2}(2A_i+ Q_{1}+Q_2+2Q_{2}^2)t_{i}^2s_1s_2^3+\\
&+\sum_{i=1,2}(2A_i+ Q_{2}+Q_1+2Q_{1}^2)t_{i}^2s_1^3s_2;\\
e_{(d)}&=\sum_{i,i'=1,2}(2A_i+ 4Q_{i'}^2)t_{i}^2s_{i'}^4+\sum_{i=1,2}(2A_i+ 2Q_{1}^2+2Q_2^2)t_{i}^2s_1^2s_2^2+\sum_{i'=1,2}(A_1+A_2+ 4Q_{i'}^2)t_1t_2s_{i'}^4+ \\
&+(A_1+A_2+ 2Q_{1}^2+2Q_2^2)t_1t_2s_1^2s_2^2;\\
e_{(e)}&=\sum_{i,i'=1,2}(2A_i+ 4Q_{i'}^4)t_i^2s_{i'}^4+\sum_{i'=1,2}(A_1+A_2+ 4Q_{i'}^4)t_1t_2s_{i'}^4;\\ 
e_{(f)}&=\sum_{i,i'=1,2} (2A_i^2+ 4Q_{i'})t_i^2s_{i'}^4+\sum_{i=1,2} (2A_i^2+ Q_{1}+3Q_2)t_i^2s_1s_2^3+\sum_{i=1,2} (2A_i^2+ 3Q_1+Q_2)t_i^2s_1^3s_2+\\
&+\sum_{i=1,2} (2A_i^2+ 2Q_{1}+2Q_2)t_i^2s_1^2s_2^2;\\
e_{(g)}&=\sum_{i,i'=1,2} (2A_{i}^2+ Q_{i'}+3Q_{i'}^3)t_i^2s_{i'}^4+\sum_{i=1,2} (2A_{i}^2+ Q_{1}+3Q_{2}^3)t_i^2s_1s_2^3+\sum_{i=1,2} (2A_{i}^2+ Q_{2}+3Q_{1}^3)t_i^2s_1^3s_2;\\
e_{(h)}&=\sum_{i,i'=1,2} (2A_i^2+2Q_{i'}+2Q_{i'}^2)t_i^2s_{i'}^4+\sum_{i=1,2} (2A_i^2+2Q_{1}+2Q_{2}^2)t_i^2s_1^2s_2^2+\sum_{i=1,2} (2A_i^2+2Q_{2}+2Q_{1}^2)t_i^2s_1^2s_2^2+\\
&+\sum_{i=1,2} (2A_i^2+Q_{1}+Q_2+2Q_{2}^2)t_i^2s_1s_2^3+\sum_{i=1,2} (2A_i^2+Q_{2}+Q_1+2Q_{1}^2)t_i^2s_1^3s_2;\\
e_{(i)}&=\sum_{i,i'=1,2} (2A_i^2+  4Q_{i'}^2)t_i^2s_{i'}^4+\sum_{i=1,2} (2A_i^2+  2Q_{1}^2+2Q_2^2)t_i^2s_1^2s_2^2;\\
e_{(l)}&= \sum_{i,i'=1,2} (2A_i^2+ 4Q_{i'}^4)t_i^2s_{i'}^4.
\end{align*}
It is possible to check they are all linearly independent vectors. 
\end{example}

\begin{theorem}
\label{thm:partial}
Let $\ell_1,\ell_2$ be positive integer greater or equal than $1$ and let
\begin{align*}
\mathfrak{D}=\left\lbrace \left(\mathfrak{d}_i=m_1\R,\overrightarrow{f}_{\mathfrak{d}_i}=\left(1+A_it_iz^{m_1},1+t_iz^{m_1}\right)\right) , \left(\mathfrak{d}_i=m_2\R,\overrightarrow{f}_{\mathfrak{d}_j}=\left(1+Q_js_jz^{m_2},1+s_jz^{m_2}\right)\right)\right\rbrace_{\substack{1\leq i\leq \ell_1 \\ 1\leq j\leq \ell_2}}
\end{align*}
be the initial scattering diagram with $[A_i,Q_j]=0$ for every $i=1,...,\ell_1$, $j=1,...,\ell_2$. If $\mathfrak{d}=m_{\mathfrak{d}}\R_{\geq 0}$ is a ray in the consistent scattering diagram $\mathfrak{D}^{\infty}$ and $l_{\mathfrak{d}}m_{\mathfrak{d}}=\ell_1m_1+\ell_2m_2$, then $\overrightarrow{f}_{\mathfrak{d}}$ allows to compute the relative Gromov--Witten invariants $N_{0,\mathbf{w}}(\overline{Y}_{\mathbf{m}})$ with the following tangency conditions:
\begin{enumerate}
\item[$(a)$] $\mathbf{w}(a)=(\underbrace{m_1,...,m_1}_{\ell_1-\text{times}},\underbrace{m_2,...,m_2}_{\ell_2-\text{times}})$, namely rational curve through $\ell_1$ distinct points on the divisor $D_{m_1}$, $\ell_2$ distinct points on the divisor $D_{m_2}$ and tangent of order $\ell_1$ to an unspecified point on $D_{m_1}$ and of order $\ell_2$ to an unspecified point on $D_{m_2}$;
\item[$(b_j)$] $\mathbf{w}(b_j)=(jm_1, \underbrace{m_1,...,m_1}_{(\ell_1-j)-\text{times}},\underbrace{m_2,...,m_2}_{\ell_2-\text{times}})$ for $j=2,...,\ell_1$. These are rational curves through $\ell_1-j$ distinct points on the divisor $D_{m_1}$, $\ell_2$ distinct points on the divisor $D_{m_2}$, tangent of order $j$ to a given point on $D_{m_1}$ (distinct from the previous $\ell_1-j$ points), tangent of order $\ell_1$ to an unspecified point on $D_{m_1}$ and of order $\ell_2$ to an unspecified point on $D_{m_2}$;
\item[$(c_j)$] $\mathbf{w}(c_j)=(\underbrace{m_1,...,m_1}_{\ell_1-\text{times}},jm_2,\underbrace{m_2,...,m_2}_{(\ell_2-j)-\text{times}})$ for $j=2,...,\ell_2$. These are rational curves through $\ell_1$ distinct points on the divisor $D_{m_1}$, $\ell_2-j$ distinct points on the divisor $D_{m_2}$, tangent of order $j$ to a given point on $D_{m_2}$ (distinct from the previous $\ell_2-j$ points), tangent of order $\ell_1$ to an unspecified point on $D_{m_1}$ and of order $\ell_2$ to an unspecified point on $D_{m_2}$.
\end{enumerate}   
\end{theorem}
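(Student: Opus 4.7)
The plan is to apply Theorem \ref{thm:m_out} to $\log\overrightarrow{f}_{\mathfrak{d}}$ in the specialization $l_{\mathfrak{d}} m_{\mathfrak{d}} = \ell_1 m_1 + \ell_2 m_2$, and to read off each targeted invariant $N_{0,\mathbf{w}}(\overline{Y}_{\mathbf{m}})$ as the coefficient of a carefully chosen \emph{distinguishing monomial} in the polynomial algebra $\C[A_1,\ldots,A_{\ell_1},Q_1,\ldots,Q_{\ell_2},t_1,\ldots,t_{\ell_1},s_1,\ldots,s_{\ell_2}]$ in which the first (matrix) component of $\log\overrightarrow{f}_{\mathfrak{d}}$ lives. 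The key structural observation making this work is that, in formula \eqref{eq:GW_rel GPS2}, the matrix contribution of each summand is a \emph{linear} combination of pure monomials $A_i^l$ or $Q_j^l$; no products such as $A_i^a A_{i'}^b$ or $A_i^a Q_j^b$ ever appear, and the commutativity hypothesis makes the surviving monomials linearly independent over the $t_i,s_j$. Hence, after fixing a single matrix generator $A_1^j$ or $Q_1^j$, the extraction reduces to a combinatorial question about the variable exponents attached to the partition $\mathbf{k}$.

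For the three families of weights I would use the distinguishing monomials
\[
\mu_a = A_1\cdot t_1 \cdots t_{\ell_1}\cdot s_1 \cdots s_{\ell_2}, \qquad
\mu_{b_j} = A_1^j\cdot t_1^j\cdot t_2 \cdots t_{\ell_1-j+1}\cdot s_1 \cdots s_{\ell_2},
\]
and the $s$-symmetric analogue $\mu_{c_j} = Q_1^j\cdot t_1 \cdots t_{\ell_1}\cdot s_1^j\cdot s_2 \cdots s_{\ell_2-j+1}$. I would detail the argument only for $\mu_{b_j}$, the cases $\mu_a$ and $\mu_{c_j}$ being identical up to relabeling. Requiring $A_1^j$ to appear in the matrix sum of $e_{\mathbf{k}(\mathbf{w})}$ forces $k_{1,j}\geq 1$; combined with the exponent constraint $\sum_l l\,k_{1,l}=j$ coming from the $t_1^j$ factor this yields $k_{1,j}=1$ and $k_{1,l}=0$ for $l\neq j$. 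The remaining $t_i$-exponent constraints pin down $k_{i,1}=1$ for $i=2,\ldots,\ell_1-j+1$ and $k_{i,\bullet}=0$ for $i>\ell_1-j+1$, while the $s_i^1$ factors similarly force $k_{i+\ell_1,1}=1$ and $k_{i+\ell_1,l\geq 2}=0$ for all $i=1,\ldots,\ell_2$. The partition $\mathbf{k}$ is therefore unique, one checks $\mathbf{w}(\mathbf{k})=\mathbf{w}(b_j)$, and combining $\lambda_{\mathbf{k}}=(-1)^{j-1}/j$ with the matrix coefficient $j\cdot k_{1,j}=j$ gives the coefficient of $\mu_{b_j}$ in $\log\overrightarrow{f}_{\mathfrak{d}}$ as $(-1)^{j-1}N_{0,\mathbf{w}(b_j)}(\overline{Y}_{\mathbf{m}})$. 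Analogous computations produce $N_{0,\mathbf{w}(a)}(\overline{Y}_{\mathbf{m}})$ as the coefficient of $\mu_a$ and $(-1)^{j-1}N_{0,\mathbf{w}(c_j)}(\overline{Y}_{\mathbf{m}})$ as that of $\mu_{c_j}$.

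The hard part will be verifying that no other weight $\mathbf{w}'$ contributes to any of these distinguishing monomials; for the three families above this is already contained in the uniqueness argument, since demanding a matrix factor $A_1^j$ (with $j\geq 2$) or $Q_1^j$ together with a variable factor keeping all other $t_i,s_j$-exponents at most $1$ rigidly fixes the partition $\mathbf{k}$. This rigidity is precisely what fails for weights whose $m_1$- or $m_2$-part has more than one non-trivial term --- such as $\mathbf{w}(d)=(2m_1,2m_2)$ in the $\ell_1=\ell_2=2$ setting of Example \ref{ex:1b}, where any natural candidate monomial is hit by partitions of $\mathbf{w}(b_2)$ or $\mathbf{w}(c_2)$ as well --- and it is this obstruction that stops the argument short of the full Conjecture \ref{conj}. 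The hypothesis $l_{\mathfrak{d}} m_{\mathfrak{d}}=\ell_1 m_1+\ell_2 m_2$ is essential: in Example \ref{ex:1a}, where $l_{\mathfrak{d}}$ exceeds this minimal value, the vectors $e_{\mathbf{k}(\mathbf{w})}$ themselves are already linearly dependent, so no choice of distinguishing monomial could possibly isolate an individual invariant.
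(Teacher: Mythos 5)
Your proposal is correct and follows essentially the same route as the paper's own proof: extract each invariant $N_{0,\mathbf{w}}(\overline{Y}_{\mathbf{m}})$ as the coefficient of a distinguishing monomial, exploiting the fact that the matrix component of each $e_{\mathbf{k}}$ in \eqref{eq:GW_rel GPS2} is a \emph{linear} combination of pure powers $A_i^l$, $Q_j^l$ so that a single factor $A_1^j$ (resp. $Q_1^j$) together with the $t$- and $s$-exponents rigidifies $\mathbf{k}$. Your choice $\mu_{b_j} = A_1^j\,t_1^j t_2\cdots t_{\ell_1-j+1}\, s_1\cdots s_{\ell_2}$ in fact silently corrects the upper index in the paper's stated monomial $t_1^j t_2\cdots t_{\ell_1-j}$, which has total $t$-degree $\ell_1-1$ rather than the required $\ell_1$; your indexing is the consistent one, and the argument you then give (linearity of the matrix part forbids products such as $A_1^j Q_1^r$ or $A_1^a A_2^b$, hence the partition is pinned down uniquely, hence the only contributing weight is $\mathbf{w}(b_j)$) matches the paper's reasoning.
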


\begin{proof}
Being an element of $\tilde{\mathfrak{h}}$, $\log\overrightarrow{f}_{\mathfrak{d}}$ is a polynomial in the variables $A_1,...,A_{\ell_1}$, $Q_{1},...,Q_{\ell_2}$, $ t_1,...,t_{\ell_1}$, $s_1,...,s_{\ell_2}$ and, according to formula \eqref{eq:GW_rel GPS2}, the invariants $N_{0,\mathbf{w}}(\overline{Y}_{\mathbf{m}})$ are the coefficients of the polynomial $V_{\mathbf{w}}$. In particular there are some monomials which only appear in $V_{\mathbf{w}}$ for a given $\mathbf{w}$, hence from the expression of $\log \overrightarrow{f}_{\mathfrak{d}}$ we can compute the invariants $N_{0,\mathbf{w}}(\overline{Y}_{\mathbf{m}})$ looking at the coefficient of these monomials. From the definition of $V_\mathbf{w}$, the vectors $\mathbf{k}$ (such that $\mathbf{k}(\mathbf{w})=\mathbf{w}$) govern the possible polynomials which appears in $V_{\mathbf{w}}$. We collect the possible partitions of $\ell_1$ and $\ell_2$ in the following table (see Figure \ref{table}). Then $t_1\cdot...\cdot t_{\ell_1}$ can only appear when $\mathbf{w}=(\underbrace{m_1,...,m_1}_{\ell_1-\text{times}}, *)$ because they are all distinct, and for the same reason $s_1\cdot ...\cdot s_{\ell_2}$ can only appear when $\mathbf{w}=(*,\underbrace{m_2,...,m_2}_{\ell_1-\text{times}})$. In addition this forces $A_1,...,A_{\ell_1}$ and $Q_1,...,Q_{\ell_2}$ not being of higher powers. Hence $A_1t_1\cdot...\cdot t_{\ell_1}s_1\cdot...\cdot s_{\ell_2}$ only contributes to $N_{0,\mathbf{w}(a)}$.

\begin{figure}
\begin{center}
\begin{tabular}{|c|c|c|c|c|c|c|}
\hline
$\ell_1m_1 / \ell_2m_2$ & $(\underbrace{m_2,...,m_2}_{\ell_2-\text{times}})$ & $(2m_2,\underbrace{m_2,...,m_2}_{(\ell_2-2)-\text{times}})$ & $\cdots$ & $(\ell_2m_2)$  & $(2m_2,2m_2,\underbrace{m_2,...,m_2}_{(\ell_2-4)-\text{times}} )$ &$\cdots$ \\
\hline 
$(\underbrace{m_1,...,m_1}_{\ell_1-\text{times}})$  & ${\mathbf{w}(a)}$ & ${\mathbf{w}(c_2)}$ & $\cdots$ & ${\mathbf{w}(c_{\ell_2})}$ & $\mathbf{w}(\spadesuit)$ & \\
\hline 
$(2m_1,\underbrace{m_1,...,m_1}_{(\ell_1-2)-\text{times}})$ & ${\mathbf{w}(b_2)}$ & $\mathbf{w}(*)$  &$\cdots$ & $\mathbf{w}(*)$  & &\\
\hline 
$\vdots$ & $\vdots$ & $\vdots$  &  & $\vdots$  & &\\
\hline 
$(\ell_1m_1)$  &${\mathbf{w}(b_{\ell_1})}$  & $\mathbf{w}(*)$  &$\cdots$ & $\mathbf{w}(*)$      & &\\
&  &  & &  &  & \\
\hline 
$(2m_1,2m_1,\underbrace{m_1,...,m_1}_{(\ell_1-4)-\text{times}} )$ & $\mathbf{w}(\clubsuit)$ &  & &  & &\\
\hline 
$\vdots$ &  &  & &  & &\\
\hline
\end{tabular}
\end{center}
\caption{Table of the possible invariants appearing in the commutator formula.}\label{table}
\end{figure}
\vspace{5mm}

As soon as we consider $t_1^jt_2\cdot...\cdot t_{\ell_1-j}$ for $j\in\lbrace 2,...,\ell_1\rbrace$ it is not possible to uniquely determine the corresponding partition of $\ell_1$: these monomials appear in $(rm_1,\underbrace{m_1,...,m_1}_{\ell_1-r})$ for $r\in\lbrace 1,...,j-1\rbrace$ by choosing non distinct values (e.g. if $r=1$ and $k_1^{(1)}=(j), k_1^{(i)}=(1)$ for $i=2,...,\ell_1-j$). However $A_1^jt_1^jt_2\cdot...\cdot t_{\ell_1-j}$ can only occur when $\mathbf{w}=(jm_1,\underbrace{m_1,...,m_1}_{(\ell_1-j)-\text{times}},*)$ because $jm_1$ forces $k_1=(\ell_1-j,...,1)$ with $1$ in the $j$-th position and consequently $A_1^jt_1^j$ comes from $k_1^{(1)}=(0,...,1)$ with $1$ in the $j$-th position. If $\mathbf{w}=(rm_1,\underbrace{m_1,...,m_1}_{\ell_1-r},*)$ for $r\in\lbrace 1,...,j-1\rbrace$ then the higher power of $A_1$ would have been $A_1^r$. Therefore $A_1^jt_1^jt_2\cdot...\cdot t_{\ell_1-j}s_1\cdot...\cdot s_{\ell_2}$ only contributes to $N_{0,\mathbf{w}(b_j)}$. Reversing the role of $k_1$ and $k_2$ the same arguments apply to $N_{0,\mathbf{w}(c_j)}$ which is the coefficient of $Q_1^jt_1\cdot...\cdot t_{\ell_1}s_1^js_2\cdot...\cdot s_{\ell_2-j}$. 

What goes wrong with the other $N_{0,\mathbf{w}}$ is that we are not able to isolate a unique monomial which corresponds to $\mathbf{w}$: for the $\mathbf{w}(*)$ in the table, we would need both $A_1^jt_1^jt_2\cdot...\cdot t_{\ell_1-j}$ and $Q_1^rs_1^rs_2\cdot...\cdot s_{\ell_2-r}$ but $\left(A_1^jt_1^jt_2\cdot...\cdot t_{\ell_1-j}\cdot Q_1^rs_1^rs_2\cdot...\cdot s_{\ell_2-r},\bullet\right)$ is not a monomial in $\log \overrightarrow{f}_{\mathfrak{d}}\subset\tilde{\mathfrak{h}}$. For the $\mathbf{w}(\clubsuit)$ we may choose $t_1^2t_2^2t_3,...,t_{\ell_1-4}$ but as for $\mathbf{w}(b_2)$ we need to consider also $A_1^2$ and $A_2^2$; however $\left(A_1^2A_2^2t_1^2t_2^2t_3\cdot...\cdot t_{\ell_1-4}\cdot s_1\cdot...\cdot s_{\ell_2},\bullet\right)$ is not a monomial in $\log \overrightarrow{f}_{\mathfrak{d}}\subset\tilde{\mathfrak{h}}$. The same argument applies to $N_{\mathbf{w}(\spadesuit)}$.




\end{proof}

\newpage

\backmatter

\bibliographystyle{amsalpha}
\bibliography{biblio_tesi}
\printindex

\end{document}